\newtheorem{thm}{Theorem}
\newtheorem{cor}[thm]{Corollary}
\newtheorem{prop}[thm]{Proposition}
\newtheorem{lem}[thm]{Lemma}
\newtheorem{conjecture}[thm]{Conjecture}
\newcommand\scalemath[2]{\scalebox{#1}{\mbox{\ensuremath{\displaystyle #2}}}}
\theoremstyle{definition} 
\newtheorem{defin}[thm]{Definition}
\newtheorem{exa}[thm]{Example}
\newtheorem*{notation}{Notational convention}
\theoremstyle{remark}
\newtheorem{remark}[thm]{Remark}
\numberwithin{thm}{section}
\numberwithin{equation}{thm}
\newcommand{\be}{\begin{equation}}
\newcommand{\ee}{\end{equation}}
\newcommand{\bP}{\ensuremath{{\mathbb{P}}}}
\newcommand{\C}{\ensuremath{{\mathbb{C}}}}
\renewcommand{\P}{\ensuremath{{\mathbb{P}}}}
\newcommand{\Q}{\ensuremath{{\mathbb{Q}}}}
\newcommand{\R}{\ensuremath{{\mathbb{R}}}}
\newcommand{\p}{\ensuremath{{\mathfrak{p}}}}
\newcommand{\q}{\ensuremath{{\mathfrak{q}}}}
\newcommand{\cO}{\mathcal O}
\newcommand{\Kbar}{\ensuremath {\overline K}}
\newcommand{\scrP}{{\mathscr{P}}}
\newcommand{\scrS}{{\mathscr{S}}}
\DeclareMathOperator{\PrePer}{PrePer}
\DeclareMathOperator{\Gal}{Gal}
\DeclareMathOperator{\Spec}{Spec}
\DeclareMathOperator{\rad}{rad}
\newcommand\abs[1]{\left|#1\right|}
\newcolumntype{L}[1]{>{\raggedright\let\newline\\\arraybackslash\hspace{0pt}}m{#1}}
\newcolumntype{C}[1]{>{\centering\let\newline\\\arraybackslash\hspace{0pt}}m{#1}}
\newcolumntype{R}[1]{>{\raggedleft\let\newline\\\arraybackslash\hspace{0pt}}m{#1}}
\title[Unicritical polynomials over $\scalemath{1.3}{abc}$-fields]{Unicritical polynomials over $\scalemath{1.3}{abc}$-fields: from uniform boundedness to dynamical Galois groups}
\author{John R. Doyle}
\email{john.r.doyle@okstate.edu}
\address{Department of Mathematics, Oklahoma State University, Stillwater, OK 74075 USA}
\author{Wade Hindes}
\email{wmh33@txstate.edu}
\address{Department of Mathematics, Texas State University, San Marcos, TX 78666 USA}
\date{August 2024}
\subjclass[2020]{Primary: 37P15; Secondary: 37P35, 37P05, 11G50, 11S20.}
\keywords{preperiodic points, uniform boundedness, abc conjecture, semigroup dynamics.}
\begin{document}
\begin{abstract} Let $K$ be a function field of characteristic $p\geq0$ or a number field over which the $abc$ conjecture holds, and let $\phi(x)=x^d+c \in K[x]$ be a unicritical polynomial of degree $d\geq2$ with $d \not\equiv 0,1\pmod{p}$. We completely classify all portraits of $K$-rational preperiodic points for such $\phi$ for all sufficiently large degrees $d$. More precisely, we prove that, up to accounting for the natural action of $d$th roots of unity on the preperiodic points for $\phi$, there are exactly thirteen such portraits up to isomorphism.
In particular, for all such global fields $K$, it follows from our results together with earlier work of Doyle-Poonen and Looper that the number of $K$-rational preperiodic points for $\phi$ is uniformly bounded---independent of $d$. That is, there is a constant $B(K)$ depending only on $K$ such that
\[\big|\PrePer(x^d+c,K)\big|\leq B(K)\]
for all $d\geq2$ and all $c\in K$. Moreover, we apply this work to construct many irreducible polynomials with large dynamical Galois groups 
in semigroups generated by sets of unicritical polynomials under composition.       
\end{abstract}
\maketitle
\section{Introduction} 

\subsection{Uniform boundedness of preperiodic points}
One of the major motivating problems in arithmetic dynamics is the dynamical uniform boundedness conjecture of Morton and Silverman \cite[p. 100]{MR1264933}. Here and throughout, for a field $K$ and a rational function $\phi(x)\in K(x)$, we denote by $\PrePer(\phi,K) \subseteq \P^1(K)$ the set of $K$-rational preperiodic points for $\phi$.

\begin{conjecture}[Morton-Silverman]\label{conj:ubc}
    Let $D \ge 1$ and $d \ge 2$ be integers. Then there exists a constant $B := B(D,d)$ such that if $K$ is a number field of degree $[K:\Q] = D$ and $\phi\in K(x)$ is a rational function of degree $d$, then
        \[
            \big|\PrePer(\phi,K)\big| \le B.
        \]
\end{conjecture}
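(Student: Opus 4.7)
The plan is to attack this conjecture through the dynamical analogue of the modular-curves approach that led to Mazur's and Merel's theorems on torsion of elliptic curves. The central objects are, for each integer $N \geq 1$ and each $d\geq 2$, the dynamical moduli space $M_d$ parametrizing degree-$d$ rational self-maps of $\P^1$ modulo $\PGL_2$-conjugation, together with the \emph{dynatomic} cover $X_1(N,d) \to M_d$ parametrizing pairs $(\phi, P)$ in which $P$ has exact period $N$ under $\phi$, and the analogous spaces $X_1(M,N,d)$ parametrizing preperiodic points with tail length $M$ entering a cycle of length $N$. Since any preperiodic orbit decomposes canonically into a tail and a cycle, it suffices to bound, uniformly in $K$ of degree $D$, both (i) the number and lengths of $K$-rational cycles and (ii) the depth of $K$-rational tails. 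A Northcott-style packing argument then converts such bounds on $X_1(M,N,d)(K)$ for $M,N$ beyond some threshold $N_0(D,d)$ into a uniform bound on $|\PrePer(\phi,K)|$.

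Concretely, I would proceed as follows. First, build and study the tower $\{X_1(N,d)\}_{N\geq 1}$ over $\Spec \bZ[1/d]$, establishing irreducibility, dimension, and --- crucially --- lower bounds on $K$-gonality (or, more generally, positivity of the canonical bundle) growing with $N$; for $d=2$ this is classical dynatomic-curve theory, and for $d\geq 3$ one would need an analogous positivity statement on these higher-dimensional moduli varieties. Second, invoke a uniform diophantine input --- a Merel-type theorem for dynamical modular varieties --- to conclude that for $N \geq N_0(D,d)$ and every $K$ with $[K:\Q]=D$, the $K$-rational locus of $X_1(N,d)$ is contained in a controlled exceptional set, so that $\phi$ admits no $K$-rational point of period $N$. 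Third, for the finitely many remaining small periods $N<N_0$, bound the count of $K$-rational cycles of length $N$ by applying Northcott inside $M_d$ and analyzing $\Res\bigl(\phi^N-\mathrm{id}, \phi^M-\mathrm{id}\bigr)$ for proper divisors $M\mid N$. Fourth, bound tail lengths: a $K$-rational preperiodic orbit ends in a cycle of controlled length, and its backward orbit is governed by the critical orbits of $\phi$; uniform bounds here can be extracted from arithmetic equidistribution of preimages together with $S$-integrality of points in a strict backward orbit.

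The main obstacle, by a very wide margin, is the second step: proving any form of uniform boundedness of $K$-rational points on $X_1(N,d)$ as $N\to\infty$, uniformly across number fields of degree $D$. This is precisely the dynamical Merel problem and is wide open; even the weaker assertion that $X_1(N,d)(K)$ is finite for $N\gg 0$ is known only in special families. The present paper exemplifies the kind of diophantine input that seems indispensable: restricting to the unicritical slice $\phi(x)=x^d+c$ of $M_d$ and assuming the $abc$ conjecture is precisely what powers the height estimates that rule out large $K$-rational periodic orbits. For the full Morton--Silverman conjecture my expectation is that one will need either unconditional Vojta-type height inequalities on the dynamical modular tower, or a fundamentally new mechanism for producing uniform bounds on rational points in families of varieties of unbounded geometric complexity; accordingly, the outline above should be read as a roadmap toward a proof whose crucial diophantine ingredient is not presently within reach.
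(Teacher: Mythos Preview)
The statement you were asked to prove is Conjecture~\ref{conj:ubc}, the Morton--Silverman uniform boundedness conjecture. The paper does \emph{not} prove this statement; it is stated purely as motivation, and immediately afterward the authors write that ``Conjecture~\ref{conj:ubc} is still open for all $D\ge 1$ and $d\ge 2$.'' So there is no ``paper's own proof'' to compare your attempt against.

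To your credit, your proposal is explicitly framed as a roadmap rather than a proof, and you correctly identify the main obstacle: the dynamical analogue of Merel's theorem (uniform control of $K$-rational points on dynatomic varieties $X_1(N,d)$ as $N\to\infty$) is wide open. That said, a few of your steps are more optimistic than the current state of the art warrants. For $d\ge 3$ the moduli space $M_d$ has positive dimension, so the $X_1(N,d)$ are not curves and gonality bounds do not apply; one would need positivity results for higher-dimensional varieties, and no general ``Merel-type'' mechanism exists in that setting. Your Step~3 (bounding the number of cycles of small period via Northcott on $M_d$) and Step~4 (bounding tail lengths via equidistribution and $S$-integrality) are also not known to yield bounds depending only on $D$ and $d$; the known results of this type (e.g., Benedetto's tail-length bounds) depend on the number of bad primes of $\phi$, not just on $[K:\Q]$. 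So even granting your Step~2, the remaining steps as stated would not close the argument without substantial further input.

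In short: your outline is a reasonable sketch of the expected architecture of an eventual proof, but it is not a proof, and the paper makes no claim to have one either.
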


Conjecture~\ref{conj:ubc} is still open for all $D \ge 1$ and $d \ge 2$. To illustrate the difficulty of proving the conjecture, we note that the $d = 4$ case alone would imply Merel's theorem on torsion points on elliptic curves.

A great deal of work has been done on Conjecture~\ref{conj:ubc}, especially in the case of polynomial maps. We recommend \cite[\textsection 4]{MR4007163} and \cite[\textsection 2]{MR4693952} for recent surveys of progress toward Conjecture~\ref{conj:ubc}; here we highlight the two results most relevant to the present article:
    \begin{itemize}
        \item In \cite[Theorem 1.7]{MR4065068}, Poonen and the first author proved a version of Conjecture~\ref{conj:ubc} for unicritical polynomials over function fields: If $k$ is a field of characteristic $p \ge 0$ and $K$ is the function field of an integral curve over $k$, then for all $d$ not divisible by $p$ and all $c \in K \setminus \bar k$ there is an upper bound for $\big|\PrePer(x^d + c, K)\big|$ depending only on $p$, $d$, and the \emph{gonality} of $K$. We recall that the gonality of $K$ may be defined as
            \[
                \min_{t\in K\setminus \bar k} \ [K : k(t)],
            \]
so gonality is the appropriate function field analogue of the absolute degree $[K:\Q]$ appearing in Conjecture~\ref{conj:ubc}.
        \item Looper showed in \cite[Theorem 1.2]{looper2021dynamical} that if one assumes the $abc$ conjecture (resp., the $abcd$ conjecture), then the following holds: For all number fields $K$, all $c \in K$, and all $d \ge 5$ (resp., $d \ge 2$), there is an upper bound for $\big|\PrePer(x^d + c, K)\big|$ depending only on $K$ and $d$.
    \end{itemize}

One of our main results (Corollary~\ref{cor:strong-ubc}) is to show that the bounds obtained in \cite{MR4065068,looper2021dynamical} do not actually depend on $d$. We do so by connecting preperiodic points for unicritical polynomials to solutions to a certain Fermat-Catalan equation, then using height bounds on solutions to such equations over \emph{$abc$-fields}---that is, fields over which the appropriate form of the $abc$ conjecture holds:

\begin{defin}\label{defn:abc-field}
    We call a field $K$ an {\bf $\boldsymbol{abc}$-field} if $K$ is the function field of a curve over an arbitrary constant field $k$ or a number field for which the $abc$-conjecture (Conjecture~\ref{conj:abc}) holds.
\end{defin}

\begin{remark}
    As all of our main results are valid over $abc$-fields, we note, in particular, that every result is unconditionally true over function fields (of arbitrary characteristic).
\end{remark}

A similar approach was used in \cite{MR4432520} to classify the preperiodic portraits of large-degree unicritical polynomials over the rational numbers; see also \cite{zhang2024abcd} for a summary of recent results in arithmetic dynamics using some standard conjectures in arithmetic geometry (e.g., the $abc$ conjecture).

We now set some notation: We let $\mu_K$ (resp., $\mu_{K,d}$) denote the set of roots of unity (resp., $d$th roots of unity) in the field $K$. We denote by $h$ the naive (logarithmic) height on $K$; see Section~\ref{sec:abc} for a precise definition. If $K$ is the function field of a curve, we denote by $g_K$ the genus of $K$. Finally, for a prime $p$ and a positive integer $d$, we denote by $d_p$ the prime-to-$p$ part of $d$; that is, one can write $d = d_p p^n$ with $n \ge 0$ and $p\nmid d_p$. In the case that $p = 0$ (in the context of a field having characteristic $p = 0$), we set $d_0 := d$ and interpret $p^n$ as $1$.

We first prove the following result for unicritical polynomial maps of large degree: 
\begin{thm}\label{thm:uniform-bound}
    Let $K$ be an $abc$-field of characteristic $p\geq0$. There are constants $C_1 = C_1(K)$ and $D_1 = D_1(K)$ such that for all $d \ge 2$ with $d\not\equiv 1\pmod p$ and $d_p > D_1$, the following statements hold:\vspace{.1cm} 
        \begin{enumerate}
            \item[\textup{(1)}] For all nonzero (and non-constant, in the function field case) $c \in K$, the map $x^d + c$ has no $K$-rational points of period greater than $3$.\vspace{.2cm} 
            \item[\textup{(2)}] If $h(c) > C_1$ and $\PrePer(x^d + c, K)$ is non-empty, then 
		\[
			c = y - y^d \quad\text{and}\quad \PrePer(x^d + c, K) = \{\zeta y : \zeta \in \mu_{K,d}\}  
		\]
            for some unique $y \in K$. \vspace{.2cm}  
	\item[\textup{(3)}] If $h(c) \le C_1$, then all preperiodic points for $x^d + c$ have height $0$. In particular, the only preperiodic points are $0$ and roots of unity in the number field case and constant points in the function field case. \vspace{.1cm} 
        \end{enumerate}
Moreover, one can take $C_1 = 0$ and $D_1 = \max\{40, 20g_K + 20\}$ when $K$ is a function field. 
\end{thm}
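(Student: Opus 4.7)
The plan is to translate each possible preperiodic portrait for $\phi(x) = x^d + c$ into an algebraic identity over $K$, and then invoke the $abc$-field hypothesis to obtain height bounds strong enough to rule out all but the exceptional portrait described in statement (2). The driving structural observation is that $\phi(\zeta x) = \phi(x)$ for every $\zeta \in \mu_{K,d}$, so $\PrePer(\phi, K)$ is always a union of $\mu_{K,d}$-orbits, and the portrait in (2) is precisely the $\mu_{K,d}$-orbit of a $K$-rational fixed point $y$ satisfying $y^d + c = y$, i.e.\ $c = y - y^d$. Consequently, any preperiodic point not of this form forces additional orbit data---either a periodic cycle of length $\geq 2$ or a nontrivial strictly preperiodic tail---that must satisfy nontrivial polynomial identities, to which $abc$-type height bounds can be applied.

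To carry this out for part (1), I would fix a cycle $\alpha_0 \to \alpha_1 \to \cdots \to \alpha_{n-1} \to \alpha_0$ of length $n \geq 4$ and exploit the identities $\alpha_{i+1} - \alpha_{j+1} = \alpha_i^d - \alpha_j^d$, combined with the factorization $\alpha_i^d - \alpha_j^d = \prod_{\zeta \in \mu_d}(\alpha_i - \zeta \alpha_j)$ into cyclotomic pieces. Traversing the cycle in a multiplicatively consistent way produces a Fermat-Catalan-type equation in the cycle elements, whose solutions over $K$ are controlled by the $abc$-field hypothesis (along the lines of \cite{looper2021dynamical} for number fields and \cite{MR4065068} for function fields). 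For $d$ larger than some $D_1 = D_1(K)$, such an identity cannot hold, ruling out periods $\geq 4$. For part (2), a parallel argument treats the strictly preperiodic tail: any $\alpha \neq \beta$ with $\phi(\alpha) = \phi(\beta)$ satisfies $\alpha/\beta \in \mu_{K,d} \setminus \{1\}$, and combining this with the cycle analysis, one shows that once $h(c) > C_1$ the only surviving portraits are the $\mu_{K,d}$-orbits of a single fixed point. Setting $y$ equal to this fixed point then yields $c = y - y^d$ and the description of $\PrePer(\phi, K)$ in (2).

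For part (3), when $h(c) \leq C_1$ one uses the standard comparison $h(\phi^n(\alpha)) \geq d^n h(\alpha) - O(h(c))$ (or its function-field analogue) to see that any preperiodic point has bounded height, and then refines "bounded" to "exactly zero" by a further height descent and separation argument; over number fields this yields $0$ or a root of unity by Kronecker, and over function fields it yields constant points. The main technical obstacle I anticipate is arranging that $C_1$ and $D_1$ depend only on the arithmetic of $K$ and not on $d$: this is exactly where the $abc$-field hypothesis does the real work, providing a uniform height inequality across all degrees. In the function-field case, $abc$ is an unconditional and effective theorem whose error term depends explicitly on the genus, so one expects $C_1 = 0$ and an explicit bound polynomial in $g_K$ to drop out, giving the claimed $D_1 = \max\{8, 2g_K + 6\}$.
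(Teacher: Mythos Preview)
Your proposal sketches the right general strategy---translate preperiodic structure into Fermat--Catalan equations and apply $abc$---but it has a genuine gap in part~(1), and parts~(2) and~(3) are too vague to count as a proof.

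\textbf{The gap in part (1).} Your Fermat--Catalan argument for ruling out long cycles can at best show that a cycle of length $\geq 4$ forces $h(c)$ to be bounded (in terms of $K$). It cannot rule out such cycles when $h(c)$ is already small. Concretely: once part~(3) is in hand, for $h(c)\le C_1$ every $K$-rational periodic point is a root of unity or $0$, and then the identities $\alpha_{i+1}-\alpha_{j+1}=\alpha_i^d-\alpha_j^d$ yield nothing, because every term has height zero and the $abc$-type bounds are trivially satisfied. The paper handles this case by a completely different mechanism: an elementary complex-geometric lemma (two unit circles in $\mathbb{C}$ meet in at most two points) implies that $\phi$ can carry at most two roots of unity to roots of unity, so any cycle among $\mu_K\cup\{0\}$ has length at most $3$, with a short explicit check to exclude a $4$-cycle of the shape $0\mapsto\omega_3\mapsto\omega_1\mapsto\omega_2\mapsto 0$. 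Your proposal has no analogue of this step.

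\textbf{What is missing in parts (2) and (3).} For part~(2), the decisive point (which you do not isolate) is that if $\alpha,\beta$ are preperiodic with $\phi(\alpha)\ne\phi(\beta)$, then setting $a=(\phi(\alpha)-\phi(\beta))^{-1}$ gives $a\alpha^d - a\beta^d = 1$, a single Fermat--Catalan equation whose coefficient $a$ is controlled because the \emph{difference} of two preperiodic points has height $\le \tfrac{1}{d}h(c)+O(1)$. Combined with the companion estimate $h(\alpha)=\tfrac{1}{d}h(c)+O(1)$, this forces $h(c)$ bounded; hence for $h(c)>C_1$ all preperiodic points have the same image, which must be a fixed point. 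You gesture at the identities but never set up this equation or explain why $h(a)$ is controlled. For part~(3), ``bounded height'' is not enough; you need the sharper estimate $h(\alpha)\le \tfrac{1}{d}h(c)+o_d(1)$, which drops below the minimal positive height $h_K^{\min}$ for $d\gg_{K,c}0$, and then Northcott (number field) or integrality of heights (function field) forces $h(\alpha)=0$. Finally, one must note that there are only finitely many $c$ with $h(c)\le C_1$, so a single $D_1(K)$ works for all of them simultaneously.
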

\begin{remark}
In part (1) of the theorem, the restrictions on $c$ really are necessary. If we let $c=0$, then for any integers $N, d_0 \ge 2$, the points of period $N$ for the map $x^{d_0}$ lie in the set $\mu_{d_0^N-1}$ of $(d_0^N -1)$th roots of unity. Thus, if $K$ contains $\mu_{d_0^N - 1}$, then for any degree $d \equiv d_0 \pmod{d_0^N - 1}$, the map $x^d$ acts on $\mu_{d_0^N - 1}$ in exact the same way as $x^{d_0}$, so we have an infinite set of $d \ge 2$ for which $x^d$ has a $K$-rational $N$-cycle.

Similarly, we must require $c$ non-constant in the function field case: for example, if the constant subfield $k \subset K$ is algebraically closed and $c \in k$, then $x^d + c$ has infinitely many preperiodic points, all in $k$.
\end{remark}

\begin{remark}
    The only primes $p$ for which $d_p \ne d$ are the primes $p > 0$ which divide $d$. However, we note that in the hypothesis of Theorem~\ref{thm:uniform-bound} that $d_p$ be sufficiently large, $d_p$ cannot be replaced by $d$. For example, if $d = p^n$ (so that $d_p = 1$), then a map of the form $x^d + c$ can actually have infinitely many $K$-rational preperiodic points. Indeed, suppose $K$ is a function field over an algebraically closed field $k$ of characteristic $p > 0$, and let $d = p^n$. For any $t \in K$, set $\phi(x) = x^d + t - t^d$ and $f(x) = x + t$. Then $f^{-1}\circ\phi\circ f$ is the power map $x^d$ (in particular, $\phi$ is \emph{isotrivial} over $K$), which has as its preperiodic points $0$, $\infty$, and the (infinitely many) roots of unity in $k$. This implies that $\phi$ has infinitely many $K$-rational preperiodic points, namely $\infty$ and all points of the form $t + \zeta$ with $\zeta = 0$ or $\zeta \in \mu_k$.
\end{remark}

Combining Theorem~\ref{thm:uniform-bound} with prior results in \cite{MR4065068,looper2021dynamical}, we obtain the following version of uniform boundedness, independent of degrees, for $abc$-fields. 

\begin{cor}\label{cor:strong-ubc}
Let $K$ be an $abc$-field of characteristic $p \ge 0$. Define a constant
    \[
        D_2(K) :=
            \begin{cases}
                2, &\text{ if the $K$ is a function field or a number field for}\\
                & \qquad\qquad \text{which the $abcd$ conjecture from \cite{looper2021dynamical} holds;}\\
                5, &\text{ otherwise}.
            \end{cases}
    \]
Then there is a bound $B(K)$ depending only on $K$ such that, for all $c\in K$ (non-constant, when $K$ is a function field) and all $d\geq D_2(K)$ with $d \not\equiv 0,1\pmod{p}$, we have
    \[
        \big|\PrePer(x^d + c, K)\big| \le \max\big\{B(K),|\mu_{K,d}|\big\}.
    \]
In particular, when $\mu_K$ is finite (e.g., when $K$ is a global field), this upper bound is independent of $d$.
\end{cor}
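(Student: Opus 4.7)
My strategy is to partition the range of degrees $d \geq D_2(K)$ into two regimes: a ``small'' regime $D_2(K) \leq d \leq D_1(K)$, handled by the prior results of Doyle--Poonen \cite{MR4065068} (function field case) and Looper \cite{looper2021dynamical} (number field case), and a ``large'' regime $d > D_1(K)$, handled by Theorem~\ref{thm:uniform-bound} of the present paper. Since the union of these ranges covers every admissible $d$, combining the two regimes yields the stated bound.

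For the small regime, the definition of $D_2(K)$ is precisely calibrated so that \cite[Theorem 1.7]{MR4065068} or \cite[Theorem 1.2]{looper2021dynamical} applies, yielding for each $d$ a bound $B_d(K)$ on $|\PrePer(x^d+c, K)|$ depending only on $K$ and $d$. Since $\{d : D_2(K) \leq d \leq D_1(K),\ p \nmid d\}$ is a finite set, I would set
\[
   B_0(K) \;:=\; \max\{B_d(K) : D_2(K) \leq d \leq D_1(K),\ p \nmid d\},
\]
which depends only on $K$. For the large regime, I would split into subcases by height: if $h(c) > C_1$, then by Theorem~\ref{thm:uniform-bound}(2) the preperiodic set is either empty or equal to a single $\mu_{K,d}$-orbit $\{\zeta y : \zeta \in \mu_{K,d}\}$, so has size at most $|\mu_{K,d}|$; if $h(c) \leq C_1$, then by part (3) every preperiodic point has height $0$, and in the number field case Kronecker's theorem places them in $\{0\} \cup \mu_K$, so the cardinality is at most $|\mu_K|+1$. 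In the function field case we have $C_1 = 0$, and the assumption that $c$ is non-constant forces $h(c) > 0 = C_1$, so this subcase is vacuous.

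Setting $B(K) := \max\{B_0(K),\, |\mu_K| + 1\}$ in the number field case and $B(K) := B_0(K)$ in the function field case then yields the asserted inequality $|\PrePer(x^d+c, K)| \leq \max\{B(K), |\mu_{K,d}|\}$ uniformly in $c$ and $d$. The ``in particular'' clause is immediate: when $\mu_K$ is finite, $|\mu_{K,d}| \leq |\mu_K|$, so $\max\{B(K), |\mu_{K,d}|\}$ is bounded above by the $d$-independent constant $\max\{B(K), |\mu_K|\}$.

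The main obstacle here is not mathematical but bookkeeping: one must verify that \cite[Theorem 1.7]{MR4065068} and \cite[Theorem 1.2]{looper2021dynamical} really deliver bounds depending only on $K$ and $d$ (and not, say, on an auxiliary model or on a choice of place), and that the threshold $D_2(K)$ in the statement exactly matches the hypotheses of those theorems. The substantive dynamical and Diophantine content is already packaged into Theorem~\ref{thm:uniform-bound}, so the remainder is essentially an assembly argument.
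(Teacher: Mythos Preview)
Your proposal is correct and follows essentially the same approach as the paper's own proof: partition into the finite range $D_2(K)\le d\le D_1(K)$ handled by \cite{MR4065068,looper2021dynamical}, and the range $d>D_1(K)$ handled by Theorem~\ref{thm:uniform-bound}, then take the maximum. Your treatment of the function field case (noting that $C_1=0$ makes the small-height subcase vacuous, so $B(K)=B_0(K)$ suffices) is in fact slightly cleaner than the paper's, which uses the uniform choice $B(K)=\max\{|\mu_K|+1,\ \max_d B_{K,d}\}$ without comment on the case where $\mu_K$ is infinite.
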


\begin{remark}
    The $K = \Q$ case of Corollary~\ref{cor:strong-ubc} was shown by Panraksa in \cite[Theorem 4]{MR4432520} (assuming the $abc$ conjecture).
\end{remark}

Corollary~\ref{cor:strong-ubc} says that for an $abc$-field $K$, the number of $K$-rational preperiodic points for maps of the form $x^d + c \in K[x]$ can be uniformly bounded independent of $d\ge2$ (or $d\ge5$ if the $abcd$ conjecture fails for $K$). By carefully analyzing the possible actions of unicritical polynomial maps on points of height zero (i.e., roots of unity and $0$ in the number field case, and constant points in the function field case), we are able to prove something stronger: we completely classify, up to isomorphism, all possible \textit{preperiodic portraits} for maps of the form $x^d+c$ over $K$ with $d \gg_K 0$. Specifically, we prove that there are thirteen graphs that illustrate the dynamics of $x^d + c$ on $\PrePer(x^d + c, K)$, up to the natural action of $\mu_{K,d}$, for all $d\gg_K 0$ and all $c\in K$; see Section~\ref{sec:portraits}---specifically Theorem~\ref{thm:prep-classification}---for a more precise statement.     

\subsection{Irreducibility and Galois groups in unicritically generated semigroups}
We next apply this work on preperiodic points to the construction of irreducible polynomials in semigroups generated by unicritical polynomials. Before we state our results along these lines, we briefly discuss the problem of constructing irreducible polynomials dynamically in a more general framework. 

For a given field $K$, it can often be an interesting and subtle problem to explicitly construct irreducible polynomials over $K$ of arbitrarily large degree. This is especially the case when one makes minimal assumptions about the coefficients of the defining polynomials, rendering the straightforward application of well-known tests (e.g., Eisenstein's criterion) unlikely. In this paper, we study the problem of constructing irreducible polynomials of arbitrarily large degree via composition over function fields and over number fields, assuming the $abc$ conjecture. In particular, we aim to construct a \emph{large} set of irreducible polynomials in the following sense. 
\begin{defin}\label{def:proportion} Let $K$ be a field, let $S=\{f_1,\dots,f_r\}$ be a finite set of elements of $K[x]$ all of degree at least two, and let $M_S$ be the semigroup generated by $S$ under composition. Then we say that $M_S$ \textbf{contains a positive proportion of irreducible polynomials} over $K$ if,  
\[\liminf_{B\rightarrow\infty}\,\frac{\Big|\big\{f\in M_S\,: \deg(f)\leq B\;\text{and $f$ is irreducible in $K[x]$}\big\}\Big|}{\Big|\big\{f\in M_S\,: \deg(f)\leq B\big\}\Big|}>0.\]
\end{defin}
\begin{remark}\label{rem:length} When $M_S$ is a free semigroup then one may make a similar definition of the proportion above using lengths of polynomials as words instead of degrees. However, we have used degrees since this option makes no assumptions on the structure of the underlying semigroup. Moreover, counting arguments in \cite[\S2]{bell2023counting} can be used to pass information back and forth between these perspectives.      
\end{remark}
With a little thought one can see that the generating set $S$ should contain at least one irreducible polynomial; otherwise, the full semigroup $M_S$ contains no irreducible polynomials, so the proportion above is zero. But is this enough? Perhaps the first non-trivial case to investigate this question is to consider sets of unicritical polynomials. This problem was first considered over $K=\mathbb{Q}$ when $c\in\mathbb{Z}$ (and some special cases of $c\in\mathbb{Q}$) in \cite{Mathworks2,Mathworks1}, and in this case, the unicritical polynomials with \emph{rational periodic points that are simultaneously perfect powers} provide the main source of difficulty. In particular, such polynomials will also play a prominent role in our analysis over number fields and function fields.
With this in mind, for $B\geq 0$ and $D\geq 2$, we define 
\begin{equation}\label{eq:smallhtordeg} 
\mathcal{S}(B,D):=\big\{x^d+c\,: c\in K,\; d\geq 2,\;\text{and}\; h(c)\leq B\; \text{or}\; d\leq D\big\}
\end{equation} 
to be the set of unicritical polynomials over $K$ with \emph{small constant term or small degree}. Furthermore, for $P\in K$, $n\geq1$, and $\mathfrak{d}=\{d_1,\dots,d_s\}$, we define the set 
\begin{equation}\label{eq:oneparameter+irre} 
\scalemath{.95}{\mathcal{I}(P,n,\mathfrak{d}):=\Big\{x^d+(\zeta P)-(\zeta P)^d\;:\;d\in\mathfrak{d}\,\;\text{and}\;\,  \zeta\in\mu_{K,n}\Big\}},  
\end{equation} 
which will contain the exceptional \emph{irreducible} polynomials in our analysis to come. In particular, we note that the maps in $\mathcal{I}(P,n,\mathfrak{d})$ all have a fixed point in $K$, namely $\zeta P$, and that all of the maps in $\mathcal{I}(P,n,\mathfrak{d})$ have the same primes of bad reduction, i.e., the places $v$ of $K$ with $v(P)<0$. Moreover, these sets are fairly small in cardinality:
\[\big|\mathcal{I}(P,n,\mathfrak{d})\big|\leq \big|\mu_{K,n}\big|\cdot |\mathfrak{d}|.\]
(Here we do not assume that the degrees $d_1,\dots,d_s$ are distinct, so even if $s$ is large, $|\mathfrak{d}|$ could still be $1$.). Likewise, for $P,n$, and $\mathfrak{d}$ as above, we define the set    
\begin{equation}\label{eq:oneparameter+red} 
\scalemath{.95}{\mathcal{R}(P,n,\mathfrak{d}):=\Big\{x^d+\zeta P\;:\;d\in\mathfrak{d}\;\;\text{and}\;\;\zeta\in\mu_{K,n}\Big\}},  
\end{equation} 
which will contain the exceptional \emph{reducible} polynomials in our analysis; note also that 
\[\big|\mathcal{R}(P,n,\mathfrak{d})\big|\leq \big|\mu_{K,n}\big|\cdot \big|\mathfrak{d}\big|,\]
as in the previous case. 

With these definitions in place, we have our main irreducibility theorem, which says roughly that a semigroup generated by unicritical polynomials over an $abc$-field contains a positive proportion of irreducible polynomials if and only if it contains at least one such polynomial, unless the generating set has small coefficients, has small degree, or sits inside one of the special sets in \eqref{eq:oneparameter+irre} and \eqref{eq:oneparameter+red} above.            
\begin{thm}\label{thm:main+irred} Let $K$ be an $abc$-field of characteristic zero and let $S=\{x^{d_1}+c_1,\dots,x^{d_s}+c_s\}$ for some $c_1,\dots,c_s\in K^\times$ and some $d_1,\dots,d_s\geq2$. Then there are constants $C_3=C_3(K)$ and $D_3=D_3(K)$ depending only on $K$ such that if $S$ contains an irreducible polynomial $x^d+c$ with $h(c)>C_3$ and $d>D_3$, then at least one of the following statements must hold: \vspace{.15cm} 
\begin{enumerate}
\item[\textup{(1)}] The semigroup $M_S$ contains a positive proportion of irreducibles over $K$, and there is a positive lower bound on this proportion depending only on $K$ and $d_1,\dots,d_s$. \vspace{.25cm}      
\item[\textup{(2)}] The set $S$ is of the special form
\vspace{.075cm} 
\[S\subseteq \mathcal{S}\big(C_3,D_3\big)\cup\mathcal{I}\big(P,d,\mathfrak{d}\big)\cup \mathcal{R}\big(P,d,\mathfrak{d}\big)
\vspace{.075cm} \]
for some $P\in K$ and $\mathfrak{d}:=\{d_1,\dots,d_s\}$. Furthermore, $P=ry^m$ for some $r\in\{\pm{1},\pm{4}\}$, some $y\in K$, and some $m\geq2$. \vspace{.25cm} 
\end{enumerate} 
Moreover, one can take $C_3=0$ and $D_3=\max\{14,4g_K+10\}$ in the function field case.       
\end{thm}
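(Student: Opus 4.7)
The plan is to deduce Theorem~\ref{thm:main+irred} by combining Theorem~\ref{thm:uniform-bound} with a Capelli--Jones-style recursive irreducibility criterion for compositions $g_1 \circ \cdots \circ g_n$ of unicritical polynomials. At each level of such a composition, irreducibility of the whole reduces to the statement that certain shifts $\alpha - c_j$ are not $p$-th powers in the extensions $K(\alpha)$ for primes $p \mid d_j$, together with the exceptional Capelli condition that $\alpha - c_j \not\in -4 \bigl(K(\alpha)^{\times}\bigr)^4$ when $4 \mid d_j$. The strategy is to show that if the given irreducible $f = x^d + c \in S$ with $h(c) > C_3$ and $d > D_3$ is composed with generators of $S$, then these Capelli conditions can fail systematically only when the generators lie in one of the exceptional families $\mathcal{S}$, $\mathcal{I}$, or $\mathcal{R}$.

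First I would enlarge $C_3$ and $D_3$ so that Theorem~\ref{thm:uniform-bound} applies to $f$, and split according to whether $\PrePer(f,K)$ is empty. When $\PrePer(f,K) = \emptyset$, the critical orbit $\{f^k(0)\}$ is infinite and its heights grow geometrically with $k$. Applying the $abc$-conjecture, in the form of height bounds for generalized Fermat--Catalan equations $A + B = C^p$ with primes $p \mid \lcm(d_1,\dots,d_s)$, I would argue that only finitely many iterates $f^k(0)$ can be $p$-th powers for any such $p$, or lie in the exceptional $-4a^4$ form. Hence all but finitely many $f^n$ are irreducible, and the same reasoning shows that for any fixed $h \in M_S$ a positive density of compositions $h \circ f^n$ remains irreducible; this yields conclusion~(1) after a standard counting estimate over compositions of bounded degree.

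When $\PrePer(f,K) \neq \emptyset$, part~(2) of Theorem~\ref{thm:uniform-bound} gives $c = y - y^d$ for a unique $y \in K$ with $\PrePer(f,K) = \{\zeta y : \zeta \in \mu_{K,d}\}$. Setting $P := y$, I would analyze which other $g = x^{d'} + c' \in S$ can cause the Capelli conditions to fail systematically in compositions with iterates of $f$. The key claim to establish is: infinitely many compositions involving both $f$ and $g$ factor over $K$ only when $c' = \zeta P - (\zeta P)^{d'}$ for some $\zeta \in \mu_{K,d}$, putting $g \in \mathcal{I}(P, d, \mathfrak{d})$, or when $c' = \zeta P$ for some such $\zeta$, putting $g \in \mathcal{R}(P, d, \mathfrak{d})$; the exceptional Capelli case involving $-4b^4$ pins down $P$ to the shape $ry^m$ with $r \in \{\pm 1, \pm 4\}$. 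Consequently, either $S \subseteq \mathcal{S}(C_3, D_3) \cup \mathcal{I}(P, d, \mathfrak{d}) \cup \mathcal{R}(P, d, \mathfrak{d})$---which is conclusion~(2)---or some $g \in S$ lies outside this union, in which case the unobstructed compositions $g \circ f^n$ and $f^n \circ g$ again yield a positive proportion of irreducibles.

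The main obstacle will be the careful bookkeeping of $p$-th power conditions across the growing tower of extensions $K \subset K(\alpha_1) \subset K(\alpha_1, \alpha_2) \subset \cdots$ that appears in the recursive criterion: one must verify that each failure of a Capelli condition in some $K(\alpha_j)$ either reflects a failure already visible over $K$---which is controllable by $abc$-type height bounds applied to the orbit of $0$---or arises from a rigid algebraic relation forcing the shape described in~(2). The function field constants $C_3 = 0$ and $D_3 = \max\{14, 4g_K+10\}$ come from applying the unconditional gonality-based estimates of \cite{MR4065068} to the resulting Fermat--Catalan equations, which bypasses the $abc$-conjecture entirely and explains why sharper explicit bounds are available in that setting.
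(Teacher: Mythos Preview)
Your proposal has the right high-level shape---Capelli-type reduction plus Fermat--Catalan bounds---but there are two genuine gaps that the paper's argument fills with nontrivial machinery you have not supplied.

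\textbf{First, the case split is at the wrong place.} You split on whether $\PrePer(f,K)$ is empty, but the paper's dichotomy is whether $f$ has a \emph{powered fixed point}, i.e.\ a fixed point of the form $ry^m$ with $r\in\{\pm1,\pm4\}$ and $m\ge 2$. These are different: $f$ can have a fixed point $P$ (so $\PrePer(f,K)=\{\zeta P:\zeta\in\mu_{K,d}\}$ by Theorem~\ref{thm:uniform-bound}) without $P$ being of powered form. In that situation the paper shows (Proposition~\ref{prop:nopoweredfixedpoints}) that $\{\phi^N\circ h: h\in M_S\}$ is \emph{entirely} irreducible---no exceptional-set analysis needed. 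Your argument, by contrast, immediately jumps to analysing the other generators $g$ once a fixed point exists, and then asserts without justification that the Capelli $-4b^4$ case ``pins down $P$ to the shape $ry^m$''. That assertion is false in general: the powered-fixed-point condition is an input to the case split, not an output.

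\textbf{Second, and more seriously, you are missing the bridge from ``some composition is reducible'' to ``$c'$ has the special form''.} You write that this is the key claim to establish, but your proposed mechanism---bookkeeping $p$-th power conditions up the tower $K\subset K(\alpha_1)\subset\cdots$---is both harder than necessary and not how the paper proceeds. The paper works entirely over $K$: iterating Theorem~\ref{prop:oldstability} (a norm computation) shows that reducibility of $\phi^N\circ h$ forces $\phi^N(a)=ry^m$ for some $a\in K$. The crucial missing step is then Proposition~\ref{prop:uniformpower}, which uses Looper's uniform canonical-height lower bound to conclude that $a$ must be \emph{preperiodic} for $\phi$, hence (by Theorem~\ref{thm:uniform-bound}) equal to $\zeta P$. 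Only then do the further Diophantine lemmas (Lemma~\ref{lem:two+special+overlap} for two irreducibles, Lemma~\ref{lem:special+reducible} for an irreducible/reducible pair) force the relation between $P$ and $c'$. Your sketch contains no analogue of the Looper step, and without it there is no way to pass from ``some large iterate has a powered value'' to ``that value is a preperiodic point''.

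Finally, your counting for conclusion~(1) is imprecise: ``positive density of $h\circ f^n$'' for fixed $h$ and varying $n$ does not directly give a positive proportion in the degree sense of Definition~\ref{def:proportion}. The paper instead produces a single $g\in M_S$ with $\{g\circ h: h\in M_S\}$ entirely irreducible, then invokes freeness of $M_S$ and the growth estimate of \cite{bell2023counting} to get the proportion bound.
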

\begin{remark}\label{rem:main+irred} In fact, outside of the special sets $S$ in case (2) of Theorem \ref{thm:main+irred}, we show that there are maps $\phi_i,\phi_j\in S$ and large iterates $N_i=N(K,d_i)$ and $N_j=N(K,d_j)$ such that one of the following subsets 
\[
\big\{\phi_i^{N_i}\circ f\,:\,f\in M_S\big\}
\;\;\;\text{or}\;\;\;
\big\{\phi_i^{N_i}\circ\phi_j\circ f\,:\,f\in M_S\big\}
\;\;\;\text{or}\;\;\;
\big\{\phi_i^{N_i}\circ\phi_j^{N_j}\circ f\,:\,f\in M_S\big\}
\]
is a set of irreducible polynomials in $K[x]$, and we determine when each type is needed in Proposition \ref{prop:nopoweredfixedpoints}, Proposition  \ref{prop:2specialirred,unrelated}, and Proposition \ref{prop:oneirred+onered} respectively.      
\end{remark}
In particular, we note the following consequence of Theorem \ref{thm:main+irred} in the number field case; recall that $\mu_K$ denotes the set of roots of unity in $K$, a finite set when $K$ is a number field.
\begin{cor}\label{cor:numberfield} 
Let $K$ be a number field, let $S$, $C_3$, and $D_3$ be as in Theorem \ref{thm:main+irred},
and assume that the $abc$ conjecture holds over $K$. Moreover, assume that $\min_i\{h(c_i)\}>C_3$, that $\min_i\{d_i\}>D_3$, and that at least one of the following additional conditions is satisfied: \vspace{.15cm}   
\begin{enumerate}
    \item[\textup{(1)}] $\big|S\big|>2\big|\mu_K\big|\cdot \big|\{d_1,\dots, d_s\}\big|$, or \vspace{.2cm}
    \item[\textup{(2)}] there is a place $v$ of $K$ and indices $i$ and $j$ such that $v(c_i)<0$ and $v(c_j)\geq0$. \vspace{.15cm}     
\end{enumerate} 
Then $M_S$ contains a positive proportion of irreducible polynomials over $K$ if and only if it contains at least one irreducible polynomial over $K$.  
\end{cor}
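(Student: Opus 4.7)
The plan is to derive Corollary~\ref{cor:numberfield} directly from Theorem~\ref{thm:main+irred}, using the two additional hypotheses in turn to rule out conclusion~(2) of that theorem and thus force conclusion~(1). The ``only if'' direction is trivial from Definition~\ref{def:proportion}: if $M_S$ contains no irreducible polynomial over $K$, then the numerator in the defining limit is identically $0$, so the proportion is $0$. For the ``if'' direction, the hypothesis that $S$ contains an irreducible $x^d+c$ with $h(c)>C_3$ and $d>D_3$ is exactly what is needed to apply Theorem~\ref{thm:main+irred}. That theorem returns either (i) the desired positive-proportion statement, which completes the proof, or (ii) the containment
\[
S \;\subseteq\; \mathcal{S}(C_3,D_3)\cup\mathcal{I}(P,d,\mathfrak{d})\cup\mathcal{R}(P,d,\mathfrak{d})
\]
for some $P\in K$ and $\mathfrak{d}=\{d_1,\dots,d_s\}$. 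The whole task is to show that (ii) is incompatible with the extra assumptions.

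The first step is to collapse the piece $\mathcal{S}(C_3,D_3)$. By its definition~\eqref{eq:smallhtordeg}, a polynomial $x^{d_i}+c_i$ lies in $\mathcal{S}(C_3,D_3)$ only if $h(c_i)\le C_3$ or $d_i\le D_3$. The hypotheses $\min_i h(c_i)>C_3$ and $\min_i d_i>D_3$ rule out both, so under (ii) we actually have the stronger inclusion $S \subseteq \mathcal{I}(P,d,\mathfrak{d}) \cup \mathcal{R}(P,d,\mathfrak{d})$.

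Next I contradict this stronger inclusion under each of the two alternative hypotheses. Under hypothesis~(1), I invoke the cardinality bounds noted just after \eqref{eq:oneparameter+irre} and \eqref{eq:oneparameter+red}:
\[
|S|\;\le\;|\mathcal{I}(P,d,\mathfrak{d})|+|\mathcal{R}(P,d,\mathfrak{d})|\;\le\;2\,|\mu_{K,d}|\cdot|\mathfrak{d}|\;\le\;2\,|\mu_K|\cdot|\{d_1,\dots,d_s\}|,
\]
which directly contradicts the assumed inequality on $|S|$. Under hypothesis~(2), I carry out a short valuation check at a non-archimedean place $v$. Any $\zeta\in\mu_{K,d}$ is a unit, so $v(\zeta P)=v(P)$; in particular every $c_i=\zeta P$ coming from $\mathcal{R}$ satisfies $v(c_i)=v(P)$. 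For $c_i=\zeta P-(\zeta P)^{d_i}$ coming from $\mathcal{I}$, the ultrametric inequality together with $d_i\ge 2$ gives $v(c_i)=d_i\,v(P)$ when $v(P)<0$ (the $(\zeta P)^{d_i}$ term strictly dominates), and $v(c_i)\ge 0$ when $v(P)\ge 0$. In every case, $v(c_i)<0$ if and only if $v(P)<0$. Thus, at any fixed $v$, the sign of $v(c_i)$ is independent of $i$, contradicting the existence of a place separating the valuations of two of the $c_i$'s.

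The substantive work---the dichotomy between positive proportion and special form---has already been done in Theorem~\ref{thm:main+irred}, so the main obstacle is encapsulated there. The corollary itself is then a cleanup: the exceptional sets in conclusion~(2) are either too small in cardinality to accommodate hypothesis~(1), or too uniform in their bad-reduction behavior to accommodate hypothesis~(2), so $S$ cannot be of the exceptional form and conclusion~(1) must hold.
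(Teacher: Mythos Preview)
Your proof is correct and follows essentially the same approach as the paper: apply the dichotomy of Theorem~\ref{thm:main+irred}, use the height and degree hypotheses to eliminate $\mathcal{S}(C_3,D_3)$, then contradict the remaining inclusion $S\subseteq\mathcal{I}(P,d,\mathfrak{d})\cup\mathcal{R}(P,d,\mathfrak{d})$ via the cardinality bound for hypothesis~(1) and a valuation analysis for hypothesis~(2). Your valuation argument is slightly more streamlined than the paper's case-by-case treatment, but the content is the same.
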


Finally, together with a primitive prime divisor argument (see Proposition \ref{prop:newprime} below), we give a consequence of Theorem \ref{thm:main+irred} for dynamical Galois groups. In what follows, we fix a probability measure $\nu$ on $S$ satisfying $\nu(\phi)>0$ for all $\phi\in S$ and let $\bar{\nu}=\nu^{\mathbb{N}}$ denote the associated product measure on $\Phi_S=S^{\mathbb{N}}$, the set of all infinite sequences of elements of $S$. Then to an infinite sequence $\gamma=(\theta_1,\theta_2\dots)\in \Phi_S$ we let $K_n(\gamma)$ denote the splitting field of the polynomial $\gamma_n=\theta_1\circ\dots\circ\theta_n$ over $K$. Moreover, we say that a sequence $\gamma$ is \emph{stable} over $K$ if $\gamma_n$ is irreducible for all $n\geq1$. Finally, we let $[C_d]^\infty$ denote the infinite iterated wreath product of the cyclic group of order $d$.

We prove the following result on the size of the Galois groups of $\gamma_n$ for many sequences $\gamma$ and many indices $n$. Note that we are assuming here that the degrees of the polynomials in the set $S$ are all equal. 

\begin{thm}\label{cor:BigGalois} 
Let $K$ be an $abc$-field of characteristic zero, let $S=\{x^d+c_1,\dots,x^d+c_s\}$ for some $c_1,\dots,c_s\in K$ and $d\geq2$, and let $C_3$ and $D_3$ be as in Theorem \ref{thm:main+irred}. Moreover, assume that $\min_i\{h(c_i)\}>C_3$, that $d>\max\{12,D_3\}$, that $S$ contains an irreducible polynomial over $K$, and that at least one of the following additional conditions is satisfied: \vspace{.075cm}   
\begin{enumerate}
    \item[\textup{(1)}] $\big|S\big|>2\big|\mu_{K,d}\big|$, or \vspace{.15cm}
    \item[\textup{(2)}] there is a place $v$ of $K$ and indices $i$ and $j$ such that $v(c_i)<0$ and $v(c_j)\geq0$. \vspace{.075cm}     
\end{enumerate}  
Then there exists a sequence $\gamma\in\Phi_S$ whose dynamical Galois group is a finite index subgroup of $[C_d]^{\infty}$. In fact, there are infinitely many such sequences when $|S|\geq2$. Moreover, 
\[\bar{\nu}\Big(\big\{\gamma\in \Phi_S\,: \text{$\gamma$ is stable over $K$ and $\big[K_{n}(\gamma)\,:\,K_{n-1}(\gamma)\big]=d^{d^{n-1}}$  i.o.} \big\}\Big)\]
is positive. 
\end{thm}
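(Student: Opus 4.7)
The strategy is to combine the irreducibility machinery of Theorem~\ref{thm:main+irred} with the primitive prime divisor argument of Proposition~\ref{prop:newprime}: the former guarantees stability of well-chosen composition sequences, and the latter forces the relative extensions $K_n(\gamma)/K_{n-1}(\gamma)$ to attain the maximal Kummer-theoretic degree $d^{d^{n-1}}$.

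First I would verify that the hypotheses place $S$ in case~(1) of Theorem~\ref{thm:main+irred}. Since every polynomial in $S$ has the same degree $d$, we have $\mathfrak{d}=\{d\}$ and $|\mathcal{I}(P,d,\mathfrak{d})|,\,|\mathcal{R}(P,d,\mathfrak{d})|\leq |\mu_{K,d}|$, so condition~(1) rules out the containment $S\subseteq\mathcal{I}\cup\mathcal{R}$; alternatively, every map in $\mathcal{I}\cup\mathcal{R}$ shares the same bad primes, so condition~(2) also rules it out. Combined with $\min_i h(c_i)>C_3$ and $d>D_3$, this excludes the full exceptional union $\mathcal{S}(C_3,D_3)\cup\mathcal{I}(P,d,\mathfrak{d})\cup\mathcal{R}(P,d,\mathfrak{d})$, so by Remark~\ref{rem:main+irred} there exist maps $\phi_i,\phi_j\in S$, iterates $N_i,N_j$, and a fixed prefix $\Psi\in M_S$ of one of the three listed shapes such that $\Psi\circ f$ is irreducible over $K$ for every $f\in M_S$.

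Next, I would use $\Psi$ to construct stable sequences $\gamma\in\Phi_S$ by alternating $\Psi$ with freely chosen blocks from $M_S$, making every $\gamma_n$ irreducible; the free choices yield infinitely many such $\gamma$ when $|S|\geq 2$. Since each $\theta_n=x^d+c_n$ is unicritical of degree $d$, preimages of any point form a torsor under the group of $d$-th roots of unity, so the dynamical Galois group of a stable $\gamma$ embeds naturally into $[C_d]^\infty$. To see that the image has finite index, apply Proposition~\ref{prop:newprime} to the critical orbit values $\gamma_n(0)=\gamma_{n-1}(c_n)$: a primitive prime divisor of $\gamma_n(0)$, i.e., a prime appearing to a power coprime to $d$ and not dividing $\gamma_m(0)$ for any $m<n$, supplies a new nontrivial class in $K^\times/(K^\times)^d$ and hence, after adjoining $d$-th roots of unity as needed, the maximal relative degree $[K_n(\gamma):K_{n-1}(\gamma)]=d^{d^{n-1}}$. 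Arranging the construction of $\gamma$ so that primitive primes appear at all sufficiently large $n$ then produces a $\gamma$ whose dynamical Galois group is a finite-index subgroup of $[C_d]^\infty$, proving the first two assertions.

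For the measure statement I would run two Borel--Cantelli arguments. First, almost every sequence contains the finite pattern $\Psi$ infinitely often in non-overlapping windows, and a clean-up argument yields positive $\bar\nu$-measure for the set of fully stable $\gamma$. Second, on this stable set the event ``$\gamma_n(0)$ has a primitive prime divisor coprime to $d$'' should have probability bounded below uniformly in $n$, with enough asymptotic independence across $n$ to apply Borel--Cantelli again and conclude that the maximal relative degree holds at infinitely many $n$ on a positive-measure subset. The hard part will be this last step: one must ensure that the stabilizing insertions of $\Psi$ do not accidentally force the critical orbit to be supported on a fixed finite set of primes. This should follow from the height growth $h(\gamma_n(0))\asymp d^n$ combined with the $abc$-based bound on the number of distinct prime divisors of $\gamma_n(0)$ that underlies Proposition~\ref{prop:newprime}.
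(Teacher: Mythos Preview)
Your high-level strategy matches the paper's: reduce to case~(1) of Theorem~\ref{thm:main+irred} via conditions~(1) or~(2), extract a prefix $g$ (your $\Psi$) making all compositions $g\circ f$ irreducible, and then use Proposition~\ref{prop:newprime} together with the maximality criterion (Theorem~\ref{thm:GaloisMax}) to force $[K_n(\gamma):K_{n-1}(\gamma)]=d^{d^{n-1}}$ infinitely often. However, two aspects of your execution diverge from the paper and one of them is a genuine gap.

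First, a minor point: you do not need to ``alternate $\Psi$ with freely chosen blocks.'' A single occurrence of $\Psi$ as the prefix already makes $\gamma_n=\Psi\circ(\theta_{n_0+1}\circ\cdots\circ\theta_n)$ irreducible for every $n\ge n_0$. The paper simply fixes the first $n_0$ terms of $\gamma$ to be the factors of $g$ and leaves the rest free.

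Second, and more seriously, your proposed measure argument is both overcomplicated and headed in the wrong direction. You suggest bounding from below the probability of the event ``$\gamma_n(0)$ has a primitive prime divisor coprime to $d$'' and then invoking some asymptotic independence across $n$ to run Borel--Cantelli. That independence would be very hard to justify, and it is unnecessary: Proposition~\ref{prop:newprime} is a \emph{deterministic} statement. Its condition~(2) requires that $\theta_n$ be the specific map $\theta=x^d+c_i$ with $h(c_i)=\max_r h(c_r)$; once $n\ge N'$, $\theta_n=\theta$, and $\theta_1$ has no $K$-rational root (guaranteed since $\theta_1$ is the first factor of the irreducible $g$), a good primitive prime of $\gamma_n(0)$ exists automatically. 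The paper therefore takes
\[
\mathcal{G}(S)=\{\gamma\in\Phi_S:\theta_i=\tau_i\text{ for }1\le i\le n_0\text{ and }\theta_n=\theta\text{ i.o.}\},
\]
observes that $\bar\nu(\mathcal{G}(S))\ge \prod_{i\le n_0}\nu(\tau_i)>0$ (the i.o.\ condition has full measure by the standard monkey--typewriter argument), and notes that \emph{every} $\gamma\in\mathcal{G}(S)$ satisfies the conclusion. No probabilistic analysis of primitive primes is needed. Your proposal misses the crucial role of condition~(2) in Proposition~\ref{prop:newprime} and, as written, would leave the measure claim unproven.

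For the finite-index assertion, the paper likewise avoids probability entirely: it takes eventually constant sequences of the form $(\tau_1,\ldots,\tau_{n_0},\omega_1,\ldots,\omega_m,\theta,\theta,\ldots)$, so that Proposition~\ref{prop:newprime} and Theorem~\ref{thm:GaloisMax} apply at \emph{every} $n>N'$, giving $G_K(\gamma)$ finite index in $[C_d]^\infty$.
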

\begin{remark}\label{rem:newramifiedprime} On the other hand, even in the case of unequal degrees one can still say something. Namely, there exists a sequence $\gamma\in\Phi_S$ with a new ramified prime for every sufficiently large $n$, i.e.,  a prime in $K$ that ramifies in $K_n(\gamma)$ but not in $K_m(\gamma)$ for any $m<n$. Moreover, the set of sequences $\gamma\in\Phi_S$ with a new ramified prime infinitely often has full measure in $\Phi_S$; see Corollary \ref{cor:newramifiedprime}. 
\end{remark}

\subsection{Organization of the paper}
In Section \ref{sec:abc}, we review some basic material on heights and the $abc$ conjecture and then use this information to give height bounds for rational solutions to certain generalized Fermat-Catalan equations; compare to work of Silverman over function fields in \cite{MR0664038}. Next in Section \ref{sec:uniformbdness}, we use the aforementioned height bounds to prove the uniform boundedness statement in Theorem \ref{thm:uniform-bound}. We then prove in Section \ref{sec:portraits} the complete classification of preperiodic portraits, up to the natural action of roots of unity on preperiodic points, for unicritical polynomial maps of large degree. Finally, we apply our results on preperiodic points to Galois theory and prove Theorem \ref{thm:main+irred} and Theorem \ref{cor:BigGalois} in Sections~\ref{red:irreducibility} and~\ref{sec:Galois}, respectively.

\subsection*{Acknowledgments} We thank Vivian Healey and Robin Zhang for comments on an earlier draft that helped improve the exposition. The first author's research was partially supported by NSF grant DMS-2302394.

\section{The $abc$ conjecture and Fermat-Catalan equations}\label{sec:abc}
\subsection{Absolute values and heights}

Let $K$ be either a number field or the function field $k(X)$ of a curve $X$ defined over some base field $k$. (Throughout this paper, we will refer to the latter simply as a ``function field".) We recall some basic properties of such a field.

Let $M_K$ denote the set of nontrivial places of $K$ which, when $K$ is a function field, are trivial on the constant subfield. We also denote by $M_K^0$ and $M_K^\infty$ the sets of nonarchimedean and archimedean places, respectively, of $K$. Note that if $K$ is a function field, then $M_K^\infty = \emptyset$, hence $M_K = M_K^0$. We typically use $v$ to denote a place of $K$; however, since nonarchimedean places of $K$ correspond to prime ideals in an appropriate subring of $K$, we will often use $\p$ to represent nonarchimedean places and $v_\p$ to denote the corresponding valuation.

For a number field $K$, we denote by $\cO_K$ the ring of integers of $K$; when $K$ is a function field, we choose once and for all a place $\q_\infty \in M_K$, and we define 
    \[
    \cO_K := \big\{\alpha \in K : v_{\p}(\alpha) \ge 0 \text{ for all } \p \in M_K^0 \setminus \{\q_\infty\}\big\}.
    \]
(For example, when $K = k(t)$ is a rational function field and $\mathfrak q_\infty$ is the valuation determined by the order of vanishing at $\infty$, then $\cO_K$ is simply $k[t]$.)

For a place $v \in M_K$, we denote by $K_v$ the completion of $K$ at $v$, and from this we define
    \[
        n_v :=
            \begin{cases}
                [K_v : \Q_v], &\text{ if $K$ is a number field};\\
                \hfill 1, &\text{ if $K/k$ is a function field}.
            \end{cases}
    \]
(If $v$ corresponds to a prime $\p$, we may also write $n_\p$ for $n_v$.)

Now suppose that $\p \in M_K^0$, and denote by $k_\p$ the residue field of $\p$. We define
    \[
        N_\p :=
            \begin{cases}
                \dfrac1{[K:\Q]}\log |k_\p|, &\text{ if $K$ is a number field};\\
                \hfill [k_\p : k], &\text{ if $K/k$ is a function field}.
            \end{cases}
    \]

Finally, for each place $v \in M_K$, we define an absolute value $\abs{\cdot}_v$ on $K$ as follows:

\begin{itemize}
\item If $K$ is a number field and $v \in M_K^\infty$, then the place $v$ corresponds to a complex conjugate pair of embeddings $\sigma : K \hookrightarrow \C$, and we take
	\[
		\abs{x}_v := \abs{\sigma(x)},
	\]
where $\abs{\cdot}$ with no subscript corresponds to the standard absolute value on $\C$.
\item If $K$ is a number field and $v \in M_K^0$ corresponds to a prime $\p$, then
	\[
		\abs{x}_v = \abs{x}_\p := e^{-v_\p(x)N_\p\frac{[K:\Q]}{n_\p}} = |k_\p|^{\frac{-v_\p(x)}{n_\p}}.
	\]
\item If $K$ is a function field and $v \in M_K = M_K^0$ corresponds to a prime $\p$, then 
        \[
        \abs{x}_v = \abs{x}_\p := e^{-v_p(x)N_\p}.
        \]
\end{itemize}

For this normalization, the product formula holds: for all $x \in K^\times$, we have
	\[
		\prod_{v\in M_K} \abs{x}_v^{n_v} = 1; \quad\text{equivalently,}\quad \sum_{v\in M_K} n_v\log\abs{x}_v = 0.
	\]

The (logarithmic) height of a point $P = (x_1 : x_2 : \cdots : x_{n+1}) \in \bP^n(K)$ is then
	\[
		h(P) := \delta_K \sum_{v\in M_K} n_v\log\max\{\abs{x_1}_v, \abs{x_2}_v, \ldots,\abs{x_{n+1}}_v\},
	\]
where we define
    \[
        \delta_K :=
            \begin{cases}
                \dfrac1{[K:\Q]}, &\text{ if $K$ is a number field;}\\
                \hfill 1, &\text{ if $K$ is a function field.}
            \end{cases}
    \]
(We have followed the convention of \cite{MR3138487}, normalizing the height by $[K:\Q]$ in the number field case for convenience.)
Note that for $\alpha \in K$, we have
	\[
		h(\alpha) = h\big((\alpha : 1)\big) =
            \delta_K \sum_{v \in M_K} n_v \log^+\abs{\alpha}_v,
	\]
where we recall that $\log^+(\cdot) = \log\max\{\cdot, 1\}$.

Finally, we define the (logarithmic) {\bf radical} of $P$ to be the quantity
	\[
		\rad(P) := \sum_\p N_\p,
	\]
where the sum is taken over all $\p \in \Spec \cO_K$ for which $v_\p(x_i) \ne v_\p(x_j)$ for some $i,j$ with $x_ix_j \ne 0$. For an element $\alpha \in K$, we will define $\rad(\alpha)$ slightly differently: if $\alpha \ne 0$, we set
	\[
		\rad(\alpha) := \sum_{\substack{\p\in \Spec\cO_K\\v_\p(\alpha) > 0}} N_\p,
	\]
and we set $\rad(0) = 0$. In particular, we 
note that it follows from the product formula that
\begin{equation}\label{bd:productformula}
\rad(\alpha)\leq \sum_{\substack{\p\in \Spec\cO_K\\v_\p(\alpha) > 0}} \hspace{-.4cm}v_\p(\alpha)N_\p\; \le h(\alpha).
\end{equation}

We will use the following version of the $abc$ conjecture over number fields; see, for instance, \cite[p. 84]{vojta:1987} or \cite[p. 100]{elkies:1991}.

\begin{conjecture}[$abc$ conjecture for number fields]\label{conj:abc}
Fix a number field $K$ and $\epsilon > 0$. Then there exists a constant $C(K,\epsilon)$ such that, for all $P = (a : b : c) \in \bP^2(K)$ satisfying $a + b = c$ and $abc \ne 0$, we have
	\[
		\max\{h(a/c), h(b/c)\} < (1+\epsilon)\rad(P) + C(K,\epsilon).
	\]
\end{conjecture}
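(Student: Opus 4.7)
The final statement is the $abc$ conjecture over number fields, which the paper invokes as a hypothesis rather than attempts to establish; it is one of the deepest open problems in Diophantine geometry. Any plan is therefore aspirational, so what follows is an outline of the strategic landscape and where I would begin.

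I would start from the function-field analogue (Mason--Stothers), which is genuinely provable and models what we want. For coprime polynomials $a+b=c$ in $k[t]$, one forms the Wronskian $W = ab'-a'b$, observes that $W$ is nonzero with zero divisor controlled by the common roots of $a,b,c$, and deduces $\max\{\deg a,\deg b,\deg c\}\le \deg\rad(abc)-1$. Geometrically this is Riemann--Hurwitz applied to the map $a/c:\mathbb{P}^1 \to \mathbb{P}^1$, with ramification confined to the zeros of $abc$. My plan for number fields is to reformulate a triple $(a:b:c)\in\bP^2(K)$ with $a+b=c$ as an $\cO_K$-point of $\bP^1\setminus\{0,1,\infty\}$, identify $h(P)$ with the arithmetic ``degree'' of this map, and then seek an \emph{arithmetic} Riemann--Hurwitz bounding this degree by a log-different term of size $(1+\epsilon)\rad(P) + O(1)$.

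The core step, and the main obstacle, is producing the arithmetic derivative: there is no honest analogue of $d/dt$ on $\Spec\bZ$, so the Wronskian argument has no direct arithmetic counterpart. Vojta's framework replaces it by Arakelov-theoretic height pairings on jet spaces; the Belyi-map reductions of Elkies allow one to pull back to covers ramified only over $\{0,1,\infty\}$ and recast the estimate as a height bound of Bogomolov/Miyaoka--Yau type on modular-like surfaces; Mochizuki's inter-universal Teichm\"uller theory is another proposal whose status remains contested. None of these routes currently delivers the full inequality, and the precision of the $(1+\epsilon)$ factor---rather than some $\rad(P)^{\kappa}$ with $\kappa>1$---is what pushes the problem past presently available techniques. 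I would therefore view the realistic short-term goal as a weaker statement of the form $h(P)\ll_{K,\epsilon} \rad(P)^{1+\epsilon}$, proved via a Belyi reduction combined with an Arakelov Riemann--Hurwitz inequality with unspecified constants; the sharpening to the linear bound $(1+\epsilon)\rad(P)+C(K,\epsilon)$, which is exactly what the statement demands, is the truly hard step and is where I would expect the plan to stall.
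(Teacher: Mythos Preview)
Your assessment is correct: the statement is a \emph{conjecture}, not a theorem, and the paper makes no attempt to prove it. It is introduced precisely so that the paper can define an ``$abc$-field'' as a number field for which Conjecture~\ref{conj:abc} holds (or a function field, where the analogue is a theorem), and all of the paper's main results are stated conditionally on working over such a field. So there is no proof in the paper to compare your proposal against, and your opening sentence already identifies the situation accurately.

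Your survey of the strategic landscape (Mason--Stothers via Wronskians, the geometric reinterpretation as $\bP^1\setminus\{0,1,\infty\}$, Vojta/Arakelov, Belyi--Elkies reductions, IUT) is a reasonable summary of known approaches and obstructions, but none of it is needed here: the paper simply assumes the conjecture and moves on.
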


We note that a typical statement of the $abc$ conjecture has $h(P)$ on the left-hand side, rather than $\max\{h(a/c), h(b/c)\}$, but our version is an immediate consequence of the standard conjecture: Writing $P = (a/c : b/c : 1)$, we have
    \begin{align*}
        h(P) &= \sum_{v \in M_K} n_v\log\max\{\abs{a/c}_v, \abs{b/c}_v, 1\}\\
		  &\ge \max\left\{ \sum_{v \in M_K} n_v\log\max\{\abs{a/c}_v, 1\}, \sum_{v \in M_K} n_v\log\max\{\abs{b/c}_v, 1\}\right\}\\
		  &= \max\{h(a/c), h(b/c)\}.
    \end{align*}
We have chosen to state Conjecture~\ref{conj:abc} as we have in order to more closely mirror the function field version due to Mason \cite[Lemma 10, p. 97]{mason1984diophantine}---see also \cite{silverman1984s}---which we now state. For a field $K$ and a positive integer $n$, we write $K^n$ for the set of $n$th powers in $K$.

\begin{thm}[$abc$ theorem for function fields]\label{thm:mason}
    Fix a function field $K$ of characteristic $p \ge 0$ and genus $g_K$. Let $P = (a : b : c) \in \bP^2(K)$ satisfy $a + b = c$ and $abc \ne 0$; if $p > 0$, further assume that $a/c \notin K^p$. Then
        \[
            \max\{h(a/c), h(b/c)\} \le \rad(P) + 2g_K - 2.
        \]
\end{thm}

Note that, in Theorem~\ref{thm:mason}, the assumption that $a/c \notin K^p$ implies also that $b/c \notin K^p$, since if we had $b/c = \alpha^p$, we would then have $a/c = (1 - \alpha)^p$.

\subsection{The generalized Fermat-Catalan equation}

We begin by providing a height inequality for solutions to generalized Fermat-Catalan equations over $abc$-fields. This result will provide a major tool for proving both the uniform boundedness statements and the irreducibility statements from the introduction. For what follows, when $K$ is a function field, we let $g_K$ denote the genus of $K$.

\begin{prop}\label{prop:fermat-catalan}
Let $K$ be an $abc$-field of characteristic $p \ge 0$, and let $a,b \in K^\times$. Fix integers $m \le n$ such that one of the following holds:
	\begin{itemize}
	\item $m \ge 3$ and $n \ge 4$, or
	\item $m = 2$ and $n \ge 5$.
	\end{itemize} Then there exist an absolute constant $B_1 > 0$ and a constant $B_2 = B_2(K) \ge 0$ depending only on $K$ such that for every solution $(x,y) \in K^2$ to the equation
	\begin{equation}\label{eq:power-equation}
		ax^m + by^n = 1
	\end{equation}
we have either
    \begin{equation}\label{eq:FC-ht-bound}
\max\{mh(x), nh(y)\} \le B_1\max\{h(a),h(b)\} + B_2
    \end{equation}
or
    \[
        p > 0 \quad\text{and}\quad ax^m, by^n \in K^p.
    \]
Moreover, when $K$ is a function field we can take $B_1=40$ and
$B_2(K)=\max\{0, 20g_K-20\}$, and when $K$ is a number field we can take $B_1=41$.
\end{prop}

\begin{remark}
In terms of the restrictions on $m$ and $n$, the result is best possible. If one takes $m = 1$, $m = 2$ and $2\le n \le 4$, or $m = n = 3$, then the affine curve defined by \eqref{eq:power-equation} has genus at most $1$, hence could have infinitely many $K$-rational points (and {\it will} have infinitely many points over some finite extension of $K$).
\end{remark}

\begin{remark}
Silverman proved a version of Proposition~\ref{prop:fermat-catalan} in \cite{MR0664038}, but he omitted the possibility that the height bound \eqref{eq:FC-ht-bound} could fail when either $ax^m$ or $by^n$ (hence both) is a $p$th power in $K$ when $K$ has characteristic $p>0$; this was recently pointed out by Koymans \cite{koymans:2022}.
\end{remark}

\begin{proof}
Set
$P = (ax^m : by^n : 1) \in \bP^2(K)$. If $K$ has characteristic $p > 0$, assume that either $ax^m$ or $by^n$ is not a $p$th power in $K$, in which case neither is. We now show that the inequality from \eqref{eq:FC-ht-bound} is satisfied.

We first derive an upper bound for $\rad(P)$. In order for a prime $\p \in \Spec\cO_K$ to appear in the sum defining $\rad(P)$, we must have either $v_\p(ax^m) > 0$, $v_\p(by^n) > 0$, or $\min\{v_\p(ax^m), v_\p(by^n)\} < 0$, and in the latter case we have $v_\p(ax^m) = v_\p(by^n)$. In other words, the primes appearing in the sum for $\rad(P)$ appear in the sum for at least one of $\rad(a)$, $\rad(b)$, $\rad(x)$, $\rad(y)$, $\rad(1/b)$, and $\rad(1/y)$.
It follows that
	\begin{align*}
		\rad(P)
			&\le \rad(a) + \rad(b) + \rad(x) + \rad(y) + \rad(1/b) + \rad(1/y)\\
			&\le h(a) + h(b) + h(x) + h(y) + h(1/b) + h(1/y)\\
			&= h(a) + 2h(b) + h(x) + 2h(y)\\
			&\le 3\max\{h(a), h(b)\} + h(x) + 2h(y).
	\end{align*}

Define
    \[
        \eta_K :=
            \begin{cases}
                \frac{401}{400}, &\text{ if $K$ is a number field;}\\
                1, &\text{ if $K$ is a function field.}
            \end{cases}
    \]
If $K$ is an $abc$-field, then applying Conjecture~\ref{conj:abc} (with $\epsilon = 1/400$) or Theorem~\ref{thm:mason} as appropriate, we have that there is a constant $C(K)$ depending only on $K$, with $C(K) = 2g_K - 2$ in the function field case, such that
	\begin{align*}
		\max\{h(ax^m), h(by^n)\}
            &\le \eta_K\rad(P) + C(K)\\
            &\le \eta_K\big(3\max\{h(a), h(b)\} + h(x) + 2h(y)\big) + C(K).
	\end{align*}
Using the fact that $h(\alpha) \le h(\alpha\beta) + h(\beta)$ whenever $\beta \ne 0$, we have
	\[
		\max\{h(x^m), h(y^n)\} \le \max\{h(ax^m), h(by^n)\} + \max\{h(a), h(b)\},
	\]
and since $\eta_K \ge 1$, this implies that
	\begin{align*}
		\max\{h(x^m), h(y^n)\}
			&\le \eta_K\big(4\max\{h(a), h(b)\} + h(x) + 2h(y)\big) + C(K)\\
			&= \eta_K\left(4\max\{h(a), h(b)\} + \frac1m h(x^m) + \frac2n h(y^n)\right) + C(K)\\
			&\le \eta_K\left(4\max\{h(a), h(b)\} + \left(\frac1m + \frac2n\right) \max\{h(x^m), h(y^n)\}\right) + C(K)\\
                &\le \eta_K\left(4\max\{h(a), h(b)\} + \frac9{10} \max\{h(x^m), h(y^n)\}\right) + C(K),
	\end{align*}
where we have used the fact that whenever $m\ge3$ and $n \ge 4$ or $m = 2$ and $n \ge 5$, we have $\frac1m + \frac2n \le \frac9{10}$. Now, rewriting the above inequality yields
    \[
        \left(1 - \frac{9\eta_K}{10}\right)\max\{h(x^m), h(y^n)\} \le 4\eta_K \max\{h(a), h(b)\} + C(K),
    \]
hence
    \[
		\max\{h(x^n), h(y^m)\}
		  \le \frac{40\eta_K}{10 - 9\eta_K} \max\{h(a), h(b)\} + \frac{10\max\{C(K), 0\}}{10 - 9\eta_K}.
    \]
Finally, if $K$ is a function field, we have $\eta_K = 1$ and $C(K) = 2g_K - 2$, so we can take
    \[
        B_1 = 40 \text{ and } B_2 = \max\{20g_K - 20, 0\};
    \]
if $K$ is a number field, we have $\eta_K = \frac{401}{400}$, so, assuming $C(K) \ge 0$, we can take
    \[
        B_1 = \left\lceil\frac{40\eta_K}{10-9\eta_K}\right\rceil = 41 \text{ and } B_2(K) = \frac{4000}{391}C(K).\qedhere
    \]
\end{proof}
\section{Uniform boundedness of preperiodic points}\label{sec:uniformbdness}
In this section, we prove the uniform boundedness statements from the introduction. Before we begin, we briefly sketch the basic argument. The first step is to show that the preperiodic points of a fixed unicritical polynomial map have roughly the same height; more specifically,
\begin{equation}\label{eq:preperiodics are close}
\qquad\qquad  
h(\alpha)=\frac{1}{d}h(c)+o(d)\qquad \text{for all $\alpha\in\PrePer(x^d+c,K)$,}
\end{equation}
where the error term depends only on $d$ (and not $K$ or $c$).
From here, we note that if $\phi(\alpha)-\phi(\beta)$ is nonzero for some $\alpha,\beta\in\PrePer(\phi,K)$ with $\phi(x)=x^d+c$, then we obtain a solution $(x,y)=(\alpha,\beta)$ to the generalized Fermat-Catalan equation $ax^d+(-a)y^d=1$, where $a := (\phi(\alpha) - \phi(\beta))^{-1}$. In particular, it follows from Proposition~\ref{prop:fermat-catalan} and \eqref{eq:preperiodics are close} that
\begin{equation}\label{eq:dh(alpha)}
    dh(\alpha)\leq B_1 h(\alpha)+B_2
\end{equation}
for some absolute constant $B_1$ and some $B_2$ depending on $K$, unless $K$ has characteristic $p > 0$ and $a^{-1}\alpha^d$ (hence also $a^{-1}\beta^d$) is a $p$th power in $K$. (The issue with $p$th powers is the reason for the congruence conditions imposed on $d$ in the statement of Theorem~\ref{thm:uniform-bound}.) Excluding instances of the $p$th power obstruction, it follows that for all $d\gg_K0$ and $h(c)\gg_K0$, we must have $\phi(\alpha)=\phi(\beta)$ for all $\alpha,\beta\in\PrePer(x^d+c,K)$, in which case the preperiodic points are contained in the set of preimages of a unique fixed point. On the other hand, for each of the remaining $c\in K$ of small height, we may use \eqref{eq:preperiodics are close} again to deduce that $h(\alpha)=0$ for all $d\gg_{K,c}0$ and all $\alpha\in\PrePer(x^d+c,K)$. Moreover, since height-zero points are either roots of unity or zero---or constants in the function field case---we may classify the action of unicritical polynomial maps on these points. In particular, combining these steps yields the desired form of uniform boundedness for preperiodic points in large degree.       
\subsection{Height inequalities for preperiodic points}

For this section, we fix an $abc$-field $K$, an integer $d \ge 2$, and an element $c \in K$. We begin by recording a standard arithmetic property of preperiodic points.

\begin{lem}\label{lem:nonarch-abs-val}
Let $v \in M_K^0$ and suppose that $\alpha \in K$ is preperiodic for $x^d + c$.  
	\begin{enumerate}
	\item[\textup{(1)}] If $\abs{c}_v \le 1$, then $\abs{\alpha}_v \le 1$.
	\item[\textup{(2)}] If $\abs{c}_v > 1$, then $\abs{\alpha}_v = \abs{c}_v^{1/d}$.
	\end{enumerate}
\end{lem}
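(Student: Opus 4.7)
The plan is to use the ultrametric inequality to identify a ``filled Julia set'' for $\phi(x) = x^d + c$ at the nonarchimedean place $v$, and then to show that any point $\alpha$ lying outside this set has forward orbit that escapes to infinity in $\abs{\cdot}_v$, contradicting preperiodicity. Throughout, the only two inputs will be the ultrametric inequality and the standing assumption $d \ge 2$.

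For part (1), suppose $\abs{c}_v \le 1$. First I would observe that the closed unit disk $\{x \in K : \abs{x}_v \le 1\}$ is forward invariant under $\phi$, since $\abs{x}_v \le 1$ gives $\abs{\phi(x)}_v \le \max\{\abs{x}_v^d,\, \abs{c}_v\} \le 1$. Conversely, if $\abs{\alpha}_v > 1$, then $\abs{\alpha}_v^d > 1 \ge \abs{c}_v$, so the strict form of the ultrametric inequality forces $\abs{\phi(\alpha)}_v = \abs{\alpha}_v^d$, which strictly exceeds $\abs{\alpha}_v$ because $d \ge 2$. Induction then yields $\abs{\phi^n(\alpha)}_v = \abs{\alpha}_v^{d^n} \to \infty$, contradicting preperiodicity.

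For part (2), assume $\abs{c}_v > 1$. The plan is to compare $\abs{\alpha}_v$ to the critical radius $\abs{c}_v^{1/d}$ and rule out both strict inequalities. If $\abs{\alpha}_v > \abs{c}_v^{1/d}$, then $\abs{\alpha^d}_v > \abs{c}_v$, so the ultrametric inequality gives $\abs{\phi(\alpha)}_v = \abs{\alpha}_v^d > \abs{\alpha}_v$; iterating yields $\abs{\phi^n(\alpha)}_v = \abs{\alpha}_v^{d^n} \to \infty$. If instead $\abs{\alpha}_v < \abs{c}_v^{1/d}$, then $\abs{\alpha^d}_v < \abs{c}_v$ forces $\abs{\phi(\alpha)}_v = \abs{c}_v$, and since $\abs{c}_v > 1$ and $d \ge 2$ we have $\abs{c}_v > \abs{c}_v^{1/d}$, putting $\phi(\alpha)$ into the first case, so again the orbit escapes. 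Both alternatives contradict preperiodicity, forcing $\abs{\alpha}_v^d = \abs{c}_v$, i.e.\ the stated equality.

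I do not anticipate any real obstacle here; the whole argument is a routine application of the ultrametric inequality, and the dichotomy $\abs{c}_v \le 1$ vs.\ $\abs{c}_v > 1$ exactly reflects whether $0$ (the critical point) lies in the $v$-adic filled Julia set.
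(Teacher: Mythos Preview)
Your argument is correct and is exactly the standard one. The paper does not actually supply a proof of this lemma; it simply records it as ``a standard arithmetic property of preperiodic points'' and moves on, so there is nothing further to compare.

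One remark worth making: the statement as printed reads $\abs{\alpha}_v = \tfrac{1}{d}\abs{c}_v$, which is evidently a typo for $\abs{\alpha}_v^d = \abs{c}_v$ (equivalently $\log^+\abs{\alpha}_v = \tfrac{1}{d}\log^+\abs{c}_v$, which is precisely how the lemma is invoked in the proof of Corollary~\ref{cor:ht-bound}). You have proved the intended statement.
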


To prove a similar statement for the archimedean places in the number field setting, we use the following estimates for the roots of an auxiliary polynomial over the real numbers:  

\begin{lem}\label{lem:gamma-d}
Let $F_d(x) := x^d - 2x - 1$ and let $\rho_d$ be the unique positive real root of $F_d$. Then
	\begin{enumerate}
	\item[\textup{(1)}] $\rho_d > 1$ for all $d \ge 2$;
	\item[\textup{(2)}] the sequence $(\rho_d)_d$ is decreasing, with $\displaystyle\lim_{d\to\infty}\rho_d = 1$; and
	\item[\textup{(3)}] the sequence $(\rho_d^d)_d$ is decreasing, with $\displaystyle\lim_{d\to\infty}\rho_d^d = 3$.
	\end{enumerate}
In particular, we have that 
	\[
		\rho_d \le \rho_2 = 1 + \sqrt 2 \quad\text{and}\quad \rho_d^d \le \rho_2^2 = 3 + 2\sqrt 2
	\]
for all $d \ge 2$.
\end{lem}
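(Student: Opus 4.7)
The plan is to establish the shape of $F_d(x) = x^d - 2x - 1$ on $(0,\infty)$ via its derivative, and then derive everything else from the defining identity $\rho_d^d = 2\rho_d + 1$.

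First I would confirm that $\rho_d$ is well-defined. The derivative $F_d'(x) = dx^{d-1} - 2$ vanishes at the unique positive point $x_0 = (2/d)^{1/(d-1)}$, so $F_d$ is strictly decreasing on $(0, x_0)$ and strictly increasing on $(x_0, \infty)$. Combined with $F_d(0) = -1 < 0$ and $F_d(x) \to +\infty$ as $x \to \infty$, this shows that $F_d$ has exactly one positive root $\rho_d$, and that $\rho_d$ lies on the increasing branch (i.e., $\rho_d > x_0$). For part (1), evaluating $F_d(1) = -2 < 0$ places $1$ strictly to the left of $\rho_d$ on the increasing branch, which gives $\rho_d > 1$.

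For part (2), the crux is the computation
\[
F_{d+1}(\rho_d) \;=\; \rho_d \cdot \rho_d^{d} - 2\rho_d - 1 \;=\; \rho_d(2\rho_d + 1) - 2\rho_d - 1 \;=\; (2\rho_d + 1)(\rho_d - 1),
\]
which is positive by part (1). Since $\rho_d > 1 > x_0^{(d+1)} = (2/(d+1))^{1/d}$, the value $\rho_d$ also lies on the increasing branch of $F_{d+1}$, so $F_{d+1}(\rho_d) > 0 = F_{d+1}(\rho_{d+1})$ forces $\rho_d > \rho_{d+1}$. The sequence $(\rho_d)$ is thus decreasing and bounded below by $1$, hence converges to some $L \ge 1$. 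If $L > 1$, then $\rho_d^d \to \infty$; but $\rho_d^d = 2\rho_d + 1 \to 2L + 1 < \infty$, a contradiction, so $L = 1$.

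Part (3) is then immediate from the identity $\rho_d^d = 2\rho_d + 1$: monotonicity and the limit transfer directly from $(\rho_d)$, yielding $\rho_d^d \searrow 3$. The explicit numerical bounds come from applying the quadratic formula at $d = 2$ to get $\rho_2 = 1 + \sqrt{2}$, whence $\rho_2^2 = 3 + 2\sqrt{2}$, and parts (2) and (3) propagate these as upper bounds for all $d \ge 2$. No step appears to present a serious obstacle; this is a self-contained real-analysis exercise, and the only subtle point is remembering to invoke the ``increasing branch'' observation to convert the sign of $F_{d+1}(\rho_d)$ into an inequality between $\rho_d$ and $\rho_{d+1}$.
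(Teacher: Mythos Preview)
Your proof is correct and follows essentially the same approach as the paper: both arguments establish $\rho_d>1$ from $F_d(1)<0$, deduce monotonicity of $(\rho_d)$ by checking $F_{d+1}(\rho_d)>0$, and then read off part~(3) from the identity $\rho_d^d=2\rho_d+1$. The only cosmetic differences are that the paper cites Descartes' rule of signs for uniqueness (you use the derivative) and proves $\rho_d\to 1$ by directly showing $F_d(1+\epsilon)>0$ for large $d$ (you argue by contradiction from the boundedness of $2\rho_d+1$).
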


\begin{proof}
That $F_d$ has a {\it unique} positive real root follows from Descartes' rule of signs. Moreover, since $F_d(1) < 0$ and $F_d$ is increasing on $[1, \infty)$, we have $\rho_d > 1$ for all $d$. We now show that the sequences $(\rho_d)$ and $(\rho_d^d)$ are decreasing. Since $\rho_d > 1$ for each $d\ge2$, we have that
	\[
		F_{d+1}(\rho_d) = \rho_d^{d+1} - 2\rho_d - 1 > \rho_d^d - 2\rho_d - 1 = 0,
	\]
and since $F_{d+1}$ is increasing on $[1,\infty)$, this implies that $\rho_d > \rho_{d+1}$. Likewise, the sequence $(\rho_d^d)$ is decreasing since $\rho_d^d = 2\rho_d + 1$.

Next, we show that for all $\epsilon > 0$, we have $\rho_d < 1+\epsilon$ for all $d \gg_\epsilon 0$, which then implies that $\rho_d$ tends to $1$. Since $F_d(1) < 0$, it suffices to show that $F_d(1+\epsilon) > 0$ for all sufficiently large $d$ (depending on $\epsilon$). Indeed, we have
	\[
		F_d(1+\epsilon) = (1 + \epsilon)^d - (3 + 2\epsilon),
	\]
which tends to infinity as $d \to \infty$. Thus, $\rho_d \to 1$, and it follows immediately that
	\[
		\lim_{d\to\infty} \rho_d^d = \lim_{d\to\infty} 2\rho_d + 1 = 3. \qedhere
	\]
\end{proof}

\begin{lem}\label{lem:arch-abs-val}
Let $\rho_d$ be as defined in Lemma~\ref{lem:gamma-d}, let $K$ be a number field, and let $v \in M_K^\infty$. If $\alpha \in K$ is preperiodic for $x^d + c$, then
	\[
		\abs{\log^+\abs{\alpha}_v - \frac1d\log^+\abs{c}_v} \le \log\rho_d.
	\]
\end{lem}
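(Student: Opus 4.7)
The plan is to prove the two-sided inequality by establishing the upper and lower halves separately, each exploiting the defining identity $\rho_d^d = 2\rho_d + 1$ from Lemma~\ref{lem:gamma-d}. Equivalently, setting $M := \max\{1, |c|_v^{1/d}\}$, I aim to show that $M/\rho_d \leq \max\{1, |\alpha|_v\} \leq \rho_d M$ for every preperiodic point $\alpha$.

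For the upper bound $|\alpha|_v \leq \rho_d M$, I would argue by contradiction: assume $|\alpha|_v > \rho_d M$ and write $t := |\alpha|_v/M > \rho_d$. Since $|c|_v \leq M^d$, the archimedean triangle inequality gives
\[
|\phi(\alpha)|_v \,\geq\, |\alpha|_v^d - |c|_v \,\geq\, M^d(t^d - 1).
\]
Because $M \geq 1$ and $F_d(t) = t^d - 2t - 1 > 0$ for $t > \rho_d$, a short calculation shows $M^{d-1}(t^d - 1) \geq t^d - 1 > 2t$, and hence $M^d(t^d-1) > 2tM = 2|\alpha|_v$. Thus $|\phi(\alpha)|_v > 2|\alpha|_v$, and moreover $|\phi(\alpha)|_v > 2\rho_d M > \rho_d M$, so the same estimate applies inductively to $\phi^n(\alpha)$, yielding $|\phi^n(\alpha)|_v > 2^n|\alpha|_v \to \infty$ and contradicting the preperiodicity of $\alpha$.

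For the lower bound, I first reduce to the case $|c|_v > \rho_d^d$, since otherwise $M/\rho_d \leq 1$ and the claim is automatic. Assume for contradiction that $|\alpha|_v < |c|_v^{1/d}/\rho_d$, so $|\alpha|_v^d < |c|_v/\rho_d^d$. The identity $1 - 1/\rho_d^d = 2/\rho_d^{d-1}$ (a direct consequence of $\rho_d^d = 2\rho_d + 1$) then yields
\[
|\phi(\alpha)|_v \,\geq\, |c|_v - |\alpha|_v^d \,>\, \frac{2|c|_v}{\rho_d^{d-1}}.
\]
Since $\phi(\alpha)$ is also preperiodic, the upper bound already proved gives $|\phi(\alpha)|_v \leq \rho_d |c|_v^{1/d}$, and combining these two inequalities forces $|c|_v < (\rho_d^d/2)^{d/(d-1)}$. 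For $d \geq 3$, the bound $\rho_d \leq \rho_3 < 2$ from Lemma~\ref{lem:gamma-d} implies $(\rho_d^d/2)^{d/(d-1)} \leq \rho_d^d$, contradicting $|c|_v > \rho_d^d$.

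The main obstacle will be handling $d = 2$ in the lower bound, since $\rho_2 = 1 + \sqrt{2} > 2$ and the single-iteration argument only yields the weaker bound $|c|_v < \rho_2^4/4$, which is consistent with $|c|_v > \rho_2^2$. To close this gap I would iterate once more: from $|\phi(\alpha)|_v > 2|c|_v/\rho_2$ and $|c|_v > \rho_2^2 > \rho_2^2/4$, one verifies that $|\phi(\alpha)|_v^2 > |c|_v$, so
\[
|\phi^2(\alpha)|_v \,\geq\, |\phi(\alpha)|_v^2 - |c|_v \,>\, \frac{4|c|_v^2}{\rho_2^2} - |c|_v \,>\, 3|c|_v,
\]
contradicting the upper bound $|\phi^2(\alpha)|_v \leq \rho_2 |c|_v^{1/2}$ applied to the preperiodic point $\phi^2(\alpha)$.
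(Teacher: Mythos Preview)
Your proof is correct and follows the same overall contradiction strategy as the paper, but your lower-bound argument differs in one instructive way. After obtaining $|\phi(\alpha)|_v > |c|_v(1 - \rho_d^{-d})$, you feed back the \emph{sharp} upper bound $|\phi(\alpha)|_v \le \rho_d|c|_v^{1/d}$ you just proved; this closes the contradiction cleanly for $d\ge 3$ (where $\rho_d<2$) but forces a separate two-step iteration for $d=2$. The paper instead invokes the cruder, but uniform, bound $|\phi(\alpha)|_v \le 2|c|_v^{1/d}$ (which is the elementary estimate \eqref{eq:2c^1/d}); with that constant $2$ in place of $\rho_d$, the inequality chain collapses directly to $\rho_d^{d-1}(1-\rho_d^{-d})<2$, i.e.\ $F_d(\rho_d)<0$, for \emph{all} $d\ge 2$, and no case split is needed. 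Your route shows the argument can be made entirely self-contained (bootstrapping only the sharp inequality), at the cost of a bit more work when $\rho_d>2$; the paper's route shows that a slightly lossy auxiliary bound can actually give a cleaner proof.
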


\begin{proof}
Let $\alpha$ be preperiodic for $\phi(x) = x^d + c$. Then it is straightforward to verify that  
	\begin{equation}\label{eq:2c^1/d}
            \log^+\abs{\alpha}_v \le \frac1d \log^+\abs{c}_v + \log2.
	\end{equation}
We note that the inequality we are now proving is actually stronger than \eqref{eq:2c^1/d} whenever $d \ge 3$, since for $d \ge 3$ we have that $\rho_d \le \rho_3 = \frac{1 + \sqrt 5}{2} < 2$.

We first prove that
	\[
		\log^+\abs{\alpha}_v \le \frac1d\log^+\abs{c}_v + \log\rho_d.
	\]
Since the right-hand side of the inequality is nonnegative, it suffices to show that $\log\abs{\alpha}_v \le \frac1d \log^+\abs{c}_v + \log \rho_d$. Suppose for contradiction that \vspace{.1cm} 
	\begin{equation}\label{eq:basin-ineq-contra}
	\abs{\alpha}_v > \rho_d\max\{\abs{c}_v, 1\}^{1/d}.
	\end{equation}
Then we have that 
	\begin{equation*}
		\;\abs{\phi(\alpha)}_v = \abs{\alpha^d + c}_v
  \ge \abs{\alpha}_v\left(\abs{\alpha}_v^{d-1} - \frac{\abs{c}_v}{\abs{\alpha}_v}\right)
  > \abs{\alpha}_v\left(\rho_d^{d-1} - \frac1{\rho_d}\right)\max\{\abs{c}_v, 1\}^{(d-1)/d}
	> \abs{\alpha}_v. \vspace{.1cm}  
	\end{equation*}
Here the last inequality follows from the fact that $\rho_d^d = 2\rho_d + 1$ and so $\rho_d^{d-1} - \frac{1}{\rho_d} = 2$. But then $\abs{\phi^n(\alpha)}_v$ is increasing with $n$, contradicting the fact that $\alpha$ is preperiodic.

Thus, it remains to show that
	\[
		\log^+\abs{\alpha}_v \ge \frac1d\log^+\abs{c}_v - \log\rho_d.
	\]
However, note that the inequality above is trivial whenever $\abs{c}_v \le \rho_d^d$, so we may assume that $\abs{c}_v > \rho_d^d > 1$.
Now, suppose for contradiction that the claimed inequality fails, so that
	\[
		\abs{\alpha}_v < \frac{\abs{c}_v^{1/d}}{\rho_d}.
	\]
Then
	\begin{equation*}
	\abs{\phi(\alpha)}_v
		= \abs{\alpha^d + c}_v
		\ge \abs{c}_v - \abs{\alpha}_v^d
		> \abs{c}_v\left(1 - \frac{1}{\rho_d^d}\right).
	\end{equation*}
On the other hand, since $\alpha$ is preperiodic for $\phi$, so is $\phi(\alpha)$. Hence, \eqref{eq:2c^1/d} applied to the point $\phi(\alpha)$ in place of $\alpha$, together with the bound above, imply that  
	\[
		\abs{c}_v\left(1 - \frac{1}{\rho_d^d}\right)<\abs{\phi(\alpha)}_v \le 2\abs{c}_v^{1/d}.
	\]
From here, we see that 
	\[
		\abs{c}_v^{(d-1)/d}\left(1 - \frac{1}{\rho_d^d}\right) < 2.
	\]
Moreover, since $\abs{c}_v > \rho_d^d$ by assumption, it follows that
	\[
		\rho_d^{d-1}\left(1 - \frac1{\rho_d^d}\right) < 2.
	\]
However, we then have that $\rho_d^d - 2\rho_d - 1 < 0$, which contradicts that $\rho_d$ is a root of $F_d$.
\end{proof}
We now have the tools in place to show that the preperiodic points for a fixed unicritical polynomial have roughly the same height:   
\begin{cor}\label{cor:ht-bound}
Fix $d \ge 2$ and $c \in K$, and let $\alpha \in K$ be preperiodic for $x^d + c$.
    \begin{enumerate}
        \item[\textup{(1)}] If $K$ is a function field, then $h(\alpha) = \frac1dh(c)$. \vspace{.1cm} 
        \item[\textup{(2)}] If $K$ is a number field, then
            \[
                \abs{h(\alpha) - \frac1dh(c)} \le \log\rho_d,
            \]
        where $\rho_d$ is as defined in Lemma~\ref{lem:gamma-d}. 
    \end{enumerate}
\end{cor}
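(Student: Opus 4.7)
The plan is to assemble the claim from the local bounds recorded in Lemmas~\ref{lem:nonarch-abs-val} and~\ref{lem:arch-abs-val} by summing over all places of $K$, and then simplifying using the normalization conventions for the height.

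First I would treat the function field case, where $M_K = M_K^0$. Fix a place $v \in M_K^0$ and a preperiodic point $\alpha$ for $x^d + c$. Lemma~\ref{lem:nonarch-abs-val}(1) shows that $|\alpha|_v \le 1$ whenever $|c|_v \le 1$, so both $\log^+|\alpha|_v$ and $\tfrac{1}{d}\log^+|c|_v$ vanish. On the other hand, Lemma~\ref{lem:nonarch-abs-val}(2) gives $|\alpha|_v = |c|_v^{1/d}$ whenever $|c|_v > 1$, in which case $\log^+|\alpha|_v = \tfrac{1}{d}\log|c|_v = \tfrac{1}{d}\log^+|c|_v$. Either way, the identity
\[
\log^+|\alpha|_v \;=\; \tfrac{1}{d}\log^+|c|_v
\]
holds at every place. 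Multiplying by $n_v$, summing over $v \in M_K$, and using $\delta_K = 1$ in the function field case then yields $h(\alpha) = \tfrac{1}{d}h(c)$ exactly.

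Next I would handle the number field case by separating the archimedean and nonarchimedean contributions to $h(\alpha) - \tfrac{1}{d}h(c)$. The nonarchimedean calculation above still applies verbatim at every $v \in M_K^0$, so those terms cancel. For $v \in M_K^\infty$, Lemma~\ref{lem:arch-abs-val} gives $\bigl|\log^+|\alpha|_v - \tfrac{1}{d}\log^+|c|_v\bigr| \le \log\rho_d$. Applying the triangle inequality to the (finitely many) archimedean terms yields
\[
\Bigl|h(\alpha) - \tfrac{1}{d}h(c)\Bigr| \;\le\; \delta_K \sum_{v \in M_K^\infty} n_v \log\rho_d.
\]
Since $\sum_{v \in M_K^\infty} n_v = [K:\Q]$ and $\delta_K = 1/[K:\Q]$, the right-hand side collapses to $\log\rho_d$, as desired.

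There is no real obstacle here: the content of the corollary lies entirely in the local estimates of the preceding two lemmas, and the only point requiring care is to make sure the archimedean sum is weighted correctly so that the factor $[K:\Q]$ in $\sum_v n_v$ cancels the $\delta_K = 1/[K:\Q]$ from the height normalization, leaving a clean bound of $\log\rho_d$ that is independent of the field $K$.
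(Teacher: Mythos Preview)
Your proof is correct and follows essentially the same approach as the paper: both arguments reduce to the local identities from Lemma~\ref{lem:nonarch-abs-val} at nonarchimedean places and the local bound from Lemma~\ref{lem:arch-abs-val} at archimedean places, then sum and use $\sum_{v\in M_K^\infty} n_v = [K:\Q]$ to cancel the normalizing factor $\delta_K$.
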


\begin{proof}
First, we observe that
	\begin{equation}\label{eq:ht-ineq-log+}
		\abs{h(\alpha) - \frac{1}{d}h(c)}
			\le \delta_K \sum_{v\in M_K} n_v\abs{\log^+\abs{\alpha}_v - \frac1d\log^+\abs{c}_v},
	\end{equation}
where we recall that $\delta_K$ is equal to $1/[K:\Q]$ if $K$ is a number field and $1$ if $K$ is a function field.
Since $\log^+\abs{\alpha}_v = \frac1d\log^+\abs{c}_v$ for all $v \in M_K^0$ by Lemma~\ref{lem:nonarch-abs-val}, the only possible nonzero terms of the sum in \eqref{eq:ht-ineq-log+} are at the archimedean places $v \in M_K^\infty$; in particular, statement (1) now follows.
For statement (2), we assume that $K$ is a number field and apply Lemma~\ref{lem:arch-abs-val} to \eqref{eq:ht-ineq-log+} to get
	\[
		\abs{h(\alpha) - \frac1dh(c)}
			\le \frac{1}{[K:\Q]}\sum_{v\in M_K^\infty} n_v \log\rho_d
			= \log\rho_d. \qedhere
	\]
\end{proof}

As a final preliminary height estimate, we record the following inequality from \cite{looper2021dynamical}; note that only the number field case is stated explicitly in \cite[Lemma 8.2]{looper2021dynamical}, but the same proof also yields the function field case immediately.

\begin{lem}\label{lem:diff-bound}
Let $c \in K$, and let $\alpha,\beta\in K$ be any two preperiodic points for $x^d + c$.
Then
    \[
    h(\alpha-\beta) \le
        \begin{cases}
            \dfrac1d h(c), &\text{ if $K$ is a function field;}\\[10pt]
            \dfrac1d h(c) + \log 4, &\text{ if $K$ is a number field.}
        \end{cases}
    \]
\end{lem}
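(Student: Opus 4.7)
The plan is to bound $\log^+|\alpha-\beta|_v$ place-by-place and then sum via the defining formula
\[
h(\alpha-\beta) \;=\; \delta_K \sum_{v\in M_K} n_v \log^+\abs{\alpha-\beta}_v.
\]
At each $v$, the key input is an upper bound on $\abs{\alpha}_v$ and $\abs{\beta}_v$ coming from the fact that $\alpha$ and $\beta$ are both preperiodic for $x^d+c$.

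The first step handles the nonarchimedean places $\p \in M_K^0$. By Lemma~\ref{lem:nonarch-abs-val}, there are two cases. If $\abs{c}_\p \le 1$, then $\abs{\alpha}_\p, \abs{\beta}_\p \le 1$, and the ultrametric inequality gives $\abs{\alpha-\beta}_\p \le 1$, so $\log^+\abs{\alpha-\beta}_\p = 0 = \frac{1}{d}\log^+\abs{c}_\p$. If $\abs{c}_\p > 1$, then $\abs{\alpha}_\p = \abs{\beta}_\p = \abs{c}_\p^{1/d}$, so again by the ultrametric inequality $\abs{\alpha-\beta}_\p \le \abs{c}_\p^{1/d}$, hence $\log^+\abs{\alpha-\beta}_\p \le \tfrac{1}{d}\log^+\abs{c}_\p$. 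Thus in both cases
\[
\log^+\abs{\alpha-\beta}_\p \le \tfrac{1}{d}\log^+\abs{c}_\p.
\]
Summing over $\p \in M_K^0$ against the weights $\delta_K n_\p$ gives exactly $\tfrac{1}{d}$ times the nonarchimedean contribution to $h(c)$. In the function field case $M_K = M_K^0$, which already yields the bound $h(\alpha-\beta) \le \tfrac1d h(c)$ and completes part~(1).

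For the number field part, the second step is to treat the archimedean places $v\in M_K^\infty$ using Lemma~\ref{lem:arch-abs-val}, which gives $\abs{\alpha}_v \le \rho_d \max\{\abs{c}_v^{1/d},1\}$ and the analogous bound for $\beta$. The usual triangle inequality then yields
\[
\abs{\alpha-\beta}_v \;\le\; 2\rho_d \max\{\abs{c}_v^{1/d},1\},
\qquad\text{so}\qquad
\log^+\abs{\alpha-\beta}_v \;\le\; \tfrac{1}{d}\log^+\abs{c}_v + \log(2\rho_d).
\]
By Lemma~\ref{lem:gamma-d}, for $d \ge 3$ we have $\rho_d \le \rho_3 = \tfrac{1+\sqrt 5}{2} < 2$, so $\log(2\rho_d) < \log 4$; for $d = 2$ one can either accept the slightly larger constant $\log(2+2\sqrt 2)$ or tighten the archimedean estimate using the sharper bound $\abs{\alpha}_v \le 2\max\{\abs{c}_v^{1/2},1\}$ valid for quadratic preperiodic points, which gives $\log^+\abs{\alpha-\beta}_v \le \tfrac12 \log^+\abs{c}_v + \log 4$ directly.

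Finally, adding the two contributions and using the identity $\delta_K \sum_{v\in M_K^\infty} n_v = \tfrac{1}{[K:\Q]}\sum_{v\in M_K^\infty}[K_v:\Q_v] = 1$ collapses the archimedean constants to a single $\log 4$, producing
\[
h(\alpha-\beta) \;\le\; \tfrac{1}{d} h(c) + \log 4
\]
in the number field case. The main (minor) obstacle is the $d=2$ archimedean constant, where the straightforward use of Lemma~\ref{lem:arch-abs-val} gives $\log(2\rho_2) > \log 4$; this is handled by the quadratic-specific bound mentioned above, and is not needed for any $d\ge 3$.
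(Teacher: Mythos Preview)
Your proof is correct. The paper itself does not give a proof of this lemma; it simply cites \cite[Lemma 8.2]{looper2021dynamical} and remarks that the function field case follows by the same argument. Your place-by-place argument using Lemma~\ref{lem:nonarch-abs-val} at the nonarchimedean places and the archimedean estimates from Lemma~\ref{lem:arch-abs-val} (and \eqref{eq:2c^1/d}) is exactly the natural approach, and it has the virtue of being self-contained within the paper.

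One small simplification: the $d=2$ case split is unnecessary. The inequality \eqref{eq:2c^1/d}, namely $\log^+\abs{\alpha}_v \le \tfrac1d\log^+\abs{c}_v + \log 2$, holds for every $d\ge 2$, not just $d=2$. Using it directly (instead of the $\rho_d$ bound) gives $\abs{\alpha-\beta}_v \le 4\max\{\abs{c}_v^{1/d},1\}$ and hence $\log^+\abs{\alpha-\beta}_v \le \tfrac1d\log^+\abs{c}_v + \log 4$ uniformly in $d$, so the archimedean contribution is at most $\log 4$ with no case analysis. Your detour through $\rho_d$ is correct but sharper than needed, and forces you to fall back on \eqref{eq:2c^1/d} anyway when $d=2$.
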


\subsection{Proof of Theorem~\ref{thm:uniform-bound}}
Having established several useful height estimates, we now begin the proof of uniform boundedness by establishing that, for a fixed $c \in K$, the preperiodic points for $x^d + c$ have height zero for all sufficiently large degrees $d$.

\begin{lem}\label{lem:rou}
Let $K$ be an $abc$-field, and let $c \in K$. Then for all $d \gg_{K,c} 0$, all $K$-rational preperiodic points for $x^d + c$ have height $0$.
\end{lem}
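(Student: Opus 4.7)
The plan is to invoke Corollary~\ref{cor:ht-bound}, which already gives
\[
h(\alpha) \le \frac{1}{d}h(c) + \log \rho_d
\]
for every preperiodic point $\alpha \in K$ of $x^d+c$, where the extra term is identically zero in the function field case and, by Lemma~\ref{lem:gamma-d}, tends to $0$ as $d \to \infty$ in the number field case. In particular, the right-hand side can be made arbitrarily small as $d \to \infty$ with $K$ and $c$ fixed. The remaining task is to upgrade the qualitative statement ``$h(\alpha)$ is small'' to ``$h(\alpha)=0$'', and in both settings this will follow from a discreteness property of the height function on $K$.

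In the function field case Corollary~\ref{cor:ht-bound}(1) actually gives the exact equality $h(\alpha)=\frac{1}{d}h(c)$. With the normalization of Section~\ref{sec:abc}, the height of any element of $K$ is a finite sum of nonnegative integers $N_\p \cdot \max\{0,-v_\p(\alpha)\}$ and hence is itself a nonnegative integer. So once $d > h(c)$, the quantity $\frac{1}{d}h(c)$ lies in $[0,1)$, forcing $h(\alpha)=0$. (The case $h(c)=0$ is immediate.) In the number field case we instead apply Northcott's theorem: the set $\{\alpha \in K : h(\alpha) \le 1\}$ is finite, so the set of heights it realizes is a finite subset of $[0,1]$. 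Define $\epsilon_0 = \epsilon_0(K) > 0$ to be either the minimum positive element of this finite set, or $1$ if every element has height $0$; then no $\alpha \in K$ satisfies $0 < h(\alpha) < \epsilon_0$. Now choose $d$ large enough (depending on $K$ via $\epsilon_0$ and on $c$ via $h(c)$) so that $\frac{1}{d}h(c)+\log \rho_d < \epsilon_0$, which is possible since $\log \rho_d \to 0$; Corollary~\ref{cor:ht-bound} then forces $h(\alpha) < \epsilon_0$, and hence $h(\alpha) = 0$.

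There is no real obstacle here: the work has already been done in Corollary~\ref{cor:ht-bound}, and all that remains is to observe that the height on $K$ takes only discrete values near $0$ (integrality in the function field case, Northcott in the number field case). The only subtlety is bookkeeping the dependence of the threshold on $K$ and $c$, which matches the $d\gg_{K,c}0$ quantifier in the statement.
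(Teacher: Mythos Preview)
Your proof is correct and follows essentially the same approach as the paper: both invoke Corollary~\ref{cor:ht-bound} to bound $h(\alpha)$ by a quantity tending to $0$ as $d\to\infty$, and then use the fact that the positive heights on $K$ are bounded away from $0$ (via Northcott in the number field case, integrality in the function field case). The paper packages both cases into a single constant $h_K^{\min}$, while you handle them separately, but the content is the same.
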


\begin{proof}
Let $\alpha \in K$ be a preperiodic point for $x^d + c$. By Corollary~\ref{cor:ht-bound}, we have
	\[
		h(\alpha) \le \frac1dh(c) + \log \rho_d,
	\]
and by Lemma~\ref{lem:gamma-d} we have
	\[
		\lim_{d\to\infty} \frac1dh(c) + \log \rho_d = 0.
	\]
Now let $h_K^{\min}$ be the minimal nonzero height of an element of $K$,
\begin{equation}\label{eq:heightmin}
h^{\min}_K:=\min\{h(\alpha)\;:\; \alpha\in K\;\text{and}\; h(\alpha)>0\},
\end{equation} 
which is well-defined (and positive) by Northcott's theorem in the number field case and by the fact that heights are nonnegative integers in the function field case. Then for all $d$ sufficiently large we have
	\[
		\frac1dh(c) + \log \rho_d < h_K^{\min}.
	\]
In other words, for all $d \gg_{K,c} 0$, every $K$-rational preperiodic point for $x^d + c$ has height $0$.
\end{proof}

Next, for the number field case, we need the following elementary statement related to points on the unit circle in the complex plane. 

\begin{lem}\label{lem:x+y=c}
Let $c \in \C^\times$. Up to swapping $x$ and $y$, the system
	\[
            \left\{
 		\begin{split}
		x + y &= c\\
		\abs x = \abs y &= 1
		\end{split}
            \right.
	\]
has at most one solution. Moreover, if $\abs c = 1$, then
	\[
		x = \zeta_6 c \quad\text{and}\quad y = \zeta_6^{-1} c
	\]
for some choice $\zeta_6$ of a primitive $6$th root of unity.
\end{lem}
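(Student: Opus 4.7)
The two constraints $|x| = 1$ and $|y| = |c - x| = 1$ say that $x$ lies on the intersection of two unit circles, one centered at $0$ and one centered at $c$. This intersection has at most two points, giving at most two candidates for $x$; I will show that any valid solution $(x,y)$ is built from these two candidates (in some order), which yields uniqueness up to swap.

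I would first make the geometric observation algebraic. Combining $z \bar z = 1$ with $(z-c)\overline{(z-c)} = 1$ yields the single linear relation $z \bar c + \bar z c = |c|^2$. Setting $w := z \bar c$, this becomes $\operatorname{Re}(w) = |c|^2/2$; together with $|w| = |z|\,|c| = |c|$, this pins $w$ down to at most two values
\[ w_\pm \;=\; \tfrac{|c|^2}{2} \,\pm\, i\,|c|\sqrt{1 - \tfrac{|c|^2}{4}}, \]
which are real and distinct precisely when $0 < |c| < 2$. The decisive identity is $w_+ + w_- = |c|^2$: dividing by $\bar c$ gives $z_+ + z_- = c$, where $z_\pm := w_\pm / \bar c$. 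Hence the two candidate values for $x$ already form a valid solution pair, and any solution $(x,y)$ must be $(z_+, z_-)$ or $(z_-, z_+)$, proving the uniqueness up to swap.

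For the moreover statement, specialize to $|c| = 1$: the formula gives $w_\pm = \tfrac{1}{2} \pm \tfrac{\sqrt 3}{2}\,i$, which are exactly the primitive sixth roots of unity $\zeta_6$ and $\zeta_6^{-1}$. Since $\bar c = 1/c$ when $|c| = 1$, we have $z_\pm = w_\pm c$, so the unique unordered solution pair is $\{\zeta_6 c,\,\zeta_6^{-1} c\}$.

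The argument is elementary, and the only subtleties are the degenerate cases $c = 0$ (excluded by hypothesis) and $|c| = 2$ (tangent circles, where $z_+ = z_- = c/2$, still giving a unique unordered pair); for $|c| > 2$ the two circles are disjoint and there are no solutions at all. Since $|c| = 1$ lies in the transverse-intersection regime $0 < |c| < 2$, the explicit form involving $\zeta_6$ falls out cleanly from the computation, so I do not anticipate a genuine obstacle.
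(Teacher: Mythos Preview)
Your proof is correct and follows essentially the same two-circle intersection idea as the paper: the paper simply observes that the circles $|x|=1$ and $|c-x|=1$ meet in at most two points, notes that both $x$ and $y$ lie in this intersection, and for the $|c|=1$ case just verifies $\zeta_6 c + \zeta_6^{-1}c = c$; you instead carry out the intersection explicitly via the substitution $w=z\bar c$ and derive the sixth-root formula from the computed values of $w_\pm$. One small wording slip: your $w_\pm$ are not ``real and distinct'' for $0<|c|<2$---they are genuinely non-real there; you presumably mean the radicand $1-|c|^2/4$ is nonnegative (so the formula is valid) and the two values are distinct.
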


\begin{proof}
Since $c \ne 0$, the equations $\abs x = \abs{c - x} = 1$ describe two distinct circles of radius $1$, which intersect in at most two points. Thus $x$ must be one of at most two points. But $\abs y = \abs{c - y} = 1$ as well, so $y$ must also be one of those two intersection points. Thus, if $x \ne y$, we are done. On the other hand, if $x = y$, then $\abs c = 2\abs x = 2$, so the two circles are tangent, hence $(x,x) = (y,y)$ is the only solution. In either case, there is a unique solution up to swapping $x$ and $y$.

The last statement is immediate: there can only be one solution up to reordering, and it is easy to check that
	\[
		\zeta_6 c + \zeta_6^{-1}c = (\zeta_6 + \zeta_6^{-1})c = c. \qedhere
	\]
\end{proof}
In particular, we can use the result above to restrict the possible actions of a unicritical polynomial map on the roots of unity in characteristic $0$.

\begin{cor}\label{cor:two-images}
Let $K$ be a field of characteristic $0$, let $c \in K$, let $d \ge 2$ and let $\phi(x) = x^d + c$. Then
	\[
		\big|\phi(\mu_K) \cap \mu_K\big| \le 2.
	\]
If $\omega_1$ and $\omega_2$ are distinct elements of $\phi(\mu_K) \cap \mu_K$, then $c = \omega_1 + \omega_2$. Moreover, if $c \in \mu_K$, then
	\[
		\omega_1 = \zeta_6c \quad\text{and}\quad \omega_2 = \zeta_6^{-1}c
	\]
for some choice $\zeta_6$ of a primitive $6$th root of unity.
\end{cor}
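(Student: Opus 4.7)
The idea is to reduce the statement to an application of Lemma~\ref{lem:x+y=c}, after embedding everything into $\C$. First I would observe the basic reformulation: if $\omega \in \phi(\mu_K) \cap \mu_K$, then $\omega = \zeta^d + c$ for some $\zeta \in \mu_K$, so $\omega - c = \zeta^d \in \mu_K$ and in particular $c = \omega - \zeta^d$ is a difference of two roots of unity. Consequently, for any two elements $\omega_1, \omega_2 \in \phi(\mu_K) \cap \mu_K$ with corresponding $\zeta_1, \zeta_2$, the field $L := \Q(\omega_1, \omega_2, \zeta_1, \zeta_2, c)$ is a finite cyclotomic extension of $\Q$, so we may fix an embedding $L \hookrightarrow \C$ and work with the usual complex absolute value, under which every root of unity has absolute value $1$.

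Next I would set $x_i := \omega_i$ and $y_i := c - \omega_i$ for each $i$. Then $x_i + y_i = c$, and $|x_i| = 1$ since $\omega_i \in \mu_K$, while $|y_i| = |\omega_i - c| = |\zeta_i^d| = 1$. Thus each $\omega_i$ produces a solution $(x_i, y_i)$ to the system of Lemma~\ref{lem:x+y=c}. If $c = 0$, the statement is trivial (indeed $\phi(\mu_K) \cap \mu_K$ would consist entirely of $d$th powers of roots of unity, but the distinct-image analysis below is vacuous since $c = 0 \notin$ any such sum argument unless we have zero of them); so assume $c \ne 0$. By Lemma~\ref{lem:x+y=c}, there is at most one such solution up to swapping $x$ and $y$, and hence at most two possible values of $x = \omega$. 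This yields $|\phi(\mu_K) \cap \mu_K| \le 2$.

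For the additive identity, suppose $\omega_1 \ne \omega_2$ are two such elements. Then the pairs $(\omega_1, c-\omega_1)$ and $(\omega_2, c-\omega_2)$ are two distinct solutions to the system, so by Lemma~\ref{lem:x+y=c} they must be swaps of one another. Thus $\omega_2 = c - \omega_1$, equivalently $c = \omega_1 + \omega_2$. Finally, if $c \in \mu_K$, then under the chosen embedding $|c| = 1$, so the last assertion of Lemma~\ref{lem:x+y=c} applies directly and yields $\omega_1 = \zeta_6 c$, $\omega_2 = \zeta_6^{-1} c$ for some primitive sixth root of unity $\zeta_6$.

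The only mild subtlety is the passage from the abstract field $K$ to $\C$; this is harmless because $K$ has characteristic $0$ and the finitely many algebraic elements involved generate a number field that embeds into $\C$. The numerical content of the corollary is entirely carried by Lemma~\ref{lem:x+y=c}, so no new estimate is required.
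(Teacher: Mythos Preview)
Your approach is essentially identical to the paper's: both embed the finitely generated subfield $\Q(c,\xi_1,\xi_2,\ldots)$ into $\C$ and apply Lemma~\ref{lem:x+y=c} to the pairs $(\omega_i,-\xi_i^d)$ summing to $c$, obtaining the bound, the identity $c=\omega_1+\omega_2$ from the swap, and the $\zeta_6$-description when $|c|=1$. One small correction: your aside that the case $c=0$ is ``trivial'' is not right---the first inequality actually fails for $c=0$ (take $K=\Q(\zeta_7)$, $d=2$), and Lemma~\ref{lem:x+y=c} requires $c\ne 0$; the paper's proof likewise tacitly assumes $c\ne 0$, which is the only case used downstream.
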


\begin{proof}
Let $\omega_1,\omega_2,\omega_3 \in \phi(\mu_K) \cap \mu_K$, and for each $i = 1,2,3$ write $\omega_i = \phi(\xi_i)$ with $\xi_i \in \mu_K$. Then
	\[
		c = \omega_1 - \xi_1^d = \omega_2 - \xi_2^d = \omega_3 - \xi_3^d.
	\]
By choosing an embedding of $\Q(c,\xi_1,\xi_2,\xi_3)$ into $\C$, we may apply Lemma~\ref{lem:x+y=c} to say that the points $\omega_1$, $\omega_2$, and $\omega_3$ cannot all be distinct, so there are at most two elements of $\phi(\mu_K) \cap \mu_K$.

Now suppose $\omega_1, \omega_2 \in \phi(\mu_K) \cap \mu_K$ are distinct. As above, we have
	\[
		c = \omega_1 - \xi_1^d = \omega_2 - \xi_2^d,
	\]
and since $\omega_1 \ne \omega_2$, Lemma~\ref{lem:x+y=c} says that $\omega_2 = -\xi_1^d$. Therefore, we have $c = \omega_1 + \omega_2$. The final statement about the case $c \in \mu_K$ follows immediately from the last statement of Lemma~\ref{lem:x+y=c}.
\end{proof}
We now prove the uniform boundedness theorem from the introduction. We start with the following, which is part (2) of Theorem~\ref{thm:uniform-bound} for function fields. Recall from the introduction that if $p$ is prime and $d \ge 2$ is an integer, we factor $d = d_p p^n$ with $p \nmid d_p$; when $p = 0$, we take $d_0 = d$ and interpret $p^n$ as $1$.

\begin{prop}\label{prop:function-fields-ubc}
    Let $K$ be a function field of characteristic $p \ge 0$. Fix $c \in K \setminus \overline{k}$ and $d \ge 2$. If $d \not\equiv 1 \pmod p$ and $d_p > D_1 = D_1(K) := \max\{20g_K + 20, 40\}$, then either $\PrePer(x^d + c, K) = \emptyset$ or
        \begin{equation}\label{eq:c=y-y^d}
            c = y - y^d \quad\text{and}\quad \PrePer(x^d + c, K) = \{\zeta y : \zeta \in \mu_{K,d}\}
        \end{equation}
    for some unique $y \in K$.
\end{prop}

\begin{proof}
We show that for all $c \in K\setminus \overline{k}$ and all $d$ with $d_p > D_1$ and $d \not\equiv 1 \pmod{p}$, any preperiodic points $\alpha,\beta \in K$ for $\phi(x) = x^d + c$ must satisfy $\phi(\alpha) = \phi(\beta)$. It follows that $y := \phi(\alpha)$ is a fixed point, and \eqref{eq:c=y-y^d} is satisfied.

We henceforth assume that $d \not\equiv 1 \pmod p$. Write $d = d_pp^n$ as above. Let $c \in K\setminus \overline{k}$ be such that $\phi(x) = x^d + c$ has preperiodic points $\alpha,\beta$ with $\phi(\alpha) \ne \phi(\beta)$. Our goal is to show that $d_p \le D_1$.

We first observe that $\left(\phi(\alpha) - \phi(\beta)\right)^{-1}$ is a $(p^n)$th power. Indeed, we have
	\[
		\phi(\alpha) - \phi(\beta) = \alpha^d - \beta^d = \left(\alpha^{d_p} - \beta^{d_p}\right)^{p^n};
	\]
thus, setting $a := \left(\alpha^{d_p} - \beta^{d_p}\right)^{-1}$ we have $a^{p^n} = \left(\phi(\alpha) - \phi(\beta)\right)^{-1}$.
Moreover, $(x,y) = (\alpha,\beta)$ is a solution to the Fermat-Catalan equation
	\[
		ax^{d_p} - ay^{d_p} = 1.
	\]

Suppose either that $p = 0$ or that $p > 0$ and $a\alpha^{d_p} \notin K^p$.
Then we may apply Proposition~\ref{prop:fermat-catalan} to say that
    \[
        d_p\max\{h(\alpha), h(\beta)\} \le 40h(a) + \max\{20g_K - 20, 0\}.
    \]
Since
	\[
		h(a) = \frac{1}{p^n}h\left(\phi(\alpha) - \phi(\beta)\right) \le \frac1{dp^n}h(c) = \frac1{p^n}h(\alpha) = \frac1{p^n}h(\beta)
	\]
by Corollary~\ref{cor:ht-bound} and Lemma~\ref{lem:diff-bound}, we have
    \[
        d_ph(\alpha) \le \frac{40}{p^n}h(\alpha) + \max\{20g_K - 20, 0\} \le 40h(\alpha) + \max\{20g_K - 20, 0\},
    \]
hence
    \[
        (d_p-40)h(\alpha) \le \max\{20g_K - 20, 0\}.
    \]
Thus, we have either that $d_p \le 40$ or, since $h(\alpha)\geq1$ (as $c$ being nonconstant implies $\alpha$ is nonconstant, hence $h(\alpha)$ is a positive integer), that
    \[
        d_p-40 \le (d_p-40)h(\alpha) \le \max\{20g_K - 20, 0\}.
    \]
In either case, we have $d_p \le D_1$.

In summary, we have now shown that if $c \in K \setminus \overline{k}$ and $d_p > D_1$, then for all $\alpha,\beta \in \PrePer(\phi,K)$, either $\phi(\alpha) = \phi(\beta)$ or $p > 0$ and $a\alpha^{d_p} \in K^p$. In particular, having reached the desired conclusion in characteristic zero, we may assume that $p>0$ and that for all $\alpha,\beta\in\PrePer(\phi,K)$, either $\phi(\alpha) = \phi(\beta)$ or $a\alpha^{d_p} \in K^p$. In the latter case, 
    \[
        \left(\frac{\beta}{\alpha}\right)^{d_p} = 1 - \frac{\alpha^{d_p} - \beta^{d_p}}{\alpha^{d_p}} = 1 - \left(a\alpha^{d_p}\right)^{-1} \in K^p,
    \]
and since $p\nmid d_p$, this implies that $\beta/\alpha \in K^p$. Likewise, if $\phi(\alpha) = \phi(\beta)$, then $\alpha = \zeta\beta$ for some $\zeta\in\mu_{K,d} \subset \overline{\mathbb{F}}_p$. Hence, $\beta/\alpha\in K^p$, since $\overline{\mathbb{F}}_p\subseteq K^p$ when $p > 0$. In particular, we may assume that $\beta/\alpha\in K^p$ for all $\alpha,\beta\in\PrePer(\phi,K)$. From here we proceed in cases:\\

\noindent {\bf Case 1.} Suppose there exists $\alpha \in \PrePer(\phi,K)$ with $\alpha \notin K^p$. Let $D$ be a derivation on $K$ with kernel $K^p$; see \cite[\S6]{mason1984diophantine}. In particular, $D(\beta/\alpha)=0=D(\phi(\beta)/\alpha)$ by our assumption above. Hence, $\alpha D(\beta)=\beta D(\alpha)$ and $\alpha D(\phi(\beta))=\phi(\beta) D(\alpha)$ by the quotient rule. But then,  
\begin{equation*}
\begin{split}
0&=\alpha D(\phi(\beta))-\phi(\beta)D(\alpha)\\[5pt] 
&=\alpha(d\beta^{d-1}D(\beta)+D(c))-(\beta^d+c)D(\alpha)\\[5pt]
&=d\beta^{d-1}(\alpha D(\beta))+\alpha D(c)-D(\alpha)\beta^d-D(\alpha)c\\[5pt]
&=dD(\alpha)\beta^{d}+\alpha D(c)-D(\alpha)\beta^d-D(\alpha)c\\[5pt]
&=(d-1)D(\alpha)\beta^d+\alpha D(c)-D(\alpha)c.
\end{split} 
\end{equation*}   
In other words, if we view $\alpha$ as fixed, then every element $\beta\in\PrePer(\phi,K)$ is a root of the polynomial $F_{\phi,\alpha}\in K[z]$ given by
    \[
        F_{\phi,\alpha}(z) := (d-1)D(\alpha) z^d + \alpha D(c) - cD(\alpha).
    \]
Moreover, since we have assumed that both $d \not\equiv 1 \pmod p$ and $\alpha \notin K^p$, we have that $(d-1)D(\alpha) \ne 0$. Thus, since $\alpha \in \PrePer(\phi,K)$ is also a root of $F_{\phi,\alpha}$, we deduce that 
    \[
        \PrePer(\phi,K) \subseteq \{\text{roots of $F_{\phi,\alpha}$}\} = \{\zeta\alpha : \zeta \in \mu_d\}.
    \]
Therefore, $\phi(\beta) = \phi(\zeta\alpha) = \phi(\alpha)$ for all $\beta \in \PrePer(\phi,K)$ as claimed.\\

\noindent {\bf Case 2.} Now suppose that $\alpha \in K^p$ for all $\alpha \in \PrePer(\phi, K)$ and let $m$ be the largest integer with the property that $\PrePer(\phi,K) \subset K^{p^m}$; note that $m$ is well-defined since we have assumed that $c$ is non-constant, hence the same is true for all elements of $\PrePer(\phi,K)$. From here, we fix $\alpha \in \PrePer(\phi,K)$ such that $\alpha \in K^{p^m} \setminus K^{p^{m+1}}$ and let $A \in K \setminus K^p$ be such that $\alpha = A^{p^m}$. Note also that 
    \[
        c = \phi(\alpha) - \alpha^d \in K^{p^m},
    \]
so we can write $c = C^{p^m}$ for some $C \in K$. Finally, let $\Phi(x) := x^d + C$ and let $\sigma: K\rightarrow K^{p^m}$ be the ring isomorphism given by $\sigma(x)=x^{p^m}$. 

Then, thinking of $\phi$ and $\Phi$ as functions on $K^{p^m}$ and $K$, respectively, we have that 
    \[
        \Phi = \sigma^{-1} \circ \phi \circ \sigma.
    \]
Therefore, $\Phi^k = \sigma^{-1} \circ \phi^k \circ \sigma$ for all $k \ge 0$. In particular, since we have assumed that $\PrePer(\phi,K) \subset K^{p^m}$, it follows that $B\in \PrePer(\Phi, K)$ if and only if $B^{p^m} \in \PrePer(\phi,K)$.

Now, since $A \notin K^p$, it follows from Case 1 above that for every $B \in \PrePer(\Phi, K)$ we must have that $\Phi(A) = \Phi(B)$. However, this fact then implies that for every $\beta \in \PrePer(\phi,K)$, we must have that $\phi(\alpha) = \phi(\beta)$ as claimed.
\end{proof}

We now complete the proof of Theorem~\ref{thm:uniform-bound}.

\begin{proof}[Proof of Theorem~\ref{thm:uniform-bound}]
That statement (2) holds for function fields is precisely Proposition~\ref{prop:function-fields-ubc}; we now show that (2) holds for number fields. Arguing as in Proposition~\ref{prop:function-fields-ubc}, it suffices to show that if $K$ is a number field, then for all $d \ge 42$ and all $c \in K$ with $h(c)\gg_K0$, any preperiodic points $\alpha,\beta \in K$ for $\phi(x) = x^d + c$ must satisfy $\phi(\alpha) = \phi(\beta)$.

Thus, let $K$ be a number field. Let $c \in K$ be such that $\phi$ has preperiodic points $\alpha,\beta$ with $\phi(\alpha) \ne \phi(\beta)$ and let $a=\big(\phi(\alpha)-\phi(\beta)\big)^{-1}$. As in the proof of Proposition~\ref{prop:function-fields-ubc}, $(x,y) = (\alpha,\beta)$ is a solution to the generalized Fermat-Catalan equation $ax^d-ay^d=1$. By Proposition~\ref{prop:fermat-catalan}, we have
	\[
		d\max\{h(\alpha), h(\beta)\} \le 41h(a) + B_2(K)
	\]
for some constant $B_2(K)$ depending only on $K$. Note that
	\[
		h(a) = h(\phi(\alpha) - \phi(\beta)) \le \frac1d h(c) + \log 4
	\]
by Lemma~\ref{lem:diff-bound}, since $\phi(\alpha)$ and $\phi(\beta)$ are preperiodic for $\phi$; applying this inequality as well as the inequality from Corollary~\ref{cor:ht-bound} yields
	\[
		d\left(\frac1dh(c) - \log\rho_d\right) \le 41\left(\frac1dh(c) + \log4\right) + B_2(K).
	\]
Since we have assumed $d \ge 42$, this implies that
	\begin{align*}
		h(c) &\le \frac{d}{d-41}\left(\log\rho_d^d + 41\log4 + B_2(K)\right)\\
                &\le 42\left(\log\rho_{42}^{42} + 41\log4 + B_2(K)\right),
	\end{align*}
where the second inequality follows from Lemma~\ref{lem:gamma-d}. Therefore, if we choose $C_1 = C_1(K)$ so that
    \[
        C_1 > 42\left(\log\rho_{42}^{42} + 41\log4 + B_2(K)\right) > 2434.088 + 42B_2(K)
    \]
and assume that $d>D_1\ge41$ and $h(c) > C_1$, then we must have that $\phi(\alpha) = \phi(\beta)$ as claimed. This completes the proof of part (2) of Theorem~\ref{thm:uniform-bound}.

We now prove part (3). The statement is immediate when $K$ is a function field: If $h(c) \le C_1(K) = 0$, then $c$ is a constant point in $K$, hence all of the preperiodic points for $\phi(x) = x^d + c$ are constant as well. Thus, we assume $K$ is a number field.

By Lemma~\ref{lem:rou}, for any $c \in K$, there is a constant $D_{1,c}' = D_{1,c}'(K)$ such that for all $d > D_{1,c}'$, all $K$-rational preperiodic points for $x^d + c$ have height $0$. Since there are only finitely many elements in $K$ of height at most $C_1(K)$, we can define
    \[
        D_1'(K) := \max\{D_{1,c}' : h(c) \le C_1(K)\},
    \]
and then we can set $D_1(K) := \max\{D_1'(K), 41\}$ to guarantee that both statements (2) and (3) of the theorem hold.

Finally, we prove part (1). By part (2), it suffices to show that if $c \in K^\times$, $d > D_1(K)$, and $h(c) \le C_1(K)$ (and $c$ is non-constant in the case that $K$ is a function field), then $\phi(x) = x^d + c$ cannot have $K$-rational points of period greater than $3$. The statement is vacuously true when $K$ is a function field, since $h(c) \le C_1(K) = 0$ implies $c$ is constant, so we again assume that $K$ is a number field.

Now suppose that $c\in K^\times$, $h(c) \le C_1(K)$, and $d > D_1(K)$. By part (3), $\PrePer(\phi, K)$ consists only of height-zero points; it therefore suffices to show that for all $n \ge 4$, there cannot be an $n$-cycle consisting only of roots of unity and $0$.

We first show there cannot be such a cycle with at least four roots of unity. Indeed, if there were, then there would be distinct roots of unity $\omega_1, \omega_2, \omega_3, \omega_4$ such that $\phi(\omega_i) = \omega_{i+1}$ for $i=1,2,3$, contradicting Corollary~\ref{cor:two-images}.

Thus, the only way that $\phi$ could have a cycle of length at least $4$ consisting only of roots of unity and $0$ is for the cycle to have the form
	\[
		0 \mapsto \omega_3 \mapsto \omega_1 \mapsto \omega_2 \mapsto 0
	\]
with each $\omega_i$ a root of unity.
Suppose there was such a cycle. Since $\omega_1 \ne\omega_2$, it follows from Corollary~\ref{cor:two-images} that
	\[
		\omega_3 = \phi(0) = c = \omega_1 + \omega_2
	\]
and, since $\omega_3 \in \mu_K$, it follows from Lemma~\ref{lem:x+y=c} that
	\[
		\omega_1 = \zeta_6\omega_3 \quad\text{and}\quad \omega_2 = \zeta_6^{-1}\omega_3
	\]
with $\zeta_6$ a primitive $6$th root of unity. Since $\phi(\omega_1) = \omega_2$, we have
		\[
			\zeta_6^d\omega_3^d + \omega_3 = \zeta_6^{-1}\omega_3,\quad\text{hence}\quad \omega_3^{d-1} = \zeta_6^{-d}(\zeta_6^{-1} - 1) = \zeta_6^{-d}\zeta_6^{-2} = \zeta_6^{-d-2}.
		\]
On the other hand, since $\phi(\omega_2) = 0$, we have
		\[
			\zeta_6^{-d}\omega_3^d + \omega_3 = 0,\quad\text{hence}\quad \omega_3^{d-1} = -\zeta_6^d = \zeta_6^{d+3}.
		\]
This implies that $\zeta_6^{-d-2} = \zeta_6^{d+3}$, which is impossible for any integer $d$. Therefore, $\phi$ cannot admit a $4$-cycle consisting only of roots of unity and $0$, completing the proof of part (1).
\end{proof}
Combining Theorem \ref{thm:uniform-bound} for large degrees with prior work in \cite{MR4065068,looper2021dynamical} for small degrees, we obtain a uniform boundedness statement for unicritical polynomials over global fields that is independent of the degree of the map.
\begin{proof}[Proof of Corollary \ref{cor:strong-ubc}]
By \cite[Theorem 1.7]{MR4065068} in the function field case and \cite[Theorem 1.2]{looper2021dynamical} in the number field case, for each $d \ge D_2(K)$ not divisible by $p$, there is a uniform bound $B_{K,d}$ such that
    \[
        \big|\PrePer(x^d + c, K)\big| \le B_{K,d}
    \]
for all $c\in K$ (non-constant, if $K$ is a function field). It then follows from Theorem~\ref{thm:uniform-bound} in the function field case that if $d \not\equiv 0,1\pmod{p}$, then 
\[\big|\PrePer(x^d + c, K)\big| \le \max\big\{|\mu_{K,d}|,\; \max\{B_{K,d} : D_2(K) \le d \le D_1(K) \text{ and } p\nmid d\}\big\}\]
for all $d\geq D_2$ and all non-constant $c\in K$. (Only statement (2) of Theorem \ref{thm:uniform-bound} applies, since $c$ non-constant implies $h(c) > 0 = C_1(K)$.) Likewise, when $K$ is a number field, Theorem \ref{thm:uniform-bound} implies that 
\[\big|\PrePer(x^d + c, K)\big| \le \max\big\{|\mu_{K}|+1,\; \max\{B_{K,d} : D_2(K) \le d \le D_1(K)\}\big\},\]
since both statements (2) and (3) of Theorem \ref{thm:uniform-bound} must be considered. 
In any case, the result follows by setting
    \[
        B(K) := \max\{B_{K,d} : D_2(K) \le d \le D_1(K)\}
    \]
if $K$ is a function field, and
    \[
        B(K) := \max\big\{|\mu_{K}|+1,\; \max\{B_{K,d} : D_2(K) \le d \le D_1(K)\}\big\}
    \]
if $K$ is a number field.
\end{proof}

\section{Classification of preperiodic portraits}\label{sec:portraits}

Let $K$ be an $abc$-field. For a map $\phi \in K[x]$, the ({\bf preperiodic}) {\bf portrait} $\mathscr{P}(\phi, K)$ is the directed graph whose vertices are the elements of $\PrePer(\phi, K)$, with an edge $\alpha \to \beta$ if and only if $\phi(\alpha) = \beta$. Following the convention of \cite{Poonen}, we omit from the portrait $\scrP(\phi,K)$ the point at $\infty$, which is a fixed point for every polynomial map.

Corollary~\ref{cor:strong-ubc} says that for a fixed $abc$-field $K$ for which $\mu_K$ is finite, the number of $K$-rational preperiodic points for $\phi(x) = x^d + c$ is bounded above by a constant depending only on $K$, as long as $c$ is nonzero and, when $K$ is a function field, non-constant; when $K$ has characteristic $p > 0$, we further assume that $d \not\equiv 0,1\pmod p$. This implies there are finitely many portraits (up to isomorphism) that can be realized as $\scrP(x^d + c, K)$ for some (non-constant) $c \in K^\times$ and $d \ge D_2(K)$, with $D_2(K)$ as in Corollary~\ref{cor:strong-ubc}, and again assuming $d \not\equiv 0,1\pmod p$ in the case $p > 0$. In this section, we classify all such portraits for $c \in K^\times$ and for all sufficiently large $d$; specifically, we do so for all $d > D_1(K)$, with $D_1(K)$ as in Theorem~\ref{thm:uniform-bound}.

Unfortunately, ``classify all such portraits" does not translate to ``provide a finite list containing all such portraits" as in, for example, Poonen's classification result for rational preperiodic points for quadratic polynomials; see \cite[Figure 1]{Poonen}. Indeed, there is no finite list that contains all portraits $\scrP(x^d + c, K)$ when we allow both $K$ and $d > D_1(K)$ to vary. The fundamental problem is that if $\alpha$ is a nonzero $K$-rational preperiodic point for $\phi(x) = x^d + c$, then so is $\zeta\alpha$ for all $\zeta\in \mu_{K,d}$, as $\phi(\zeta\alpha) = \phi(\alpha)$. Thus, as the size of $\mu_{K,d}$ increases, so must the number of $K$-rational preperiodic points for $x^d+ c$ (assuming there are any). For example, let $K$ be a number field, let $k := \big|\mu_{K,d}\big|$, and let $\zeta$ be a generator for $\mu_{K,d}$. If we take $c := y - y^d$ with $y\in K^\times$, then $y$ is a $K$-rational fixed point for $x^d + c$, and $\zeta^iy$ is a preimage of $y$ for each $i \in\{1,\ldots,k-1\}$, so the portrait $\scrP(x^d + c, K)$ contains the graph illustrated in Figure~\ref{fig:FP}.

\begin{figure}
\centering
    \begin{tikzpicture}[scale=2]
	\tikzset{vertex/.style = {}}
	\tikzset{every loop/.style={min distance=10mm,in=45,out=-45,->}}
	\tikzset{edge/.style={decoration={markings,mark=at position 1 with %
    {\arrow[scale=1.5,>=stealth]{>}}},postaction={decorate}}}
	%
	%
	\node[vertex] (11a) at  (-1, 1) {$\zeta y$};
	\node[vertex] (11b) at  (-.3, 1) {$\zeta^2 y$};
	\node[vertex] (dots) at (.3, 1) {$\cdots$};
	\node[vertex] (11c) at (1, 1) {$\zeta^{k-1} y$};
	\node[vertex] (1) at (0, 0) {$y$};
	%
	%
	\draw[-{Latex[length=1.5mm,width=2mm]}] (11a) to (1);
	\draw[-{Latex[length=1.5mm,width=2mm]}] (11b) to (1);
	\draw[-{Latex[length=1.5mm,width=2mm]}] (11c) to (1);
	\draw[-{Latex[length=1.5mm,width=2mm]}] (1) to[out=220, in=320, looseness=7] (1);
\end{tikzpicture}
	\caption{A fixed point and its preimages under $x^d + c$}
	\label{fig:FP}
\end{figure}

Based on the previous paragraph, one might say that the full preperiodic portrait has some redundancy: the data of a single (nonzero) preperiodic point $\alpha$ is enough to determine the entire fiber $\{\zeta\alpha : \zeta\in\mu_{K,d}\}$ over $\phi(\alpha)$. (Note that this redundancy is a special property of polynomials of the form $x^d + c$. For a generic polynomial $\phi(x)$ of degree $d > 2$, one does not expect the existence of a preperiodic point $\alpha$ to imply the existence of an additional preimage of $\phi(\alpha)$.) Thus, it makes sense to replace the preperiodic portrait $\scrP(\phi, K)$ with what we will call its {\it skeleton} $\scrS(\phi, K)$, which removes this redundancy. Our main result of this section (Theorem~\ref{thm:prep-classification}) says that, up to isomorphism, there are only finitely many such skeleta $\scrS(x^d + c, K)$ as one ranges over all $abc$-fields $K$, all $c \in K^\times$ (non-constant in the function field case), and all degrees $d \ge D_1(K)$.

\begin{defin}
The {\bf skeleton} of a portrait $\scrP(\phi,K)$ is the directed graph $\mathscr{S}(\phi,K)$ constructed from $\scrP(\phi,K)$ as follows: \vspace{.1cm}
	\begin{itemize}
	\item For all vertices $\alpha$ in $\scrP(\phi,K)$ with at least one $K$-rational preimage under $\phi$, include $\alpha$ as a vertex of $\scrS(\phi,K)$.\vspace{.1cm}
	\item If $\alpha$ has been included in $\scrS(\phi,K)$ but none of its $K$-rational preimages have, then include a single vertex $v$ to represent all of the $K$-rational preimages of $\alpha$ under $\phi$. \vspace{.1cm}
	\item Edge relations among the vertices in $\scrS(\phi,K)$ are inherited from $\scrP(\phi,K)$.
	\end{itemize}
\end{defin}

\begin{exa}
The portrait in Figure~\ref{fig:FP} has as its skeleton the directed graph labeled {\bf(1)a} in Table~\ref{tab:SPs}.    
\end{exa}

We now state our main classification result, which refers to the graphs in Table~\ref{tab:SPs}. We note that the labels assigned to the graphs in Table~\ref{tab:SPs} consist of the cycle structure (i.e., the nonincreasing sequence of cycle lengths appearing in the graph), together with a letter if there are multiple graphs with the same cycle structure.

\begin{thm}\label{thm:prep-classification}
Let $K$ be an $abc$-field, and let $D_1(K)$ be as in Theorem~\ref{thm:uniform-bound}. If $d > D_1(K)$ and $d \not\equiv 0,1\pmod p$, and if $c \in K^\times$ (non-constant, if $K$ is a function field), then the skeleton $\scrS(x^d + c, K)$ is isomorphic to either the empty graph or one of the twelve graphs in Table~\ref{tab:SPs}.
\end{thm}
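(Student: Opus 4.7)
The plan is to combine Theorem~\ref{thm:uniform-bound} with a short combinatorial analysis of how unicritical polynomials of large degree act on roots of unity and $0$. Both key inputs are already established, so the work is mainly organizational. First I would invoke Theorem~\ref{thm:uniform-bound} to split the problem according to whether $h(c) > C_1(K)$ or $h(c) \le C_1(K)$.

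In the ``large height'' regime, part (2) of Theorem~\ref{thm:uniform-bound} implies that either $\PrePer(x^d + c, K)$ is empty (giving the empty skeleton) or $\PrePer(x^d + c, K) = \{\zeta y : \zeta \in \mu_{K,d}\}$ for a unique fixed point $y$ satisfying $c = y - y^d$. The map $\phi$ collapses this entire fiber onto $y$, so after passing to the skeleton we obtain either the self-looped vertex $y$ alone (when $|\mu_{K,d}|=1$) or the skeleton labeled \textbf{(1)a} in Table~\ref{tab:SPs} (when $|\mu_{K,d}| > 1$). In particular, the function field case of the theorem is already complete, since the non-constant hypothesis on $c$ forces $h(c) > 0 = C_1(K)$.

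Next I would handle the ``small height'' regime, which by the previous paragraph only occurs when $K$ is a number field. Part (3) of Theorem~\ref{thm:uniform-bound} places $\PrePer(x^d + c, K) \subseteq \mu_K \cup \{0\}$, and part (1) forbids cycles of length greater than $3$. The primary combinatorial tool here is Corollary~\ref{cor:two-images}: the set $\phi(\mu_K) \cap \mu_K$ has cardinality at most $2$. Since every fixed point of $\phi$ in $\mu_K$ and both members of every $2$-cycle of $\phi$ contained in $\mu_K$ must lie in $\phi(\mu_K) \cap \mu_K$, this immediately forces at most two fixed points, at most one $2$-cycle inside $\mu_K$, and any $3$-cycle must pass through $0$. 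Combined with the observation that at most one cycle passes through the single vertex $0$, a short enumeration produces the complete list of admissible cycle configurations, and Lemma~\ref{lem:x+y=c} supplies arithmetic identities (such as $c = \omega_1 + \omega_2$) that further constrain which configurations can be realized.

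With the cycles in hand, I would determine the trees of $K$-rational preimages attached to each cycle vertex. Because $\phi(\zeta\alpha) = \phi(\alpha)$ for $\zeta \in \mu_{K,d}$, every nonempty fiber of $\phi$ over a vertex $v$ consists of a single $\mu_{K,d}$-orbit together with possibly the isolated vertex $0$ when $c = v$, and hence contributes at most two new vertices to the skeleton. Iterating this observation and reapplying Corollary~\ref{cor:two-images} to bound which of the new vertices can themselves be roots of unity gives only finitely many extensions of each cycle configuration to a full skeleton. The main obstacle I expect is the bookkeeping: matching every surviving cycle-plus-tree combination to one of the twelve graphs in Table~\ref{tab:SPs}, and conversely exhibiting for each listed skeleton an explicit $(K, c, d)$ that realizes it. I would organize this by fixing the cycle type first, attaching skeletonized preimage vertices one level at a time, and at each step using the two-image constraint together with the identities coming from $\phi(\alpha) = \phi(\beta)$ for $\alpha, \beta \in \mu_K$ to eliminate the graphs that cannot occur.
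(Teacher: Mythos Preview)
Your proposal is correct and follows essentially the same approach as the paper: split via Theorem~\ref{thm:uniform-bound} into the large-height case (skeleton {\bf(1)a}) and the height-zero case, then use Corollary~\ref{cor:two-images} to bound $|\phi(\mu_K)\cap\mu_K|\le 2$ and carry out a case analysis on the cycle structure, attaching preimage trees level by level. The paper organizes the height-zero enumeration by cycle type ($3$-cycles, $2$-cycles, then fixed points), proving in each case a proposition giving necessary and sufficient conditions on $d\pmod 6$ and on the roots of unity involved; your outline anticipates exactly this bookkeeping, though note that realizability of each graph is not part of the theorem statement and need not be checked.
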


\begin{table}
\aboverulesep = 0pt
\belowrulesep = 0pt
\begin{tabular}{|C{.48\textwidth}|C{.48\textwidth}|}
\toprule
{\bf (1)a}\hfill\mbox{}
	&
{\bf (1)b}\hfill\mbox{}\\
\begin{tikzpicture}[scale=.9]
	\tikzset{vertex/.style = {}}
	\tikzset{every loop/.style={min distance=10mm,in=45,out=-45,->}}
	\tikzset{edge/.style={decoration={markings,mark=at position 1 with %
    {\arrow[scale=1.5,>=stealth]{>}}},postaction={decorate}}}
	%
	%
	\node[vertex] (1) at (0, 0) {$\bullet$};
	%
	%
	\draw[-{Latex[length=1.5mm,width=2mm]}] (1) to[out=310, in=50, looseness=7] (1);
\end{tikzpicture}
	&
\begin{tikzpicture}[scale=.9]
	\tikzset{vertex/.style = {}}
	\tikzset{every loop/.style={min distance=10mm,in=45,out=-45,->}}
	\tikzset{edge/.style={decoration={markings,mark=at position 1 with %
    {\arrow[scale=1.5,>=stealth]{>}}},postaction={decorate}}}
	%
	%
	\node[vertex] (1-2) at (0, 0) {$\omega_3$};
	\node[vertex] (1-1) at (2, 0) {$\omega_2$};
	\node[vertex] (1) at (4, 0) {$\omega_1$};
	%
	%
	\draw[-{Latex[length=1.5mm,width=2mm]}] (1) to[out=310, in=50, looseness=7] (1);
	\draw[-{Latex[length=1.5mm,width=2mm]}] (1-1) to (1);
	\draw[-{Latex[length=1.5mm,width=2mm]}] (1-2) to (1-1);
\end{tikzpicture}
\\
\midrule
	{\bf(1)c}\hfill\mbox{}
	&
	{\bf(1)d}\hfill\mbox{}
\\
\begin{tikzpicture}[scale=.9]
	\tikzset{vertex/.style = {}}
	\tikzset{every loop/.style={min distance=10mm,in=45,out=-45,->}}
	\tikzset{edge/.style={decoration={markings,mark=at position 1 with %
    {\arrow[scale=1.5,>=stealth]{>}}},postaction={decorate}}}
	%
	%
	\node[vertex] (1-2) at (0, 0) {$0$};
	\node[vertex] (1-1) at (2, 0) {$\omega_3$};
	\node[vertex] (1) at (4, 0) {$\omega_1$};
	%
	%
	\draw[-{Latex[length=1.5mm,width=2mm]}] (1) to[out=310, in=50, looseness=7] (1);
	\draw[-{Latex[length=1.5mm,width=2mm]}] (1-1) to (1);
	\draw[-{Latex[length=1.5mm,width=2mm]}] (1-2) to (1-1);
\end{tikzpicture}
	&
\begin{tikzpicture}[scale=1]
	\tikzset{vertex/.style = {}}
	\tikzset{every loop/.style={min distance=10mm,in=45,out=-45,->}}
	\tikzset{edge/.style={decoration={markings,mark=at position 1 with %
    {\arrow[scale=1.5,>=stealth]{>}}},postaction={decorate}}}
	%
	%
	\node[vertex] (1) at (2, 0) {$\omega_1$};
	\node[vertex] (1-1a) at (.166, .5) {$\omega_3$};
	\node[vertex] (1-1b) at (.166, -.5) {$\omega_2$};
	\node[vertex] (1-2a) at (-1.833, .5) {$0$};
	\node[vertex] (1-2b) at (-1.833, -.5) {$\omega_4$};
	%
	%
	\draw[-{Latex[length=1.5mm,width=2mm]}] (1) to[out=310, in=50, looseness=7] (1);
	\draw[-{Latex[length=1.5mm,width=2mm]}] (1-1a) to (1);
	\draw[-{Latex[length=1.5mm,width=2mm]}] (1-1b) to (1);
	\draw[-{Latex[length=1.5mm,width=2mm]}] (1-2a) to (1-1a);
	\draw[-{Latex[length=1.5mm,width=2mm]}] (1-2b) to (1-1b);
\end{tikzpicture}
	\\
\midrule
	{\bf(1)e}\hfill\mbox{}
	&
	{\bf(1,1)}\hfill\mbox{}\\
\begin{tikzpicture}[scale=1]
	\tikzset{vertex/.style = {}}
	\tikzset{every loop/.style={min distance=10mm,in=45,out=-45,->}}
	\tikzset{edge/.style={decoration={markings,mark=at position 1 with %
    {\arrow[scale=1.5,>=stealth]{>}}},postaction={decorate}}}
	%
	%
	\node[vertex] (1) at (2, 0) {$\omega_1$};
	\node[vertex] (1-1a) at (.166, .5) {$\omega_3$};
	\node[vertex] (1-1b) at (.166, -.5) {$\omega_2$};
	\node[vertex] (1-2a) at (-1.833, .5) {$0$};
	\node[vertex] (1-2b) at (-1.833, -.5) {$\omega_4$};
        \node[vertex] (1-3a) at (-3.833, .5) {$\omega_5$};
	%
	%
	\draw[-{Latex[length=1.5mm,width=2mm]}] (1) to[out=310, in=50, looseness=7] (1);
	\draw[-{Latex[length=1.5mm,width=2mm]}] (1-1a) to (1);
	\draw[-{Latex[length=1.5mm,width=2mm]}] (1-1b) to (1);
	\draw[-{Latex[length=1.5mm,width=2mm]}] (1-2a) to (1-1a);
	\draw[-{Latex[length=1.5mm,width=2mm]}] (1-2b) to (1-1b);
        \draw[-{Latex[length=1.5mm,width=2mm]}] (1-3a) to (1-2a);
\end{tikzpicture}
	&
\begin{tikzpicture}[scale=.9]
	\tikzset{vertex/.style = {}}
	\tikzset{every loop/.style={min distance=10mm,in=45,out=-45,->}}
	\tikzset{edge/.style={decoration={markings,mark=at position 1 with %
    {\arrow[scale=1.5,>=stealth]{>}}},postaction={decorate}}}
	%
	%
	\node[vertex] (1a) at (0, 0) {$\omega_1$};
	\node[vertex] (1b) at (2, 0) {$\omega_2$};
	%
	%
	\draw[-{Latex[length=1.5mm,width=2mm]}] (1a) to[out=310, in=50, looseness=7] (1a);
	\draw[-{Latex[length=1.5mm,width=2mm]}] (1b) to[out=310, in=50, looseness=7] (1b);
\end{tikzpicture}
	\\
\midrule
	{\bf(2)a}\hfill\mbox{}
	&
	{\bf(2)b}\hfill\mbox{}\\
\begin{tikzpicture}[scale=.9]
	\tikzset{vertex/.style = {}}
	\tikzset{every loop/.style={min distance=10mm,in=45,out=-45,->}}
	\tikzset{edge/.style={decoration={markings,mark=at position 1 with %
    {\arrow[scale=1.5,>=stealth]{>}}},postaction={decorate}}}
	%
	%
	\node[vertex] (2c) at (2, 0) {$\omega_1$};
	\node[vertex] (2d) at (4, 0) {$\omega_2$};
	%
	%
	\draw[-{Latex[length=1.5mm,width=2mm]}] (2c) to[bend right=30] (2d);
	\draw[-{Latex[length=1.5mm,width=2mm]}] (2d) to[bend right=30] (2c);
\end{tikzpicture}
	&
\begin{tikzpicture}[scale=.9]
	\tikzset{vertex/.style = {}}
	\tikzset{every loop/.style={min distance=10mm,in=45,out=-45,->}}
	\tikzset{edge/.style={decoration={markings,mark=at position 1 with %
    {\arrow[scale=1.5,>=stealth]{>}}},postaction={decorate}}}
	%
	%
	\node[vertex] (2c) at (2, 0) {$0$};
	\node[vertex] (2d) at (4, 0) {$\omega_3$};
	%
	%
	\draw[-{Latex[length=1.5mm,width=2mm]}] (2c) to[bend right=30] (2d);
	\draw[-{Latex[length=1.5mm,width=2mm]}] (2d) to[bend right=30] (2c);
\end{tikzpicture}
	\\
\midrule
	{\bf(2)c}\hfill\mbox{}
	&
	{\bf(2,1,1)}\hfill\mbox{}\\
\begin{tikzpicture}[scale=.9]
	\tikzset{vertex/.style = {}}
	\tikzset{every loop/.style={min distance=10mm,in=45,out=-45,->}}
	\tikzset{edge/.style={decoration={markings,mark=at position 1 with %
    {\arrow[scale=1.5,>=stealth]{>}}},postaction={decorate}}}
	%
	%
	\node[vertex] (2a) at (2, 0) {$0$};
	\node[vertex] (2b) at (4, 0) {$\omega_3$};
	\node[vertex] (2-1a) at (.166, .5) {$\omega_1$};
	\node[vertex] (2-1b) at (.166, -.5) {$\omega_2$};
	\node[vertex] (2-2a) at (-1.833, .5) {$\omega_4$};
	\node[vertex] (2-2b) at (-1.833, -.5) {$\omega_5$};
	%
	%
	\draw[-{Latex[length=1.5mm,width=2mm]}] (2a) to[bend right=30] (2b);
	\draw[-{Latex[length=1.5mm,width=2mm]}] (2b) to[bend right=30] (2a);
	\draw[-{Latex[length=1.5mm,width=2mm]}] (2-1a) to (2a);
	\draw[-{Latex[length=1.5mm,width=2mm]}] (2-1b) to (2a);
	\draw[-{Latex[length=1.5mm,width=2mm]}] (2-2a) to (2-1a);
	\draw[-{Latex[length=1.5mm,width=2mm]}] (2-2b) to (2-1b);
\end{tikzpicture}
	&
\begin{tikzpicture}[scale=.9]
	\tikzset{vertex/.style = {}}
	\tikzset{every loop/.style={min distance=10mm,in=45,out=-45,->}}
	\tikzset{edge/.style={decoration={markings,mark=at position 1 with %
    {\arrow[scale=1.5,>=stealth]{>}}},postaction={decorate}}}
	%
	%
	\node[vertex] (2a) at  (-2, 0) {$0$};
	\node[vertex] (2b) at  (0, 0) {$\omega_3$};
	\node[vertex] (1a) at (2, 0) {$\omega_1$};
	\node[vertex] (1b) at (4, 0) {$\omega_2$};
	%
	%
	\draw[-{Latex[length=1.5mm,width=2mm]}] (2a) to[bend right=30] (2b);
	\draw[-{Latex[length=1.5mm,width=2mm]}] (2b) to[bend right=30] (2a);
	\draw[-{Latex[length=1.5mm,width=2mm]}] (1a) to[out=310, in=50, looseness=7] (1a);
	\draw[-{Latex[length=1.5mm,width=2mm]}] (1b) to[out=310, in=50, looseness=7] (1b);
\end{tikzpicture}
	\\
\midrule
	{\bf(2,2)}\hfill\mbox{}
	&
	{\bf(3)}\hfill\mbox{}\\
\begin{tikzpicture}[scale=.9]
	\tikzset{vertex/.style = {}}
	\tikzset{every loop/.style={min distance=10mm,in=45,out=-45,->}}
	\tikzset{edge/.style={decoration={markings,mark=at position 1 with %
    {\arrow[scale=1.5,>=stealth]{>}}},postaction={decorate}}}
	%
	%
	\node[vertex] (2a) at  (-2, 0) {$0$};
	\node[vertex] (2b) at  (0, 0) {$\omega_3$};
	\node[vertex] (2c) at (2, 0) {$\omega_1$};
	\node[vertex] (2d) at (4, 0) {$\omega_2$};
	%
	%
	\draw[-{Latex[length=1.5mm,width=2mm]}] (2a) to[bend right=30] (2b);
	\draw[-{Latex[length=1.5mm,width=2mm]}] (2b) to[bend right=30] (2a);
	\draw[-{Latex[length=1.5mm,width=2mm]}] (2c) to[bend right=30] (2d);
	\draw[-{Latex[length=1.5mm,width=2mm]}] (2d) to[bend right=30] (2c);
\end{tikzpicture}
	&
\begin{tikzpicture}[scale=.9]
	\tikzset{vertex/.style = {}}
	\tikzset{every loop/.style={min distance=10mm,in=45,out=-45,->}}
	\tikzset{edge/.style={decoration={markings,mark=at position 1 with %
    {\arrow[scale=1.5,>=stealth]{>}}},postaction={decorate}}}
	%
	%
	\node[vertex] (3a) at  (-1, 0) {$0$};
	\node[vertex] (3b) at  (.5, -0.866) {$\omega_3$};
	\node[vertex] (3c) at (.5, 0.866) {$\omega_1$};
	%
	%
	\draw[-{Latex[length=1.5mm,width=2mm]}] (3a) to[bend right=30] (3b);
	\draw[-{Latex[length=1.5mm,width=2mm]}] (3b) to[bend right=30] (3c);
	\draw[-{Latex[length=1.5mm,width=2mm]}] (3c) to[bend right=30] (3a);
\end{tikzpicture}
	\\
\bottomrule
\end{tabular}
\caption{Nonempty graphs that can be realized as $\scrS(z^d + c, K)$ with $c \in K^\times$ and $d > D_1(K)$. Here $\omega_i$ always represents a root of unity.}
\label{tab:SPs}
\end{table}

\begin{remark}
Theorem~\ref{thm:prep-classification} explicitly excludes $c = 0$, since $0$ and every root of unity are preperiodic for $f_{d,0}(z) = z^d$, so even the skeleta $\scrS(f_{d,0}, K)$ will take infinitely many forms as $K$ varies. Similarly, if $c$ is a constant element of a function field $K$, then the set of preperiodic points could be arbitrarily large---even infinite if, for example, the constant subfield is algebraically closed.
\end{remark}

\begin{remark}
    The $K = \Q$ case of Theorem~\ref{thm:prep-classification} recovers \cite[Theorem 3]{MR4432520}, which says that if the $abc$ conjecture holds, then for all $d \gg 0$ the following statements hold:
        \begin{itemize}
            \item For all $c \in \Q^\times$, the map $x^d + c$ has at most four rational preperiodic points (including the fixed point $\infty$). 
            \item If, moreover, $c \ne -1$, then $x^d + c$ has at most one (necessarily fixed) rational periodic point.
        \end{itemize}
More precisely, assuming the $abc$ conjecture (over $\Q$), the only graphs that can be realized as $\scrS(x^d + c, \Q)$ with $d \gg 0$ and $c \in \Q^\times$ are
    \begin{itemize}
        \item {\bf(1)a}, if $c = y - y^d$ with $y \notin \{0,1,(-1)^d\}$, and
        \item {\bf(2)b}, if $c = -1$ and $d$ is even.
    \end{itemize}
\end{remark}

If $K$ is a function field, then it follows from Theorem~\ref{thm:uniform-bound} that for all $d > D_1(K)$ satisfying $d\not\equiv 0,1\pmod p$, and for all non-constant $c \in K$, the preperiodic points must form a portrait isomorphic to the one illustrated in Figure~\ref{fig:FP}, hence the skeleton must be isomorphic to {\bf(1)a}. For this reason, \textit{we henceforth assume $K$ is a number field}.

By Theorem~\ref{thm:uniform-bound}, if $c \in K^\times$ and $d > D_1(K)$, then $\PrePer(x^d + c, K)$ either consists entirely of height-zero points or has the form $\{\zeta y : \zeta \in \mu_{K,d}\}$ for a fixed point $y \in K$ for $x^d + c$. In the latter case, the full preperiodic portrait is the portrait appearing in Figure~\ref{fig:FP}, hence its skeleton is the one labeled {\bf(1)a} in Table~\ref{tab:SPs}. Thus, it remains to consider only those $c \in K^\times$ and $d > D_1(K)$ for which $\PrePer(x^d + c, K)$ consists only of height-zero points. We get a further restriction from Corollary~\ref{cor:two-images}: there can be at most two roots of unity $\omega \in \PrePer(x^d + c, K)$ with a root of unity preimage. We therefore make the following definition.

\begin{defin}
    A {\bf height-zero skeleton} is a skeleton all of whose vertices correspond to $0$ or roots of unity and for which there are at most two roots of unity with a preimage which is also a root of unity.
\end{defin}

Note that, up to graph isomorphism, there are only finitely many height-zero skeleta. Thus, to prove Theorem~\ref{thm:prep-classification}, one could list all such graphs and then, for each graph $G$ in the list, determine whether there exist a number field $K$, a degree $d > D_1(K)$, and an element $c \in K^\times$ such that $\scrS(x^d + c, K)$ is isomorphic to $G$. This is essentially the approach we take, with some shortcuts included so we can more quickly dispense with some of the graphs.

\begin{notation}
Throughout the rest of this section, the symbols $\omega_i$ are reserved for roots of unity. More specifically, we will use $\omega_1$ and $\omega_2$ for roots of unity which, in a given skeleton, have preimages which are also roots of unity. This explains why, for example, some graphs in Table~\ref{tab:SPs} have $\omega_1$ and $\omega_3$ but not $\omega_2$; in such a graph, $\omega_1$ would be the only root of unity with a root of unity preimage. Likewise, we will often use $\zeta_6$ to represent a choice of a primitive $6$th root of unity, with $\zeta_3 := \zeta_6^2$. For example, if we say ``$\omega_1 = \zeta_6\omega_3$", we mean that the equation is valid {\it for some choice} of a primitive root of unity $\zeta_6$, and all subsequent uses of $\zeta_6^{\pm 1}$ and $\zeta_3^{\pm 1}$ are dependent on that choice.

Note also that we have used $\bullet$ for the sole vertex of graph {\bf(1)a} in Table~\ref{tab:SPs}, rather than $\omega$ or $0$ as in the other graphs in that table. The reason for this is that for any $d \ge 2$, there exists $c \in K$ such that $\scrS(x^d + c, K)$ is isomorphic to {\bf(1)a} but for which the fixed point is neither a root of unity nor $0$.
\end{notation}

Before we begin discussing specific graphs, we record the following elementary lemma that will be useful for ruling out certain dynamical behavior.

\begin{lem}\label{lem:w1w2w3}
Let $d \ge 2$, let $K$ be a number field, and let $\omega_1,\omega_2,\omega_3 \in \mu_K$. Let $\phi(z) := z^d + \omega_3$ and suppose that $\omega_1,\omega_2 \in \mu_K \cap \phi(\mu_K)$ are preperiodic for $\phi$. If $\phi(\omega_3) = \phi(\omega_i)$ for at least one $i \in \{1,2\}$, then $\phi(\omega_1) = \phi(\omega_2) = \phi(\omega_3)$.
\end{lem}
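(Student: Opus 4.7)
The plan is to reduce to cases based on whether $\omega_1$ and $\omega_2$ coincide, and in the distinct case, exploit the rigid description of $\phi(\mu_K) \cap \mu_K$ provided by Corollary~\ref{cor:two-images}.

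First I would dispose of the trivial case $\omega_1 = \omega_2$, in which $\phi(\omega_1) = \phi(\omega_2)$ is immediate, so combined with the hypothesis that $\phi(\omega_3) = \phi(\omega_i)$ for at least one $i$, we immediately conclude $\phi(\omega_1) = \phi(\omega_2) = \phi(\omega_3)$.

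Next, assume $\omega_1 \neq \omega_2$. Since $\omega_1, \omega_2 \in \phi(\mu_K) \cap \mu_K$ are distinct and $c = \omega_3 \in \mu_K$, Corollary~\ref{cor:two-images} gives us the explicit description
\[
\omega_1 = \zeta_6\, \omega_3, \qquad \omega_2 = \zeta_6^{-1}\, \omega_3
\]
for some primitive $6$th root of unity $\zeta_6$. From this I would directly compute
\[
\phi(\omega_1) = \zeta_6^{d}\, \omega_3^d + \omega_3, \qquad \phi(\omega_2) = \zeta_6^{-d}\, \omega_3^d + \omega_3, \qquad \phi(\omega_3) = \omega_3^d + \omega_3.
\]

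Finally, I would invoke the hypothesis that $\phi(\omega_3) = \phi(\omega_i)$ for some $i \in \{1,2\}$. Without loss of generality take $i = 1$; then comparing the two expressions above forces $\zeta_6^{d}\, \omega_3^d = \omega_3^d$, and since $\omega_3 \in \mu_K$ is nonzero this yields $\zeta_6^{d} = 1$. But then also $\zeta_6^{-d} = 1$, so $\phi(\omega_2) = \omega_3^d + \omega_3 = \phi(\omega_3)$ as well. The case $i=2$ is symmetric.

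There is no real obstacle here: once Corollary~\ref{cor:two-images} is invoked to put $\omega_1,\omega_2$ into the rigid form $\zeta_6^{\pm 1}\omega_3$, the conclusion follows from a direct computation showing that one equality among $\phi(\omega_1), \phi(\omega_2), \phi(\omega_3)$ forces $6 \mid d$, which in turn forces the other equality. Note that the preperiodicity hypothesis on $\omega_1,\omega_2$ does not appear to enter the argument directly; it is presumably present because the lemma will be applied in a context where preperiodicity is already in force.
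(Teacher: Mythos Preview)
Your proof is correct and follows essentially the same route as the paper: invoke Corollary~\ref{cor:two-images} to write $\omega_1 = \zeta_6\omega_3$, $\omega_2 = \zeta_6^{-1}\omega_3$, then observe that $\phi(\omega_3) = \phi(\omega_i)$ forces $6 \mid d$, which gives the remaining equality. Your explicit handling of the degenerate case $\omega_1 = \omega_2$ is a small improvement in care over the paper's version, and your observation that the preperiodicity hypothesis is not actually used in the argument is accurate.
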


\begin{proof}
By Corollary~\ref{cor:two-images}, the hypotheses imply that $\omega_1 = \zeta_6\omega_3$ and $\omega_2 = \zeta_6^{-1}\omega_3$. Thus, the fact that $\phi(\omega_3) = \phi(\omega_i)$ for some $i\in\{1,2\}$ implies that $6 \mid d$. It then follows that $\phi(\omega_1) = \phi(\omega_3) = \phi(\omega_2)$.
\end{proof}

\subsection{Graphs with $3$-cycles}
We begin with classifying the skeleta that contain a $3$-cycle, since these are the easiest to deal with. 
\begin{prop}\label{prop:3cycle}
Let $d \ge 2$, let $K$ be a number field, let $c \in K^\times$, and let $\phi(x) = x^d + c$. Suppose that $\scrS(\phi, K)$ is a height-zero skeleton. If $\phi$ has a $K$-rational point of period $3$, then the skeleton $\scrS(\phi, K)$ is isomorphic to graph {\bf(3)} from Table~\ref{tab:SPs}. Furthermore, this is the case if and only if the following conditions hold: 
	\begin{itemize}
	\item $d \equiv 1 \pmod 6$,
 	\item $\omega_3^{d-1} = \zeta_3$,
	\item $c = \omega_3$,
	\item $\omega_1 = \zeta_6\omega_3$.
	\end{itemize}
\end{prop}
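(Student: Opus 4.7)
The plan is to analyze the structure of any $K$-rational $3$-cycle sitting inside a height-zero skeleton. Every cycle element has height zero, so is either $0$ or a root of unity. I would first rule out the possibility that all three cycle elements are roots of unity: in such a cycle $\omega_a \to \omega_b \to \omega_c \to \omega_a$, each of $\omega_a, \omega_b, \omega_c$ would lie in $\phi(\mu_K) \cap \mu_K$, contradicting the bound $\abs{\phi(\mu_K) \cap \mu_K} \le 2$ from Corollary~\ref{cor:two-images}. Hence the cycle must contain $0$, and after cyclic relabeling takes the form $0 \to \omega_3 \to \omega_1 \to 0$ for roots of unity $\omega_1,\omega_3 \in \mu_K$. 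The relation $\phi(0) = \omega_3$ immediately gives $c = \omega_3$, so $\phi(x) = x^d + \omega_3$.

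The remaining conditions come from $\phi(\omega_3) = \omega_1$ and $\phi(\omega_1) = 0$. The first reads $\omega_1 = \omega_3(\omega_3^{d-1} + 1)$; passing to any complex embedding and using $\abs{\omega_1} = \abs{\omega_3} = 1$ forces $\abs{\omega_3^{d-1} + 1} = 1$, and together with $\abs{\omega_3^{d-1}} = 1$ this pins $\omega_3^{d-1}$ to one of the two primitive cube roots of unity. Choosing a primitive sixth root of unity $\zeta_6$ with $\zeta_3 := \zeta_6^2 = \omega_3^{d-1}$, the identity $\zeta_3 + 1 = \zeta_6$ yields $\omega_1 = \zeta_6 \omega_3$. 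Substituting into the equation $\omega_1^d = -\omega_3$ coming from $\phi(\omega_1) = 0$, together with $\omega_3^d = \omega_3\zeta_3$ and $-1 = \zeta_6^3$, collapses to $\zeta_6^{d+2} = \zeta_6^3$, i.e.\ $d \equiv 1 \pmod 6$.

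For the converse, a direct calculation from the four conditions verifies $\phi(0) = \omega_3$, $\phi(\omega_3) = \omega_3(\zeta_3+1) = \omega_1$, and $\phi(\omega_1) = \zeta_6^d \omega_3^d + \omega_3 = \zeta_6 \cdot \omega_3 \zeta_3 + \omega_3 = \omega_3(\zeta_6^3 + 1) = 0$, so $\{0, \omega_3, \omega_1\}$ is indeed a $3$-cycle. It then remains to argue that $\scrS(\phi, K)$ has no additional vertices beyond these three. Every other preperiodic point in the height-zero portrait lies in a $\mu_{K,d}$-translate of $\omega_1$ or $\omega_3$: the $K$-rational preimages of $0$ and of $\omega_1$ under $\phi$ are exactly $\mu_{K,d}\cdot\omega_1$ and $\mu_{K,d}\cdot\omega_3$ respectively, while the preimages of $\omega_3$ are just $\{0\}$. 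The main thing to check---the part I expect to require the most care---is that these nontrivial translates $\zeta\omega_1,\zeta\omega_3$ with $\zeta\in\mu_{K,d}\setminus\{1\}$ do not themselves admit preperiodic preimages in $K$, as otherwise they would appear as vertices in $\scrS$. A direct computation, invoking Corollary~\ref{cor:two-images} once more to constrain preimages in $\mu_K$ and the unit-circle equations $\abs{\zeta\zeta_6 - 1} = 1$ and $\abs{\zeta - 1} = 1$, shows that any such preimage would force $\zeta_6 \in \mu_{K,d}$ or $\zeta_3 \in \mu_{K,d}$, both of which are ruled out by $d \equiv 1 \pmod 6$. Thus the translates are leaves of $\scrP(\phi, K)$, do not appear in $\scrS(\phi, K)$, and the skeleton is isomorphic to graph~{\bf(3)}.
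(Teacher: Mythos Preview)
Your derivation of the four conditions and the converse verification are correct and essentially parallel the paper's argument (you use the unit-circle constraint directly where the paper cites Lemma~\ref{lem:x+y=c}). The issue is in the maximality step. Your sentence ``Every other preperiodic point in the height-zero portrait lies in a $\mu_{K,d}$-translate of $\omega_1$ or $\omega_3$'' is justified only by computing the preimages of the three cycle elements; it silently assumes that the $3$-cycle is the \emph{only} periodic cycle. But nothing you have written rules out, say, an additional fixed point. Indeed, by Corollary~\ref{cor:two-images} the unique candidate for a second element of $\phi(\mu_K)\cap\mu_K$ is $\omega_2 := \zeta_6^{-1}\omega_3$, and since $d\equiv 1\pmod 6$ one has $\zeta_6^{-1}\notin\mu_{K,d}$ and $\zeta_3^{-1}\notin\mu_{K,d}$, so $\omega_2$ is \emph{not} of the form $\zeta\omega_3$ or $\zeta\omega_1$ with $\zeta\in\mu_{K,d}$. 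Your translate check therefore never touches $\omega_2$, and a hypothetical fixed point there would give a second component of the skeleton that your argument misses.

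The paper closes this gap in one line, and in doing so replaces your entire translate analysis: any extra vertex of $\scrS(\phi,K)$ is a root of unity with a root-of-unity preimage (since $0$ already maps to $\omega_3$), hence must equal $\omega_2=\zeta_6^{-1}\omega_3$ by Corollary~\ref{cor:two-images}; but then
\[
\phi(\omega_2)=\zeta_6^{-d}\omega_3^d+\omega_3=\omega_3(\zeta_6^{-1}\zeta_3+1)=\omega_3(\zeta_6+1)
\]
has positive height, so $\omega_2$ cannot lie in a height-zero portrait. This single computation simultaneously excludes extra cycles and extra tails, so the skeleton is exactly \textbf{(3)}. You can patch your proof by inserting this observation before (or instead of) the translate argument.
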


\begin{proof}
We first show that the four listed conditions are necessary and sufficient for $\scrS(\phi, K)$ to {\it contain} the graph {\bf(3)}, and after that we show that the skeleton cannot properly contain {\bf(3)}. It is straightforward to show that the four conditions imply that $\{0, \omega_3, \omega_1\}$ forms a $3$-cycle for $\phi$, so we focus on showing that these four conditions are necessary.

Thus, suppose that $\phi$ has a $K$-rational point of period $3$. We first note that the $3$-cycle must contain $0$, since otherwise the cycle would consist solely of roots of unity, contradicting Corollary~\ref{cor:two-images}. In other words, the $3$-cycle must be as shown in {\bf(3)}.

We begin with the simple observation that $c = \phi(0) = \omega_3$. Next, since $\phi(\omega_3) = \omega_1$, we have  
	\[
		\omega_3^d + \omega_3 = \omega_1, \quad\text{hence}\quad \omega_3 = \omega_1 - \omega_3^d,
	\]
which implies that $\omega_1 = \zeta_6\omega_3$ and $-\omega_3^d = \zeta_6^{-1}\omega_3$ by Lemma~\ref{lem:x+y=c}; in other words,
	\[
		\omega_1 = \zeta_6\omega_3 \quad\text{and}\quad \omega_3^{d-1} = -\zeta_6^{-1} = \zeta_3.
	\]
Finally, since $\phi(\omega_1) = 0$, we have that 
	\[
		\zeta_6^d\omega_3^d + \omega_3 = 0, \quad\text{hence}\quad \zeta_3 = \omega_3^{d-1} = -\zeta_6^{-d} = \zeta_6^{3-d},
	\]
which implies that $d \equiv 1 \pmod 6$.

It remains to show that if $\scrS(\phi, K)$ is a height-zero skeleton with a $3$-cycle, then $\scrS(\phi, K)$ is precisely {\bf(3)}.
Suppose for contradiction that $\scrS(\phi,K)$ \textit{properly} contains the $3$-cycle from {\bf(3)}. Then there must be an additional root of unity $\omega_2$ appearing in the skeleton, and $\omega_2$ must itself be the image of a root of unity under $\phi$. Since we already have $\omega_1 = \zeta_6\omega_3$, we would then have $\omega_2 = \zeta_6^{-1}\omega_3$ by Corollary~\ref{cor:two-images}, thus
	\begin{align*}
		\phi(\omega_2)
			&= \zeta_6^{-d}\omega_3^d + \omega_3\\
			&= \omega_3(\zeta_6^{-1} \omega_3^{d-1} + 1) \qquad \text{(since $d\equiv1 \pmod6$)}\\
			&= \omega_3(\zeta_6^{-1} \zeta_3 + 1)\\
			&= \omega_3(\zeta_6 + 1).
	\end{align*}
But $\zeta_6 + 1$, and therefore $\omega_3(\zeta_6+1)$, has positive height, contradicting our initial assumption that all $K$-rational preperiodic points have height $0$.
\end{proof}

\subsection{Graphs with $2$-cycles}
A height-zero $2$-cycle will take the form $\{0,\omega_3\}$ or $\{\omega_1,\omega_2\}$, so there are two separate cases to consider.

\begin{lem}\label{lem:2-cycles}
Let $d \ge 2$, let $K$ be a number field, let $c \in K^\times$, and let $\phi(x) = x^d + c$.\vspace{.05cm}  
	\begin{enumerate}
	\item[\textup{(1)}]The map $\phi$ has a $2$-cycle of the form $\{\omega_1, \omega_2\}$ if and only if $c = \omega_1 + \omega_2$, $\omega_1^{d-1} = \omega_2^{d-1} = -1$, and $\omega_1 \ne \omega_2$. \vspace{.2cm} 
	\item[\textup{(2)}]The map $\phi$ has a $2$-cycle of the form $\{0, \omega_3\}$ if and only if $c = \omega_3$ and $\omega_3^{d-1} = -1$.
	\end{enumerate}
\end{lem}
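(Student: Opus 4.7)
My plan is to treat the two parts separately, with part (2) handled by a direct computation and part (1) by appealing to Corollary~\ref{cor:two-images} for the nontrivial implication.

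For part (2), the pair $\{0,\omega_3\}$ forms a $2$-cycle for $\phi$ if and only if $\phi(0)=\omega_3$ and $\phi(\omega_3)=0$. The first equation is simply $c=\omega_3$, and substituting into the second yields $\omega_3^d+\omega_3=0$, which, since $\omega_3\in\mu_K$ is nonzero, is equivalent to $\omega_3^{d-1}=-1$. Both implications of the biconditional are immediate from this observation, so no separate verification of the converse is needed.

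For part (1), the forward direction is where I would invoke the machinery already built. Suppose $\{\omega_1,\omega_2\}$ is a $2$-cycle with $\omega_1\ne\omega_2$. Then $\omega_1$ and $\omega_2$ are distinct elements of $\phi(\mu_K)\cap\mu_K$, so Corollary~\ref{cor:two-images} gives $c=\omega_1+\omega_2$. Plugging this into the equation $\omega_1^d+c=\omega_2$ collapses the right-hand side to $-\omega_1$, and dividing by $\omega_1$ yields $\omega_1^{d-1}=-1$; the symmetric computation gives $\omega_2^{d-1}=-1$. For the converse, assume the three stated conditions and just compute directly: $\phi(\omega_1)=\omega_1^d+\omega_1+\omega_2=-\omega_1+\omega_1+\omega_2=\omega_2$, and similarly $\phi(\omega_2)=\omega_1$, so together with $\omega_1\ne\omega_2$ this exhibits $\{\omega_1,\omega_2\}$ as a genuine $2$-cycle.

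There is no real obstacle here; the only nontrivial input is Corollary~\ref{cor:two-images}, which restricts the possible preimages of roots of unity under $\phi$ and thereby forces the relation $c=\omega_1+\omega_2$. Once that is in hand, both parts reduce to one-line algebraic identities. I would present the proof as four short verifications (two directions each for the two statements), possibly combined since each direction is only a sentence long.
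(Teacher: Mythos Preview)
Your proof is correct and follows essentially the same approach as the paper: both invoke Corollary~\ref{cor:two-images} to obtain $c=\omega_1+\omega_2$ in part~(1) and then reduce to elementary algebra. The only minor difference is that the paper derives $\omega_i^{d-1}=-1$ by writing $c=\omega_1-\omega_2^d=\omega_2-\omega_1^d$ and appealing to the uniqueness statement of Lemma~\ref{lem:x+y=c}, whereas your direct substitution of $c=\omega_1+\omega_2$ into $\phi(\omega_1)=\omega_2$ is a slight streamlining that avoids that extra reference.
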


\begin{proof}
For part (1), suppose that $\{\omega_1,\omega_2\}$ is a $2$-cycle for $\phi$. That $c = \omega_1 + \omega_2$ follows from Corollary~\ref{cor:two-images}, and we must certainly have $\omega_1 \ne \omega_2$.
Now, since
	\[
		c = \omega_1 - \omega_2^d = \omega_2 - \omega_1^d,
	\]
it follows from Lemma~\ref{lem:x+y=c} that $\omega_1 = -\omega_1^d$ and $\omega_2 = -\omega_2^d$, hence $\omega_1^{d-1} = \omega_2^{d-1} = -1$. The converse, that these conditions imply that $\omega_1$ and $\omega_2$ form a $2$-cycle for $\phi$, is straightforward to verify.

For part (2), we simply note that $c = \phi(0) = \omega_3$ and
	\[
		0 = \phi(\omega_3) = \omega_3^d + \omega_3,
	\]
hence $\omega_3^{d-1} = -1$. Once again, we omit the straightforward proof of the converse.
\end{proof}

\begin{prop}\label{prop:most2cycles}
Let $K$ be a number field, let $c \in K^\times$, and suppose that $\scrS(x^d + c, K)$ is a height-zero skeleton that contains the graph {\bf(2)b} from Table~\ref{tab:SPs}, so that $c = \omega_3$ and $\omega_3^{d-1} = -1$ for some $\omega_3 \in \mu_K$. Then $\scrS(x^d + c, K)$ is isomorphic to one of the following: \vspace{.1cm}
	\begin{enumerate}
		\item[\textup{(1)}] {\bf(2,2)}, if and only if \vspace{.05cm}
			\begin{itemize}
			\item $d \equiv 1 \pmod 6$,\vspace{.05cm}
			\item $\omega_1 = \zeta_6\omega_3$, and
			\item $\omega_2 = \zeta_6^{-1}\omega_3$;
			\end{itemize} \vspace{.15cm}
		\item[\textup{(2)}] {\bf(2,1,1)}, if and only if
                \vspace{.05cm}
			\begin{itemize}
			\item $d \equiv 5 \pmod 6$,\vspace{.05cm}
			\item $\omega_1 = \zeta_6\omega_3$, and\vspace{.05cm}
			\item $\omega_2 = \zeta_6^{-1}\omega_3$;
			\end{itemize}\vspace{.15cm}
		\item[\textup{(3)}] {\bf(2)c}, if and only if
            \vspace{.05cm}
			\begin{itemize}
			\item $d \equiv 0 \pmod 6$,\vspace{.05cm}
			\item $\omega_1 = \zeta_6\omega_3$,\vspace{.05cm}
                \item $\omega_2 = \zeta_6^{-1}\omega_3$,\vspace{.05cm}
			\item $\omega_4^d = \zeta_3\omega_3$, and\vspace{.05cm}
			\item $\omega_5^d = \zeta_3^{-1}\omega_3$;
			\end{itemize} \vspace{.15cm}
		\item[\textup{(4)}] {\bf(2)b}, otherwise.
	\end{enumerate}
\end{prop}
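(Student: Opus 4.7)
Since $(2)b \subseteq \scrS(\phi,K)$, Lemma~\ref{lem:2-cycles}(2) gives $c = \omega_3$ and $\omega_3^{d-1} = -1$, so $\omega_3^d = -\omega_3$. Because $\omega_3$'s only $K$-rational preimage under $\phi$ is $0$, any vertex of $\scrS(\phi,K)$ beyond $\{0,\omega_3\}$ must be a preimage of $0$ distinct from $\omega_3$, or an iterated preimage thereof. Every such vertex $\omega$ is a height-zero preperiodic point, hence a root of unity, and by the skeleton definition it carries a $K$-rational preimage, which is itself a root of unity. Thus $\omega \in \phi(\mu_K)\cap\mu_K$, and by Corollary~\ref{cor:two-images} there are at most two such vertices, call them $\omega_1,\omega_2$. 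When they are distinct, Corollary~\ref{cor:two-images} combined with $c = \omega_3 \in \mu_K$ forces $\omega_1 = \zeta_6\omega_3$ and $\omega_2 = \zeta_6^{-1}\omega_3$ for some primitive $6$th root of unity $\zeta_6 \in K$.

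Using $\omega_3^d = -\omega_3$ and the identity $1 - \zeta_6^{\pm 1} = \zeta_6^{\mp 1}$, compute $\phi(\omega_1) = (1-\zeta_6^d)\omega_3$. This value cannot equal $\omega_3$ (that would force $\zeta_6^d = 0$) and, by Corollary~\ref{cor:two-images}, cannot be a root of unity outside $\{\omega_1,\omega_2\}$, so $\phi(\omega_1)\in\{0,\omega_1,\omega_2\}$. A direct calculation yields the trichotomy
\[
\phi(\omega_1) = 0 \iff d \equiv 0,\quad \phi(\omega_1) = \omega_1 \iff d \equiv 5,\quad \phi(\omega_1) = \omega_2 \iff d \equiv 1 \pmod 6,
\]
and the same residue governs $\phi(\omega_2)$, ruling out mixed configurations; in particular, if $\phi(\mu_K)\cap\mu_K$ is a singleton $\{\omega_1\}$, the same constraints force $\omega_1 = \zeta_6^{\pm 1}\omega_3$ with $\zeta_6\in K$, producing the second element automatically.

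Translating each residue into skeleton structure: $d \equiv 1\pmod 6$ yields the disjoint $2$-cycle $\omega_1 \leftrightarrow \omega_2$ of (2,2); $d \equiv 5\pmod 6$ makes $\omega_1,\omega_2$ fixed points, giving (2,1,1); and $d \equiv 0\pmod 6$ sends both $\omega_1,\omega_2$ to $0$, with the skeleton realizing (2)c exactly when $\omega_1,\omega_2$ admit $K$-rational preimages $\omega_4,\omega_5$ satisfying $\omega_4^d = (\zeta_6-1)\omega_3 = \zeta_3\omega_3$ and $\omega_5^d = \zeta_3^{-1}\omega_3$. A short order analysis---using that $\omega_3^{d-1}=-1$ together with $6\mid d$ forces $\gcd\bigl(d,\ord(\omega_3)\bigr)=2$---shows that $\omega_5$ lies in the subgroup of $\mu_{\bar K}$ generated by $\omega_3$ and $\omega_4$, so the ``one-sided'' situation $\omega_4\in K$ yet $\omega_5\notin K$ cannot occur.

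In every remaining configuration---bad residue modulo $6$, $\zeta_6\notin K$, or the required $d$-th roots absent---no extra skeleton vertex appears and $\scrS(\phi,K)$ equals (2)b. The main subtlety is that even when $K$ contains preimages of $0$ beyond $\omega_3$ but none of them admits a preimage of its own, \emph{no} collapsed preimage vertex for $0$ is introduced, because $\omega_3 \in \scrS$ already sits above $0$ and defeats the second clause of the skeleton definition; this is what keeps the ``otherwise'' branch honestly equal to (2)b. The converse implications in each case are routine verifications, with the nonexistence of further extensions (for instance, an additional preimage level above (2,2) or (2,1,1)) once again following from Corollary~\ref{cor:two-images} applied to any would-be third element of $\phi(\mu_K)\cap\mu_K$.
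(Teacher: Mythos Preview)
Your argument is correct and takes a genuinely different route from the paper's, but one sentence needs correcting. The claim ``any vertex of $\scrS(\phi,K)$ beyond $\{0,\omega_3\}$ must be a preimage of $0$ distinct from $\omega_3$, or an iterated preimage thereof'' is false as written: in the $(2,2)$ and $(2,1,1)$ outcomes the extra vertices $\omega_1,\omega_2$ sit in a separate periodic component and never reach $0$. This does no damage, because the sentence is not what you actually use. Your real argument is the next line: any \emph{named} vertex $\omega\notin\{0,\omega_3\}$ has a root-of-unity preimage (the only alternative, preimage $0$, would force $\omega=\phi(0)=\omega_3$), hence $\omega\in\phi(\mu_K)\cap\mu_K\subseteq\{\zeta_6^{\pm1}\omega_3\}$ by Corollary~\ref{cor:two-images}. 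Just delete the misleading sentence.

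On the comparison: the paper argues by graph enumeration. It lists the two minimal height-zero skeleta strictly containing $(2)b$ that are not already among the four targets (Figure~\ref{fig:period2}) and shows each of them forces the larger graph $(2,1,1)$ or $(2)c$; necessity of the bulleted conditions is then verified one graph at a time. You instead compute $\phi(\omega_1)=(1-\zeta_6^{\,d})\omega_3$ once, use the height-zero hypothesis to confine it to $\{0,\omega_1,\omega_2\}$, and read off the trichotomy on $d\bmod 6$ in a single stroke. Your route is more uniform and makes the congruence mechanism transparent; the paper's case-by-case treatment is more pedestrian but renders the ``no half-skeleton'' step fully explicit---where you invoke an order argument to place $\omega_5$ in $\langle\omega_3,\omega_4\rangle$, the paper simply writes down $\omega_5=\omega_3^2\omega_4^{-1}$ and checks $\phi(\omega_5)=\omega_2$ directly. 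Both proofs rest on the same two tools, Lemma~\ref{lem:x+y=c} and Corollary~\ref{cor:two-images}.
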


\begin{proof}
In each case, it is easy to verify that if the conditions in each bulleted list are satisfied, then $\scrS(x^d + c, K)$ {\it contains} the corresponding graph. But {\bf(2,2)}, {\bf(2,1,1)}, and {\bf(2)c} are maximal height-zero skeleta, in the sense that there is no height-zero skeleton properly containing any of them, so it follows that each of the first three bulleted lists of conditions is sufficient for $\scrS(x^d + c, K)$ to be {\it isomorphic to} the corresponding graph. We now prove necessity.
Write $\phi(x) = x^d + c$.

First, suppose $\scrS(\phi, K)$ is isomorphic to {\bf(2,2)}. By Lemma~\ref{lem:2-cycles}, we must have the following:
	\begin{itemize}
	\item $\omega_3 = c = \omega_1 + \omega_2$,
	\item $\omega_1^{d-1} = \omega_2^{d-1} = \omega_3^{d-1} = -1$, and
	\item $\omega_1 \ne \omega_2$.
	\end{itemize}
By Corollary~\ref{cor:two-images}, we have $\omega_1 = \zeta_6\omega_3$ and $\omega_2 = \zeta_6^{-1}\omega_3$,
so it remains only to show that $d \equiv 1 \pmod 6$. Indeed, using the fact that $\omega_1^{d-1} = \omega_3^{d-1}$, we have $\zeta_6^{d-1}\omega_3^{d-1} = \omega_3^{d-1}$, hence $\zeta_6^{d-1} = 1$,
and therefore $d \equiv 1 \pmod 6$.

Next, suppose $\scrS(\phi, K)$ is isomorphic to {\bf(2,1,1)}. Once again, we have $\omega_1 = \zeta_6\omega_3$ and $\omega_2 = \zeta_6^{-1}\omega_3$ by Corollary~\ref{cor:two-images}, and since $\phi(\omega_1) = \omega_1$, we have
	\[
		\zeta_6^d\omega_3^d + \omega_3 = \zeta_6\omega_3, \quad\text{hence}\quad -1 = \omega_3^{d-1} = \zeta_6^{-d}(\zeta_6 - 1) = \zeta_6^{-d}\zeta_3 = \zeta_6^{2-d}.
	\]
Therefore, $d \equiv 5 \pmod 6$.

We now suppose that $\scrS(\phi, K)$ is isomorphic to {\bf(2)c}. Yet again, it follows from Corollary~\ref{cor:two-images} that $\omega_1 = \zeta_6\omega_3$ and $\omega_2 = \zeta_6^{-1}\omega_3$. Since $\omega_1$ and $\omega_3$ are preimages of a common point (i.e., $0$), we have $\zeta_6^d\omega_3^d = \omega_1^d = \omega_3^d$, and hence $6 \mid d$. The fact that $\phi(\omega_4) = \omega_1$ implies that
	\[
		\omega_4^d + \omega_3 = \omega_1 = \zeta_6\omega_3, \quad\text{hence}\quad \omega_4^d = (\zeta_6 - 1)\omega_3 = \zeta_3\omega_3.
	\]
The same argument shows that $\omega_5^d = (\zeta_6^{-1} - 1)\omega_3 = \zeta_3^{-1}\omega_3$.

To complete the proof, it remains to show that the four graphs listed in the proposition---that is, {\bf(2,2)}, {\bf(2,1,1)}, {\bf(2)c}, and {\bf(2)b}---are the only graphs containing {\bf(2)b} that can be realized as $\scrS(x^d + c, K)$.

\begin{figure}
\begin{subfigure}[t]{0.45\textwidth}
\centering
\begin{tikzpicture}[scale=.9]
	\tikzset{vertex/.style = {}}
	\tikzset{every loop/.style={min distance=10mm,in=45,out=-45,->}}
	\tikzset{edge/.style={decoration={markings,mark=at position 1 with %
    {\arrow[scale=1.5,>=stealth]{>}}},postaction={decorate}}}
	%
	%
	\node[vertex] (2a) at  (0, 0) {$0$};
	\node[vertex] (2b) at  (2, 0) {$\omega_3$};
	\node[vertex] (1a) at (4, 0) {$\omega_1$};
	%
	%
	\draw[-{Latex[length=1.5mm,width=2mm]}] (2a) to[bend right=30] (2b);
	\draw[-{Latex[length=1.5mm,width=2mm]}] (2b) to[bend right=30] (2a);
	\draw[-{Latex[length=1.5mm,width=2mm]}] (1a) to[out=310, in=50, looseness=7] (1a);
\end{tikzpicture}
	\caption{}
	\label{fig:period2a}
\end{subfigure}
\vline
\begin{subfigure}[t]{0.45\textwidth}
\centering
\begin{tikzpicture}[scale=.9]
	\tikzset{vertex/.style = {}}
	\tikzset{every loop/.style={min distance=10mm,in=45,out=-45,->}}
	\tikzset{edge/.style={decoration={markings,mark=at position 1 with %
    {\arrow[scale=1.5,>=stealth]{>}}},postaction={decorate}}}
	%
	%
	\node[vertex] (2a) at  (2, 0) {$0$};
	\node[vertex] (2b) at  (4, 0) {$\omega_3$};
	\node[vertex] (2-1) at (0, 0) {$\omega_1$};
	\node[vertex] (2-2) at (-2, 0) {$\omega_4$};
	\node[vertex] (q) at (-2, -1.15) {}; 
	%
	%
	\draw[-{Latex[length=1.5mm,width=2mm]}] (2a) to[bend right=30] (2b);
	\draw[-{Latex[length=1.5mm,width=2mm]}] (2b) to[bend right=30] (2a);
	\draw[-{Latex[length=1.5mm,width=2mm]}] (2-1) to (2a);
	\draw[-{Latex[length=1.5mm,width=2mm]}] (2-2) to (2-1);
\end{tikzpicture}
	\caption{}
	\label{fig:period2b}
\end{subfigure}
	\caption{Two graphs containing {\bf(2)b}}
	\label{fig:period2}
\end{figure}
Any height-zero skeleton that contains {\bf(2)b} but is not isomorphic to any of the four listed in the proposition must contain one of the two appearing in Figure~\ref{fig:period2}. We show that if $\scrS(x^d + c, K)$ contains the graph in Figure~{\sc\ref{fig:period2a}} (resp., Figure~{\sc\ref{fig:period2b}}), then $\scrS(x^d + c, K)$ must actually contain---and therefore be isomorphic to---the graph {\bf (2,1,1)} (resp., {\bf (2)c}).

Let $\phi(x) = x^d + c$, and suppose that $\scrS(\phi, K)$ contains the graph in Figure~{\sc\ref{fig:period2a}}. 
The relation $\phi(\omega_1) = \omega_1$ implies that
	\[
		\omega_1^d + \omega_3 = \omega_1, \quad\text{hence}\quad \omega_3 = \omega_1 - \omega_1^d,
	\]
and therefore $\omega_1 = \zeta_6\omega_3$ by Lemma~\ref{lem:x+y=c}. The same argument as when we considered the graph {\bf(2,1,1)} above shows that $d \equiv 5 \pmod 6$.

We now claim that $\omega_2 := \zeta_6^{-1}\omega_3 \ne \omega_1$ is necessarily a fixed point for $\phi$ as well, hence $\scrS(\phi, K)$ contains {\bf(2,1,1)}. Indeed, we have
	\begin{align*}
		\phi(\omega_2)
			&= \omega_2^d + \omega_3\\
			&= \zeta_6^{-d}\omega_3^d + \omega_3\\
			&= \zeta_6^{-5}(-\omega_3) + \omega_3\\
			&= (-\zeta_6 + 1)\omega_3\\
			&= \zeta_6^{-1}\omega_3 = \omega_2.
	\end{align*}
This shows that if $\scrS(\phi, K)$ contains the graph from Figure~{\sc\ref{fig:period2a}}, then it must actually be isomorphic to {\bf(2,1,1)}.

Now suppose that $\scrS(\phi, K)$ contains the graph from Figure~{\sc\ref{fig:period2b}}. Since $\phi(\omega_4) = \omega_1$, we have
	\[
		\omega_3 = \omega_1 - \omega_4^d,
	\]
so $\omega_1 = \zeta_6\omega_3$ and $\omega_4^d = -\zeta_6^{-1}\omega_3 = \zeta_3\omega_3$. On the other hand, $\omega_1$ and $\omega_3$ have the same image under $\phi$, so $6 \mid d$. If we set $\omega_2 := \zeta_6^{-1}\omega_3 \ne \omega_1$, then because $6 \mid d$ we have $\phi(\omega_2) = \phi(\omega_3) = 0$. Finally, if we set $\omega_5 := \omega_3^2\omega_4^{-1}$, we have
	\begin{align*}
		\phi(\omega_5)
			&= \omega_3^{2d}\omega_4^{-d} + \omega_3\\
			&= \omega_3^{2d}\cdot\zeta_3^{-1}\omega_3^{-1} + \omega_3\\
			&= (\zeta_3^{-1}\omega_3^{2(d-1)} + 1)\omega_3\\
			&= (\zeta_3^{-1} + 1)\omega_3\\
			&= \zeta_6^{-1}\omega_3 = \omega_2.
	\end{align*}
Therefore, $\scrS(\phi, K)$ contains---hence is isomorphic to---the graph {\bf(2)c}.
\end{proof}

To finish our discussion of graphs with $2$-cycles, we need to show that {\bf (2)a} is the only graph with a $2$-cycle not listed in Proposition~\ref{prop:most2cycles} that can be realized as $\scrS(x^d + c, K)$. Since a skeleton with a $2$-cycle must contain either {\bf(2)a} or {\bf(2)b}, it suffices to prove the following:

\begin{prop}\label{prop:(2)a}
Let $K$ be a number field, let $c \in K^\times$, and suppose that $\scrS(x^d + c, K)$ is a height-zero skeleton that contains the graph {\bf(2)a} from Table~\ref{tab:SPs}, so that $c = \omega_1 + \omega_2 \ne 0$ and $\omega_1^{d-1} = \omega_2^{d-1} = -1$ for some distinct $\omega_1,\omega_2 \in \mu_K$. Then $\scrS(x^d + c, K)$ is isomorphic to one of the following:
	\begin{enumerate}
		\item[\textup{(1)}] {\bf(2,2)}, if and only if \vspace{.05cm} 
			\begin{itemize}
			\item $d \equiv 1 \pmod 6$, and \vspace{.05cm} 
			\item $\omega_3 = c$ is a root of unity;
   \vspace{.1cm}  
			\end{itemize}
		\item[\textup{(2)}] {\bf(2)a}, otherwise.
	\end{enumerate}
\end{prop}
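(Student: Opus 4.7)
The plan is to show that any height-zero skeleton strictly containing the $2$-cycle {\bf(2)a} must augment it by exactly the disjoint $2$-cycle $\{0,\omega_3\}$, yielding {\bf(2,2)}. The main leverage is Corollary~\ref{cor:two-images}: the set $\phi(\mu_K)\cap\mu_K$ has at most two elements, and both slots are already occupied by $\omega_1$ and $\omega_2$. This sharply constrains what new vertices and edges can appear in $\scrS(\phi,K)$.

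The first step is to rule out additional preimages of $\omega_1$ or $\omega_2$ in $\scrS$. Since $\omega_2^d=\omega_1-c=-\omega_1$, the $K$-rational preimages of $\omega_1$ are exactly $\{\zeta\omega_2:\zeta\in\mu_{K,d}\}$. For such a vertex $\zeta\omega_2$ with $\zeta\ne 1$ to enter $\scrS$, it must itself possess a $K$-rational preimage, which, by the height-zero hypothesis, is either a root of unity or $0$. A root-of-unity preimage would force $\zeta\omega_2\in\phi(\mu_K)\cap\mu_K=\{\omega_1,\omega_2\}$; but $\zeta\omega_2=\omega_2$ requires $\zeta=1$, and $\zeta\omega_2=\omega_1$ would make $\omega_1$ a preimage of itself, contradicting that it lies in a nontrivial $2$-cycle. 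The preimage $0$ would give $c=\zeta\omega_2$, hence $\omega_1=(\zeta-1)\omega_2$; this forces $|\zeta-1|=1$, so $\zeta\in\{\zeta_6,\zeta_6^{-1}\}$, which requires $6\mid d$. Combined with the identity $\zeta_6-1=\zeta_3$ and the relations $\omega_1^{d-1}=\omega_2^{d-1}=-1$, a short computation then yields $3\mid d-1$, incompatible with $6\mid d$. The analysis for $\omega_2$ is symmetric, so any extension of $\scrS$ must arise from a disjoint connected component.

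The second step is to rule out every additional cycle except $\{0,\omega_3\}$. Periods exceeding $3$ are excluded by Theorem~\ref{thm:uniform-bound}(1); a $3$-cycle would, by Proposition~\ref{prop:3cycle}, force $\scrS\cong$ {\bf(3)}, precluding the already-present $2$-cycle. A hypothetical $2$-cycle of two distinct roots of unity $\{\omega_4,\omega_5\}$ would enlarge $\phi(\mu_K)\cap\mu_K$ beyond $\{\omega_1,\omega_2\}$, violating Corollary~\ref{cor:two-images}; and an additional fixed point $\omega_4\in\mu_K$ would satisfy $\omega_4-\omega_4^d=c=\omega_1+\omega_2$, whence Lemma~\ref{lem:x+y=c} forces $\omega_4\in\{\omega_1,\omega_2\}$, a contradiction. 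The only remaining possibility is the $2$-cycle $\{0,\omega_3\}$. By Lemma~\ref{lem:2-cycles}(2) this demands $c=\omega_3\in\mu_K$ and $\omega_3^{d-1}=-1$; combined with $c=\omega_1+\omega_2$, Lemma~\ref{lem:x+y=c} yields $\omega_1=\zeta_6\omega_3$ and $\omega_2=\zeta_6^{-1}\omega_3$, and substituting into $\omega_1^{d-1}=-1$ gives $\zeta_6^{d-1}=1$, i.e., $d\equiv 1\pmod 6$.

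For the converse, if $c\in\mu_K$ and $d\equiv 1\pmod 6$, setting $\omega_3:=c$ and reversing the above calculation yields $\omega_3^{d-1}=-1$, hence $\phi(\omega_3)=0$ and $\phi(0)=\omega_3$; rerunning the preimage analysis of the first step (again crucially using $6\nmid d$) confirms no further vertices attach to $0$ or $\omega_3$, so $\scrS\cong$ {\bf(2,2)}. When at least one of the two conditions fails, Steps 1 and 2 together leave $\scrS\cong$ {\bf(2)a}. The main obstacle is the $|\zeta-1|=1$ case analysis that rules out $0$ as a new preimage: the divisibility constraints on $d$ imposed by $\zeta\in\mu_{K,d}$ must be played off against the relations $\omega_i^{d-1}=-1$, and it is this numerical interplay that ultimately produces the clean dichotomy between cases (1) and (2).
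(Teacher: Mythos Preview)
Your argument is correct. One small typo: you write $\omega_2^d=\omega_1-c=-\omega_1$, but in fact $\omega_1-c=-\omega_2$ (equivalently $\omega_2^{d-1}=-1$); fortunately the next clause, that the $K$-rational preimages of $\omega_1$ are the $\zeta\omega_2$, is right, so nothing downstream is affected. Also, your appeal to Theorem~\ref{thm:uniform-bound}(1) for periods exceeding $3$ is slightly loose, since that statement carries the hypothesis $d>D_1(K)$; what you really want is the height-zero argument \emph{inside} its proof, or more simply the observation that in any additional cycle every root of unity with a root-of-unity predecessor must already lie in $\{\omega_1,\omega_2\}$, which immediately forces the extra cycle to have length at most $2$.

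The route is genuinely different from the paper's. The paper dispatches the possibility that $0$ attaches to the $\{\omega_1,\omega_2\}$ component (the graph $0\to\omega_3\to\omega_1\leftrightarrow\omega_2$) in one line via Lemma~\ref{lem:w1w2w3}, and it defers both the sufficiency direction and the ``no further attachments to $\{0,\omega_3\}$'' check to Proposition~\ref{prop:most2cycles}. You instead carry out the exclusion by hand: from $c=\zeta\omega_2$ you extract $|\zeta-1|=1$, hence $\zeta\in\{\zeta_6^{\pm1}\}$ and $6\mid d$, and then combine $(\zeta-1)^{d-1}=\omega_1^{d-1}/\omega_2^{d-1}=1$ with $\zeta_6-1=\zeta_3$ to force $3\mid d-1$, a contradiction. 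This buys you a self-contained proof that does not invoke Lemma~\ref{lem:w1w2w3} or the prior classification of skeleta containing {\bf(2)b}, at the cost of a longer case analysis; the paper's version is shorter precisely because it cashes in those earlier results.
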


\begin{proof}
Based on what was already done in the proof of Proposition~\ref{prop:most2cycles}, all we need to do is show that if $\scrS(x^d + c, K)$ contains {\bf(2)a} but is not isomorphic to {\bf(2,2)}, then actually $\scrS(x^d + c, K)$ is isomorphic to {\bf(2)a}.

Since {\bf(2)a} already has two roots of unity with root of unity preimages, any height-zero skeleton properly containing {\bf(2)a} must contain $0$; moreover, we cannot have $0$ as a fixed point (since $c \ne 0$), so $0$ must appear in its own $2$-cycle, in which case {\bf(2,2)} is the full skeleton, or $0$ must appear in the same component as the $2$-cycle $\{\omega_1,\omega_2\}$, in which case $\scrS(x^d + c, K)$ would contain the graph illustrated in Figure~\ref{fig:2cycle}. However, the latter possibility is ruled out by Lemma~\ref{lem:w1w2w3}.
\end{proof}

\begin{figure}
\centering
\begin{tikzpicture}[scale=1]
	\tikzset{vertex/.style = {}}
	\tikzset{every loop/.style={min distance=10mm,in=45,out=-45,->}}
	\tikzset{edge/.style={decoration={markings,mark=at position 1 with %
    {\arrow[scale=1.5,>=stealth]{>}}},postaction={decorate}}}
	%
	%
	\node[vertex] (2a) at  (2, 0) {$\omega_1$};
	\node[vertex] (2b) at  (4, 0) {$\omega_2$};
	\node[vertex] (2-1) at (0, 0) {$\omega_3$};
	\node[vertex] (2-2) at (-2, 0) {$0$};
	%
	%
	\draw[-{Latex[length=1.5mm,width=2mm]}] (2a) to[bend right=30] (2b);
	\draw[-{Latex[length=1.5mm,width=2mm]}] (2b) to[bend right=30] (2a);
	\draw[-{Latex[length=1.5mm,width=2mm]}] (2-1) to (2a);
	\draw[-{Latex[length=1.5mm,width=2mm]}] (2-2) to (2-1);
\end{tikzpicture}
	\caption{A graph containing {\bf(2)b}}
	\label{fig:2cycle}
\end{figure}

\subsection{Graphs with fixed points}
Finally, we classify the height-zero skeleta with fixed points. To do this, we frequently make use of the following fact: 
\begin{lem}\label{lem:fixed-points}
Let $d \ge 2$, let $K$ be a number field, let $c \in K^\times$, and let $\phi(x) = x^d + c$.
	\begin{enumerate}
	\item[\textup{(1)}] The map $\phi$ has a fixed point $\omega_1\in\mu_K$ if and only if $c = \omega_1 - \omega_1^d$. \vspace{.1cm} 
	\item[\textup{(2)}] The map $\phi$ has a second fixed point $\omega_2\in\mu_K$ if and only if $\omega_2 = -\omega_1^d$, $\omega_1 = -\omega_2^d$, and $\omega_1\ne\omega_2$. \vspace{.1cm} 
        \item[\textup{(3)}] If $\scrS(\phi,K)$ is a height-zero skeleton with exactly one fixed point $\omega_1$, then $\omega_1$ is the only $K$-rational periodic point for $\phi$.
	\end{enumerate}
\end{lem}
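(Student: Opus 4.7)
Part (1) is immediate, since $\phi(\omega_1) = \omega_1$ rearranges to $c = \omega_1 - \omega_1^d$. For part (2), a second fixed point $\omega_2 \in \mu_K$ with $\omega_2 \neq \omega_1$ gives two modulus-one representations $\omega_1 + (-\omega_1^d) = c = \omega_2 + (-\omega_2^d)$ with $c \neq 0$; by Lemma~\ref{lem:x+y=c} these representations must coincide up to swap, and the inequality $\omega_1 \neq \omega_2$ pins down $\omega_2 = -\omega_1^d$ and symmetrically $\omega_1 = -\omega_2^d$. The reverse direction is a routine verification.

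For part (3), the plan is to show, assuming $\omega_1$ is the unique fixed point in a height-zero skeleton, that no further periodic cycle can coexist. Every periodic point lies in $\{0\}\cup \mu_K$, and the argument at the end of the proof of Theorem~\ref{thm:uniform-bound} already excludes cycles of length $\geq 4$, so it suffices to rule out a second $2$-cycle and any $3$-cycle. A $2$-cycle $\{\omega,\omega'\} \subset \mu_K$ would yield, via Lemma~\ref{lem:2-cycles}(1), a second modulus-one decomposition $c = \omega + \omega'$ distinct as an unordered pair from $\{\omega_1, -\omega_1^d\}$ (since $\omega_1$ cannot belong to a $2$-cycle), contradicting Lemma~\ref{lem:x+y=c}. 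A $3$-cycle entirely in $\mu_K$ would contribute three elements to $\phi(\mu_K)\cap \mu_K$ which together with $\omega_1$ exceed the cap of two given by Corollary~\ref{cor:two-images}; so any $3$-cycle must contain $0$, and Proposition~\ref{prop:3cycle} forces $c = \omega_3$, $d \equiv 1 \pmod 6$, and cycle $0 \to \omega_3 \to \zeta_6\omega_3 \to 0$. Part (1) combined with Lemma~\ref{lem:x+y=c} then yields $\omega_1 \in \{\zeta_6\omega_3, \zeta_6^{-1}\omega_3\}$: the first choice forces $\omega_1$ to lie on the $3$-cycle (whose elements all satisfy $\phi(x)\ne x$), and the second gives
\[
\phi(\omega_1) = \zeta_6^{-d}\omega_3^d + \omega_3 = (\zeta_6 + 1)\omega_3,
\]
which has positive height since $|\zeta_6+1| = \sqrt{3}$, contradicting the height-zero hypothesis.

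The remaining case, a $2$-cycle $\{0,\omega_3\}$, is the main obstacle, since it does not yield an immediate contradiction; instead one must manufacture a second fixed point from the hypotheses. Lemma~\ref{lem:2-cycles}(2) gives $c = \omega_3 \in \mu_K$ with $\omega_3^{d-1} = -1$, so part (1) combined with Lemma~\ref{lem:x+y=c} forces $\omega_1 = \zeta_6^{\epsilon}\omega_3$ for some $\epsilon\in\{\pm 1\}$. Imposing $\phi(\omega_1)=\omega_1$ together with $\omega_3^{d-1} = -1$ pins down $d \equiv 5 \pmod 6$. A short direct computation then verifies that $\omega_2 := \zeta_6^{-\epsilon}\omega_3 \neq \omega_1$ satisfies $\phi(\omega_2) = \omega_2$, producing a second fixed point and contradicting uniqueness. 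This completes the proof.
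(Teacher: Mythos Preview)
Your proof is correct. Parts (1) and (2) match the paper's argument closely (the paper invokes Corollary~\ref{cor:two-images} rather than Lemma~\ref{lem:x+y=c} directly, but this is the same idea). For part (3), your route differs genuinely from the paper's: the paper dispatches the case in one sentence by citing the already-established classification results---Proposition~\ref{prop:3cycle} shows a skeleton with a $3$-cycle is exactly \textbf{(3)} (no fixed points), and Propositions~\ref{prop:most2cycles} and~\ref{prop:(2)a} show a skeleton with a $2$-cycle is one of \textbf{(2)a}, \textbf{(2)b}, \textbf{(2)c}, \textbf{(2,1,1)}, \textbf{(2,2)}, of which only \textbf{(2,1,1)} has fixed points, and it has two. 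You instead handle each cycle type by hand: for a $2$-cycle $\{\omega,\omega'\}\subset\mu_K$ you use the uniqueness in Lemma~\ref{lem:x+y=c} directly; for a $2$-cycle $\{0,\omega_3\}$ you explicitly construct the second fixed point $\zeta_6^{-\epsilon}\omega_3$ (this is exactly the computation hidden inside the proof of Proposition~\ref{prop:most2cycles} for the graph in Figure~{\sc\ref{fig:period2a}}); and for a $3$-cycle through $0$ you re-derive the contradiction rather than simply noting that graph~\textbf{(3)} has no fixed point. Your approach is more self-contained and avoids forward references to the full $2$-cycle classification, at the cost of reproving fragments of those propositions; the paper's approach is much shorter because those propositions are already in hand.
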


\begin{proof}
Part (1) follows immediately from the fact that $\omega_1$ is fixed by $\phi$ if and only if $\omega_1^d + c = \phi(\omega_1) = \omega_1$.

Now suppose that $\phi$ has two fixed points $\omega_1,\omega_2$. That $c = \omega_1 + \omega_2$ follows from Corollary~\ref{cor:two-images}; this then implies that $\omega_2 = -\omega_1^d$. Since $\omega_2$ is a fixed point, we also have $c = \omega_2 - \omega_2^d$, so that $\omega_1 = -\omega_2^d$.
Conversely, if
	\[
		c = \omega_1 - \omega_1^d = \omega_1 + \omega_2 = \omega_2 - \omega_2^d,
	\]
then $\omega_1$ and $\omega_2$ are fixed points for $\phi$.

Part (3) follows from previous statements: if $\scrS(\phi,K)$ is a height-zero skeleton with exactly one fixed point, then $\scrS(\phi, K)$ cannot have points of period greater than $3$ by the proof of part (1) of Theorem~\ref{thm:uniform-bound}, equal to $3$ by Proposition~\ref{prop:3cycle}, or equal to $2$ by Propositions~\ref{prop:most2cycles} and~\ref{prop:(2)a}.
\end{proof}

We now classify all graphs with fixed points that are realized as $\scrS(x^d + c, K)$ for some number field $K$, $d > D_1(K)$, and $c \in K^\times$. We begin by considering those graphs with two fixed points.

\begin{prop}
Let $K$ be a number field, let $c \in K^\times$, and suppose that $\scrS(x^d + c, K)$ is a height-zero skeleton with two distinct fixed points, so that $c = \omega_1 + \omega_2 \ne 0$, $\omega_1^d = -\omega_2$, and $\omega_2^d = -\omega_1$ for some distinct $\omega_1,\omega_2\in\mu_K$. Then $\scrS(x^d + c, K)$ is isomorphic to one of the following:
	\begin{enumerate}
		\item[\textup{(1)}] {\bf(2,1,1)}, if and only if \vspace{.05cm}
			\begin{itemize}
			\item $d \equiv 5 \pmod 6$, and \vspace{.05cm}
			\item $c = \omega_3$ is a root of unity.\vspace{.05cm}
			\end{itemize}
		\item[\textup{(2)}] {\bf(1,1)}, otherwise.
	\end{enumerate}
\end{prop}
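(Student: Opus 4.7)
The plan is to mirror the proof of Proposition~\ref{prop:(2)a}. Since \textbf{(1,1)} already contributes two distinct roots of unity to $\phi(\mu_K)\cap\mu_K$ (where $\phi(x)=x^d+c$), Corollary~\ref{cor:two-images} forces $\phi(\mu_K)\cap\mu_K=\{\omega_1,\omega_2\}$. This simultaneously rules out a third fixed point, a $2$-cycle $\{\omega_1',\omega_2'\}$ of two roots of unity, and any additional root-of-unity vertex whose $\phi$-image is again a root of unity. Proposition~\ref{prop:3cycle} excludes a $3$-cycle, since \textbf{(3)} does not contain \textbf{(1,1)} as a subgraph. Consequently, any proper extension of \textbf{(1,1)} within a height-zero skeleton must involve the vertex $0$.

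For the sufficient direction in case (1), I would verify directly that if $d\equiv 5\pmod 6$ and $c=\omega_3\in\mu_K$ with $\omega_1=\zeta_6\omega_3$ and $\omega_2=\zeta_6^{-1}\omega_3$, then $\{0,\omega_3\}$ is a $2$-cycle by Lemma~\ref{lem:2-cycles}(2), and Proposition~\ref{prop:most2cycles}(2) identifies the full skeleton as \textbf{(2,1,1)}. Conversely, if the skeleton is isomorphic to \textbf{(2,1,1)}, then Proposition~\ref{prop:most2cycles}(2) returns precisely these conditions on $d$ and $c$.

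The main remaining task, and the principal obstacle, is showing that $0$ can enter the skeleton only as part of a $2$-cycle. The vertex $0$ cannot itself be a fixed point since $c=\omega_1+\omega_2\ne 0$, and it cannot lie in a $3$-cycle by the first paragraph. For the strictly preperiodic case, any such configuration would produce a chain $0\mapsto\omega_3=c\mapsto\phi(\omega_3)\in\{\omega_1,\omega_2\}$; longer tails are impossible, since each intermediate image would itself lie in $\phi(\mu_K)\cap\mu_K=\{\omega_1,\omega_2\}$, collapsing the chain. Taking $\phi(\omega_3)=\omega_1$ yields $\omega_3^d=\omega_1^d$, so $\omega_3=\zeta\omega_1$ with $\zeta^d=1$ and $\zeta\ne 1$; combining $\omega_3=c=\omega_1-\omega_1^d$ gives $\omega_1^{d-1}=1-\zeta$. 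Applying Lemma~\ref{lem:x+y=c} (or just $|1-\zeta|=|\omega_1^{d-1}|=1$) narrows $\zeta$ to $\zeta_6^{\pm 1}$, and the fixed-point relation $\omega_2^d=-\omega_1$ then collapses to $\omega_1=\omega_2$, a contradiction. With $0$ thus forced into a $2$-cycle, Proposition~\ref{prop:most2cycles}(2) completes the argument.

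The subtle point throughout is that the skeleton construction collapses $\mu_{K,d}$-orbits, so one must track when a preimage of $\omega_i$—necessarily of the form $\zeta\omega_i$ with $\zeta^d=1$—is actually promoted to its own skeleton vertex rather than being absorbed into $\omega_i$. The critical leverage is that the $\mu_{K,d}$-orbits of $\omega_1$ and $\omega_2$ are disjoint: if $\omega_2=\zeta\omega_1$ with $\zeta^d=1$, then $\omega_2^d=\omega_1^d=-\omega_2$ would contradict $\omega_2^d=-\omega_1$. This ensures that no hidden cross-orbit preimage produces a new skeleton vertex, and the dichotomy between \textbf{(1,1)} and \textbf{(2,1,1)} is the complete classification.
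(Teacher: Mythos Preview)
Your proposal is correct and follows essentially the same strategy as the paper: reduce any proper extension of \textbf{(1,1)} to the case where $0$ is preperiodic, then split into the periodic case (giving the $2$-cycle $\{0,\omega_3\}$ and hence \textbf{(2,1,1)} via Proposition~\ref{prop:most2cycles}) versus the strictly preperiodic case (the chain $0\to\omega_3\to\omega_1$), and eliminate the latter. The only real difference is packaging: the paper dispatches the $0\to\omega_3\to\omega_1$ configuration in one line by invoking Lemma~\ref{lem:w1w2w3} (which immediately gives $\phi(\omega_1)=\phi(\omega_2)$, hence $\omega_1=\omega_2$), whereas you reprove that lemma inline via the computation $\omega_1^{d-1}=1-\zeta$, $\zeta\in\{\zeta_6^{\pm1}\}$, $6\mid d$, and then $\omega_2^d=\omega_1^d$. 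Your final sentence (``collapses to $\omega_1=\omega_2$'') is correct but compresses the key step $6\mid d\Rightarrow\omega_2^d=\omega_1^d$; it would read more cleanly if you simply cited Lemma~\ref{lem:w1w2w3}.
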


\begin{proof}
Any height-zero skeleton properly containing {\bf(1,1)} must either contain (hence be isomorphic to) {\bf(2,1,1)} or contain the graph in Figure~\ref{fig:2FPs}. As the necessary and sufficient conditions for $\scrS(x^d + c, K)$ to be isomorphic to {\bf(2,1,1)} were given in Proposition~\ref{prop:most2cycles}, all that remains is to show is that $\scrS(x^d + c, K)$ cannot contain the graph from Figure~\ref{fig:2FPs}. But this follows immediately from Lemma~\ref{lem:w1w2w3}.
\end{proof}

\begin{figure}
\centering
\begin{tikzpicture}[scale=1]
	\tikzset{vertex/.style = {}}
	\tikzset{every loop/.style={min distance=10mm,in=45,out=-45,->}}
	\tikzset{edge/.style={decoration={markings,mark=at position 1 with %
    {\arrow[scale=1.5,>=stealth]{>}}},postaction={decorate}}}
	%
	%
	\node[vertex] (1a) at (0, 0) {$\omega_1$};
	\node[vertex] (1b) at (2, 0) {$\omega_2$};
	\node[vertex] (1-1) at (-2, 0) {$\omega_3$};
	\node[vertex] (1-2) at (-4, 0) {$0$};
	%
	%
	\draw[-{Latex[length=1.5mm,width=2mm]}] (1a) to[out=310, in=50, looseness=7] (1a);
	\draw[-{Latex[length=1.5mm,width=2mm]}] (1b) to[out=310, in=50, looseness=7] (1b);
	\draw[-{Latex[length=1.5mm,width=2mm]}] (1-1) to (1a);
	\draw[-{Latex[length=1.5mm,width=2mm]}] (1-2) to (1-1);
\end{tikzpicture}
\caption{A graph with two fixed points}
\label{fig:2FPs}
\end{figure}

To complete the proof of Theorem~\ref{thm:prep-classification}, it remains to show that if $\scrS(x^d + c, K)$ is a height-zero skeleton with exactly one fixed point, then $\scrS(x^d + c, K)$ is isomorphic to one of the graphs {\bf(1)a} through {\bf(1)e} in Table~\ref{tab:SPs}. To help organize our proof of this assertion, we break this final case into two parts, depending on whether $\scrS(x^d + c, K)$ contains a graph isomorphic to {\bf(1)c}.

\begin{lem}\label{lem:contains(1)c}
    Let $K$ be a number field, let $c \in K^\times$, and suppose $\scrS(x^d + c, K)$ is a height-zero skeleton. Then $\scrS(x^d + c, K)$ contains a subgraph isomorphic to {\bf(1)c} if and only if
    \begin{itemize}
        \item $6 \mid d$,
        \item $c = \omega_3 = \omega_1 - \omega_1^d$, and
        \item $\omega_1 = \zeta_6\omega_3$.
    \end{itemize}
\end{lem}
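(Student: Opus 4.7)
The plan is to translate the three arrows of graph {\bf(1)c} into three algebraic identities and then extract the three bullet-point conditions using Lemma~\ref{lem:x+y=c} as the main tool. Specifically, {\bf(1)c} encodes a fixed point $\omega_1$ with preimage $\omega_3 \in \mu_K$, whose own preimage is $0$; equivalently, for $\phi(x) = x^d + c$, it encodes the three relations
\[
\phi(0) = \omega_3, \qquad \phi(\omega_3) = \omega_1, \qquad \phi(\omega_1) = \omega_1.
\]
The proof will show these three relations are equivalent to the three bullet-point conditions, together with the automatic distinctness of $0, \omega_3, \omega_1$.

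For the forward direction, the identity $\phi(0) = \omega_3$ immediately forces $c = \omega_3$, and $\phi(\omega_1) = \omega_1$ forces $c = \omega_1 - \omega_1^d$; together these give the second bullet $\omega_3 = c = \omega_1 - \omega_1^d$. Rewriting this as $\omega_1 + (-\omega_1^d) = \omega_3$, all three quantities have absolute value $1$ under any complex embedding, so Lemma~\ref{lem:x+y=c} produces $\omega_1 = \zeta_6 \omega_3$ and $-\omega_1^d = \zeta_6^{-1}\omega_3$ for some primitive sixth root of unity $\zeta_6$; this is the third bullet. Finally, combining $\phi(\omega_3) = \omega_1 = \phi(\omega_1)$ gives $\omega_3^d = \omega_1^d$, and substituting $\omega_1 = \zeta_6\omega_3$ forces $\zeta_6^d = 1$, i.e.\ $6 \mid d$, which is the first bullet.

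For the converse, assume the three bullet conditions. Then $\phi(0) = c = \omega_3$ and $\phi(\omega_1) = \omega_1^d + (\omega_1 - \omega_1^d) = \omega_1$ hold by direct substitution. For the middle arrow $\phi(\omega_3) = \omega_1$, the hypothesis $6 \mid d$ combined with $\omega_1 = \zeta_6\omega_3$ yields $\omega_1^d = \zeta_6^d\omega_3^d = \omega_3^d$, so $\phi(\omega_3) = \omega_3^d + \omega_3 = \omega_1^d + \omega_3 = (\omega_1^d + c) = \phi(\omega_1) = \omega_1$. To confirm this really produces a subgraph isomorphic to {\bf(1)c} and not a degenerate collapse, I will check that $0, \omega_3, \omega_1$ are pairwise distinct: $\omega_3 \neq 0$ since $\omega_3 \in \mu_K$, $\omega_1 \neq \omega_3$ since $\omega_1 = \zeta_6\omega_3$ with $\zeta_6 \neq 1$, and $\omega_1 \neq 0$ for the same reason.

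There is no essential obstacle in this argument; it is a direct unwinding of the arrow relations through Lemma~\ref{lem:x+y=c}. The only subtlety worth flagging is that the divisibility condition $6 \mid d$ is not forced by $\phi(\omega_1) = \omega_1$ alone, but rather by the compatibility $\omega_3^d = \omega_1^d$ coming from $\omega_3$ and $\omega_1$ sharing the image $\omega_1$ under $\phi$; this is also where $6 \mid d$ is used in the converse to identify $\omega_3^d$ with $\omega_1^d$.
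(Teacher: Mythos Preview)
Your proof is correct and follows essentially the same approach as the paper: both derive $c=\omega_3$ from $\phi(0)=\omega_3$, then $c=\omega_1-\omega_1^d$ from the fixed-point relation, apply Lemma~\ref{lem:x+y=c} to obtain $\omega_1=\zeta_6\omega_3$, and finally use $\phi(\omega_3)=\phi(\omega_1)$ to force $\zeta_6^d=1$. The paper dismisses the sufficiency direction as ``not difficult'' and omits the distinctness check, whereas you supply both explicitly, but the argument is otherwise identical.
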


\begin{proof}
Sufficiency of the conditions is not difficult, so we only prove necessity. Thus, let $\phi(x) = x^d + c$, and suppose $\scrS(\phi, K)$ contains {\bf(1)c}.

First, we observe that $c = \phi(0) = \omega_3$. Since $\phi(\omega_1) = \omega_1$, we have that
    \[
        \omega_3 = c = \omega_1 - \omega_1^d,\quad\text{hence}\quad \omega_1 = \zeta_6\omega_3.
    \]
Finally, since $\phi(\omega_3) = \omega_1 = \phi(\omega_1)$, we have
    \[
        \omega_3^d = \omega_1^d = \zeta_6^d\omega_3^d,\quad\text{hence}\quad 6\mid d.\qedhere
    \]
\end{proof}
\begin{prop}\label{prop:not(1)c}
Let $K$ be a number field, let $c \in K^\times$, and suppose that $\scrS(x^d + c, K)$ is a height-zero skeleton with a single fixed point $\omega_1\in \mu_K$, so that $c = \omega_1 - \omega_1^d \ne 0$.
Furthermore, suppose that $\scrS(x^d + c, K)$ does \emph{not} contain a subgraph isomorphic to {\bf(1)c}. Then $\scrS(x^d + c, K)$ is isomorphic to one of the following: \vspace{.05cm}
    \begin{enumerate}
		\item[\textup{(1)}] {\bf(1)b}, if and only if \vspace{.05cm}
			\begin{itemize}
			\item $\omega_2^{d-1} = -1$,\vspace{.05cm}
			\item $\omega_1^d = -\omega_2$, \vspace{.05cm}
			\item $\omega_3^d = -\omega_1$, and \vspace{.05cm}
			\item $\omega_1$, $\omega_2$, and $\omega_3$ are distinct. \vspace{.2cm}
			\end{itemize}
		\item[\textup{(2)}] {\bf(1)a}, otherwise.
    \end{enumerate}
\end{prop}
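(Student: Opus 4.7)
The plan is to show, under the standing hypotheses (height-zero skeleton, unique fixed point $\omega_1$, no $\textbf{(1)c}$ subgraph), that the only possible skeleta are $\textbf{(1)a}$ and $\textbf{(1)b}$, with the latter occurring precisely when the four listed conditions hold. Writing $\phi(x) = x^d + c$, I'll handle the sufficiency direction by direct computation: substituting $c = \omega_1 + \omega_2$ (which follows from $\omega_1^d = -\omega_2$ and the fixed-point relation $c = \omega_1 - \omega_1^d$) together with $\omega_2^{d-1} = -1$ and $\omega_3^d = -\omega_1$ immediately gives $\phi(\omega_2) = -\omega_2 + (\omega_1+\omega_2) = \omega_1$ and $\phi(\omega_3) = -\omega_1 + (\omega_1+\omega_2) = \omega_2$, yielding the chain in $\textbf{(1)b}$.

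For necessity, I'll suppose $\scrS(\phi, K)$ properly extends $\textbf{(1)a}$ and show the four conditions must hold. A proper extension requires that $\omega_1$ have an additional in-skeleton preimage $\omega_2$; this cannot be $0$ (else $c = \omega_1$ forces $\omega_1^d = 0$), and $\omega_2$ cannot have only $0$ as a preimage (else we would find a $\textbf{(1)c}$ subgraph). So there is a root-of-unity preimage $\omega_3$ of $\omega_2$, placing both $\omega_1, \omega_2$ in $\phi(\mu_K) \cap \mu_K$. Corollary~\ref{cor:two-images} then forces $c = \omega_1 + \omega_2$, from which $\omega_2 = -\omega_1^d$ follows; combined with $\omega_2^d = \omega_1^d$ (from $\phi(\omega_2) = \phi(\omega_1)$), this gives $\omega_2^{d-1} = -1$, and $\omega_3^d = \omega_2 - c = -\omega_1$ is then immediate. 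Distinctness of $\omega_1, \omega_2, \omega_3$ follows because any coincidence would produce a second periodic point, contradicting Lemma~\ref{lem:fixed-points}(3).

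What remains---and I expect to be the main obstacle---is showing the skeleton cannot properly extend $\textbf{(1)b}$. The dangerous case is the 4-chain $0 \to \omega_3 \to \omega_2 \to \omega_1$, which does not contain $\textbf{(1)c}$ yet is absent from Table~\ref{tab:SPs}. Most potential extensions are easy to rule out using Corollary~\ref{cor:two-images}: any additional root-of-unity preimage of $\omega_1$ beyond $\omega_2$, or any root-of-unity preimage of $\omega_3$, would place a third root of unity in $\phi(\mu_K) \cap \mu_K$; and a further preimage of $\omega_1$ whose only preimage is $0$ would produce $\textbf{(1)c}$. The subtle case is when $0$ is a preimage of $\omega_3$, i.e., $c = \omega_3$. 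Here the plan is to derive two incompatible congruences on $d$: combining $c = \omega_3 = \omega_1 + \omega_2$ with Lemma~\ref{lem:x+y=c} gives $\omega_1 = \zeta_6 \omega_3$ and $\omega_2 = \zeta_6^{-1} \omega_3$; then $\omega_2^d = \omega_1^d$ forces $\zeta_6^{2d} = 1$, hence $3 \mid d$, while computing $\omega_3^{d-1}$ two ways---once from $\omega_3^d = -\omega_1 = -\zeta_6 \omega_3$, and once from $\omega_1^d = \zeta_3 \omega_3$ (the fixed-point relation rewritten) together with $\omega_1 = \zeta_6 \omega_3$---forces $d \equiv 4 \pmod 6$. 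The contradiction with $3 \mid d$ rules out this final case, so $\scrS(\phi, K)$ is exactly $\textbf{(1)b}$, completing the classification.
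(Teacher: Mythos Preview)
Your proof is correct and follows essentially the same route as the paper's: reduce to the trichotomy $\textbf{(1)a}$, $\textbf{(1)b}$, or the 4-chain $0 \to \omega_3 \to \omega_2 \to \omega_1$, verify the conditions for $\textbf{(1)b}$, and eliminate the 4-chain via a congruence argument built on $\omega_1 = \zeta_6\omega_3$, $\omega_2 = \zeta_6^{-1}\omega_3$. The only real difference is in how the final contradiction is extracted: you compute $\omega_3^{d-1}$ two ways to obtain $d \equiv 4 \pmod 6$, which clashes directly with $3 \mid d$; the paper instead combines $3 \mid d$ with $6 \nmid d$ (from $\phi(\omega_3) \neq \phi(\omega_1)$) to get $d \equiv 3 \pmod 6$, and then pushes further to $\omega_1^d = \omega_1$, forcing $c = 0$. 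Your variant is arguably a touch cleaner since it terminates at the congruence level without the extra step.
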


\begin{proof}
    Let $\phi(x) = x^d + c$, and suppose $\scrS(\phi, K)$ satisfies the hypotheses of the proposition. By Lemma~\ref{lem:fixed-points}, since $\scrS(\phi, K)$ has a single fixed point, there can be no other periodic points in $\scrS(\phi,K)$. Combined with the fact that $\scrS(\phi,K)$ does not contain {\bf(1)c}, either $\scrS(\phi,K)$ is isomorphic to {\bf(1)a} or {\bf(1)b}, or $\scrS(\phi,K)$ contains the graph in Figure~{\sc\ref{fig:not(1)c}}. Therefore, it suffices to show that $\scrS(\phi, K)$ \emph{contains} a subgraph isomorphic to {\bf(1)b} if and only if the four conditions described in part (1) are satisfied and that $\scrS(\phi,K)$ cannot contain the graph from Figure~{\sc\ref{fig:not(1)c}}.

\begin{figure}
\begin{subfigure}[t]{\textwidth}
\centering
\begin{tikzpicture}[scale=1]
	\tikzset{vertex/.style = {}}
	\tikzset{every loop/.style={min distance=10mm,in=45,out=-45,->}}
	\tikzset{edge/.style={decoration={markings,mark=at position 1 with %
    {\arrow[scale=1.5,>=stealth]{>}}},postaction={decorate}}}
	%
	%
	\node[vertex] (1a) at (2, 0) {$\omega_1$};
	\node[vertex] (1-1) at (0, 0) {$\omega_2$};
	\node[vertex] (1-2) at (-2, 0) {$\omega_3$};
	\node[vertex] (1-3) at (-4, 0) {$0$};
	%
	%
	\draw[-{Latex[length=1.5mm,width=2mm]}] (1a) to[out=310, in=50, looseness=7] (1a);
	\draw[-{Latex[length=1.5mm,width=2mm]}] (1-1) to (1a);
	\draw[-{Latex[length=1.5mm,width=2mm]}] (1-2) to (1-1);
	\draw[-{Latex[length=1.5mm,width=2mm]}] (1-3) to (1-2);
\end{tikzpicture}
    \caption{}
    \label{fig:not(1)c}
\end{subfigure}
\begin{subfigure}[t]{\textwidth}
\centering
\begin{tikzpicture}[scale=.9]
	\tikzset{vertex/.style = {}}
	\tikzset{every loop/.style={min distance=10mm,in=45,out=-45,->}}
	\tikzset{edge/.style={decoration={markings,mark=at position 1 with %
    {\arrow[scale=1.5,>=stealth]{>}}},postaction={decorate}}}
	%
	%
	\node[vertex] (1-3) at (-2, 0) {$\omega_5$};
	\node[vertex] (1-2) at (0, 0) {$0$};
	\node[vertex] (1-1) at (2, 0) {$\omega_3$};
	\node[vertex] (1) at (4, 0) {$\omega_1$};
	%
	%
	\draw[-{Latex[length=1.5mm,width=2mm]}] (1) to[out=310, in=50, looseness=7] (1);
	\draw[-{Latex[length=1.5mm,width=2mm]}] (1-1) to (1);
	\draw[-{Latex[length=1.5mm,width=2mm]}] (1-2) to (1-1);
	\draw[-{Latex[length=1.5mm,width=2mm]}] (1-3) to (1-2);
\end{tikzpicture}
        \caption{}
	\label{fig:short1FP}
 \end{subfigure}
	\caption{Two graphs with a single fixed point}
\end{figure}

It is straightforward to show that the four conditions in part (1) are, together with the initial assumptions on $\scrS(\phi,K)$, sufficient for $\scrS(\phi,K)$ to contain {\bf(1)b}. We now show those conditions are necessary, so suppose that $\scrS(\phi, K)$ contains {\bf(1)b}. The points $\omega_1$, $\omega_2$, and $\omega_3$ are necessarily distinct. By Corollary~\ref{cor:two-images}, we have $\omega_1 + \omega_2 = c = \omega_1 - \omega_1^d$, so $\omega_1^d = -\omega_2$. Since $\phi(\omega_2) = \omega_1 = \phi(\omega_1)$, we have $\omega_1^d = \omega_2^d$; thus,
    \[
       \omega_2^d = \omega_1^d = -\omega_2, \quad\text{hence}\quad \omega_2^{d-1} = -1.
    \]
Finally, the relation $\omega_3^d + \omega_1 +\omega_2 = \omega_3^d + c = \phi(\omega_3) = \omega_2$ implies that $\omega_3^d = -\omega_1$.

It remains to show that $\scrS(\phi,K)$ cannot contain the graph from Figure~{\sc\ref{fig:not(1)c}}. Suppose for contradiction that $\scrS(\phi,K)$ does contain such a subgraph. Then $\scrS(\phi,K)$ also contains {\bf(1)b}, so the four conditions of part (1) are satisfied, but now with the additional condition that $c = \phi(0) = \omega_3$. From Corollary~\ref{cor:two-images}, we have
    \[
        \omega_1 = \zeta_6\omega_3\quad\text{and}\quad\omega_2 = \zeta_6^{-1}\omega_3 = \zeta_3^{-1}\omega_1.
    \]
Since $\phi(\omega_2) = \omega_1 = \phi(\omega_1)$, we have $3 \mid d$; on the other hand, since $\phi(\omega_3) = \omega_2 \ne \phi(\omega_1)$, we have $6 \nmid d$. In other words, $d \equiv 3 \pmod 6$, which then implies that
    \begin{equation}\label{eq:not(1)c}
        \omega_1^d = \omega_2^d = -\omega_3^d.
    \end{equation}
The relation $\omega_3^d + \omega_3 = \phi(\omega_3) = \omega_2$ implies that
    \[
        \omega_1 + \omega_2 = \omega_3 = \omega_2 - \omega_3^d,
    \]
hence $\omega_3^d = -\omega_1$. Combined with \eqref{eq:not(1)c}, we have $\omega_1^d = \omega_1$. However, this implies that
    \[
        c = \phi(\omega_1) - \omega_1^d = \omega_1 - \omega_1^d = 0,
    \]
and since we specifically required $c \ne 0$, it follows that $\scrS(\phi,K)$ cannot contain the graph from Figure~{\sc\ref{fig:not(1)c}}.
\end{proof}

\begin{prop}\label{prop:contains(1)c}
Let $K$ be a number field, let $c \in K^\times$, and suppose that $\scrS(x^d + c, K)$ is a height-zero skeleton with a single fixed point $\omega_1\in \mu_K$, so that $c = \omega_1 - \omega_1^d \ne 0$.
Furthermore, suppose that $\scrS(x^d + c, K)$ contains a subgraph isomorphic to {\bf(1)c}, so that the conditions of Lemma~\ref{lem:contains(1)c} are satisfied. Then $\scrS(x^d + c, K)$ is isomorphic to one of the following:
	\begin{enumerate}
            \item[\textup{(1)}] {\bf(1)c} if and only if $\zeta_3$ is not a $d$th power in $K$. \vspace{.1cm}
		\item[\textup{(2)}] {\bf(1)d} if and only if $\zeta_3$ is a $d$th power in $K$ but $\zeta_6$ is not. \vspace{.1cm}
		\item[\textup{(3)}] {\bf(1)e} if and only if $\zeta_6$ is a $d$th power in $K$.
	\end{enumerate}
\end{prop}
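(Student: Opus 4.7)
My plan is to leverage the explicit structural data from Lemma~\ref{lem:contains(1)c}, namely $6 \mid d$, $c = \omega_3$, and $\omega_1 = \zeta_6\omega_3$, to classify the non-leaf vertices of $\scrS(\phi, K)$ in the height-zero setting, where $\phi(x) = x^d + c$. Since $\omega_1,\omega_3 \in K$ we have $\zeta_6 = \omega_1/\omega_3 \in K$ and thus $\mu_6 \subseteq \mu_{K,d}$ (using $6 \mid d$); and applying Lemma~\ref{lem:x+y=c} to $\omega_1 = \omega_3 + \omega_1^d$ gives $\omega_1^d = \zeta_6\omega_1$, i.e., $\omega_1^{d-1} = \zeta_6$.

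The crux of the argument is to identify the non-leaf $K$-rational preimages of $\omega_1$. Any such preimage has the form $\zeta\omega_1$ with $\zeta \in \mu_{K,d}$, and it is non-leaf in the height-zero skeleton if and only if the equation $\omega_4^d = \zeta\omega_1 - c = \omega_1(\zeta + \zeta_6 - 1)$ admits a $K$-rational solution $\omega_4$ that is $0$ or a root of unity. The case $\omega_4 = 0$ forces $\zeta = 1 - \zeta_6 = \zeta_6^{-1}$, giving back $\omega_3$. Otherwise $|\omega_4^d| = 1$, and rewriting the relation as $\zeta = \zeta_6^{-1} + \omega_4^d/\omega_1$, an application of Lemma~\ref{lem:x+y=c} forces $\zeta \in \{1,\zeta_3^{-1}\}$. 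The value $\zeta = 1$ recovers $\omega_1$ itself, while $\zeta = \zeta_3^{-1}$ gives a new preimage $\omega_2 := \zeta_3^{-1}\omega_1$ with $\omega_4^d = -\omega_1$; dividing by $\omega_1^d$ yields $(\omega_4/\omega_1)^d = -\zeta_6^{-1} = \zeta_3$. Hence $\omega_2$ contributes a non-leaf vertex exactly when $\zeta_3$ is a $d$-th power in $K$, distinguishing case (1) from cases (2) and (3). Similarly, $0$ is a non-leaf vertex if and only if $-c = \zeta_3\omega_1 = \zeta_6\omega_1^d$ is a $d$-th power in $K$, i.e., if and only if $\zeta_6$ is a $d$-th power in $K$, distinguishing {\bf(1)d} from {\bf(1)e}.

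To finish the classification, I must confirm that no additional non-leaf vertices appear. Specifically, (i) the leaf $\omega_4$ over $\omega_2$ should have no root-of-unity $K$-rational preimage, and (ii) in case {\bf(1)e}, the preimages of $0$ should similarly have no root-of-unity $K$-rational preimage. Both are handled by the same method: writing any candidate preimage of interest as $\beta\omega_1$ with $\beta^d \in \{\zeta_3,\zeta_6\}$, the further preimage equation $y^d = \beta\omega_1 - c = (\beta - \zeta_6^{-1})\omega_1$ leads, via Lemma~\ref{lem:x+y=c} applied to $\beta = \zeta_6^{-1} + y^d/\omega_1$, to $\beta = \zeta_6^{-1}(\zeta_6'')^{\pm 1}$ for some primitive $6$th root of unity $\zeta_6''$. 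Raising to the $d$-th power and using $6 \mid d$ gives $\beta^d = 1$, contradicting $\beta^d \in \{\zeta_3,\zeta_6\}$. I expect this final exclusion step to be the main technical obstacle---the ideas are routine applications of Lemma~\ref{lem:x+y=c}, but organizing the case analysis cleanly and tracking the various roots of unity requires some care.
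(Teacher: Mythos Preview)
Your argument is correct and complete. The core computations match the paper's: both identify $\omega_2=\zeta_6^{-1}\omega_3=\zeta_3^{-1}\omega_1$ as the only candidate for a second non-leaf preimage of $\omega_1$, reduce the existence of $\omega_4$ to $\zeta_3$ being a $d$th power, and reduce the existence of $\omega_5$ to $\zeta_6$ being a $d$th power. The organization, however, is genuinely different.

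The paper argues structurally: since $\scrS(\phi,K)$ has a single periodic point and contains \textbf{(1)c}, the only way to enlarge it is either to contain \textbf{(1)d} or to contain the graph $\omega_5\to 0\to\omega_3\to\omega_1$ (Figure~\textsc{4b}). It then shows that the latter forces \textbf{(1)e}, invokes the fact that \textbf{(1)e} is a \emph{maximal} height-zero skeleton (a consequence of Corollary~\ref{cor:two-images}, since $\omega_1$ and $\omega_2$ already exhaust the two permitted elements of $\phi(\mu_K)\cap\mu_K$), and concludes that the skeleton is one of the three graphs; the $d$th-power conditions then distinguish them. Your approach is instead a direct level-by-level classification of non-leaf vertices, and your exclusion step for $\omega_4$ and $\omega_5$ reproves, by explicit computation with Lemma~\ref{lem:x+y=c}, precisely the content that the paper gets for free from Corollary~\ref{cor:two-images}. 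Indeed, once you know $\omega_1,\omega_2\in\phi(\mu_K)\cap\mu_K$, Corollary~\ref{cor:two-images} immediately rules out any root-of-unity preimage for $\omega_4$ or $\omega_5$, and one checks separately that $0$ cannot map to them since $\phi(0)=\omega_3$. Your route is more self-contained but slightly longer; the paper's route is shorter but leans on the maximality observation. Both are valid.
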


\begin{proof}
Suppose that $\phi(x) = x^d + c$ satisfies the hypotheses of the proposition. As in the proof of Proposition~\ref{prop:not(1)c}, the fact that $\scrS(\phi, K)$ is a height-zero skeleton with a single fixed point implies that there are no periodic points other than that one fixed point. Combined with the assumption that $\scrS(\phi,K)$ contains {\bf(1)c}, one of the following must be true: $\scrS(\phi,K)$ is isomorphic to {\bf(1)c}, $\scrS(\phi,K)$ contains {\bf(1)d}, or $\scrS(\phi,K)$ contains the graph from Figure~{\sc\ref{fig:short1FP}}.

We begin by showing that if $\scrS(\phi,K)$ contains the graph from Figure~{\sc\ref{fig:short1FP}}, then in fact $\scrS(\phi,K)$ contains {\bf(1)e}, hence is isomorphic to {\bf(1)e}, since {\bf(1)e} is a maximal height-zero skeleton.
Thus, suppose $\scrS(\phi,K)$ contains the graph from Figure~{\sc\ref{fig:short1FP}}. In addition to the conditions
    \begin{itemize}
        \item $6 \mid d$,
        \item $c = \omega_3 = \omega_1 - \omega_1^d$, and
        \item $\omega_1 = \zeta_6\omega_3$
    \end{itemize}
from Lemma~\ref{lem:contains(1)c}, we also have $\phi(\omega_5) = 0$, hence $\omega_5^d = -\omega_3$.

We now claim that if we take
    \[
        \omega_2 := \zeta_6^{-1}\omega_3 \quad\text{and}\quad \omega_4 := \omega_5^2\omega_1^{-1},
    \]
then $\phi(\omega_4) = \omega_2$ and $\phi(\omega_2) = \omega_1$, so that the full skeleton $\scrS(\phi, K)$ contains {\bf(1)e}. (That $\omega_2 \notin\{\omega_1,\omega_3\}$ is clear.)

Since $6 \mid d$ and $\omega_2 = \zeta_6^{-1}\omega_3$, we certainly have $\phi(\omega_2) = \phi(\omega_3) = \omega_1$. 
To show that $\phi(\omega_4) = \omega_2$, we write
    \[
        \phi(\omega_4)
            = \frac{(\omega_5^d)^2}{\omega_1^d} + \omega_3
            = \frac{\omega_3^2}{\omega_1 - \omega_3} + \omega_3
            = \frac{\omega_3^2}{(\zeta_6 - 1)\omega_3} + \omega_3
            = (\zeta_3^{-1} + 1)\omega_3
            = \zeta_6^{-1}\omega_3 = \omega_2.
    \]
Therefore, $\scrS(\phi,K)$ contains---hence is isomorphic to---the graph {\bf(1)e}.

What we have shown so far is that if $\scrS(\phi,K)$ contains {\bf(1)c}, then $\scrS(\phi,K)$ must be isomorphic to one of {\bf(1)c}, {\bf(1)d}, and {\bf(1)e}. It remains to prove the following two statements, assuming that $\scrS(\phi,K)$ contains {\bf(1)c}:

    \begin{enumerate}[label=(\alph*)]
        \item The skeleton $\scrS(\phi,K)$ contains {\bf(1)d} if and only if $\zeta_3$ is a $d$th power in $K$.
        \item The skeleton $\scrS(\phi,K)$ contains {\bf(1)e} if and only if $\zeta_6$ is a $d$th power in $K$.
    \end{enumerate}

We begin with (a). Since we have assumed that $\scrS(\phi,K)$ contains {\bf(1)c}, we again assume the conditions of Lemma~\ref{lem:contains(1)c} hold. Setting $\omega_2 := \zeta_6^{-1}\omega_3$ and using the fact that $6\mid d$, we have $\phi(\omega_2) = \phi(\omega_3) = \omega_1$. Thus, it suffices to show that there exists $\omega_4 \in \mu_K$ with $\phi(\omega_4) = \omega_2$ if and only if $\zeta_3$ is a $d$th power in $K$. Indeed, we have $\phi(\omega_4) = \omega_2$ if and only if
    \[
        \omega_4^d = \omega_2 - \omega_3 = (\zeta_6^{-1} - 1)\omega_3 = \zeta_3^{-1}\omega_3.
    \]
On the other hand, we already know that
    \[
        \omega_1^d = \omega_1 - \omega_3 = (\zeta_6 - 1)\omega_3 = \zeta_3\omega_3,
    \]
so $\phi(\omega_4) = \omega_2$ if and only if $\omega_4^d = \zeta_3\omega_1^d$, which has a solution $\omega_4 \in \mu_K$ if and only if $\zeta_3$ is a $d$th power in $K$.

The proof of (b) is similar: The skeleton $\scrS(\phi,K)$ contains {\bf(1)e} if and only if $\scrS(\phi,K)$ contains {\bf(1)d} and there exists $\omega_5 \in \mu_K$ for which $\phi(\omega_5) = 0$. Now, $\phi(\omega_5) = 0$ if and only if
    \[
        \omega_5^d = -\omega_3 = -\zeta_3^{-1}\omega_1^d = \zeta_6\omega_1^d,
    \]
so there exists such $\omega_5 \in \mu_K$ if and only if $\zeta_6$ is a $d$th power in $K$.
\end{proof}
\section{Irreducible polynomials in semigroups}\label{red:irreducibility}
We briefly recall the setup from the introduction. Fix a field $K$ of characteristic $0$, and let $S = \{\phi_1,\ldots,\phi_s\}$ be a set of polynomials in $K[x]$ with degrees $d_i = \deg \phi_i \ge 2$. We denote by $M_S$ the semigroup generated by $S$ under composition.

We will specifically be interested in the case that $\phi_i(x) = x^{d_i} + c_i$ for some $c_i \in K$. The proof of our main irreducibility result, Theorem \ref{thm:main+irred}, has a number of steps, so we provide here a basic outline of the proof.\\[2pt]
\indent\textbf{Step(1)}: First, since the semigroups $M_S$ we consider are free \cite[Theorem 3.1]{MR4349782}, it suffices to construct an irreducible $g\in M_S$ such that $g\circ f$ is irreducible for all $f\in M_S$. To do this, we take an irreducible $\phi=x^d+c\in S$ and try and prove that $g=\phi^N$ is irreducible for some sufficiently large $N$. This is accomplished by ruling out $m$th powers in the critical orbit $\{\phi(0),\phi^2(0),\dots\}$ by studying the Fermat-Catalan equations $Y^m=X^d+c$ where $m|d$. In particular, we can do this when $d\gg_K0$ and $h(c)\gg_K0$ by using the height bounds in Proposition \ref{prop:fermat-catalan}; see Theorem  \ref{prop:oldstability} and Proposition \ref{prop:uniformstability}.\\[3pt] 
\indent\textbf{Step(2):} Next, we try and prove that $\phi^N\circ f$ is irreducible for all $f\in M_S$. In fact, if this is not the case, then roughly $\phi^N(a)=y^m$ for some $a,y\in K$ and $m\geq2$. But then $(\phi^{N-1}(a),y)$ is a solution to the Fermat-Catalan equation $Y^m=X^d+c$, so that the height of $\phi^{N-1}(a)$ is  bounded. However, by making $N\gg_{K,d}0$ this means that $a$ must be preperiodic for $\phi$; here we use a result of Looper \cite{looper2021dynamical}. But again, by enlarging $N$ if necessary, this means that $y^m=\phi^N(a)$ is a \emph{periodic point} for $\phi$. Moreover, by enlarging $d$ if necessary, any periodic point must be a fixed point by Theorem \ref{thm:uniform-bound}. Therefore, we have succeeded in proving that $\phi^N\circ f$ is irreducible for any $f\in M_S$ unless $\phi\in S$ has a powered fixed point; see Definition \ref{def:poweredfixedpoint}. Moreover, since the choice of $\phi$ was arbitrary as long as $\phi$ is irreducible, we are done unless every irreducible map in $S$ has a powered fixed point.\\[3pt]  
\indent\textbf{Step(3):} Lastly, we deal with the case of maps with powered fixed points separately by examining some additional diophantine equations. For example, if $\phi_1$ and $\phi_2$ both have powered fixed points and $\phi_1(K)\cap \PrePer(\phi_2,K)\neq\varnothing$ and $\phi_2(K)\cap \PrePer(\phi_1,K)\neq\varnothing$, then the ratio of the powered fixed points of $\phi_1$ and $\phi_2$ are roots of unity; see Lemma  \ref{lem:two+special+overlap}, rephrased as a system of diophantine equations. In particular, when this is not the case, either $\phi_1^N\circ\phi_2\circ f$ or $\phi_2^N\circ\phi_1\circ g$ must be irreducible for all $f,g\in M_S$. Moreover, a similar argument can be used when $\phi_2$ is reducible. \\[3pt]
\indent We begin with the following irreducibility test for polynomials having a unicritical polynomial as a compositional factor; compare to \cite[Corollary 2.3]{JonesIMRN}.
\begin{thm}\label{prop:oldstability} Let $K$ be a field of characteristic zero, let $f,g\in K[x]$ with $f(x)=x^d+c$ for some $d\geq2$, and assume that $g$ is monic and irreducible in $K[x]$. Then, if $g\circ f$ is reducible in $K[x]$, we have that  
\[g(f(0))=(-1)^{e_1}4^{e_2} y^m\] 
for some $e_1,e_2\in\{0,1\}$ and some divisor $m\geq2$ of $d$. 
\end{thm}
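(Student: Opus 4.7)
The plan is to reduce the reducibility of $g\circ f$ to the reducibility of a binomial over an extension of $K$, apply the Vahlen--Capelli criterion, and then recover the claim for $g(c)=g(f(0))$ via a norm argument. Fix a root $\beta$ of $g$ in $\overline K$ and let $\beta=\beta_1,\dots,\beta_n$ denote its Galois conjugates, where $n=\deg g$. Since $g$ is monic and irreducible, Capelli's lemma says that $g\circ f$ is reducible in $K[x]$ if and only if $f(x)-\beta=x^d-(\beta-c)$ is reducible in $K(\beta)[x]$.

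Next I would invoke the classical Vahlen--Capelli theorem: over a field of characteristic zero, $x^d-a$ is reducible precisely when either $a$ is a $p$th power in $F$ for some prime $p\mid d$, or $4\mid d$ and $a\in -4F^{\,4}$. Applied to $a=\beta-c$ inside $F=K(\beta)$, this yields one of the following two cases:
\begin{itemize}
\item[(A)] There exist a prime $p\mid d$ and $y\in K(\beta)$ with $\beta-c=y^p$.
\item[(B)] $4\mid d$ and there exists $b\in K(\beta)$ with $\beta-c=-4b^4$.
\end{itemize}
Letting $\sigma_1,\dots,\sigma_n$ be the $K$-embeddings $K(\beta)\hookrightarrow\overline K$ with $\sigma_i(\beta)=\beta_i$, case (A) gives $\beta_i-c=\sigma_i(y)^p$ for each $i$, so
\[
g(c)=\prod_{i=1}^n(c-\beta_i)=(-1)^n\,N_{K(\beta)/K}(y)^p,
\]
while case (B) gives $\beta_i-c=-4\sigma_i(b)^4$, so
\[
g(c)=(-1)^n\prod_{i=1}^n\bigl(-4\sigma_i(b)^4\bigr)=(-1)^n(-4)^n\,N_{K(\beta)/K}(b)^4=4^n\,N_{K(\beta)/K}(b)^4.
\]

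Finally, I would collapse the scalars $(-1)^n$ and $4^n$ into the allowed exponents $e_1,e_2\in\{0,1\}$. In case (A) with odd $p$, the identity $-1=(-1)^p$ shows that $(-1)^n$ is itself a $p$th power in $K$, so $g(c)=w^p$ with $m=p\mid d$; for $p=2$ one reads off $g(c)=(-1)^{e_1}w^2$ directly, with $e_1\equiv n\pmod 2$. In case (B), writing $n=2k$ or $n=2k+1$ gives either $4^nN(b)^4=(4^kN(b)^2)^2$ or $4\cdot(4^kN(b)^2)^2$, so $g(c)=4^{e_2}w^2$ with $e_2\in\{0,1\}$ and $m=2\mid 4\mid d$. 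Combining all sub-cases yields $g(f(0))=g(c)=(-1)^{e_1}4^{e_2}w^m$ with $e_1,e_2\in\{0,1\}$ and $m\ge 2$ dividing $d$, as required. The main obstacle is organizational rather than conceptual: it is the sign/power-of-$4$ bookkeeping needed to absorb $(-1)^n$ and $4^n$ into the prescribed $(-1)^{e_1}4^{e_2}$ format. The substantive content is provided by the two classical inputs cited above---Capelli's lemma and the Vahlen--Capelli reducibility criterion for binomials.
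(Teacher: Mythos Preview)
Your proof is correct and follows essentially the same route as the paper: Capelli's lemma reduces to reducibility of $x^d-(\beta-c)$ over $K(\beta)$, the Vahlen--Capelli criterion (cited in the paper as \cite[Theorem 9.1, p.~297]{MR1878556}) gives the two cases, and a norm computation transfers the conclusion back to $g(c)=g(f(0))$. The only cosmetic difference is in the sign/power bookkeeping in case (B)---the paper uses $\alpha-c=4z^4$ and ends with $g(f(0))$ equal to a fourth power or $-4$ times a fourth power, whereas you use $\beta-c=-4b^4$ and package the result as $4^{e_2}w^2$---but both fit the stated form $(-1)^{e_1}4^{e_2}y^m$ with $m\mid d$.
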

\begin{remark}\label{rem:stability} However, if $K$ contains a primitive $d$th root of unity, then one may consider only the case when $g(f(0))=\pm{y^p}$ for some prime $p$ dividing $d$ (i.e., $e_2=0$); see \cite[Corollary 2.3]{JonesIMRN}.  
\end{remark}
\begin{proof} Let $f$ and $g$ be as above and assume that $g\circ f$ is reducible over $K$. Then Capelli's Lemma implies that $f(x)-\alpha=x^d+c-\alpha$ is reducible over $K(\alpha)$ for some root $\alpha\in\overline{K}$ of $g$. From here, \cite[Theorem 9.1, p. 297]{MR1878556} implies that $\alpha-c=z^p$ for some $z\in K(\alpha)$ and some prime $p|d$ or $\alpha-c=4z^4$ when $4|d$. On the other hand, since $g\in K[x]$ is irreducible and the norm map is multiplicative, we have that  
\begin{equation*}
\begin{split} 
N_{K(\alpha)/K}(\alpha-c)=(-1)^{\deg(g)}N_{K(\alpha)/K}(c-\alpha)=(-1)^{\deg(g)}g(c)=(-1)^{\deg(g)}g(f(0)).
\end{split} 
\end{equation*}
In particular, when $\alpha-c=z^p$, then $(-1)^{\deg(g)}y^p=g(f(0))$ for $y=N_{K(\alpha)/K}(z)\in K$ and the claim follows. Likewise, assume that $\alpha-c=4z^4$ in the case when $4|d$. If $\deg(g)=2m$ is even, then the norm calculation above implies that
\[\Big(2^mN_{K(\alpha)/K}(z)\Big)^4=4^{\deg(g)}\Big(N_{K(\alpha)/K}(z)\Big)^4=g(f(0))\]
is a fourth power as claimed. Likewise, when $\deg(g)=2m+1$ is odd, then 
\[-4\Big(2^mN_{K(\alpha)/K}(z)\Big)^4=g(f(0))\]
and the result follows.  
\end{proof}
Thus, to produce many irreducible polynomials in $M_S$ it suffices to construct an irreducible polynomial $g\in M_S$ with no perfect powered images (up to a small multiple). With this in mind, we first prove that an irreducible polynomial $x^d+c$ for $c\in K$ is \emph{stable} over $K$ (meaning that all iterates of $\phi$ are irreducible over $K$) whenever $d$ is sufficiently large and $c$ has sufficiently large height.
\begin{prop}\label{prop:uniformstability} Let $K$ be an $abc$-field of characteristic zero. Then there is an absolute constant $C_4$ and a constant $D_4=D_4(K)$ depending on $K$ such that if $h(c)>C_4$ and $d\geq D_4(K)$, then $\phi(x)=x^d+c$ is irreducible over $K$ if and only if $\phi$ is stable over $K$. Moreover, one can take $C_4=0$ and $D_4=\max\{14,4g_K+10\}$ when $K$ is a function field and $C_4=\frac{3}{5}\log2$ when $K/\mathbb{Q}$ is a number field.   
\end{prop}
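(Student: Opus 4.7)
The implication ``$\phi$ stable $\Rightarrow$ $\phi$ irreducible'' is trivial, since stability means every iterate is irreducible, including $\phi = \phi^1$. For the converse, I proceed by induction on $n$ to show $\phi^n$ is irreducible, the base case being the hypothesis. For the inductive step, assume $\phi^n$ is monic and irreducible, and apply Theorem~\ref{prop:oldstability} with the unicritical $f := \phi$ and the irreducible $g := \phi^n$: if $\phi^{n+1} = \phi^n \circ \phi$ were reducible, then
\[
\phi^{n+1}(0) \;=\; \phi^n(c) \;=\; r\, y^m
\]
for some $r := (-1)^{e_1} 4^{e_2} \in \{\pm 1, \pm 4\}$, some $y \in K$, and some divisor $m \ge 2$ of $d$. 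Using the recursion $\phi^{n+1}(0) = X_n^d + c$ with $X_n := \phi^n(0)$, this rearranges to
\[
-\tfrac{1}{c}\, X_n^d + \tfrac{r}{c}\, y^m \;=\; 1,
\]
a generalized Fermat--Catalan equation of type $(d,m)$.

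Since $m \mid d$ with $2 \le m$, choosing $D_4 \ge 6$ ensures the exponent pair $(m,d)$ satisfies the hypotheses of Proposition~\ref{prop:fermat-catalan}. That proposition then yields
\[
d\, h(X_n) \;\le\; B_1 \max\{h(1/c),\, h(r/c)\} + B_2(K) \;\le\; B_1\bigl(h(c) + \log 4\bigr) + B_2(K),
\]
where $(B_1, B_2(K))$ is as in Proposition~\ref{prop:fermat-catalan}; in the function field case $r$ is constant, so $h(r) = 0$ and the $\log 4$ term drops out. To contradict this I need a matching lower bound on $h(X_n)$. The standard one-step estimate $h(\phi(\alpha)) \ge d\, h(\alpha) - h(c) - \log 2$ (with the archimedean $\log 2$ absent in the function field case), applied inductively from $h(X_1) = h(c)$, gives $h(X_n) \ge h(c)$ for every $n \ge 1$ provided $(d-2)h(c) \ge \log 2$; this is automatic once $h(c) > C_4 = \tfrac{3}{5}\log 2$ and $d \ge 5$, while in the function field case integrality of heights ($h(c) \ge 1$) makes this trivial.

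Combining the upper and lower bounds produces $(d - B_1)\,h(c) \le B_1\log 4 + B_2(K)$, which for $d$ exceeding an explicit threshold $D_4(K) > B_1$ contradicts $h(c) > C_4$, closing the induction. In the function field case, using Theorem~\ref{thm:Sunit} directly (rather than the consolidated Proposition~\ref{prop:fermat-catalan}) with the sharper prefactor $1 - \tfrac{1}{m} - \tfrac{2}{d}$ coming from $\mathrm{lcm}(m,d) = d$, the worst case $m = 2$ yields $(d/2 - 7)h(c) \le 2g_K - 2$; integrality of heights then forces $d \le 4g_K + 10$, so $D_4 = \max\{14, 4g_K + 10\}$ suffices. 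The main technical point is calibrating $C_4$ and $D_4$ so that the one-step loss term $\log 2$ in the height recursion and the $B_1\log 4$ contribution in the Fermat--Catalan bound are both dominated, uniformly in $d \ge D_4$ and $n \ge 1$; the rest of the proof is a transparent combination of the reducibility criterion (Theorem~\ref{prop:oldstability}) and the Fermat--Catalan height estimate (Proposition~\ref{prop:fermat-catalan}).
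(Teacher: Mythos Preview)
Your proof is correct and follows the same overall architecture as the paper: reduce stability to the non-existence of solutions $\phi^{n+1}(0)=ry^m$ via Theorem~\ref{prop:oldstability}, rewrite as a Fermat--Catalan equation in $(\phi^n(0),y)$, apply Proposition~\ref{prop:fermat-catalan} (or Theorem~\ref{thm:Sunit} in the function field case) for an upper bound on $d\,h(\phi^n(0))$, and combine with a lower bound $h(\phi^n(0))\gtrsim h(c)$ to force $d$ bounded.

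The one substantive difference is in how the lower bound on $h(\phi^n(0))$ is obtained. The paper passes through the canonical height $\hat h_\phi$ (Lemma~\ref{lem:basic}), bounding $d^n\hat h_\phi(0)$ from above by the Fermat--Catalan estimate and from below by $\tfrac19 h(c)$. You instead use a direct one-step estimate $h(\phi(\alpha))\ge d\,h(\alpha)-h(c)-\log 2$ and induct to get $h(\phi^n(0))\ge h(c)$ for all $n\ge1$. Your route is more elementary---it avoids canonical heights entirely---and gives the same explicit constants in the function field case; the paper's route via $\hat h_\phi$ is the one that generalizes more readily when the base point $0$ is replaced by an arbitrary non-preperiodic point (as happens in Proposition~\ref{prop:uniformpower}), since there the inductive lower bound in terms of $h(c)$ alone is unavailable and one genuinely needs $\hat h_\phi(a)>0$.
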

Before we give the proof of this result, we recall some basic properties of dynamical canonical heights. The canonical height function attached to a rational function $\phi(x) \in K(x)$ is the unique function $\hat{h}_\phi : \P^1(\Kbar) \to \R$ satisfying
    \[
        \hat{h}_\phi(\alpha) = h(\alpha) + O(1) \quad\text{and}\quad \hat{h}_\phi(\phi(\alpha)) = \deg\phi \cdot \hat{h}(\alpha)
    \]
for all $\alpha \in \P^1(\Kbar)$; see \cite[\textsection 3.4]{SilvDyn}. We will require the following basic facts about canonical heights for unicritical polynomials; see, for instance, \cite[Lemma 2.2]{MR4127857}.
\begin{lem}\label{lem:basic} Let $K$ be a number field or a function field of characteristic zero and let $\phi(x)=x^d+c$ for some $d\geq2$ and $c\in K$. Then we have that 
\[|h(\alpha)-\hat{h}_\phi(\alpha)|\leq \frac{1}{(d-1)}(h(c)+\log 2)
\;\;\;\;\text{and}\;\;\;\;
\hat{h}_{\phi}(\phi^m(\alpha))=d^m\hat{h}_\phi(\alpha)
\]
for all $\alpha\in \overline{K}$ and $m\geq0$.
\end{lem}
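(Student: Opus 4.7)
The plan is to follow the standard Call--Silverman telescoping construction: define $\hat h_\phi(\alpha) := \lim_{n \to \infty} d^{-n}\,h(\phi^n(\alpha))$, show the limit exists by a Cauchy argument, and read off the height comparison from the same estimate. The functional equation will then be immediate from the definition.

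The key preliminary input is a one-step bound
\[
\bigl| h(\phi(\alpha)) - d\,h(\alpha) \bigr| \le h(c) + \log 2
\]
for every $\alpha \in \overline K$. I would obtain the upper direction by writing $\phi(\alpha) = \alpha^d + c$ and applying the standard inequality $h(x + y) \le h(x) + h(y) + \log 2$ (which holds over any global field, the $\log 2$ being needed only when there are archimedean places) together with the identity $h(\alpha^d) = d\,h(\alpha)$. For the lower direction, the same inequality applied to $\alpha^d = \phi(\alpha) - c$ yields $d\,h(\alpha) \le h(\phi(\alpha)) + h(c) + \log 2$.

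Now telescope: for each $n \ge 0$,
\[
\Bigl| d^{-(n+1)} h(\phi^{n+1}(\alpha)) - d^{-n} h(\phi^n(\alpha)) \Bigr|
= d^{-(n+1)}\bigl| h(\phi(\phi^n(\alpha))) - d\,h(\phi^n(\alpha)) \bigr|
\le d^{-(n+1)}\bigl(h(c) + \log 2\bigr).
\]
Summing $\sum_{n \ge 0} d^{-(n+1)} = \tfrac{1}{d-1}$ shows the sequence $d^{-n} h(\phi^n(\alpha))$ is Cauchy (so $\hat h_\phi(\alpha)$ is well defined in $\R$), and the same partial-sum telescoping, with $n=0$ on one end and the limit on the other, gives exactly
\[
\bigl| \hat h_\phi(\alpha) - h(\alpha) \bigr| \le \frac{1}{d-1}\bigl(h(c) + \log 2\bigr),
\]
which is the first assertion.

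For the functional equation, the definition gives
\[
\hat h_\phi(\phi(\alpha)) = \lim_{n \to \infty} d^{-n} h(\phi^{n+1}(\alpha)) = d \cdot \lim_{n \to \infty} d^{-(n+1)} h(\phi^{n+1}(\alpha)) = d\,\hat h_\phi(\alpha),
\]
and iterating $m$ times yields $\hat h_\phi(\phi^m(\alpha)) = d^m \hat h_\phi(\alpha)$. There is essentially no obstacle; the only delicate point is bookkeeping the additive constant in $h(x+y) \le h(x)+h(y)+\log 2$, which in the function field case is actually $0$, so the stated bound $\tfrac{1}{d-1}(h(c)+\log 2)$ holds uniformly in both settings. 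No special feature of the characteristic-zero hypothesis is used beyond the validity of the height inequalities on the ambient global field.
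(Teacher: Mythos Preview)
Your argument is correct and is exactly the standard Call--Silverman telescoping proof. The paper does not actually give a proof of this lemma; it simply cites \cite[Lemma 2.2]{MR4127857} for these facts, so your write-up is more detailed than what appears in the paper but follows the same (only reasonable) route.
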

\begin{proof}[Proof of Proposition \ref{prop:uniformstability}] 
Suppose that $c\in K$ is such that $h(c)>\frac{3}{5}\log2$. (Note that when $K$ is a function field, this condition on the height is equivalent to saying that $c$ is non-constant.) Moreover, assume that $\phi^n(0)=ry^m$ for some $y\in K$, some $n\geq2$, some $r$ in the set  
\begin{equation}\label{eq:r's}
R_K:=\big\{(-1)^{e_1}4^{e_2}\zeta\;:\; e_1,e_2\in\{0,1\}\;\text{and}\; \zeta\in\mu_{K}\big\}, 
\end{equation} 
and some divisor $m|d$ with $ m\geq2$. (Technically speaking, we only need to consider the  case when $r\in\{\pm{1},\pm{4}\}$ for this result. However, the more general case will be useful for later estimates.) Then $(X,Y)=(\phi^{n-1}(0),y)$ is a solution to the generalized Fermat-Catalan equation $(-c^{-1})X^d+(rc^{-1})Y^m=1$. Hence, Proposition \ref{prop:fermat-catalan} implies that
\begin{equation}\label{eq:uniformstability1}
d h(\phi^{n-1}(0))\leq B_1 h(c)+B_2, 
\end{equation}
for some constants $B_1$ and $B_2$, with $B_2$ depending on $K$.
On the other hand, Lemma \ref{lem:basic} applied to the point $\alpha=\phi^{n-1}(0)$ implies that
\begin{equation*}
d^{n-1}\hat{h}_\phi(0)-\frac{1}{(d-1)}(h(c)+\log2)=\hat{h}_\phi(\phi^{n-1}(0))-\frac{1}{(d-1)}(h(c)+\log2)\leq h(\phi^{n-1}(0)).
\end{equation*}
In particular, multiplying the above inequality by $d$ and combining with \eqref{eq:uniformstability1} implies that 
\begin{equation}\label{eq:uniformstability2}
\begin{split} 
d^n\hat{h}_\phi(0)&\leq\frac{d}{d-1}(h(c)+\log2)+dh(\phi^{n-1}(0))\\[4pt] 
&\leq\frac{d}{d-1}(h(c)+\log2)+B_1h(c)+B_2\\[4pt]
&\leq B_3 h(c)+B_4\\[2pt]   
\end{split}
\end{equation}
for some constants $B_3$ and $B_4$, with $B_4$ depending on $K$. From here, applying Lemma \ref{lem:basic} to the point $\alpha=c$, increasing $D_4$ if necessary so that $d \ge 4$, and using the fact that $h(c)\geq \frac{3}{5}\log2$, we see that  
\begin{equation}\label{eq:uniformstability3}
\frac{1}{9}h(c)\leq\frac{2}{3}h(c)-\frac{1}{3}\log2\leq\frac{d-2}{d-1}h(c)-\frac{\log2}{d-1}\leq \hat{h}_\phi(c)=\hat{h}(\phi(0))=d\hat{h}_\phi(0). 
\end{equation} 
In particular, by combining \eqref{eq:uniformstability2} and \eqref{eq:uniformstability3} with the fact that $n\geq2$, we deduce that 
\[d\Big(\frac{1}{9}h(c)\Big)\leq d^{n-1}\Big(\frac{1}{9}h(c)\Big)\le d^{n-1}(d\hat{h}_\phi(0))=d^n\hat{h}_\phi(0)\leq B_3 h(c)+ B_4.\]
Hence, $d\leq 9B_3+15B_4/\log2$, so that $\phi^n(0)\neq ry^m$ for all $m\geq2$, all $y\in K$, all $r\in R_K$, and all $n\geq2$ whenever $d\gg_K0$. From here, repeated application of Theorem \ref{prop:oldstability} then implies that $\phi^n$ is irreducible for all $n\geq2$ whenever $\phi$ is irreducible as desired. 
\end{proof}
\begin{remark} In the function field case, a simpler argument yields the explicit constants mentioned in Proposition \ref{prop:uniformstability}. Namely, if $c$ is non-constant, then it is straightforward to verify that $h(\phi^{t}(0))=h(\phi^{t-1}(c))=d^{t-1}h(c)$ for all $t\geq1$. Hence, using \cite[Theorem 2]{MR0664038} instead of Proposition \ref{prop:fermat-catalan}, the bound in \eqref{eq:uniformstability1} may be replaced by
\[\frac{1}{2}(d-4)h(c)\leq\frac{1}{2}(d-4)d^{n-2}h(c)\leq\Big(d-2-\frac{d}{m}\Big)d^{n-2}h(c)\leq5h(c)+2g_K-2\]
for all $n \ge 2$.
In particular, the claim that $C_4=0$ and that $D_4=\max\{14,4g_K+10\}$ follows. 
\end{remark}
\begin{remark}\label{rem:criticalorbitpowers} Since it will be useful for future arguments, we note that in the course of proving Proposition \ref{prop:uniformstability} we established the following fact regarding the set $R_K$ defined in \eqref{eq:r's}: for all $h(c)\gg_K0$ and $d\gg_K0$, if $\phi^n(0)=ry^m$ for some $y\in K$, some $n\geq1$, some $r\in R_K$, and some $m\geq2$, then necessarily $n=1$. Moreover, we note that $R_K$ is a set of bounded height (independent of $K$) and is closed under multiplication by roots of unity and their negatives.     
\end{remark}
Next (and with the goal of avoiding powers in mind), we have the following new result, which implies that if some large iterate of $\phi$ is a perfect power, then $\phi$ has a periodic point that is a perfect power; compare to similar results in \cite{Mathworks2, Mathworks1} over $\mathbb{Z}$ and $\mathbb{Q}$.
\begin{prop}\label{prop:uniformpower} Let $K$ be an $abc$-field of characteristic zero, let $d\geq5$ and $m\geq2$, and let $\phi(x)=x^d+c$ for some $c\in K^\times$. Moreover, if $K$ is a function field, we assume that $c$ is non-constant. Then there is a constant $N=N(K,d)$ such that if $\phi^n(a)=ry^m$ for some $a,y\in K$, some $r\in R_{K}$, and some $n\geq N$, then $ry^m$ is a periodic point for $\phi$.\end{prop}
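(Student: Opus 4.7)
The strategy is to turn the hypothesis $\phi^n(a)=ry^m$ into a Fermat--Catalan equation, use this to bound $h\bigl(\phi^{n-1}(a)\bigr)$ uniformly in $n$, translate the bound into a canonical-height bound on $a$ that decays like $d^{-(n-1)}$, invoke a uniform lower bound on canonical heights of non-preperiodic points (a consequence of Looper's work) to force $a$ itself to be preperiodic once $n$ is large, and finally use uniform boundedness of $|\PrePer(\phi,K)|$ to conclude that $\phi^n(a)$ has passed the pre-periodic tail of its orbit and hence is periodic.

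Concretely, setting $X:=\phi^{n-1}(a)$, the relation $\phi(X)=X^d+c=ry^m$ rewrites as
\[
(-c^{-1})\,X^d+(rc^{-1})\,y^m=1.
\]
Since $d\ge 5$ and $m\ge 2$, the pair $(\min\{m,d\},\max\{m,d\})$ falls in the range of Proposition~\ref{prop:fermat-catalan}. Because $r\in R_K$ has absolutely bounded height, the proposition yields
\[
d\,h\bigl(\phi^{n-1}(a)\bigr)\;\le\; B_5(K)\,h(c)+B_6(K).
\]
Combining this with Lemma~\ref{lem:basic}, namely $\hat h_\phi\bigl(\phi^{n-1}(a)\bigr)=d^{n-1}\hat h_\phi(a)$ and $|\hat h_\phi-h|\le (h(c)+\log 2)/(d-1)$, then gives a bound of the form
\[
\hat h_\phi(a)\;\le\;\frac{B_7(K)\,h(c)+B_8(K)}{d^{n-1}}.
\]

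The decisive step is to force $a$ to be preperiodic for $\phi$. For this I will invoke the uniform canonical-height lower bound underpinning \cite[Theorem~1.2]{looper2021dynamical}: for each $abc$-field $K$ and each $d\ge 5$, there is $\epsilon=\epsilon(K,d)>0$ such that every non-preperiodic $\alpha\in K$ for $\phi=x^d+c$ satisfies $\hat h_\phi(\alpha)\ge \epsilon\cdot\hat h_\phi(0)$, together with $\hat h_\phi(0)\gtrsim_d h(c)$ once $h(c)$ is bounded away from zero. The finitely many $c\in K^\times$ of small height in the number field case are handled individually via Northcott applied to $\hat h_\phi$; in the function field case, non-constancy of $c$ automatically gives $h(c)\ge 1$. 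Comparing this lower bound against the upper bound on $\hat h_\phi(a)$ displayed above then forces $a$ to be preperiodic whenever $n$ exceeds a threshold $N_0=N_0(K,d)$.

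Finally, once $a$ is known to be $K$-rationally preperiodic for $\phi$, its tail length is at most $|\PrePer(\phi,K)|$, which by Corollary~\ref{cor:strong-ubc} is bounded by a constant $P(K,d)$ depending only on $K$ and $d$. Taking $N:=\max\{N_0(K,d),\,P(K,d)+1\}$, whenever $n\ge N$ the iterate $\phi^n(a)=ry^m$ lies on the periodic cycle in the forward orbit of $a$ and is therefore periodic, as required. The principal obstacle is the uniform-in-$c$ canonical-height lower bound for non-preperiodic points; the rest of the argument is a routine combination of Proposition~\ref{prop:fermat-catalan}, Lemma~\ref{lem:basic}, and the orbit-structure of preperiodic points.
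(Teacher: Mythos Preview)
Your argument is correct and follows essentially the same route as the paper's proof: both rewrite $\phi^n(a)=ry^m$ as a Fermat--Catalan equation, apply Proposition~\ref{prop:fermat-catalan} and Lemma~\ref{lem:basic} to bound $d^n\hat h_\phi(a)$ in terms of $h(c)$, invoke a uniform canonical-height lower bound of Looper to force $a$ preperiodic once $n$ is large, and then use uniform boundedness of $\PrePer(\phi,K)$ to bound the tail length.

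The only notable difference is how uniformity in $c$ is achieved at the Looper step. The paper quotes the bound in the form $\hat h_\phi(a)\ge\kappa\max\{\hat h_\phi(0),1\}$ (this is \cite[Theorem~1.4]{looper2021uniform}, not \cite{looper2021dynamical}), and separately bounds $h(c)\lesssim_d\max\{\hat h_\phi(0),1\}$; dividing then gives $d^n\le B_7(K,d)$ in one stroke, with no case analysis. You instead use the weaker form $\hat h_\phi(a)\ge\epsilon\,\hat h_\phi(0)$ and compensate by splitting off the finitely many $c$ of small height and treating each via Northcott. Both are valid; the paper's version is cleaner and avoids the case split, while yours makes the dependence on Looper's theorem slightly more modest but at the cost of an extra finiteness argument.
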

\begin{proof}
The first steps in the proof are similar to the proof of Proposition \ref{prop:uniformstability}. Suppose that $\phi^n(a)=ry^m$ for some $a,y\in K$, some $r\in R_{K}$, some $d\geq5$ and $m\geq2$. Moreover, assume that $a$ is not preperiodic for $\phi$. Then $\hat{h}_\phi(a)>0$ by \cite[Theorem 3.22]{SilvDyn} when $K$ is a number field and \cite[Theorem A]{MR2202175} when $K$ is a function field; here we use that $c$ is non-constant in the function field case---see Remark~\ref{rem:isotrivial} below. Thus $(X,Y)=(\phi^{n-1}(a),y)$ is a solution to the generalized Catalan equation $(-c^{-1})X^d+(rc^{-1})Y^m=1$. Hence, Proposition \ref{prop:fermat-catalan} and Lemma \ref{lem:basic} imply that
\begin{equation}\label{eq:uniformpower1}
d^n \hat{h}_\phi(a)\leq B_3h(c)+B_4, 
\end{equation}
for some constants $B_3$ and $B_4$ with $B_4$ depending on $K$; see \eqref{eq:uniformstability2} and simply replace $0$ with $a$. On the other hand, Lemma \ref{lem:basic} and the fact that $c=\phi(0)$ imply that  
\begin{equation}\label{eq:uniformpower2}
h(c)\leq \frac{d(d-1)}{(d-2)} \max\big\{\hat{h}_\phi(0),1\big\}+\frac{\log2}{d-2}.   
\end{equation}
In particular, by combining \eqref{eq:uniformpower1} and \eqref{eq:uniformpower2}, we deduce that
\begin{equation}\label{eq:uniformpower3}
d^{n}\hat{h}_\phi(a)\leq B_{5}\max\big\{\hat{h}_\phi(0),1\big\}+B_{6}
\end{equation} 
for some constants $B_5$ and $B_{6}$ depending on $K$ and $d$. But, \cite[Theorem 1.4]{looper2021uniform} implies that
\begin{equation}\label{eq:uniformpower5}
\hat{h}_\phi(a)\geq \kappa \max\big\{\hat{h}_\phi(0),1\big\}
\end{equation} 
for some positive constant $\kappa$ depending on $K$ and $d$. Therefore,
\[
d^{n}\leq B_7 := (B_5+B_6)/\kappa,
\]
where $B_7$ is now a positive constant depending on $K$ and $d$. Thus, we see that there is a constant $N_1=N_1(K,d)$ such that if $\phi^n(a)=ry^m$ for some $n\geq N_1$, $y\in K$, $r\in R_{K}$, and $m\geq2$, then $a$ must be \emph{preperiodic} for $\phi$.

On the other hand, by the Dynamical Uniform Boundedness Theorem for the family $x^d+c$ (see \cite[Theorem 1.7]{MR4065068} and \cite[Theorem 1.2]{looper2021dynamical}), there is an $N_2=N_2(K,d)$ such that if $a\in K$ is a preperiodic point for $\phi(x)=x^d+c$, then $\phi^{n}(a)$ must be a \emph{periodic} point for $\phi$ for all $n\geq N_2$ (that is, the tail lengths of preperiodic points are uniformly bounded). In particular, if we define $N(K,d):=\max\{N_1(K,d),N_2(K,d)\}$ and assume that $\phi^n(a)=ry^m$ for some $n\geq N$, $m\geq2$, and some $y\in K$, then $a$ is preperiodic by definition of $N_1$ and $\phi^{n}(a)=ry^m$ must be periodic by definition of $N_2$. The claim follows.  
\end{proof}

\begin{remark}\label{rem:isotrivial}
In the proof Proposition~\ref{prop:uniformpower}, we apply \cite[Theorem A]{MR2202175} in the case that $K/k(t)$ is a function field and $\phi(x) = x^d + c$ with $c \in K$ non-constant. However, to apply \cite[Theorem A]{MR2202175}, we require $\phi$ to be \emph{nonisotrivial}; that is, $\phi$ cannot be conjugate to a rational function defined over $\bar k$. Here we sketch a proof of the standard fact that $\phi(x) = x^d + c$ is isotrivial if and only if $c \in \bar k$.

Certainly if $c \in \bar k$, then $\phi$ itself is in $\bar k[x]$, so $\phi$ is isotrivial. Now suppose that $\phi$ is isotrivial, in which case $\phi$ is conjugate to a map $\psi(x) \in k(x)$. Since $\phi$ has only two ramification points (namely $0$ and $\infty$), so must $\psi$; moreover, since $\psi \in \bar k(x)$, the ramification points for $\psi$ are elements of $\P^1(\bar k)$, so there is a change of coordinates over $\bar k$ that moves those ramification points to $0$ and $\infty$. Thus, $\phi$ has a conjugate of the form $Ax^d + B$ with $A,B \in \bar k$, and therefore, after further conjugating by the map $A^{1/(1-d)}x$, we have that $\phi$ is conjugate to a map of the form $x^d + C$ with $C \in \bar k$. Finally, the two maps $x^d + c$ and $x^d + C$ are conjugate if and only if $c$ and $C$ differ by a factor of a $(d-1)$th root of unity, so $c \in \bar k$ as well.
\end{remark}

In particular, if $\phi^N$ is irreducible and $\phi^N\circ f$ is reducible for some $\phi\in S$ and $f\in M_S$, then it must be the case that $\phi$ has an $m$th powered periodic point (up to a small multiple) for some $m\geq2$. However, for sufficiently large degrees $d$, all periodic points for $x^d + c$ are fixed points, outside of a set of $c$ of bounded height. With this in mind, we make the following definition.  
\begin{defin}\label{def:poweredfixedpoint} Let $K$ be a field of characteristic $p\geq0$, let $d\geq2$, and let $c\in K$. Then we say that $\phi(x)=x^d+c$ has a \emph{powered fixed point} if there exist $y\in K$, $m\geq2$, and $r\in\{\pm{1},\pm{4}\}$ such that $\phi(ry^m)=ry^m$. Equivalently, $\phi$ has a powered fixed point if it is of the form $\phi(x)=x^d+ry^m-(ry^m)^d$ for the stipulated $y$, $m$, and $r$. 
\end{defin}
We immediately deduce the following irreducibility result for semigroups containing an irreducible map with no powered fixed points. 
\begin{prop}\label{prop:nopoweredfixedpoints} Let $K$ be an $abc$-field of characteristic zero and let $S=\{x^{d_1}+c_1,\dots x^{d_s}+c_s\}$ for some $c_1,\dots,c_s\in K$ and some $d_1,\dots,d_s\geq2$. Moreover, assume that $S$ contains $\phi(x)=x^d+c$ that is irreducible and has no powered fixed points. Then there exist constants $C_5=C_5(K)$, $D_5=D_5(K)$ and $N=N(K,d)$ such that if $h(c)>C_5$ and $d>D_5$, then  
\[\{\phi^N\circ f\,:\, f\in M_S\}\]
is a set of irreducible polynomials in $K[x]$. Moreover, one can take $C_5=0$ and $D_5=\max\{14, 4g_K+10\}$ in the function field case.  
\end{prop}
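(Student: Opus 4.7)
The plan is to combine the irreducibility test of Theorem~\ref{prop:oldstability} with Propositions~\ref{prop:uniformstability} and~\ref{prop:uniformpower} and Theorem~\ref{thm:uniform-bound}. First set
$$C_5(K) := \max\{C_1(K), C_4\}, \qquad D_5(K) := \max\{D_1(K), D_4(K)\}, \qquad N := N(K,d),$$
where $N(K,d)$ is the constant furnished by Proposition~\ref{prop:uniformpower}. In the function field case, $C_1(K) = C_4 = 0$ and $D_4(K) = \max\{14, 4g_K + 10\}$ dominates $D_1(K) = \max\{8, 2g_K + 6\}$, giving the stated explicit values. Because $\phi$ is irreducible with $h(c) > C_4$ and $d > D_4$, Proposition~\ref{prop:uniformstability} guarantees $\phi$ is stable; in particular $\phi^N$ is monic and irreducible.

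Next I would show by induction on the word length of $f \in M_S$ with respect to the generating set $S$ (well-defined because $M_S$ is free by the result of~\cite{MR4349782} cited earlier) that $\phi^N \circ f$ is irreducible. Write $f = f' \circ \theta$ with $\theta(x) = x^{d_\theta} + c_\theta \in S$ the innermost factor and $f' \in M_S \cup \{\mathrm{id}\}$ of strictly smaller length; the inductive hypothesis (or stability of $\phi$ in the base case $f' = \mathrm{id}$) makes $g := \phi^N \circ f'$ monic and irreducible. Assuming for contradiction that $\phi^N \circ f = g \circ \theta$ is reducible, Theorem~\ref{prop:oldstability} applied with the monic irreducible outer $g$ and the unicritical inner $\theta$ yields
$$\phi^N\bigl(f'(c_\theta)\bigr) = g(\theta(0)) = (-1)^{e_1} 4^{e_2}\, y^m$$
for some $e_1, e_2 \in \{0,1\}$, some $y \in K$, and some divisor $m \geq 2$ of $d_\theta$.

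Setting $a := f'(c_\theta)$ and $r := (-1)^{e_1} 4^{e_2} \in \{\pm 1, \pm 4\} \subseteq R_K$, the equation $\phi^N(a) = ry^m$ together with $N \geq N(K,d)$ places us in the hypotheses of Proposition~\ref{prop:uniformpower}, forcing $ry^m$ to be a $K$-rational periodic point of $\phi$. Since $h(c) > C_1(K)$ and $d > D_1(K)$, Theorem~\ref{thm:uniform-bound}(2) then forces $\PrePer(\phi, K) = \{\zeta y_0 : \zeta \in \mu_{K,d}\}$ for the unique fixed point $y_0 \in K$ of $\phi$ (with $c = y_0 - y_0^d$ and in particular $y_0 \ne 0$, as $c \ne 0$); because $\phi(\zeta y_0) = y_0 \ne \zeta y_0$ for every non-identity $\zeta \in \mu_{K,d}$, the only $K$-rational periodic point of $\phi$ is $y_0$ itself. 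Therefore $ry^m = y_0$, whence $\phi(ry^m) = ry^m$, contradicting the standing assumption that $\phi$ has no powered fixed point in the sense of Definition~\ref{def:poweredfixedpoint}.

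The one organizational point requiring care is the direction of the induction: Theorem~\ref{prop:oldstability} demands that the \emph{outer} factor be monic and irreducible, so one must peel the innermost unicritical map off $f$ and apply the inductive hypothesis to the outer $\phi^N \circ f'$ rather than to the inner portion. Once this is arranged, the three prior propositions plug together without further analysis.
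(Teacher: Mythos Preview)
Your proof is correct and follows essentially the same approach as the paper: the paper's phrase ``repeated application of Theorem~\ref{prop:oldstability}'' is precisely the induction on word length that you spell out, and the remaining steps (Proposition~\ref{prop:uniformpower} to force periodicity, then Theorem~\ref{thm:uniform-bound}(2) to force the periodic point to be the unique fixed point) are identical. The appeal to freeness of $M_S$ for well-definedness of word length is harmless but unnecessary---inducting on the length of any chosen word representation suffices.
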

\begin{proof} Assume that $\phi(x)=x^d+c\in S$ is irreducible and without a powered fixed point. Let $C_5=\max\{C_1,C_4\}$ and $D_5=\max\{D_1,D_4\}$ where $C_1$ and $D_1$ are the constants from Theorem \ref{thm:uniform-bound} and $C_4$ and $D_4$ are the constants from Proposition \ref{prop:uniformstability}. Moreover, let $N = N(K,d)$ be as in Proposition \ref{prop:uniformpower} and assume that $h(c)>C_5$ and $d>D_5$. Then Proposition \ref{prop:uniformstability} implies that $\phi^N$ is irreducible over $K$. Now suppose for a contradiction that $\phi^N\circ f$ is reducible for some $f\in M_S$. Since $f$ is a composition of maps of the form $x^{d_i} + c_i$, repeated application of Theorem \ref{prop:oldstability} implies that $\phi^N(a)=ry^m$ for some $a,y\in K$, some $r\in\{\pm{1},\pm{4}\}$, and some $m\geq2$. In particular, it follows from Proposition \ref{prop:uniformpower} that $ry^m$ is a periodic point for $\phi$. But then $ry^m$ must be a fixed point for $\phi$ by Theorem \ref{thm:uniform-bound}, and we reach a contradiction of our assumption that $\phi$ has no powered fixed points.          
\end{proof}
Thus we have succeeded in constructing a large set of irreducible polynomials in $M_S$ unless \emph{every irreducible polynomial in $S$ (of sufficiently large height) has a powered fixed point}. To handle the case when $S$ contains at least two such maps, we use the following result:
\begin{lem}\label{lem:two+special+overlap} Let $K$ be an $abc$-field of characteristic zero, let $a,b,y,z\in K$, let $d_1,d_2\geq2$ and let $\zeta_i\in\mu_{K,d_i}$ for $i=1,2$. Moreover, assume that 
\begin{equation}\label{eq1:two-irre-overlap}
    a^{d_1}+y-y^{d_1}=\zeta_2z\;\;\;\,\text{and}\,\;\;\; b^{d_2}+z-z^{d_2}=\zeta_1y.\vspace{.1cm}  
\end{equation}
Then there exists $D_6=D_6(K)$, such that if $\min\{d_1,d_2\}>D_6$ and $\min\{h(y),h(z)\}>0$, then $z=\zeta_1 y$. In particular, we can take $D_6=\max\{13,2g_K+11\}$ in the function field case.  
\end{lem}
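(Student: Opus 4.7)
I would argue by contrapositive: assume $z\neq\zeta_1 y$ and derive a bound on $\min\{d_1,d_2\}$ in terms of $K$. The strategy is to rewrite the two equations of \eqref{eq1:two-irre-overlap} in Fermat--Catalan form, apply Proposition~\ref{prop:fermat-catalan} (or Theorem~\ref{thm:Sunit} in the function field case), and combine the resulting height estimates to force either $h(y)$ or $h(z)$ to be small, contradicting $\min\{h(y),h(z)\}>0$.

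First I would rearrange \eqref{eq1:two-irre-overlap} as
\[
a^{d_1}-y^{d_1}=\zeta_2 z-y \qquad\text{and}\qquad b^{d_2}-z^{d_2}=\zeta_1 y-z.
\]
Since $y\neq 0$, the assumption $z\neq\zeta_1 y$ makes the second right-hand side nonzero. Dividing by $\zeta_1 y-z$ and applying Proposition~\ref{prop:fermat-catalan} to the equation $Ab^{d_2}-Az^{d_2}=1$, with $A:=(\zeta_1 y-z)^{-1}$, yields
\[
d_2\,h(z)\;\le\;B_1\,h(\zeta_1 y-z)+B_2\;\le\;B_1\bigl(h(y)+h(z)+\log 2\bigr)+B_2.
\]

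Next I would split on whether $\zeta_2 z-y$ vanishes. In the degenerate case $\zeta_2 z=y$ we have $z=\zeta_2^{-1}y$, hence $h(z)=h(y)$; the standing assumption $z\neq\zeta_1 y$ then forces $\zeta_1\neq\zeta_2^{-1}$, so substituting $h(z)=h(y)$ into the previous display gives $(d_2-2B_1)h(y)\le B_1\log 2+B_2$, which contradicts $h(y)\ge h^{\min}_K>0$ (see \eqref{eq:heightmin}) once $d_2$ exceeds a constant depending only on $K$. In the remaining case $\zeta_2 z\neq y$, the same argument applied to the first equation gives $d_1 h(y)\le B_1(h(y)+h(z)+\log 2)+B_2$; adding the two bounds produces
\[
(d_1-2B_1)h(y)+(d_2-2B_1)h(z)\;\le\;2B_1\log 2+2B_2,
\]
which again contradicts $h(y),h(z)\ge h^{\min}_K$ once $\min\{d_1,d_2\}$ is sufficiently large in terms of $K$.

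For the explicit function field bound $D_6=\max\{13,2g_K+11\}$, I would run the same argument with Theorem~\ref{thm:Sunit} in place of Proposition~\ref{prop:fermat-catalan}: the absence of archimedean places eliminates the $\log 2$ terms, and Silverman's inequality specialized to $m=n=d$ reads $(d-3)h(x)\le 5\max\{h(a),h(b)\}+2g_K-2$. The two resulting inequalities add to $(d_1-13)h(y)+(d_2-13)h(z)\le 4g_K-4$, and combining with $h(y),h(z)\ge 1$ (since function-field heights are integers) yields $\min\{d_1,d_2\}\le 2g_K+11$. The main obstacle is really the bookkeeping around the degenerate subcase $\zeta_2 z=y$: in this case the first equation trivializes, and one must verify that the forced conclusion $z=\zeta_2^{-1}y$ coincides with the target $z=\zeta_1 y$ precisely when $\zeta_1=\zeta_2^{-1}$, so under the contrary assumption the second equation still furnishes a nontrivial Fermat--Catalan relation and the argument goes through.
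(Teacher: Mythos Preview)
Your proposal is correct and follows essentially the same approach as the paper: rewrite each equation in \eqref{eq1:two-irre-overlap} as a Fermat--Catalan equation, apply Proposition~\ref{prop:fermat-catalan} (or Theorem~\ref{thm:Sunit} in the function field case), and combine the resulting height bounds against $h^{\min}_K$ to cap $\min\{d_1,d_2\}$. The only difference is organizational: the paper first splits on whether $y=\zeta_2 z$ and, in the degenerate case, substitutes $z=\zeta_2^{-1}y$ into the second equation (using $\zeta_2^{d_2}=1$) to obtain a fresh Fermat--Catalan relation in $b$ and $y$, whereas you extract the bound $d_2 h(z)\le B_1(h(y)+h(z)+\log 2)+B_2$ once from the second equation (valid throughout since $z\ne\zeta_1 y$) and then simply set $h(z)=h(y)$ in the degenerate case---a slightly more economical route to the same conclusion.
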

\begin{proof}
Assume that $\min\{h(y),h(z)\}>0$ and that $z\neq \zeta_1 y$. Now suppose first that also $y\neq \zeta_2z$. Then in particular, we may define $c_1:=(\zeta_2z-y)^{-1}$ and $c_2:=(\zeta_1y-z)^{-1}$ so that 
\begin{equation}\label{overlap}
c_1a^{d_1}-c_1y^{d_1}=1\;\;\;\text{and}\;\;\;c_2a^{d_2}-c_2z^{d_2}=1.
\end{equation}
But then Proposition \ref{prop:fermat-catalan} and the fact that $h(\pm{c_i})\leq h(y)+h(z)+\log2$ together imply 
\begin{equation*}
\begin{split} 
\max\{d_1h(y),d_2h(z)\}\leq B_1'(h(y)+h(z))+B_2'
\end{split} 
\end{equation*}
for some constants $B_1'$ and $B_2'$, with $B_2'$ depending on $K$. In particular, we deduce that  
\begin{equation}\label{eq:overlap+bd}
\begin{split}
\min\{d_1,d_2\}(h(y)+h(z))\leq d_1h(y)+d_2h(z)\leq 2B_1'(h(y)+h(z))+ 2B_2'. 
\end{split}
\end{equation}
On the other hand, the minimal nonzero height $h^{\min}_K$ of an element of $K$ is strictly positive; see also \eqref{eq:heightmin}. Hence, \eqref{eq:overlap+bd} implies that $\min\{d_1,d_2\}\leq 2B_1'+B_2'/h^{\min}_K$. Therefore, for $\min\{d_1,d_2\}\gg_K0$ it must be the case that $y=\zeta_2z$. But then $ b^{d_2}+(\zeta_2^{-1}y)-y^{d_2}=\zeta_1y$. Hence, if $\zeta_1-\zeta_2^{-1}\neq0$, then we see that
\[\Big(\frac{1}{(\zeta_1-\zeta_2^{-1})y}\Big)b^{d_2}+\Big(\frac{-1}{(\zeta_1-\zeta_2^{-1})y}\Big)y^{d_2}=1.\]
Therefore, Proposition \ref{prop:fermat-catalan} implies that $d_2h(y)\leq B_1'h(y)+B_2'$, and thus $\min\{d_1,d_2\}\leq B_1'+B_2'/h^{\min}_K$. In particular, for $\min\{d_1,d_2\}\gg_K0$ we have that $\zeta_2=\zeta_1^{-1}$. But then, since we already know that $y=\zeta_2z$, we deduce that $z=\zeta_1y$ as claimed.        
\end{proof} 
\begin{remark} As before, a simpler and more explicit argument may be given in the function field case. Namely, \cite[Theorem 2]{MR0664038} and \eqref{overlap} together imply that \[\big(\min\{d_1,d_2\}-3\big)\max\{h(y),h(z)\}\leq 10\max\{h(y),h(z)\}+2g_K-2.\]
In particular, we reach a contradiction for all $\min\{d_1,d_2\}>\max\{13,2g_K+11\}$ as claimed. 
\end{remark}
As a consequence, we deduce the following irreducibility result for semigroups containing two irreducible maps with powered fixed points; compare to \cite[Theorem 1.1]{Mathworks1}.
\begin{prop}\label{prop:2specialirred,unrelated} Let $K$ be an $abc$-field of characteristic zero, let $S=\{x^{d_1}+c_1,\dots x^{d_s}+c_s\}$ for some $c_1,\dots,c_s\in K$ and some $d_1,\dots,d_s\geq2$, and suppose that $S$ contains irreducible polynomials $\phi_1=x^{d_1}+c_1$ and $\phi_2=x^{d_2}+c_2$ with powered fixed points $P_1$ and $P_2$, respectively. Moreover, assume that $P_1/P_2$ is a not in $\mu_{K,d_1}$. Then there exist constants $C_7=C_7(K)$, $D_7=D_7(K)$ and $N_i=N(K,d_i)$ such that if $\min\{h(c_1),h(c_2)\}>C_7$ and $\min\{d_1,d_2\}>D_7$, then either  
\[\{\phi_1^{N_1}\circ \phi_2\circ f\;:\; f\in M_S\}\;\;\;\;\text{or}\;\;\;\; \{\phi_2^{N_2}\circ \phi_1\circ f\;:\; f\in M_S\}\]
is a set of irreducible polynomials in $K[x]$. In particular, one can take $C_7=0$ and $D_7=\max\{14, 4g_K+10\}$ in the function field case.  
\end{prop}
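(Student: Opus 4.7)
The strategy is to emulate the proof of Proposition \ref{prop:nopoweredfixedpoints}, invoking Lemma \ref{lem:two+special+overlap} to handle the obstruction posed by the powered fixed points of $\phi_1$ and $\phi_2$. I argue by contradiction: if \emph{both} of the two candidate sets contain a reducible polynomial, then by the usual outside-in stripping via Theorem \ref{prop:oldstability} I will extract a preimage of a $\mu_{K,d_1}$-multiple of $P_1$ under $\phi_2$ and, symmetrically, a preimage of a $\mu_{K,d_2}$-multiple of $P_2$ under $\phi_1$. These two equations are precisely the hypotheses of Lemma \ref{lem:two+special+overlap} (with $y = P_1$, $z = P_2$), whose conclusion forces $P_1/P_2 \in \mu_{K,d_1}$ and thereby contradicts the standing assumption.

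In detail: take $C_7 := \max\{C_1, C_4, \log 2\}$ and $D_7 := \max\{D_1, D_4, D_6, 5\}$ (with $C_7 = 0$ and $D_7 = \max\{14, 4g_K + 10\}$ in the function field case, where the relevant $C_1, C_4$ vanish, the relevant $D_1, D_4, D_6$ are explicit, and heights are integer-valued so that $h(c_i) \geq 1$ already forces $h(P_i) > 0$), and set $N_i := N(K, d_i)$ from Proposition \ref{prop:uniformpower}. Under these hypotheses, Proposition \ref{prop:uniformstability} yields the irreducibility of $\phi_1^{N_1}$ and $\phi_2^{N_2}$; Theorem \ref{thm:uniform-bound} describes $\PrePer(\phi_i, K) = \{\zeta P_i : \zeta \in \mu_{K,d_i}\}$ with $P_i$ the unique $K$-rational fixed point; and $c_i = P_i - P_i^{d_i}$ together with $h(c_i) > \log 2$ gives $h(P_i) > 0$.

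Now assume, for contradiction, that there exist $f_1, f_2 \in M_S$ with both $\phi_1^{N_1} \circ \phi_2 \circ f_1$ and $\phi_2^{N_2} \circ \phi_1 \circ f_2$ reducible. Writing $f_1 = \phi_{i_1} \circ \cdots \circ \phi_{i_k}$, I peel compositions off starting with the irreducible $\phi_1^{N_1}$ and appending $\phi_2, \phi_{i_1}, \phi_{i_2}, \ldots$ in turn: there is a smallest index $j$ (with $j = 0$ meaning $\phi_1^{N_1} \circ \phi_2$ and $j \geq 1$ meaning $\phi_1^{N_1} \circ \phi_2 \circ \phi_{i_1} \circ \cdots \circ \phi_{i_j}$) at which the resulting composition first becomes reducible. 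Theorem \ref{prop:oldstability} applied at this step, with the preceding irreducible composition playing the role of $g$ and the newly appended unicritical polynomial playing the role of $f$, produces $b \in K$---namely $b = 0$ if $j = 0$ and $b = (\phi_{i_1} \circ \cdots \circ \phi_{i_{j-1}})(c_{i_j})$ otherwise---such that $\phi_1^{N_1}(\phi_2(b)) = r y^m$ for some $r \in \{\pm 1, \pm 4\}$, $m \geq 2$, and $y \in K$. Proposition \ref{prop:uniformpower} forces $r y^m$ to be $\phi_1$-periodic, and Theorem \ref{thm:uniform-bound}(2) identifies the only $K$-rational periodic point of $\phi_1$ as the fixed point $P_1$; hence $\phi_2(b) \in \phi_1^{-N_1}(P_1) = \PrePer(\phi_1, K) = \{\zeta P_1 : \zeta \in \mu_{K,d_1}\}$, i.e., $\phi_2(b) = \zeta_1 P_1$ for some $\zeta_1 \in \mu_{K,d_1}$. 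The symmetric argument applied to $\phi_2^{N_2} \circ \phi_1 \circ f_2$ yields $a \in K$ and $\zeta_2 \in \mu_{K,d_2}$ with $\phi_1(a) = \zeta_2 P_2$.

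Substituting $c_1 = P_1 - P_1^{d_1}$ and $c_2 = P_2 - P_2^{d_2}$, these two identities read $a^{d_1} + P_1 - P_1^{d_1} = \zeta_2 P_2$ and $b^{d_2} + P_2 - P_2^{d_2} = \zeta_1 P_1$, which match the hypotheses of Lemma \ref{lem:two+special+overlap} with $y = P_1$, $z = P_2$. Since $\min\{h(P_1), h(P_2)\} > 0$ and $\min\{d_1, d_2\} > D_6$, the lemma concludes $P_2 = \zeta_1 P_1$, whence $P_1/P_2 = \zeta_1^{-1} \in \mu_{K,d_1}$, contradicting the standing hypothesis on $P_1/P_2$. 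Thus at least one of the two sets must consist entirely of irreducible polynomials, as required. The main technical subtlety lies in verifying that the outside-in stripping argument uniformly produces an equation of the precise shape $\phi_2(b) = \zeta_1 P_1$ no matter where inside the composition reducibility first appears; this is exactly what the combination of Theorem \ref{prop:oldstability}, Proposition \ref{prop:uniformpower}, and Theorem \ref{thm:uniform-bound} guarantees.
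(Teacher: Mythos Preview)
Your proof is correct and follows essentially the same approach as the paper's: assume both sets contain a reducible polynomial, strip via Theorem~\ref{prop:oldstability} to produce points $a,b\in K$ with $\phi_1^{N_1}(\phi_2(b))$ and $\phi_2^{N_2}(\phi_1(a))$ of the form $ry^m$, apply Proposition~\ref{prop:uniformpower} and Theorem~\ref{thm:uniform-bound} to deduce $\phi_2(b)=\zeta_1 P_1$ and $\phi_1(a)=\zeta_2 P_2$, then feed these into Lemma~\ref{lem:two+special+overlap} for the contradiction. The only cosmetic differences are that you spell out the stripping index $j$ explicitly and verify $h(P_i)>0$ at the outset rather than at the end; the paper instead allows $f_1,f_2\in M_S\cup\{\mathrm{Id}\}$ and rules out $h(P_i)=0$ after invoking Lemma~\ref{lem:two+special+overlap}.
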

\begin{proof} Let $\phi_1,\phi_2\in S$ be irreducible over $K$ with powered fixed points $P_1,P_2\in K$ respectively and let $C_7=\max\{C_1,C_4,\log2\}$ and $D_7=\max\{D_1,D_4,D_6\}$; here $C_1$ and $D_1$ are the constants from Theorem \ref{thm:uniform-bound}, $C_4$ and $D_4$ are the constants from Proposition \ref{prop:uniformstability}, and $D_6$ is the constant from Lemma \ref{lem:two+special+overlap}. Moreover, let $N_i=N(K,d_i)$ be as in Proposition \ref{prop:uniformpower}, and assume that $\min\{h(c_1),h(c_2)\}>C_7$ and $\min\{d_1,d_2\}>D_7$. Then $\phi_1^{N_1}$ and $\phi_2^{N_2}$ are irreducible by Proposition \ref{prop:uniformstability}. Hence, if there exist polynomials of the form $\phi_1^{N_1}\circ\phi_2\circ f_1$ and $\phi_2^{N_2}\circ\phi_1\circ f_2$ for $f_1,f_2\in M_S\cup\{\text{Id}\}$ that are \emph{both} reducible, then repeated application of Theorem  \ref{prop:oldstability} implies that there exist some $a_1,a_2,b_1,b_2\in K$, some $r_1,r_2\in\{\pm{1},\pm{4}\}$, and  some $m_1,m_2\geq2$ such that $\phi_2^{N_2}(\phi_1(a_1))=r_1a_2^{m_1}$ and $\phi_1^{N_1}(\phi_2(b_1))=r_2b_2^{m_2}$. But then Proposition \ref{prop:uniformpower} implies that $\phi_1(a_1)\in \PrePer(\phi_2,K)$ and that $\phi_2(b_1)\in \PrePer(\phi_1,K)$. On the other hand, if follows from Theorem \ref{thm:uniform-bound} that 
\[\PrePer(\phi_1,K)=\big\{\zeta P_1\,:\, \zeta\in\mu_{K,d_1}\big\}
\;\;\; \text{and}\;\;\;
\PrePer(\phi_2,K)=\big\{\zeta P_2\,:\, \zeta\in\mu_{K,d_2}\big\}.\]
Hence, since $c_1=P_1-P_1^{d_1}$ and $c_2=P_2-P_2^{d_2}$ by assumption, it must be the case that 
\[a_1^{d_1}+P_1-P_1^{d_1}=\phi_1(a_1)=\zeta_2 P_2\;\;\;\text{and}\;\;\; b_1^{d_2}+P_2-P_2^{d_2}=\phi_2(b_1)=\zeta_1 P_1\] 
for some $\zeta_i\in \mu_{K,d_i}$. But then Lemma \ref{lem:two+special+overlap} implies that either $h(P_1)=0$ or $h(P_2)=0$ or $P_2=\zeta_1 P_1$. However, in the former case we have that 
\[h(c_i)=h(P_i-P_i^{d_i})\leq h(P_i)+d_ih(P_i)+\log2\leq \log2\]
for some $i$, a contradiction. Therefore, $P_1/P_2\in \mu_{K,d_1}$ as claimed.    
\end{proof}
Lastly, we handle the case when $S$ contains an irreducible map with a powered fixed point and another reducible polynomial. To do this, we need the following lemma; see \eqref{eq:r's} for the definition of the set $R_{K}$.  
\begin{lem}\label{lem:special+reducible} Let $K$ be an $abc$-field of characteristic zero and let $P,z\in K$ satisfy 
\begin{equation}\label{eq:lem:special+reducible}
rz^{m}=\zeta_1 P-\zeta_2 P^d
\end{equation} 
for some $r\in R_{K}$, some $\zeta_1,\zeta_2\in \mu_{K}$, and some $m\geq2$. Then there exists a constant $D_8(K)$ such that if $d\geq D_8$, then $h(P)=0$. Moreover, $D_8=\max\{14, 4g_K+10\}$ suffices in the function field case.  
\end{lem}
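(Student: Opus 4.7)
The plan is to rewrite \eqref{eq:lem:special+reducible} as a generalized Fermat--Catalan equation in the two unknowns $P$ and $z$ and then apply Proposition~\ref{prop:fermat-catalan} (or Theorem~\ref{thm:Sunit} in the function field case for explicit constants). We may assume $P\neq 0$, since otherwise $h(P)=0$ trivially. Dividing both sides of \eqref{eq:lem:special+reducible} by $\zeta_1 P$ yields
\[
a\,z^m + b\,P^{d-1} = 1,\qquad\text{where}\quad a := r\zeta_1^{-1}P^{-1}\ \text{and}\ b := \zeta_1^{-1}\zeta_2.
\]
Since $b$ is a product of roots of unity we have $h(b)=0$. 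Moreover, every element of $R_K$ has height at most $\log 4$ in the number field case, and lies in $\bar k$ (hence has height $0$) in the function field case; so $h(a)\le h(P)+\log 4$ in the former setting and $h(a)\le h(P)$ in the latter. Note also that if $z=0$ then \eqref{eq:lem:special+reducible} forces $P^{d-1}=\zeta_1\zeta_2^{-1}\in\mu_K$, in which case $h(P)=0$ already, so we may assume $z\ne 0$ as well.

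For $d$ sufficiently large, say $d\ge 6$, the pair of exponents $(m,d-1)$ satisfies the hypotheses of Proposition~\ref{prop:fermat-catalan}: if $m\ge 3$ then $\min\{m,d-1\}\ge 3$ and $\max\{m,d-1\}\ge 4$, while if $m=2$ then $\max\{m,d-1\}\ge 5$. Applying the proposition gives
\[
(d-1)h(P)\;\le\;\max\{m\,h(z),\,(d-1)h(P)\}\;\le\;B_1\max\{h(a),h(b)\}+B_2\;\le\;B_1\,h(P)+(B_1\log 4+B_2),
\]
with $B_1$ absolute and $B_2$ depending only on $K$. Rearranging, $(d-1-B_1)h(P)\le B_1\log 4 + B_2$. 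If $h(P)>0$, then $h(P)\ge h^{\min}_K$ from \eqref{eq:heightmin}, so $d$ is bounded in terms of $K$ alone; taking $D_8(K)$ strictly larger than this bound forces $h(P)=0$, completing the number field case.

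For the explicit function field bound, I would instead invoke Theorem~\ref{thm:Sunit}. Using $h(a)\le h(P)$ and $h(b)=0$ the same manipulation yields
\[
(d-1)\Bigl(1-\tfrac1m-\tfrac1{d-1}-\tfrac1{\lcm(m,d-1)}\Bigr)h(P)\;\le\;5\,h(P)+2g_K-2.
\]
The worst case is $m=2$, for which the parenthetical factor is at least $\tfrac12-\tfrac{2}{d-1}$. Since $h(P)$ takes positive integer values whenever it is nonzero in the function field setting, a short computation bounds $d$ by a quantity of the form $4g_K+O(1)$, from which one extracts the claim $D_8=\max\{14,4g_K+10\}$. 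The main obstacle I anticipate is the careful bookkeeping of the small absolute contributions (from $R_K$, from the $\lcm$ term, and from $\log 4$) needed to squeeze out the clean explicit constants stated in the lemma; none of the ingredients is deep, but the shape of the Fermat--Catalan hypotheses forces one to split into subcases on $m$ before matching the stated bound.
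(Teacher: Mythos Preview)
Your proposal is correct and follows essentially the same route as the paper: divide \eqref{eq:lem:special+reducible} by $\zeta_1 P$ and apply Proposition~\ref{prop:fermat-catalan} (or Theorem~\ref{thm:Sunit} for the explicit function field bound). The only cosmetic difference is that the paper keeps the equation in the form $\big(\tfrac{\zeta_2}{\zeta_1 P}\big)P^{d}+\big(\tfrac{r}{\zeta_1 P}\big)z^{m}=1$, so the exponent on $P$ is $d$ rather than your $d-1$; this is exactly the kind of bookkeeping you anticipated, and it is what makes the stated constant $D_8=\max\{14,4g_K+10\}$ come out on the nose (your formulation would give a bound off by one).
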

\begin{proof} Assuming that $P$ is non-zero we may rewrite \eqref{eq:lem:special+reducible} as \[\Big(\frac{\zeta_2}{\zeta_1 P}\Big)P^{d}+\Big(\frac{r}{\zeta_1 P}\Big)z^{m}=1.\] 
But then Proposition \ref{prop:fermat-catalan} and the fact that $R_K$ is a set of bounded height together imply that $dh(P)\leq B_3' h(P)+B_4'$ for some constants $B_3'$ and $B_4'$ depending on $K$; the result follows.   
\end{proof}
In particular, we have the following irreducibility result for semigroups containing an irreducible map with a powered fixed point and a second reducible polynomial.  
\begin{prop}\label{prop:oneirred+onered} Let $K$ be an $abc$-field of characteristic zero and let $S=\{x^{d_1}+c_1,\dots x^{d_s}+c_s\}$ for some $c_1,\dots,c_s\in K$ and $d_1,\dots,d_s\geq2$, and suppose that $S$ contains an irreducible map $\phi_1(x)=x^{d_1}+c_1$ with a powered fixed point $P$ (in the sense of Definition \ref{def:poweredfixedpoint}) and a reducible map $\phi_2(x)=x^{d_2}+c_2$. Moreover, assume that $P/c_2$ is not in $\mu_{K,d_1}$. Then there exist constants $C_9=C_9(K)$, $D_9=D_9(K)$ and $N_i=N(K,d_i)$ such that if $\min\{h(c_1),h(c_2)\}>C_9$ and $\min\{d_1,d_2\}>D_9$, then
\[\{\phi_1^{N_1}\circ \phi_2^{N_2}\circ f\;:\; f\in M_S\}\]
is a set of irreducible polynomials in $K[x]$. In particular, one can take $C_9=0$ and $D_9=\max\{14, 4g_K+10\}$ in the function field case.     
\end{prop}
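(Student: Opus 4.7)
The plan is to adapt the strategy of Proposition \ref{prop:2specialirred,unrelated}, replacing the second map's powered-fixed-point structure by its reducibility and using Lemma \ref{lem:special+reducible} to close the principal case. Set $C_9 := \max\{C_1, C_4, \log 2\}$ and $D_9 := \max\{D_1, D_4, D_8\}$, both possibly enlarged as detailed below, and let $N_i := N(K, d_i)$ from Proposition \ref{prop:uniformpower}. Proposition \ref{prop:uniformstability} guarantees $\phi_1^{N_1}$ is irreducible.

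Suppose for contradiction that $F := \phi_1^{N_1}\circ\phi_2^{N_2}\circ f$ is reducible for some $f = \theta_1\circ\cdots\circ\theta_k\in M_S\cup\{\mathrm{Id}\}$. Consider the chain of partial compositions formed by starting at $\phi_1^{N_1}$ and appending one factor at a time---first $N_2$ copies of $\phi_2$, then $\theta_1,\ldots,\theta_k$---and let $j^*$ be the least index at which reducibility first appears. Theorem \ref{prop:oldstability} applied at $j^*$ yields, after unwinding, a relation $\phi_1^{N_1}(\phi_2^n(a)) = r(y')^{m'}$ with $r\in\{\pm 1,\pm 4\}$ and $m'\geq 2$, where either $(n,a) = (j^*, 0)$ and $j^*\leq N_2$, or $(n,a) = (N_2, \theta_1(\cdots\theta_{j^*-N_2}(0)))$ and $j^*>N_2$. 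Proposition \ref{prop:uniformpower} applied to $\phi_1$ (using $N_1\geq N(K,d_1)$) together with Theorem \ref{thm:uniform-bound}(2) for $\phi_1$ then forces $\phi_2^n(a) = \zeta P$ for some $\zeta\in\mu_{K,d_1}$.

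The case $n=1$, $a=0$ gives $c_2 = \zeta P$ and directly contradicts the hypothesis $P/c_2\notin\mu_{K,d_1}$. Suppose next that $n\geq N(K, d_2)$---automatic when $a\neq 0$, since then $n=N_2$. Write $\zeta P = (\zeta r)y^m$ with $\zeta r\in R_K$ and $m\geq 2$ (using $P = ry^m$); Proposition \ref{prop:uniformpower} applied to $\phi_2$ then shows $\zeta P$ is periodic for $\phi_2$, hence preperiodic, so Theorem \ref{thm:uniform-bound}(2) applied to $\phi_2$ gives $\zeta P = \zeta' y_2$ for some $\zeta'\in\mu_{K,d_2}$ and the $K$-rational fixed point $y_2$ of $\phi_2$. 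When $a=0$, this makes $0$ itself preperiodic for $\phi_2$, forcing $y_2 = 0$ and $c_2 = 0$, contradicting $h(c_2)>0$. When $a\neq 0$, one has $y_2 = (\zeta/\zeta')P$, so $c_2 = y_2 - y_2^{d_2} = \eta_1 P - \eta_2 P^{d_2}$ with $\eta_1, \eta_2\in\mu_K$; combined with $c_2 = r'' z^{m''}$ coming from the reducibility of $\phi_2$ (via Theorem \ref{prop:oldstability} applied with $g(x) = x$), Lemma \ref{lem:special+reducible} (which requires $d_2\geq D_8$) forces $h(P) = 0$, and then $h(c_1) = h(P - P^{d_1})\leq\log 2\leq C_9$ contradicts $h(c_1)>C_9$.

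The main technical obstacle is the remaining case $a = 0$ with $2\leq n<N(K, d_2)$, where $n$ is too small to invoke Proposition \ref{prop:uniformpower} for $\phi_2$ directly. Here I plan to apply Proposition \ref{prop:fermat-catalan} to the equation $(\phi_2^{n-1}(0))^{d_2} + c_2 = \zeta r y^m$, obtaining $d_2\, h(\phi_2^{n-1}(0)) \leq B_1\, h(c_2) + B_2(K)$. Combining this with the canonical-height identity $\hat h_{\phi_2}(\phi_2^{n-1}(0)) = d_2^{n-1}\hat h_{\phi_2}(0)$ from Lemma \ref{lem:basic}, and the lower bound $\hat h_{\phi_2}(0)\geq h(c_2)/(2d_2) - O(1/d_2)$ (which follows from $\hat h_{\phi_2}(c_2) = d_2\hat h_{\phi_2}(0) \geq h(c_2) - (h(c_2)+\log 2)/(d_2-1)$), yields $h(c_2)(d_2^{n-1}/2 - B_1) \leq O(1)$. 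Enlarging $D_9$ so that $d_2/2 > B_1$ and $C_9$ beyond the resulting upper bound on $h(c_2)$---both quantities depending only on $K$---then contradicts $h(c_2) > C_9$ and completes the proof. The function field case follows the same outline with $C_9 = 0$ and $D_9 = \max\{14, 4g_K + 10\}$, using Theorem \ref{thm:Sunit} in place of Proposition \ref{prop:fermat-catalan} and the exact identity $h(\phi_2^t(0)) = d_2^t h(c_2)$ for non-constant $c_2$.
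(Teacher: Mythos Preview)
Your proof is correct and follows essentially the same strategy as the paper. The one notable difference is in how you handle the case $a=0$ (equivalently, showing that $\phi_1^{N_1}\circ\phi_2^{N_2}$ itself is irreducible). You split into $n=1$, $2\le n<N(K,d_2)$, and $n=N(K,d_2)$, treating the middle range as your ``main technical obstacle'' and essentially reproving the content of Proposition~\ref{prop:uniformstability} via a Fermat--Catalan and canonical-height computation. The paper instead invokes Remark~\ref{rem:criticalorbitpowers} directly: since $\phi_2^n(0)=\zeta P=(\zeta r_1)y^{m_1}$ with $\zeta r_1\in R_K$ and $m_1\ge 2$, that remark forces $n=1$ immediately, eliminating the middle range and the separate $n=N_2$ subcase altogether. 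This is cleaner and avoids enlarging $C_9$ and $D_9$ a second time.

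One minor imprecision: your displayed bound $h(c_2)(d_2^{n-1}/2-B_1)\le O(1)$ is not literally correct, since the lower bound $d_2\hat h_{\phi_2}(0)\ge h(c_2)/2-O(1)$ contributes a term of size $d_2^{n-1}\cdot O(1)$ after multiplication. The correct inequality is of the form $(d_2^{n-1}/2-B_1-O(1))\,h(c_2)\le d_2^{n-1}\cdot O(1)+O(1)$, which still gives $h(c_2)\le O_K(1)$ uniformly once $d_2$ exceeds a constant depending only on $K$ (the ratio of the two sides is bounded as $d_2^{n-1}\to\infty$). So the conclusion stands, but you should either tighten the inequality or, better, simply cite Remark~\ref{rem:criticalorbitpowers}. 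Also, in the function-field line the exact identity is $h(\phi_2^{\,t}(0))=d_2^{\,t-1}h(c_2)$, not $d_2^{\,t}h(c_2)$.
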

\begin{proof} 
Let $\phi_1(x)=x^{d_1}+c_1$, $\phi_2(x)=x^{d_2}+c_2$, and $P$ be as above and let $N_i=N(K,d_i)$ be as in Proposition \ref{prop:uniformpower}. Moreover, assume that $P/c_2$ is not a $d$th root of unity in $K$, that $\min\{h(c_1),h(c_2)\}>\max\{C_1,C_4,\log2\}$, and that $\min\{d_1,d_2\}>\max\{D_1,D_4,D_8\}$; here $C_1$ and $D_1$ are the constants from Theorem \ref{thm:uniform-bound}, $C_4$ and $D_4$ are the constants from Proposition \ref{prop:uniformstability}, and $D_8$ is the constant in Lemma \ref{lem:special+reducible}. From here, we note first that
\begin{equation}\label{eq:special+red1}
P=r_1y^{m_1} \;\;\;\text{and}\;\;\;\; c_2=r_2z^{m_2}
\end{equation}
for some $y,z\in K$, some divisors $m_1,m_2\geq2$ of $d_1$ and $d_2$ respectively, and some $r_1,r_2\in R_{K}$ defined \eqref{eq:r's}; here we use that $P$ is a powered fixed point (see Definition \ref{def:poweredfixedpoint}) and that $\phi_2$ is reducible so that $\phi_2(0)=r_2z^{m_2}$ by Theorem \ref{prop:oldstability}. Moreover, $\phi_1$ is stable by Proposition \ref{prop:uniformstability}.

From here, we show that $\phi_1^{N_1}\circ\phi_2^{N_2}$ is irreducible in $K$. If not, then repeated application of Theorem \ref{prop:oldstability} implies that $\phi_1^{N_1}(\phi_2^t(0))=r_3w^{m_3}$ for some $w\in K$, some $r_3\in R_{K}$, some divisor $m_3\geq 2$ of $d_2$, and some iterate $1\leq t\leq N_2$. But then Proposition \ref{prop:uniformpower} implies that $a=\phi_2^t(0)$ must be preperiodic for $\phi_1$. On the other hand, Theorem \ref{thm:uniform-bound} implies that 
\begin{equation}\label{eq:special+red2}
\PrePer(\phi_1,K)=\{\zeta P:\zeta\in\mu_{K,d_1}\},
\end{equation}
so we can write $\phi_2^t(0)=\zeta_1 P$ for some $\zeta_1\in\mu_{K,d_1}$. Next set $r_4=\zeta_1 r_1$ and note that $r_4\in R_{K}$ since $r_1\in R_{K}$ and $R_{K}$ is closed under multiplying by roots of unity in $K$. Then, by combining \eqref{eq:special+red1} with \eqref{eq:special+red2}, we see that
\[\phi_2^t(0)=\zeta_1 P=\zeta_1 r_1y^{m_1}=r_4 y^{m_1}.\]
Hence, it follows from Remark \ref{rem:criticalorbitpowers} (see also the proof of Proposition \ref{prop:uniformstability}) that $t=1$. But then by construction $\zeta_1 P=\phi_2^t(0)=\phi_2(0)=c_2$, so that $P/c_2$ is a $d_1$-th root of unity, a contradiction. In particular, we must have that $\phi_1^{N_1}\circ \phi_2^{N_2}$ is irreducible over $K$.

Similarly, if $\phi_1^{N_1}\circ \phi_2^{N_2}\circ f$ is reducible over $K$ for some $f\in M_S$, then repeated application of Theorem \ref{prop:oldstability} applied to the map $g=\phi_1^{N_1}\circ \phi_2^{N_2}$ implies that 
\[r_5u^{m_4}=\phi_1^{N_1}(\phi_2^{N_2}(b))\] 
for some $r_5\in R_{K}$, some $b,u\in K$, and some $m_4\geq2$. Thus Proposition \ref{prop:uniformpower}, this time applied to the map $\phi_1$ and the point $a=\phi_2^{N_2}(b)$, implies that $\phi_2^{N_2}(b)$ must be preperiodic for $\phi_1$. But this means 
\[\phi_2^{N_2}(b)=\zeta_2 P=\zeta_2 r_1 y^{m_1}\]
for some $\zeta_2\in\mu_{K,d_1}$ by \eqref{eq:special+red1} and \eqref{eq:special+red2}. In particular, setting $r_6=\zeta_2 r_1\in R_{K}$, we deduce that 
\[\phi_2^{N_2}(b)=r_6y^{m_1}.\]
But then Proposition \ref{prop:uniformpower}, now applied to the map $\phi_2$ and the basepoint $b$, implies that $r_6y^{m_1}$ must be a periodic point for $\phi_2$. Hence, the point $r_6y^{m_1}=\zeta_2 r_1 y^{m_1}=\zeta_2 P$ is fixed by $\phi_2$ by Theorem \ref{thm:uniform-bound}. However, from here we see that 
\[r_2z^{m_2}=c_2=(\zeta_2 P)-(\zeta_2 P)^{d_2}=\zeta_2 P-\zeta_2^{d_2} P^{d_2}=\zeta_2 P-\zeta_3 P^{d_2}\]
for some $\zeta_3\in\mu_{K,d_1}$. In particular, we obtain a solution to a diophantine equation in Lemma \ref{lem:special+reducible}, so that $h(P)=0$. But then 
\[h(c_1)=h(P-P^{d_1})\leq h(P)+d_1h(P)+\log2=\log2,\]
and we reach a contradiction. The claim follows. 
\end{proof}
We now have the tools in place to prove our main irreducibility result from the introduction. 
\begin{proof}[Proof of Theorem \ref{thm:main+irred}] 
Let $C_3(K)$ and $D_3(K)$ denote the maximum of the constants $C_i(K)$ and $D_i(K)$ in this section respectively (so that all of the results in this section hold). Next decompose $S=S_1\cup S_2\cup S_3$ into the following pieces:\vspace{.1cm}  
\begin{equation}\label{eq:S+decomp}
\begin{split}
S_1&:=\{\phi(x)=x^d+c\in S\,:\, h(c)\leq C_3\;\text{or}\; d\leq D_3\},\\[5pt] 
S_2&:=\{\phi(x)=x^d+c\in S\,:\, h(c)>C_3,\; d>D_3, \;\text{and}\; \text{$\phi$ is irreducible over $K$}\},\\[5pt] 
S_3&:=\{\phi(x)=x^d+c\in S\,:\, h(c)>C_3,\; d>D_3,\;\text{and}\; \text{$\phi$ is reducible over $K$}\}.\\[2pt] 
\end{split}
\end{equation}
Now suppose that there is some $\phi(x)=x^{d_1}+c_1\in S_2$. We first show that there is some $g\in M_S$ such that $\{g\circ f: f\in M_S\}$ is a set of irreducible polynomials over $K$, unless $S$ is of a special form. Thus, assume no such $g$ exists. Proposition \ref{prop:nopoweredfixedpoints} implies that \emph{every map} in $S_2$ has a powered fixed point. Let $P$ denote the powered fixed point of $\phi$. From here, Proposition \ref{prop:2specialirred,unrelated} applied separately to the pairs $(\phi,\psi)$ for each $\psi\in S_2$ implies that
\begin{equation}\label{eq:irred-decomp}
S_2\subseteq \Big\{x^d+(\zeta P)-(\zeta P)^{d}\,:\, \zeta\in\mu_{K,d_1},\; d\in\{d_1,\dots,d_s\}\Big\}.
\end{equation}
Likewise, Proposition \ref{prop:oneirred+onered} applied separately to the pairs $(\phi,\psi)$ for each $\psi\in S_3$ implies that    
\begin{equation}\label{eq:red-decomp}  
S_3\subseteq \Big\{x^d+\zeta P\,:\, \zeta\in\mu_{K,d_1},\; d\in\{d_1,\dots,d_s\}\Big\}.
\end{equation}
In particular, if there is no $g\in M_S$ such that $\{g\circ f: f\in M_S\}$ is a set of irreducible polynomials over $K$, then $S$ is necessarily of the special form 
\begin{equation}\label{eq:total-decomp}
S\subseteq S_1\cup \Big\{x^d+(\zeta_1 P)-(\zeta_1 P)^d,\; x^d+\zeta_2 P\;:\;\text{$\zeta_1,\zeta_2\in\mu_{K,d_1}$\;\text{and}\;$d\in\{d_1,\dots,d_s\}$}\Big\}. 
\vspace{.15cm}
\end{equation} 
Moreover, since $P$ is a powered fixed point, we have that $P=ry^m$ for some $y\in K$, some $r\in\{\pm{1},\pm{4}\}$, and some $m\geq2$; see Definition \ref{def:poweredfixedpoint}. Equivalently, 
\begin{equation}\label{eq:specialform}
S\subseteq \mathcal{S}(C_3,D_3)\cup\mathcal{I}(ry^m,d_1,\mathfrak{d})\cup \mathcal{R}(ry^m,d_1,\mathfrak{d})
\end{equation} 
for some such $r$, $y$, and $m$ and with $\mathfrak{d}:=\{d_1,\dots,d_s\}$; see \eqref{eq:smallhtordeg}, \eqref{eq:oneparameter+irre}, and \eqref{eq:oneparameter+red} for the definitions of these sets. 

Finally, we prove the claim about the lower bound on the proportion of irreducible polynomials in $M_S$ when $S$ is not of the form in \eqref{eq:specialform}. First we note that $M_S$ is a free semigroup by \cite[Theorem 3.1]{MR4349782}. Next, we note that if $|S|=1$, then $M_S=\{\phi^n\}_{n\geq1}$ consists entirely of irreducible polynomials by Proposition \ref{prop:uniformstability}. Therefore, we may assume that $|S|\geq2$. In this case, let $\rho$ be the unique positive real number satisfying $(1/d_1)^{\rho}+\dots+ (1/d_s)^{\rho}=1$. Then \cite[Proposition 2.2]{bell2023counting} implies that there are positive constants $b_1$ and $b_2$ depending only on $d_1,\dots,d_s$ such that
\begin{equation}\label{eq:free+count}
b_1B^\rho\leq\big|\{F\in M_S\,: \deg(F)\leq B\}\big|\leq b_2 B^{\rho} 
\end{equation}
for all $B$ sufficiently large. In particular, when $S$ is not of the form in \eqref{eq:specialform}, it follows from \eqref{eq:free+count} that
\begin{equation*}
\begin{split}
\scalemath{.95}{
\frac{\big|\{F\in M_S\,: \deg(F)\leq B\;\text{and $F$ is irreducible over $K$} \}\big|}{\big|\{F\in M_S\,: \deg(F)\leq B\}\big|}}\,&
\scalemath{.95}{
\geq \frac{\big|\{g\circ f\, : \, f\in M_S\; \text{and}\; \deg(g\circ f)\leq B\}\big|}{\big|\{F\in M_S\,: \deg(F)\leq B\}\big|}}\\[8pt] 
&=\frac{\big|\{f\in M_S\,:\, \deg(f)\leq \frac{B}{\deg(g)}\}\big|}{\big|\{F\in M_S\,: \deg(F)\leq B\}\big|}\\[8pt] 
&\geq \frac{b_1}{b_2\deg(g)^{\rho}} 
\end{split} 
\end{equation*}
for all large $B$. Therefore, $M_S$ contains a positive proportion of irreducible polynomials over $K$ as claimed. Moreover, since $g$ may be taken to be of the form $g=\phi_i^{N_i}$, or $g=\phi_i^{N_i}\circ \phi_j$, or $g=\phi_i^{N_i}\circ \phi_j^{N_j}$ for some $\phi_i,\phi_j\in S$ and some iterates $N_i=N(K,d_i)$ and $N_j=N(K,d_j)$ depending only on $K$ and $d_i=\deg(\phi_i)$ and $d_j=\deg(\phi_j)$, it follows that there is a positive lower bound on the proportion of irreducibles in $M_S$ that depends only on $K$ and $d_1,\dots,d_s$. 
\end{proof}
\begin{remark} In fact, by symmetry in the proof of Lemma \ref{lem:two+special+overlap}, it follows that the condition on the ratio of the powered fixed points $P_1$ and $P_2$ in Proposition \ref{prop:2specialirred,unrelated} can be strengthened: it is enough to assume that $P_1/P_2$ is not an element of $\mu_{K,d_1}\cap\mu_{K,d_2}$. In particular, the parameter $\zeta$ in the definition of \eqref{eq:oneparameter+irre} can be assumed to come from $\mu_{K,n}\cap\mu_{K,d}$ instead of just $\mu_{K,n}$.      
\end{remark}
In particular, when $K/\mathbb{Q}$ is a number field (so that $\mu_K$ is finite), Theorem \ref{thm:main+irred} implies that $M_S$ contains a positive proportion of irreducibles whenever the polynomials in the generating set $S$ have sufficiently large degree and height, the cardinality of $S$ is large enough, and $S$ contains at least one irreducible polynomial.    
\begin{proof}[Proof of Corollary \ref{cor:numberfield}] Suppose that $\min_i\{h(c_i)\}>C_3$ and that $\min_i\{d_i\}>D_3$ where $C_3$ and $D_3$ are as in Theorem \ref{thm:main+irred}. Moreover, assume that there is some irreducible $\phi(x)=x^{d_1}+c_1\in S$. In particular, $\phi$ is necessarily in the set $S_2$ defined in \eqref{eq:irred-decomp}. Hence, if $M_S$ does not contain a positive proportion of irreducible polynomials over $K$, the proof of Theorem \ref{thm:main+irred} implies that $S$ takes the special form in \eqref{eq:total-decomp}. On the other hand, $S_1=\varnothing$ by assumption, so that  
\begin{equation}\label{eq:cor+decomp}
S\subseteq \Big\{x^d+(\zeta_1 P)-(\zeta_1 P)^d,\; x^d+\zeta_2 P\;:\;\text{$\zeta_1,\zeta_2\in\mu_{K,d_1}$\;\text{and}\; $d\in\{d_1,\dots,d_s\}$}\Big\}.
\end{equation} 
However, for each fixed $d=d_i$ there are at most $2|\mu_{K,d_1}|\leq 2|\mu_K|$ such elements in the set above. Hence, if $S$ contains more than $2|\mu_K|\cdot \big|\{d_1,\dots,d_s\}\big|$ polynomials, then we reach a contradiction; thus $M_S$ contains a positive proportion of irreducibles over $K$ in this case. 

Similarly, suppose that there is a place $v$ on $K$ and some indices $i$ and $j$ such that $v(c_i)<0$ and $v(c_j)\geq0$. Since we know that either $c_i=\zeta P-(\zeta P)^d$ or $c_i=\zeta P$ for some $\zeta\in\mu_{K}$ and some $d\geq 2$, it follows that $v(P)<0$. On the other hand, $c_j$ is also of the form $c_j=\zeta' P-(\zeta' P)^{d'}$ or $c_j=\zeta' P$ for some $\zeta'\in\mu_{K}$ and some $d'\geq2$. But in the latter case, 
\[0\leq v(c_j)=v(\zeta' P)=v(\zeta')+v(P)=v(P)<0,\] 
and we reach a contradiction. Likewise, since the valuations $v(\zeta' P)=v(P)$ and $v((\zeta' P)^{d'})=d'v(P)$ are distinct, we deduce that
\[v(c_j)=v\Big(\zeta' P-(\zeta' P)^{d'}\Big)=\min\Big\{v(\zeta' P),\; v\big((\zeta' P)^{d'}\big)\Big\}=d'v(P)<0,\]
in the first case, and we again reach a contradiction. Therefore, $S$ cannot be of the special form in \eqref{eq:cor+decomp}; thus $M_S$ contains a positive proportion of irreducibles in this case as well.   
\end{proof}
We are able to prove more in the function field setting. As an example, we consider the case when every map in $S$ is of the excluded special form \eqref{eq:total-decomp} and prove the following result:  
\begin{prop}\label{prop:allpthpowerfixedpts+related} 
Let $K/k(t)$ be a function field of characteristic zero, and let 
\[
S\subseteq\big\{x^d+(\zeta P)-(\zeta P)^d\,:\, \zeta\in\mu_{K}\;\text{and}\; d\geq 2\big\}
\]
for some non-constant $P\in K$. Moreover, assume that $\phi(x)=x^{d}+P-P^{d}$ is in $S$, that $\phi$ is irreducible over $K$, and that $d>\max\{14, 4g_K+10\}$. Then there exists $N=N(K,d)$ such that $\{\phi^N\circ f : f\in M_S\}$ is a set of irreducible polynomials in $K[x]$.   
\end{prop}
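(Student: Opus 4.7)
The plan has three ingredients: the irreducibility of $\phi^N$ itself, the compositional peeling argument already used in Propositions~\ref{prop:nopoweredfixedpoints}--\ref{prop:oneirred+onered}, and a clean pole-order calculation that replaces the various diophantine inputs of those earlier proofs.

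The first ingredient is immediate: since $P$ is non-constant so is $c := P - P^d$, giving $h(c) \geq 1 > C_4 = 0$; combined with $d > \max\{14, 4g_K + 10\} = D_4$, Proposition~\ref{prop:uniformstability} yields that $\phi^N$ is irreducible for every $N \geq 1$. I would then take $N := N(K,d)$ to be the constant from Proposition~\ref{prop:uniformpower}, whose hypotheses ($d \geq 5$ and $c$ non-constant) are met.

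For the second ingredient I would argue by contradiction: suppose $\phi^N \circ f$ is reducible for some $f = \psi_s \circ \cdots \circ \psi_1 \in M_S$, with $\psi_i(x) = x^{d_i} + c_i$ and $c_i = \zeta_i P - (\zeta_i P)^{d_i}$. Peeling off one $\psi_i$ at a time from the outside in and applying Theorem~\ref{prop:oldstability} at the first instance where an irreducible outer factor becomes reducible upon composition with an inner $\psi_i$, I would obtain an index $j \in \{1,\ldots,s\}$ and an element
\[
a := (\psi_s \circ \cdots \circ \psi_{j+1})(c_j)
\]
(an empty composition when $j = s$) such that $\phi^N(a) = r y^m$ for some $r \in \{\pm 1, \pm 4\}$, $y \in K$, and $m \geq 2$. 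Proposition~\ref{prop:uniformpower} then forces $\phi^N(a)$ to be periodic for $\phi$; Theorem~\ref{thm:uniform-bound} (applicable since $d > D_1(K)$) identifies $P$ as the unique $K$-rational periodic point of $\phi$; and hence $a \in \PrePer(\phi, K) = \{\zeta P : \zeta \in \mu_{K,d}\}$, so $a = \zeta P$ for some $\zeta \in \mu_{K, d}$.

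To finish, I would fix a place $v_0 \in M_K$ at which $P$ has a pole, say of order $p \geq 1$, so that $v_0(c_i) = -pd_i$ for each $i$ (since each $\zeta_i$ is a $v_0$-unit and $d_i \geq 2$). A straightforward induction using the non-archimedean triangle inequality---together with the observation that at every step of the composition defining $a$ the two competing valuations are strictly distinct, since $d_j d_{j+1} \cdots d_{k-1} \geq 2$ forbids the equality $d_k v_0(\alpha) = v_0(c_k)$---would yield
\[
v_0(a) = -p \cdot d_j d_{j+1} \cdots d_s \leq -2p < -p = v_0(\zeta P),
\]
contradicting $a = \zeta P$. The main obstacle I anticipate is simply this non-cancellation check in the pole-order induction, but it is automatic from $d_i \geq 2$ throughout; all other steps are direct invocations of tools already established in this section.
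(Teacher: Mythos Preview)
Your proof is correct and follows the same overall architecture as the paper's: irreducibility of $\phi^N$ from Proposition~\ref{prop:uniformstability}, the peeling argument via Theorem~\ref{prop:oldstability} to produce some $a = F(0)$ with $F \in M_S$ and $\phi^N(a) = ry^m$, then Proposition~\ref{prop:uniformpower} and Theorem~\ref{thm:uniform-bound} to force $a = \zeta P$.

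The only difference is in the final contradiction. The paper observes by induction that $F(0) = \zeta_F P^{d_1\cdots d_n} + G_F(P)$ with $G_F \in k[x]$ of strictly smaller degree, so that $F(0) = \zeta' P$ yields a nonzero polynomial over $k$ vanishing at the non-constant element $P$. You instead track the pole order of $F(0)$ at a place $v_0$ where $P$ has a pole, obtaining $v_0(F(0)) = -p\,d_j\cdots d_s < -p = v_0(\zeta P)$. These are two formalizations of the same observation (the ``degree in $P$'' of $F(0)$ is at least $2$), and your non-cancellation check in the induction is indeed automatic from $d_i \ge 2$. Your valuation phrasing has the mild advantage of not needing to verify that all intermediate coefficients lie in the constant field $k$; the paper's phrasing makes the algebraic reason slightly more transparent.
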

\begin{proof} Let $S$ and $\phi$ be as in the statement above and let $N$ be as in Proposition \ref{prop:uniformpower}. Then $\phi^N$ is irreducible by Proposition \ref{prop:uniformstability}. Now let $F\in M_S$ be an arbitrary non-identity element so that $F=\theta_{1}\circ\theta_{2}\circ\dots\circ\theta_{n}$ for some $\theta_i\in S$. In particular, we may write $\theta_{i}(x)=x^{d_i}+(\zeta_iP)-(\zeta_iP)^{d_i}$ for some $\zeta_i\in\mu_{K}$ and $d_i\geq2$. 
A quick induction argument shows that
\begin{equation}\label{eq:F(0)}
F(0)= \zeta_F P^{d_1\cdots d_n}+G_F(P) \vspace{.1cm}
\end{equation}
for some $\zeta_F\in\mu_K$ and some polynomial $G_F\in k[x]$ of degree strictly less than $d_1\cdots d_n$. 

On the other hand, suppose that $\phi^N\circ f$ is reducible in $K[x]$ for some $f\in M_S$. Then repeated application of Theorem \ref{prop:oldstability} implies that there exists some $F\in M_S$ (not necessarily the same as $f$), some $y\in K$, some $r\in\{\pm{1},\pm{4}\}$, and some $m\geq2$ such that $\phi^N(F(0))=ry^m$. But then Proposition \ref{prop:uniformpower} and Theorem \ref{thm:uniform-bound} together imply that $F(0)=\zeta' P$ for some $\zeta'\in \mu_{K}$. In particular, \eqref{eq:F(0)} implies that $\zeta_F P^{d_1\cdots d_n}+G_F(P)-\zeta' P=0$, and we obtain a polynomial with coefficients in $k$ of degree $d_1\cdots d_n$ for some $n\geq1$ that vanishes at $P$, which contradicts our assumption that $P$ is non-constant. The claim follows.     
\end{proof}
We may now deduce from Theorem \ref{thm:main+irred} and Proposition \ref{prop:allpthpowerfixedpts+related} several simple conditions for ensuring that $M_S$ contains a positive proportion of irreducible polynomials in the function field setting.
\begin{cor}\label{cor:functionfield} Let $K/k(t)$ be a function field of characteristic zero, and let $S=\{x^{d_1}+c_1,\dots, x^{d_s}+c_s\}$ for some non-constant $c_1,\dots,c_s\in K$. Moreover, assume that $\min_i\{d_1,\dots,d_s\}>\max\{14,4g_K+10\}$, that $S$ contains $\phi(x)=x^{d_1}+c_1$ that is irreducible over $K$, and that at least one of the following additional conditions is satisfied: \vspace{.1cm} 
 \begin{enumerate}
    \item[\textup{(1)}] There is a place $v$ of $K$ and indices $i$ and $j$ such that $v(c_i)<0$ and $v(c_j)\geq0$,\vspace{.2cm} 
    \item[\textup{(2)}] The number of polynomials in $S$ is greater than $2\,\big|\mu_{K,d_1}\big|\cdot \big|\{d_1,\dots,d_s\}\big|$,\vspace{.2cm} 
    \item[\textup{(3)}] The number of irreducible polynomials in $S$ is greater than $\big|\mu_{K,d_1}\big|\cdot \big|\{d_1,\dots,d_s\}\big|$, \vspace{.2cm} 
    \item[\textup{(4)}] The number of reducible polynomials in $S$ is greater than $\big|\mu_{K,d_1}\big|\cdot \big|\{d_1,\dots,d_s\}\big|$, \vspace{.2cm}  
\item[\textup{(5)}] The extension $K(c_1,\dots,c_s)/K$ is not simple, \vspace{.2cm}
\item[\textup{(6)}] Every polynomial in $S$ is irreducible. \vspace{.2cm} 
\end{enumerate}
Then $M_S$ contains a positive proportion of irreducible polynomials over $K$.       
 \end{cor}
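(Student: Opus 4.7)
The plan is to invoke Theorem~\ref{thm:main+irred} as a dichotomy and then rule out the exceptional alternative under each of the six hypotheses separately. In the function field case one may take $C_3 = 0$ and $D_3 = \max\{14, 4g_K+10\}$, so the assumptions $c_i$ non-constant (hence $h(c_i) > 0$) and $d_i > D_3$ force every element of $S$ to lie outside $\mathcal{S}(C_3, D_3)$. Theorem~\ref{thm:main+irred} thus reduces matters to the following alternative: either $M_S$ already contains a positive proportion of irreducibles (and there is nothing to prove), or else
\[
    S \subseteq \mathcal{I}(P, d_1, \mathfrak{d}) \cup \mathcal{R}(P, d_1, \mathfrak{d})
\]
for some $P = r y^m$ with $r \in \{\pm 1, \pm 4\}$, $y \in K$, $m \geq 2$. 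The whole task is then to show that each of (1)--(6) contradicts this special form (or, for (6), still produces enough irreducibles directly).

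Conditions (1)--(4) are handled by direct inspection of the exceptional set. For (1), every element of $\mathcal{I} \cup \mathcal{R}$ has the shape $\zeta P$ or $\zeta P - (\zeta P)^d$, and the valuation computation carried out in the proof of Corollary~\ref{cor:numberfield} shows that $v(c)$ has the same sign as $v(P)$ in either case, contradicting the existence of $c_i, c_j \in S$ with $v(c_i) < 0 \leq v(c_j)$. For (2), the bound $|\mathcal{I}| + |\mathcal{R}| \leq 2|\mu_{K,d_1}| \cdot |\mathfrak{d}|$ is immediate. For (3) and (4), the more refined assertion extracted from the proof of Theorem~\ref{thm:main+irred}---that the irreducible members of $S$ lie in $\mathcal{I}$ and the reducible ones in $\mathcal{R}$---combined with $|\mathcal{I}|, |\mathcal{R}| \leq |\mu_{K,d_1}| \cdot |\mathfrak{d}|$ again contradicts the special form.

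Condition (5) follows from the observation that in the exceptional set every $c_i$ is a polynomial expression in the single element $P$ with coefficients among the roots of unity (constants of $K$), so the subfield generated by $c_1,\dots,c_s$ lies inside the simple extension obtained by adjoining $P$, contradicting non-simplicity under the natural reading of the hypothesis. Condition (6) is the only case where one must actually construct irreducibles rather than just derive a contradiction: if every element of $S$ is irreducible then $S \subseteq \mathcal{I}(P, d_1, \mathfrak{d})$, and writing $c_1 = \zeta_1 P - (\zeta_1 P)^{d_1}$ and setting $P' := \zeta_1 P$ realises
\[
    S \subseteq \big\{x^d + (\zeta P') - (\zeta P')^d : \zeta \in \mu_K,\; d \geq 2\big\}
\]
with $P' \in K$ non-constant (since $c_1$ is). Proposition~\ref{prop:allpthpowerfixedpts+related} then supplies an $N = N(K, d_1)$ for which $\{\phi_1^N \circ f : f \in M_S\}$ is a set of irreducibles, and the free-semigroup count \eqref{eq:free+count} upgrades this to a positive proportion exactly as in the final paragraph of the proof of Theorem~\ref{thm:main+irred}. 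The main delicacy is the precise parsing of condition (5); the arguments in the other cases are essentially mechanical once Theorem~\ref{thm:main+irred} and Proposition~\ref{prop:allpthpowerfixedpts+related} are in hand.
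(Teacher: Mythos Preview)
Your proposal is correct and follows essentially the same route as the paper: reduce via Theorem~\ref{thm:main+irred} (with $C_3=0$, $D_3=\max\{14,4g_K+10\}$, so $S\cap\mathcal{S}(C_3,D_3)=\varnothing$), then eliminate the exceptional alternative $S\subseteq\mathcal{I}\cup\mathcal{R}$ under each of (1)--(5) and invoke Proposition~\ref{prop:allpthpowerfixedpts+related} for (6). Two small remarks: for (5) the paper makes explicit the missing step in your argument, namely L\"uroth's theorem, to pass from $k(c_1,\dots,c_s)\subseteq k(P)$ to simplicity of $k(c_1,\dots,c_s)/k$ (and indeed the hypothesis should be read over $k$, not $K$, as you note); for (6) your normalization $P'=\zeta_1 P$ to place $\phi_1$ in the exact form required by Proposition~\ref{prop:allpthpowerfixedpts+related} is a detail the paper glosses over.
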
 
\begin{proof} Statement (1) has the same proof as in the number field case; see statement (2) of Corollary \ref{cor:numberfield}. From here, let $K/k(t)$ be a function field and let $S=\{x^{d_1}+c_1,\dots, x^{d_s}+c_s\}$ for some non-constant $c_1,\dots,c_s\in K$ and some $\min\{d_1,\dots,d_s\}>\max\{14,4g_K+10\}$. Moreover, assume that at least one polynomial $\phi(x)=x^{d_1}+c_1$ in $S$ is irreducible over $K$ and let $S_2$ and $S_3$ denote the irreducible and reducible polynomials in $S$ respectively; note that $C(K)=0$ in this case, so that $S_1$ is empty. Then, if $M_S$ fails to contain a positive proportion of irreducible polynomials, the proof of Theorem \ref{thm:main+irred} above implies that $S_2$ and $S_3$ are of the form in \eqref{eq:irred-decomp} and \eqref{eq:red-decomp}. In particular, 
\[\max\{\big|S_2\big|, \big|S_3\big|\}\leq \big|\mu_{K,d_1}\big|\cdot\big|\{d_1,\dots,d_s\}\big|,\]
and we obtain statements (2), (3) and (4) of the corollary. Similarly, the field extension $K(c_1,\dots,c_s)/k$ is simple since $K(c_1,\dots,c_s)\subseteq k(P)$ and so L\"{u}roth's theorem implies that $K(c_1,\dots,c_s)=k(u)$ for some non-constant $u\in k(P)$. Thus, we obtain statement (5). Finally, it must also be the case that $S_3\neq\varnothing$, for otherwise $S$ is of the form in Proposition \ref{prop:allpthpowerfixedpts+related}. Moreover, $M_S$ contains a positive proportion of irreducibles in this case. In particular, statement (6) of the corollary follows.           
\end{proof}
\begin{remark} Assuming that $\min_i\{h(c_i)\}\gg_K0$ and that $\min_i\{d_i\}\gg_K0$, statements (2), (3) and (4) of Corollary \ref{cor:functionfield} also hold over number fields. Moreover, these statements are an improvement over statement (1) of Corollary \ref{cor:numberfield} from the introduction.  
\end{remark}

\section{Dynamical Galois groups}\label{sec:Galois}
We begin this section with notation related to Galois groups generated dynamically. Let $K$ be a field, let $S$ be a fixed set of polynomials over $K$, and let $\gamma=(\theta_1,\theta_2,\dots)$ be an infinite sequence of elements $\theta_i\in S$. Then we are interested in the tower of field extensions \[K_n(\gamma):=K(\theta_1\circ\theta_2\circ\dots\circ\theta_n),\] 
where $K(f)$ denotes the splitting field of $f\in K[x]$ in a fixed algebraic closure $\overline{K}$. Since the aforementioned extensions are nested, $K\subseteq K_{1}(\gamma)\subseteq K_{2}(\gamma)\subseteq\dots K_{n}(\gamma)$, we may form the inverse limit of associated Galois groups, 
\[G_{K}(\gamma):=\lim_{\longleftarrow}\Gal(K_{n}(\gamma)/K),\]
called the \emph{dynamical Galois group} associated to $\gamma$. Based on the analogy with constant sequences \cite{cubic:abc,Ferraguti:quad,Riccati, Jones}, we expect that the group $G_K(\gamma)$ is quite large for ``many'' (or perhaps ``most'') sequences $\gamma$. To make this precise, we fix a probability measure $\nu$ on $S$ and let $\bar{\nu}=\nu^{\mathbb{N}}$ be the product measure on $\Phi_S=S^{\mathbb{N}}$, the set of all infinite sequences of elements of $S$. We say that a property $\mathcal P$ holds for ``many'' sequences in $S$ if it holds with positive probability: \[\bar{\nu}\big(\{\gamma\in\Phi_S\,:\, \text{$\gamma$ has property $\mathcal P$}\}\big)>0.\] 
As a test case, we study the Galois groups generated by sequences of unicritical polynomials and prove that many such sequences produce big dynamical Galois groups. In what follows, we say that a sequence $\gamma=(\theta_n)_{n=1}^\infty\in \Phi_S$ is \emph{stable} over $K$, if  $\gamma_n:=\theta_1\circ\dots\circ\theta_n$ is irreducible over $K$ for every $n\geq1$. Moreover, when $S=\{x^d+c_1,\dots,x^d+c_s\}$ is a set of unicritical polynomials (of the \emph{same degree}) and $\gamma\in\Phi_S$, then we say that the subextension $K_n(\gamma)/K_{n-1}(\gamma)$ is \emph{maximal} if $[K(\gamma_n):K(\gamma_{n-1})]=d^{d^{n-1}}$; see Remark \ref{rem:maximal} for a justification of this terminology. Following \cite[\S1]{MR4680482}, we say that $\gamma \in \Phi_S$ furnishes a \emph{big dynamical Galois group} over $K$ if $\gamma$ is stable over $K$ and $K_n(\gamma)/K_{n-1}(\gamma)$ is maximal infinitely often.

As an application of our irreducibility theorem, we prove the following result; in what follows $[C_d]^\infty$ denotes the infinite, iterated wreath products of the cyclic group $C_d$ of order $d$, a natural overgroup for the dynamical Galois groups $G_K(\gamma)$ in the unicritical case (which may be seen by repeated application of \cite[Lemma 1.1]{MR0962740}; see also \cite{MR4188198} for related results). 
\begin{thm}\label{thm:BigGalois} 
Let $K$ be an $abc$-field of characteristic zero, let $S=\{x^d+c_1,\dots,x^d+c_s\}$ for some $c_1,\dots,c_s\in K$, and let $C_3=C_3(K)$ and $D_3=D_3(K)$ be as in Theorem \ref{thm:main+irred}. Moreover, assume that $K$ contains a primitive $d$th root of unity and that the following conditions hold:
\vspace{.05cm} 
\begin{enumerate}
\item[\textup{(1)}] $d\geq \max\{12,D_3\}$, \vspace{.1cm}   
\item[\textup{(2)}] $S$ contains an irreducible $\phi(x)=x^d+c$ with $h(c)>C_3$, and\vspace{.1cm}  
\item[\textup{(3)}] $S$ is not of the special form 
\[S\subseteq \Big\{x^d+c\,: h(c)\leq C_3\Big\}\cup \Big\{x^d+(\zeta_1 ry^p)-(\zeta_1 ry^p)^{d},\; x^d+\zeta_2 ry^p\;:\;\text{$\zeta_1,\zeta_2\in\mu_{K,d}$}\Big\}\]
for some $y\in K$, some $r\in\{1,-1\}$, and some prime divisor $p|d$. \vspace{.2cm} 
\end{enumerate}
Then there exists a sequence $\gamma\in\Phi_S$ such that $G_K(\gamma)$ is a finite index subgroup of $[C_d]^{\infty}$; in fact, there are infinitely many such $\gamma$ when $|S|\geq2$. Moreover, the probability that a random $\gamma\in \Phi_S$ furnishes a big dynamical Galois group over $K$ is positive:
\[\bar{\nu}\Big(\{\gamma\in \Phi_S\,: \text{$\gamma$ is stable over $K$ and $K_{n}(\gamma)/K_{n-1}(\gamma)$ is maximal i.o.} \}\Big)>0.\]
Furthermore, one can take $C(K)=0$ and $D(K)=\max\{14,4g_K+10\}$ in the function field case. Moreover, there is a lower bound on this probability that depends only on $K$, $d$, and $|S|$.  
\end{thm}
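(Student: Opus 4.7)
The plan is to combine the irreducibility results of Section~\ref{red:irreducibility} with a Kummer-theoretic analysis of the tower $K \subseteq K_1(\gamma) \subseteq K_2(\gamma) \subseteq \cdots$, using a primitive prime divisor argument on the critical orbit $\{\gamma_n(0)\}_{n \ge 1}$ to force maximal intermediate extensions.

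First I would establish stability on a $\bar\nu$-positive set of sequences. Under hypotheses (1)--(3), Theorem~\ref{thm:main+irred} together with Remark~\ref{rem:main+irred} produces a word $g = \psi_1 \circ \cdots \circ \psi_\ell \in M_S$ of bounded compositional length $\ell = \ell(K,d)$, with each $\psi_j \in S$, such that $g \circ f$ is irreducible in $K[x]$ for every $f \in M_S$. The cylinder set
\[
Z := \bigl\{\gamma = (\theta_1, \theta_2, \ldots) \in \Phi_S : \theta_j = \psi_j \text{ for } 1 \le j \le \ell\bigr\}
\]
has $\bar\nu$-measure $\prod_{j=1}^{\ell}\nu(\psi_j) > 0$, and every $\gamma \in Z$ is stable over $K$.

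Next I would prove that $K_n(\gamma)/K_{n-1}(\gamma)$ is maximal (of degree $d^{d^{n-1}}$) for all $n \gg 0$, for $\bar\nu$-a.e.\ $\gamma \in Z$. Since $\mu_d \subset K$ and $\theta_n(x) = x^d + c_{i_n}$, the extension $K_n(\gamma)/K_{n-1}(\gamma)$ is generated by the Kummer radicals $\sqrt[d]{\beta - c_{i_n}}$ as $\beta$ ranges over the roots of $\gamma_{n-1}$, and standard Kummer theory reduces maximality to a non-$p$-th-power condition on these radicals in $K_{n-1}(\gamma)^\times$ for each prime $p \mid d$. Via Capelli's lemma and the norm identity in the proof of Theorem~\ref{prop:oldstability}, this is in turn implied by the weaker arithmetic statement that $\gamma_n(0) = \gamma_{n-1}(c_{i_n})$ admits a prime of $K$ with valuation coprime to $d$ and not dividing any earlier critical-orbit term $\gamma_m(0)$ for $m < n$---a \emph{primitive prime divisor} at step $n$. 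Because $h(\gamma_n(0))$ grows geometrically in $n$ (by iterated use of Lemma~\ref{lem:basic}), Proposition~\ref{prop:fermat-catalan} applied to each equation $\gamma_n(0) = \zeta y^p$ rules out $\gamma_n(0)$ being such a power, and the resulting primitive-prime-divisor argument (in the spirit of Proposition~\ref{prop:newprime}, referenced in Remark~\ref{rem:newramifiedprime}) applies uniformly across $Z$.

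Finally I would assemble the three conclusions. Choosing $\gamma \in Z$ with $\theta_j = \phi$ for every $j > \ell$ produces a specific sequence for which both stability and maximality hold at every $n \ge n_0$, so that $G_K(\gamma)$ is a finite-index subgroup of $[C_d]^\infty$. The positive-measure statement follows from $\bar\nu(Z) > 0$ combined with a Borel--Cantelli estimate on the event ``step $n$ contributes a primitive prime divisor to the critical orbit'', showing that this event holds $\bar\nu$-a.e.\ on $Z$. When $|S| \ge 2$ the product measure $\bar\nu$ has no atoms, so the positive-measure set of big-Galois sequences is uncountable, in particular infinite. The main obstacle will be step two: cleanly translating the non-power condition on $\gamma_n(0)$ into maximality of $K_n(\gamma)/K_{n-1}(\gamma)$, and arranging the primitive-prime-divisor argument to be uniform across the $\bar\nu$-positive family $Z$ rather than for a single sequence. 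The former requires tracking all primes $p \mid d$ simultaneously under Kummer theory; the latter relies on the fact that the step at which a primitive prime appears depends only on the initial segment $(\theta_1, \ldots, \theta_n)$, which is well-adapted to the cylinder structure of $\Phi_S$ and is where the geometric growth of $h(\gamma_n(0))$ does its essential work.
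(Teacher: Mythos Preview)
Your overall architecture matches the paper's: use Theorem~\ref{thm:main+irred} to obtain a cylinder of stable sequences, then apply the primitive-prime-divisor machinery (Proposition~\ref{prop:newprime}) together with the maximality criterion (Theorem~\ref{thm:GaloisMax}) to force many maximal steps. However, there is a genuine gap in step two and in how you deduce the ``infinitely many finite-index'' claim.

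You assert that $K_n(\gamma)/K_{n-1}(\gamma)$ is maximal \emph{for all} $n\gg0$, for $\bar\nu$-a.e.\ $\gamma\in Z$. Proposition~\ref{prop:newprime} does not give this: its hypothesis~(2) requires that the \emph{last} map $\theta_n$ be the one whose constant term has maximal height among all of $S$. For a generic $\gamma\in Z$ this holds only at a density-$1/|S|$ set of indices, so the primitive-prime-divisor argument yields maximality only \emph{infinitely often}, not cofinitely. This is exactly what the theorem claims for the positive-measure statement, and the paper secures it by restricting to the full-measure subset of $Z$ on which the max-height map $\theta$ appears infinitely often (Borel--Cantelli). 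Your invocation of Proposition~\ref{prop:fermat-catalan} at this step is also misplaced: ruling out $\gamma_n(0)=\zeta y^p$ in $K$ is neither sufficient (the obstruction lives in $K_{n-1}(\gamma)^\times$, not $K^\times$) nor the mechanism of Proposition~\ref{prop:newprime}, which instead uses the Roth--$abc$ bound of Proposition~\ref{prop:abc+polyrad} to produce a prime of valuation exactly $1$ in $\gamma_n(0)$, whence Theorem~\ref{thm:GaloisMax} forces maximality via ramification.

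This overclaim then propagates to your final paragraph. ``Big Galois'' (stable and maximal i.o.) is strictly weaker than ``$G_K(\gamma)$ has finite index in $[C_d]^\infty$'' (which needs maximality at \emph{all but finitely many} $n$), so the positive-measure, atom-free argument does not produce infinitely many finite-index sequences; indeed the paper remarks that the finite-index sequences it constructs form a set of measure zero. The paper instead builds such sequences explicitly: take any $\gamma$ beginning with $g$ and eventually constant equal to the max-height map $\theta$ (not $\phi$, which need not satisfy hypothesis~(2) of Proposition~\ref{prop:newprime}); then every sufficiently large $n$ has $\theta_n=\theta$, Proposition~\ref{prop:newprime} applies at each such $n$, and finite index follows. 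Varying the middle block gives infinitely many when $|S|\ge2$.
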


\begin{remark} We note that Theorem \ref{thm:BigGalois} implies Theorem \ref{cor:BigGalois} from the introduction. To see this we note that, as in the proof of Corollary \ref{cor:numberfield}, the conditions in Theorem \ref{cor:BigGalois} imply that condition (3) of Theorem \ref{thm:BigGalois} holds.      
\end{remark}

To prove Theorem \ref{thm:BigGalois}, we need the following maximality test; see \cite{JonesIMRN,MR4292933,Stoll} for analogous statements in both the constant and random sequence settings. We remind the reader that given a sequence $\gamma=(\theta_n)_{n=1}^\infty\in \Phi_S$ and an index $n\geq1$, the polynomial $\gamma_n$ is defined to be $\gamma_n:=\theta_1\circ\dots\circ\theta_n$.   
\begin{thm}\label{thm:GaloisMax} Let $K$ be a number field or a function field of characteristic zero and let $S=\{x^{d}+c_1,\dots,x^{d}+c_s\}$ for some $c_1,\dots,c_s\in K$ and $d\geq2$. Moreover, assume that $K$ contains a primitive $d$th root of unity, that $\gamma_n\in M_S$ is irreducible over $K$ for some $n\geq2$, and that there is a prime $\mathfrak{p}$ of $\mathcal O_K$ with the following properties:\vspace{.1cm}  
\begin{enumerate}
\item[\textup{(1)}] $v_{\mathfrak{p}}(d)=0$,\vspace{.1cm}
\item[\textup{(2)}] $v_{\mathfrak{p}}(c_i)\geq 0$ for all $1\leq i\leq s$, \vspace{.1cm} 
\item[\textup{(3)}] $v_{\mathfrak{p}}(\gamma_n(0))>0$ and $\gcd(v_{\mathfrak{p}}(\gamma_n(0)),d)=1$, and\vspace{.1cm} 
\item[\textup{(4)}] $v_{\mathfrak{p}}(\gamma_m(0))=0$ for all $1\leq m\leq n-1$. \vspace{.1cm}   
\end{enumerate}
Then the extension $K(\gamma_n)/K(\gamma_{n-1})$ is maximal, i.e., $[K(\gamma_n):K(\gamma_{n-1})]=d^{d^{n-1}}$.       
\end{thm}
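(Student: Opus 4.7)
The plan is to apply Kummer theory, exploiting the hypothesis $\mu_d \subset K$. Set $L := K(\gamma_{n-1})$ and let $\alpha_1, \ldots, \alpha_{d^{n-1}} \in L$ denote the roots of $\gamma_{n-1}$. Since $\gamma_n(x) = \gamma_{n-1}(\theta_n(x)) = \gamma_{n-1}(x^d + c_n)$, there is a factorization
\[
    \gamma_n(x) = \prod_{i=1}^{d^{n-1}} \bigl(x^d - (\alpha_i - c_n)\bigr),
\]
which exhibits $K(\gamma_n) = L\bigl(\sqrt[d]{\alpha_1 - c_n}, \ldots, \sqrt[d]{\alpha_{d^{n-1}} - c_n}\bigr)$. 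By Kummer theory, the target $[K(\gamma_n):L] = d^{d^{n-1}}$ is achieved precisely when the classes of $\alpha_1 - c_n, \ldots, \alpha_{d^{n-1}} - c_n$ are $\mathbb{Z}/d$-independent in $L^{\times}/(L^{\times})^d$.

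The first substantive step will be to show that $\mathfrak{p}$ is unramified in $L$. A chain-rule computation gives $\gamma_{n-1}'(\alpha) = d^{n-1} \prod_{j=1}^{n-1} \eta_{j}(\alpha)^{d-1}$, where $\eta_{j}(\alpha) := (\theta_{j+1} \circ \cdots \circ \theta_{n-1})(\alpha)$ is a root of $\gamma_j$. Multiplying over all roots of $\gamma_{n-1}$ and using that the map $\alpha \mapsto \eta_j(\alpha)$ is $d^{n-1-j}$-to-$1$ onto the roots of $\gamma_j$, the explicit discriminant formula
\[
    \operatorname{disc}(\gamma_{n-1}) = \pm\, d^{(n-1)d^{n-1}} \prod_{j=1}^{n-1} \gamma_j(0)^{(d-1)d^{n-1-j}}
\]
falls out. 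Conditions (1) and (4) then force $v_\mathfrak{p}(\operatorname{disc}(\gamma_{n-1})) = 0$; combined with condition (2), which ensures $\gamma_{n-1} \in \mathcal{O}_{K,\mathfrak{p}}[x]$, this shows that $\gamma_{n-1}$ has good reduction at $\mathfrak{p}$, so $\mathfrak{p}$ is unramified in $L$.

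Next, fix a prime $\mathfrak{P}$ of $L$ above $\mathfrak{p}$. Since $\gamma_{n-1} \pmod{\mathfrak{p}}$ is separable and $\gamma_{n-1}(c_n) = \gamma_n(0) \equiv 0 \pmod{\mathfrak{p}}$ by condition (3), the class $\bar{c}_n$ is a simple root of $\gamma_{n-1} \pmod{\mathfrak{p}}$. The reductions $\alpha_i \bmod \mathfrak{P}$ give a bijection with the roots of $\gamma_{n-1} \pmod{\mathfrak{p}}$, so exactly one index $i(\mathfrak{P})$ satisfies $\alpha_{i(\mathfrak{P})} \equiv c_n \pmod{\mathfrak{P}}$. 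Consequently $v_\mathfrak{P}(\alpha_i - c_n) = 0$ for $i \neq i(\mathfrak{P})$, and the identity $\gamma_n(0) = \pm\prod_i(\alpha_i - c_n)$ together with unramifiedness yields
\[
    v_\mathfrak{P}(\alpha_{i(\mathfrak{P})} - c_n) = v_\mathfrak{p}(\gamma_n(0)).
\]
Moreover, since $\Gal(L/K)$ acts transitively on both the roots of $\gamma_{n-1}$ and the primes of $L$ above $\mathfrak{p}$, varying $\mathfrak{P}$ causes $i(\mathfrak{P})$ to sweep out all of $\{1,\ldots,d^{n-1}\}$.

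To conclude, suppose $\prod_i(\alpha_i - c_n)^{e_i} = \beta^d$ in $L^{\times}$ for some exponents $e_i \in \mathbb{Z}/d$ and some $\beta \in L$. Applying $v_\mathfrak{P}$ gives $e_{i(\mathfrak{P})} \cdot v_\mathfrak{p}(\gamma_n(0)) \equiv 0 \pmod{d}$; by $\gcd(v_\mathfrak{p}(\gamma_n(0)), d) = 1$ from condition (3), this forces $e_{i(\mathfrak{P})} \equiv 0 \pmod{d}$. Letting $\mathfrak{P}$ vary kills every $e_i$, so Kummer independence holds and $[K(\gamma_n):K(\gamma_{n-1})] = d^{d^{n-1}}$. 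The main hurdle will be establishing good reduction of $\gamma_{n-1}$ via the explicit discriminant identity; once $\mathfrak{p}$ is known to be unramified in $L$, the valuation-theoretic bookkeeping in the Kummer step proceeds uniformly, whether or not $d$ is prime.
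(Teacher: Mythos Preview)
Your proof is correct and, in its overall shape, close to the paper's: both arguments compute the discriminant of $\gamma_{n-1}$ explicitly and use conditions (1), (2), (4) to conclude that $\mathfrak{p}$ is unramified in $L = K(\gamma_{n-1})$. The difference lies in the Kummer step. The paper invokes a criterion of Jones (\cite[Lemma~4.1]{JonesIMRN}) reducing maximality to the single condition that $\gamma_n(0) = \pm\prod_i(\alpha_i - c_n)$ is not a $p$th power in $L$ for any prime $p\mid d$; that reduction rests on a group-theoretic lemma (\cite[Lemma~4.2]{JonesIMRN}) asserting that a nontrivial $(\mathbb{Z}/d\mathbb{Z})[G]$-module has nontrivial $G$-invariants when $G$ is solvable of $d$-power order. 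You bypass this entirely: you verify the full $\mathbb{Z}/d$-independence of the radicands $\alpha_i - c_n$ directly, by reading off valuations at the primes $\mathfrak{P}\mid\mathfrak{p}$ in $L$ and using transitivity of $\Gal(L/K)$ on both the roots and the primes to see that each exponent $e_i$ must vanish. Your route is more self-contained and makes the role of each hypothesis (especially the coprimality in (3)) completely transparent; the paper's route is terser once one is willing to cite the Jones lemmas, and isolates the obstruction to unequal degrees in exactly that group-cohomological step (cf.\ Remark~\ref{rem:problem+different d's}).
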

\begin{remark}\label{rem:maximal} Since $K_n(\gamma)/K_{n-1}(\gamma)$ is the compositum of at most $d^{n-1}$ cyclic extensions of $K_{n-1}(\gamma)$, one for each root of $\gamma_{n-1}$, we see that $[K_n(\gamma):K_{n-1}(\gamma)]\leq d^{d^{n-1}}$. For this reason, we say that the extension $K_n(\gamma)/K_{n-1}(\gamma)$ is \emph{maximal} if the inequality above is an equality.  
\end{remark}
With Theorem \ref{thm:GaloisMax} in mind, we make the following definition. 
\begin{defin} A prime $\mathfrak{p}$ of $K$ satisfying conditions (1)-(4) in Theorem \ref{thm:GaloisMax} is called a \emph{good primitive prime divisor} of $\gamma_n(0)$ in $K$.     
\end{defin}
\begin{proof}[Proof of Theorem~\ref{thm:GaloisMax}] The proof is essentially identical to \cite[Theorem 4.3]{JonesIMRN}: we may replace $g\circ f^{n-1}$ with $\gamma_{n-1}$ and $f$ with $\theta_n$ in the proof of \cite[Lemma 4.1]{JonesIMRN} to deduce that the extension $K(\gamma_n)/K(\gamma_{n-1})$ is maximal if and only if $\gamma_n(0)$ is not a $p$th power in $K_{n-1}(\gamma)$ for all primes $p|d$; here, as in \cite{JonesIMRN}, the key step is to use \cite[Lemma 4.2]{JonesIMRN} with the group $G=\Gal(K_{n-1}(\gamma)/K)$, which applies since $G$ is solvable (as $g$ is solvable by radicals \cite[Theorem 7.2]{MR1878556}) and has order dividing a power of $d$ (since $G$ is naturally a subgroup of $[C_d]^{n-1}$, an iterated wreath product of cyclic groups of order $d$, seen by repeated applying \cite[Lemma 1.1]{MR0962740}).  

Next, we note that the discriminant formula for $\gamma_{n-1}$ in  \cite[Proposition 6.2]{MR4292933} together with standard facts relating discriminants, differents, and ramification \cite[III, \S1-2]{MR1282723} imply that the primes $\mathfrak{p}$ in $K$ that ramify in $K_{n-1}(\gamma)$ must satisfy: $v_{\mathfrak{p}}(d)>0$, or $v(c_i)<0$ for some $1\leq i\leq s$, or $v_{\mathfrak{p}}(\gamma_m(0))>0$ for some $1\leq m\leq n-1$; alternatively, one may apply the argument, mutatis mutandis, used to prove \cite[Proposition 3.1]{MR3703943}. Hence, if we assume that $\mathfrak{p}$ is a good primitive prime divisor of $\gamma_n(0)$, then $\mathfrak{p}$ is unramified in $K_{n-1}(\gamma)$. On the other hand, since we also have that $\gcd(v_{\mathfrak{p}}(\gamma_n(0)),d)=1$, it follows that $\mathfrak{p}$ must ramify in $K(\sqrt[p]{\gamma_n(0)})$ for every prime $p|d$. Putting these two facts together, we deduce that $\gamma_n(0)$ cannot be a $p$th power in $K_{n-1}(\gamma)$ and the extension $K(\gamma_n)/K(\gamma_{n-1})$ is maximal as desired.                                
\end{proof}
\begin{remark}\label{rem:problem+different d's} The step that breaks down in the proof of Theorem \ref{thm:GaloisMax} when considering sets $S=\{x^{d_1}+c_1,\dots,x^{d_s}+c_s\}$ with (possibly) \emph{distinct degrees} is the use of \cite[Lemma 4.2] {JonesIMRN}: the Galois group $G=\Gal(K_{n-1}(\gamma)/K)$, which is still solvable, may not have order dividing a power of $d=\deg(\theta_n)$. In particular, a non-trivial $(\mathbb{Z}/d\mathbb{Z})[G]$-module may only have a trivial set of $G$-invariant elements in this case, a property that is used to prove that the extension $K(\gamma_n)/K(\gamma_{n-1})$ is maximal when $\gamma_n(0)$ is not a $p$th power in $K_{n-1}(\gamma)$. However, it is possible that the proof above works, for example, when all of the $d_i$ are powers of a single integer.             
\end{remark}
With this in mind, we show that for $d$ sufficiently large there are many $\gamma_n\in M_S$ with good primitive prime divisors in $K$, assuming the $abc$ conjecture. In fact, we can do this for semigroups generated by sets with possibly different degrees.  
\begin{prop}\label{prop:newprime} 
Let $K$ be an $abc$-field of characteristic zero and let $S=\{x^{d_1}+c_1,\dots,x^{d_s}+c_s\}$ for some $c_1,\dots,c_s\in K$ and integers $d_1,\ldots,d_s$ such that $\max\{h(c_1),\ldots,h(c_s)\} \ge \frac12\log2$ and $\min\{d_1,\dots,d_s\}\geq12$. 
Then there exists $N'=N'(K,S)$ such that if the following conditions are satisfied:\vspace{.1cm} 
\begin{enumerate} 
\item[\textup{(1)}] $\gamma_n=\theta_1\circ\dots\circ\theta_n$ for some $\theta_1,\dots,\theta_n\in S$ and $n\geq N'$, \vspace{.2cm} 
\item[\textup{(2)}] 
$\theta_n(x)=x^{d_i}+c_i$ with $h(c_i)=\max\{h(c_1),\dots,h(c_s)\}$, and
\vspace{.2cm} 
\item[\textup{(3)}] $\theta_1$ has no roots in $K$,\vspace{.05cm}  
\end{enumerate}
then there is a prime $\mathfrak{p}$ of $K$ with all of the following properties:\vspace{.1cm}  
\begin{enumerate}
\item[\textup{(a)}] $v_{\mathfrak{p}}(d_r)=0$ for all $1\leq r\leq s$,\vspace{.1cm}
\item[\textup{(b)}] $v_{\mathfrak{p}}(c_r)\geq 0$ for all $1\leq r\leq s$,
\vspace{.1cm} 
\item[\textup{(c)}] $v_{\mathfrak{p}}(\gamma_n(0))=1$, and
\vspace{.1cm} 
\item[\textup{(d)}] $v_{\mathfrak{p}}(\gamma_m(0))=0$ for all $1\leq m\leq n-1$. \vspace{.1cm}   
\end{enumerate}
In particular, $\gamma_n(0)$ has a good primitive prime divisor in $K$ when $d_1=d_2\dots=d_s$.    
\end{prop}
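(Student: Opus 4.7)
The plan is to argue by contradiction via the $abc$ conjecture applied to the identity
\[
\gamma_n(0) - c_{i(1)} = \beta^{d_{i(1)}}, \qquad \text{where}\quad \beta := (\theta_2 \circ \cdots \circ \theta_n)(0) \in K.
\]
Let $T_0 = T_0(K, S)$ denote the finite set of primes $\mathfrak{p} \in \Spec \mathcal O_K$ with either $v_\mathfrak{p}(d_r) > 0$ or $v_\mathfrak{p}(c_r) < 0$ for some $r$; for $\mathfrak{p} \notin T_0$, a straightforward induction gives $v_\mathfrak{p}(\gamma_m(0)) \geq 0$ for all $m$. Suppose toward a contradiction that no prime of $K$ satisfies conditions (a)--(d). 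Then every $\mathfrak{p} \notin T_0$ dividing $\gamma_n(0)$ must either satisfy $v_\mathfrak{p}(\gamma_n(0)) \geq 2$ (violating (c)) or divide $\gamma_m(0)$ for some $m < n$ (violating (d)).

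Set $H := h(c_{i(n)}) = \max_r h(c_r) \geq \tfrac{1}{2}\log 2$ and $d := \min_r d_r \geq 12$. Iterating the standard telescoping bound $|h(\theta_k(\alpha)) - d_{i(k)} h(\alpha)| \leq h(c_{i(k)}) + \log 2$ from the innermost composition outward yields
\[
h(\gamma_n(0)) \;\geq\; \frac{(d-2)H - \log 2}{d-1}\, d_{i(1)}\cdots d_{i(n-1)} \quad\text{and}\quad h(\gamma_m(0)) \;\leq\; \frac{dH + \log 2}{d-1}\, d_{i(1)}\cdots d_{i(m-1)}
\]
for all $m$. Summing the resulting geometric series produces a constant $\kappa = \kappa(d) < \tfrac12$ such that
\[
\sum_{m=1}^{n-1} h(\gamma_m(0)) \;+\; h(\beta) \;\leq\; \kappa\, h(\gamma_n(0)) + O_S(1);
\]
the hypothesis $d \geq 12$ is precisely what forces $\kappa < \tfrac12$ even when $H$ is as small as $\tfrac12\log 2$. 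Since $(d-2)H - \log 2 > 0$, the first estimate shows $h(\gamma_n(0)) \to \infty$ with $n$. For $n$ sufficiently large, $\beta \neq 0$ (its height diverges), and condition (3)---that $\theta_1$ has no $K$-rational root---forces both $c_{i(1)} \neq 0$ and $\gamma_n(0) = \theta_1(\beta) \neq 0$, so we are in a position to apply $abc$.

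Apply the $abc$ conjecture (or Theorem~\ref{thm:Sunit} in the function field case) to the triple $P = (\gamma_n(0) : -c_{i(1)} : \beta^{d_{i(1)}}) \in \bP^2(K)$. On the one hand, $h(P) \geq h(\gamma_n(0)) - h(c_{i(1)}) \geq h(\gamma_n(0)) - H$. On the other hand, for each $\mathfrak{p} \notin T_0$, $\mathfrak{p}$ contributes to $\rad(P)$ only when $v_\mathfrak{p}(\gamma_n(0)) > 0$ or $v_\mathfrak{p}(\beta) > 0$, so $\rad(P) \leq \rad(\gamma_n(0)) + \rad(\beta) + O_S(1)$. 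Under our contradictory hypothesis, primes with $v_\mathfrak{p}(\gamma_n(0)) \geq 2$ contribute at most half their height-weight, so combined with \eqref{bd:productformula} this gives $\rad(\gamma_n(0)) \leq \tfrac12 h(\gamma_n(0)) + \sum_{m<n} h(\gamma_m(0)) + O_S(1)$, while $\rad(\beta) \leq h(\beta)$. Assembling all of the above,
\[
\rad(P) \;\leq\; \bigl(\tfrac{1}{2} + \kappa\bigr) h(\gamma_n(0)) + O_S(1),
\]
with $\tfrac{1}{2} + \kappa < 1$ by choice of $d \geq 12$. Choosing $\epsilon > 0$ so that $(1+\epsilon)(\tfrac12 + \kappa) < 1$ and invoking $abc$ then bounds $h(\gamma_n(0))$ by a constant depending only on $K$ and $S$, contradicting its unbounded growth. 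Defining $N'(K,S)$ to be the smallest integer beyond which this contradiction is forced completes the proof. The main obstacle is the careful bookkeeping that keeps the geometric-series coefficient $\kappa$ strictly below $\tfrac{1}{2}$---it is this requirement, together with the minimum-height condition on $H$, that forces the hypothesis $\min d_r \geq 12$.
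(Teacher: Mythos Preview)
Your argument is correct and shares the paper's overall structure: both compare an $abc$-derived lower bound against the upper bound on the radical contribution from ``bad'' primes (those in $T_0$, with valuation $\geq 2$, or dividing an earlier $\gamma_m(0)$), using the same telescoping height estimates from Lemma~\ref{lem:heightratio}. The difference is in how $abc$ is invoked. The paper applies Proposition~\ref{prop:abc+polyrad} (the Roth--$abc$ property) with $F=\theta_1$ to bound $\rad(\theta_1(a))$ from below, then splits into valuation-$1$ and valuation-$\geq 2$ parts to isolate $\sum_{\mathfrak p\in Y_{1,n}}N_{\mathfrak p}$ explicitly before running the contradiction. You instead apply Conjecture~\ref{conj:abc} directly to the single triple $\gamma_n(0)=\beta^{d_{i(1)}}+c_{i(1)}$, exploiting the special shape of $\theta_1$. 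This is a genuine simplification: in the function-field case, Proposition~\ref{prop:abc+polyrad} (as the paper notes just before its statement) rests on Yamanoi's proof of Vojta's conjecture, whereas your route only needs the elementary function-field $abc$ theorem. Two minor points: your parenthetical citation of Theorem~\ref{thm:Sunit} is misplaced (you want the function-field $abc$ theorem itself, not Silverman's Fermat--Catalan bound); and the assertion $\kappa<\tfrac12$ deserves one line of verification---at the extremal values $d=12$, $H=\tfrac12\log 2$ one has $\tfrac{dH+\log 2}{(d-2)H-\log 2}=\tfrac{d+2}{d-4}=\tfrac74$ and $\tfrac{1}{d-1}+\tfrac{1}{d}=\tfrac{23}{132}$, giving $\kappa\le \tfrac74\cdot\tfrac{23}{132}=\tfrac{161}{528}<\tfrac12$.
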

Before giving the proof of this result, we need the following height estimate. 
\begin{lem}\label{lem:heightratio} Let $K$ be a number field or a function field of characteristic zero and let $S=\{x^{d_1}+c_1,\dots,x^{d_s}+c_s\}$ for some $c_1,\dots,c_s\in K$ and some $d_1,\dots,d_s\geq2$. Then 
\[\bigg|\frac{h(f(P))}{\deg(f)}-h(P)\bigg|\leq \frac{1}{\min\{d_1,\dots,d_s\}-1}\Big(\max\{h(c_1),\dots,h(c_s)\}+\log2\Big)\]
for all $f\in M_S$ and all $P\in\mathbb{P}^1(\overline{K})$. 
\end{lem}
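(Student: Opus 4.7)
The plan is to prove the lemma by iterating a basic single-map height estimate for each generator $\phi_i(x) = x^{d_i} + c_i$. The key preliminary step is to establish
\[\bigl|h(\phi_i(Q)) - d_i\, h(Q)\bigr| \le h(c_i) + \log 2\]
for every $Q \in \mathbb{P}^1(\overline{K})$, which follows from the standard inequalities $h(\alpha+\beta) \le h(\alpha) + h(\beta) + \log 2$ and $h(\alpha^n) = n\,h(\alpha)$: the upper bound comes directly from $h(\phi_i(Q)) = h(Q^{d_i} + c_i) \le d_i h(Q) + h(c_i) + \log 2$, while the matching lower bound follows by writing $Q^{d_i} = \phi_i(Q) - c_i$ and applying the same two inequalities in reverse. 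Setting $M := \max_i h(c_i)$ and $d := \min_i d_i$, this single-map estimate becomes uniform: $|h(\phi_i(Q)) - d_i h(Q)| \le M + \log 2$ for every index $i$.

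Next I would carry out a telescoping sum. Write $f = \theta_1 \circ \theta_2 \circ \cdots \circ \theta_n$ with each $\theta_j \in S$, let $e_j := \deg \theta_j$, and define the partial compositions $g_k := \theta_{n-k+1} \circ \cdots \circ \theta_n$ so that $g_0 = \mathrm{id}$, $g_n = f$, and $g_k(P) = \theta_{n-k+1}(g_{k-1}(P))$ for each $k \ge 1$. Applying the single-map estimate with $Q = g_{k-1}(P)$ and dividing through by $\deg(g_k) = e_{n-k+1} \cdots e_n$ yields
\[\left|\frac{h(g_k(P))}{e_{n-k+1}\cdots e_n} - \frac{h(g_{k-1}(P))}{e_{n-k+2}\cdots e_n}\right| \le \frac{M+\log 2}{e_{n-k+1}\cdots e_n}.\]
Summing this from $k=1$ to $k=n$ and applying the triangle inequality makes the left-hand side telescope to $\bigl|h(f(P))/\deg(f) - h(P)\bigr|$, while the right-hand side is dominated by the geometric sum $(M+\log 2)\sum_{k=1}^{n} d^{-k} \le (M+\log 2)/(d-1)$, which is precisely the bound claimed in the lemma.

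There is really no substantive obstacle here; the only care required is to keep the composition order straight in the telescoping so that the partial products of degrees appearing in successive denominators align correctly, and to dispose of the trivial case $P = \infty$ separately (where both sides of the asserted inequality vanish). I note in passing that in the function field setting the $\log 2$ term in the single-map estimate could actually be dropped, since the triangle inequality is ultrametric and gives $h(\alpha+\beta) \le h(\alpha) + h(\beta)$; retaining it costs nothing and yields a statement that is uniform across both the number field and function field cases.
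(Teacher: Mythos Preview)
Your proof is correct and is essentially the same approach as the paper's: the paper's one-line proof combines the single-map estimate $|h(P^d+c)-dh(P)|\le h(c)+\log 2$ with \cite[Lemma 4.1]{MR4292933}, and the latter is precisely the telescoping argument you have written out in full. Your handling of the degree bookkeeping in the telescope and the geometric-series bound on the error is exactly what is needed.
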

\begin{proof} Combine \cite[Lemma 4.1]{MR4292933} with the simple fact that $|h(P^d+c)-dh(P)|\leq h(c)+\log2$.      
\end{proof}
We also need \cite[Proposition 2.2]{MR3703943}, sometimes called the \emph{Roth-$abc$ property}; for a proof, see Propositions 3.4 and 4.2 in \cite{MR3138487}. It is worth pointing out that in the case where $K$ is a function field, the result below does not follow directly from the usual $abc$ conjecture but rather Yamanoi’s proof \cite{MR2096455} of the Vojta conjecture for algebraic points on curves over function fields of characteristic zero. 
\begin{prop}\label{prop:abc+polyrad} Let $K$ be an $abc$-field of characteristic zero, let $\epsilon>0$, and let $F\in K[x]$  have degree $d\ge3$ and no repeated factors. Then there is a constant $C(F,\epsilon)$ such that
\[\sum_{v_\mathfrak{p}(F(a))>0} N_{\mathfrak{p}}\geq (d-2-\epsilon)h(a)+C(F,\epsilon)\]
holds for all $a\in K$.  
\end{prop}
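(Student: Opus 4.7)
The plan is to apply Proposition \ref{prop:abc+polyrad} to $F=\theta_1$ at the point $a_n':=\theta_2\circ\cdots\circ\theta_n(0)$, since then $F(a_n')=\gamma_n(0)$. Condition (3) forces $c_{i_1}\neq 0$ (otherwise $x=0$ would be a root of $\theta_1$), so $\theta_1(x)=x^{d_{i_1}}+c_{i_1}$ has no repeated factors over $\overline{K}$, and $d_{i_1}\ge 12\ge 3$. Fixing $\epsilon\in(0,1)$ and noting that $\theta_1$ ranges over the finite set $S$, Proposition \ref{prop:abc+polyrad} yields a constant $C_1=C_1(K,S,\epsilon)$ with
\[
\rad\bigl(\gamma_n(0)\bigr)\;\ge\;(d_{i_1}-2-\epsilon)\,h(a_n')\;-\;C_1.
\]

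Next I will isolate primes of $\gamma_n(0)$ of valuation exactly one. The bound \eqref{bd:productformula} refines to $\sum_{v_\mathfrak{p}(\alpha)\ge 2}N_\mathfrak{p}\le h(\alpha)-\rad(\alpha)$ (since each such prime contributes at least $2N_\mathfrak{p}$ to $\sum v_\mathfrak{p}(\alpha)N_\mathfrak{p}\le h(\alpha)$ but only $N_\mathfrak{p}$ to the radical), and hence
\[
\sum_{v_\mathfrak{p}(\gamma_n(0))=1}N_\mathfrak{p}\;\ge\;2\rad\bigl(\gamma_n(0)\bigr)-h\bigl(\gamma_n(0)\bigr).
\]
Combining with the previous display and the elementary bound $h(\gamma_n(0))\le d_{i_1}h(a_n')+h(c_{i_1})+\log 2$ produces a lower bound of shape $(d_{i_1}-4-2\epsilon)\,h(a_n')-O_{K,S}(1)$ for the weighted count of simple prime divisors of $\gamma_n(0)$. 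Since $d_{i_1}\ge 12$, the leading coefficient is comfortably positive.

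For the lower bound on $h(a_n')$, I will iterate Lemma \ref{lem:heightratio}: write $a_n'=g(c_{i_n})$ with $g:=\theta_2\circ\cdots\circ\theta_{n-1}$. Each application of $\theta_k$ multiplies the height by roughly $d_{i_k}$, with additive error at most $B:=(\max_r h(c_r)+\log 2)/(\min_r d_r-1)$. Condition (2) places the coefficient of maximum height innermost, so the starting value $h(c_{i_n})=\max_r h(c_r)$ is as large as possible, and the combined hypotheses $\max_r h(c_r)\ge\tfrac12\log 2$ and $\min d_r\ge 12$ make $h(c_{i_n})-B$ strictly positive; summing the telescoping errors (a geometric series with ratio $\le 1/12$) yields
\[
h(a_n')\;\ge\;\tfrac{\deg\gamma_{n-1}}{d_{i_1}}\Bigl(h(c_{i_n})-\tfrac{12B}{11}\Bigr).
\]
Simultaneously, Lemma \ref{lem:heightratio} applied at $P=0$ gives $h(\gamma_m(0))\le B\deg\gamma_m$ for each $m$, whence \eqref{bd:productformula} together with a geometric summation (again using $\min d_r\ge 12$) bounds the contribution of primes shared with earlier iterates:
\[
\sum_{m=1}^{n-1}\rad\bigl(\gamma_m(0)\bigr)\;\le\;\sum_{m=1}^{n-1}h\bigl(\gamma_m(0)\bigr)\;\le\;\tfrac{12B}{11}\deg\gamma_{n-1}.
\]
The primes dividing some $d_r$ or a denominator of some $c_r$ form a fixed finite set contributing at most a $(K,S)$-dependent constant.

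Assembling all ingredients, the weighted count of primes satisfying all of (a)--(d) is at least
\[
\Bigl[\,\tfrac{d_{i_1}-4-2\epsilon}{d_{i_1}}\bigl(h(c_{i_n})-\tfrac{12B}{11}\bigr)\;-\;\tfrac{12B}{11}\,\Bigr]\deg\gamma_{n-1}\;-\;O_{K,S}(1).
\]
The main obstacle is verifying that the bracketed coefficient is strictly positive under the minimal hypotheses $\min d_r\ge 12$ and $h(c_{i_n})\ge\tfrac12\log 2$. Substituting $B=(h(c_{i_n})+\log 2)/(\min d_r-1)$ and taking $\epsilon$ small, the hypothesis $\log 2\le 2h(c_{i_n})$ gives $B\le 3h(c_{i_n})/11$, and a short numerical check (where $d_{i_1}\ge 12$ is used in the ratio $(d_{i_1}-4-2\epsilon)/d_{i_1}$) shows the leading term is a positive multiple of $h(c_{i_n})$ strictly exceeding the subtracted terms. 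Since $\deg\gamma_{n-1}\to\infty$ with $n$, the entire lower bound is strictly positive for all $n\ge N'(K,S)$, producing a prime $\mathfrak{p}$ with the required properties (a)--(d), and the final "in particular" statement follows since conditions (1)--(4) of Theorem \ref{thm:GaloisMax} are precisely (a)--(d) when $d_1=\cdots=d_s$.
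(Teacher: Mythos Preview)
Your proposal does not address the stated proposition at all. The statement you were asked to prove is Proposition~\ref{prop:abc+polyrad}, the Roth--$abc$ inequality
\[
\sum_{v_\mathfrak{p}(F(a))>0} N_{\mathfrak{p}}\ \ge\ (d-2-\epsilon)\,h(a)+C(F,\epsilon)
\]
for a squarefree polynomial $F$ of degree $d\ge 3$. Instead, you have taken this inequality as a black box and sketched (quite competently) an argument for Proposition~\ref{prop:newprime}, the existence of good primitive prime divisors for $\gamma_n(0)$. Everything in your write-up---the choice $F=\theta_1$, the point $a_n'=\theta_2\circ\cdots\circ\theta_n(0)$, the isolation of primes with $v_\mathfrak{p}(\gamma_n(0))=1$, the iteration of Lemma~\ref{lem:heightratio}, and the final positivity check---mirrors the paper's proof of Proposition~\ref{prop:newprime}, not of Proposition~\ref{prop:abc+polyrad}.

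As for Proposition~\ref{prop:abc+polyrad} itself, the paper does not supply a proof either: it is quoted from \cite[Proposition~2.2]{MR3703943}, with the underlying argument in \cite[Propositions~3.4 and~4.2]{MR3138487}. A genuine proof requires, in the number field case, an application of the $abc$ conjecture (typically via a Belyi-type covering that converts the zeros of $F$ into ramification data), and in the function field case, Yamanoi's proof of Vojta's conjecture for algebraic points on curves. None of that machinery appears in your proposal.
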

We now have the tools in place to produce functions $\gamma_n\in M_S$ with good primitive prime divisors in $K$ subject to some constraints; however, we do not need to assume that the degrees of the maps in $S$ are identical to prove this result.  
\begin{proof}[Proof of Proposition \ref{prop:newprime}] We begin with some notation. Let $h(c_i)=\max\{h(c_1),\dots,h(c_s)\}$ and $d_j=\min\{d_1,\dots,d_s\}$. Next, for $\gamma_n=\theta_1\circ\dots\circ\theta_n$ with $\theta_1,\dots,\theta_n\in S$ and $1\leq m\leq n$, write $d_{\theta_m} = \deg(\theta_m)$. Moreover, assume that $\theta_1$ and $\theta_n$ satisfy conditions (2) and (3) of Proposition \ref{prop:newprime}. In particular, we note that   
\[\gamma_m(0)=\theta_1\circ\dots\circ\theta_m(0)\neq0\;\;\; \text{for all $m\geq1$},\] 
since $\theta_1$ has no roots in $K$, and that $\theta_1$ has no repeated factors in $\overline{K}$, since $\theta_1(0)\neq0$ and $K$ has characteristic zero. 

Now let $a=\theta_2\circ\dots\circ\theta_n(0)$ so that Lemma \ref{lem:heightratio} applied to the map $f=\theta_2\circ\dots\circ\theta_{n-1}$ and the point $P=\theta_n(0)=c_i$  implies that
\begin{equation}\label{eq:newprime1}
\begin{split}
h(a)=h(\theta_2\circ\dots\circ\theta_{n}(0))&\geq (d_{\theta_2}\cdots d_{\theta_{n-1}})\Big(h(c_i)-\frac{1}{d_j-1}(h(c_i)+\log2)\Big)\\[5pt] 
&\geq(d_{\theta_2}\cdots d_{\theta_{n-1}})\frac{(d_j-4)}{(d_j-1)}h(c_i).
\end{split} 
\end{equation}
Here we use that $\theta_n(x)=x^{d_i}+c_i$ and that $h(c_i)\geq\frac{1}{2}\log2$. Moreover, Proposition \ref{prop:abc+polyrad} applied to the polynomial $F=\theta_1$ with $\epsilon=1$ implies that
\begin{equation}\label{eq:newprime2}  
\sum_{v_\mathfrak{p}(\gamma_n(0))>0} N_{\mathfrak{p}}=\sum_{v_\mathfrak{p}(\theta_1(a))>0} N_{\mathfrak{p}}\geq (d_{\theta_1}-3)h(a)+C_{10}
\end{equation}
for some $C_{10}$ depending on $\theta_1\in S$. On the other hand, \eqref{bd:productformula} and the standard fact that $h\circ\theta_1=d_{\theta_1}h+O(1)$ together imply that \vspace{.1cm}    
\begin{equation}\label{eq:newprime3}  
\sum_{v_\mathfrak{p}(\gamma_n(0))>0}v_\mathfrak{p}(\gamma_n(0))N_{\mathfrak{p}}=\sum_{v_\mathfrak{p}(\theta_1(a))>0} v_\mathfrak{p}(\theta_1(a))N_{\mathfrak{p}}\leq h(\theta_1(a))\leq d_{\theta_1}h(a)+C_{11} 
\end{equation}
for some $C_{11}$ depending on $\theta_1\in S$. Now consider the sets  $Y_{1,n}(\gamma):=\{\mathfrak{p}:v_{\mathfrak{p}}(\gamma_n(0))=1\}$ and $Y_{2,n}(\gamma):=\{\mathfrak{p}:v_{\mathfrak{p}}(\gamma_n(0))\geq2\}$. Then combining \eqref{eq:newprime2} and \eqref{eq:newprime3} implies that \vspace{.05cm}     
\begin{equation}\label{eq:newprime4}
 \begin{split}
2(d_{\theta_1}-3)h(a)+2C_{10}&\scalemath{.89}{\leq 2 \bigg(\sum_{v_\mathfrak{p}(\gamma_n(0))>0} N_{\mathfrak{p}}\bigg)=\sum_{\mathfrak{p}\in Y_{1,n}(\gamma)}N_{\mathfrak{p}}\;+\;\bigg( \sum_{\mathfrak{p}\in Y_{1,n}(\gamma)} N_{\mathfrak{p}}+\sum_{\mathfrak{p}\in Y_{2,n}(\gamma)} 2N_{\mathfrak{p}}\bigg)}\\[7pt]
 &\scalemath{.93}{\leq\sum_{\mathfrak{p}\in Y_{1,n}(\gamma)}N_{\mathfrak{p}}\;+\bigg(
 \sum_{\mathfrak{p}\in Y_{1,n}(\gamma)} v_\mathfrak{p}(\gamma_n(0))N_{\mathfrak{p}}+
 \sum_{\mathfrak{p}\in Y_{2,n}(\gamma)} v_\mathfrak{p}(\gamma_n(0))N_{\mathfrak{p}}
 \bigg)}
 \\[7pt] 
 &=\sum_{\mathfrak{p}\in Y_{1,n}(\gamma)}N_{\mathfrak{p}}\;+\; \sum_{v_\mathfrak{p}(\gamma_n(0))>0} v_\mathfrak{p}(\gamma_n(0))N_{\mathfrak{p}}\\[7pt] 
&\leq \sum_{\mathfrak{p}\in Y_{1,n}(\gamma)}N_{\mathfrak{p}}\;+\;d_{\theta_1}h(a)+C_{11}.
     \end{split} 
\end{equation}
Therefore, we deduce from \eqref{eq:newprime1} and \eqref{eq:newprime4} that 
\begin{equation}\label{eq:newprime5}
\sum_{\mathfrak{p}\in Y_{1,n}(\gamma)} N_{\mathfrak{p}}\geq (d_{\theta_1}-6)h(a)+C_{12}\geq(d_{\theta_1}-6)(d_{\theta_2}\cdots d_{\theta_{n-1}})\frac{(d_j-4)}{(d_j-1)}h(c_i) \;+\;C_{12}  
\end{equation}
for some $C_{12}$ depending on $\theta_1\in S$. Now for the crucial step: if we assume that for every prime $\mathfrak{p}\in Y_{1,n}(\gamma)$ either $v_{\mathfrak{p}}(d_r)>0$ for some $1\leq r\leq s$, or $v_{\mathfrak{p}}(c_r)<0$ for some $1\leq r\leq s$, or  $v_{\mathfrak{p}}(\gamma_m(0))>0$ for some $1\leq m\leq n-1$, then we see that 
\begin{equation}\label{eq:newprime6}
 \begin{split}
 \sum_{\mathfrak{p}\in Y_{1,n}(\gamma)} N_{\mathfrak{p}}&\leq\;
 \sum_{m=1}^{n-1}\Big(\sum_{v_{\mathfrak{p}}(\gamma_m(0))>0}N_{\mathfrak{p}}\Big)\;+\;
 \sum_{r=1}^s\Big(\sum_{v_{\mathfrak{p}}(d_r)>0} N_{\mathfrak{p}}\Big)\;+\;
 \sum_{r=1}^s\Big(\sum_{v_{\mathfrak{p}}(c_r)<0}N_{\mathfrak{p}}\Big)\\[6pt]
 &\leq \;\sum_{m=1}^{n-1}h(\gamma_m(0))\;+\; C_{13}\\[6pt]
 &\leq \sum_{m=1}^{n-1}(d_{\theta_1}\cdots d_{\theta_m})\frac{1}{d_j-1}(h(c_i)+\log2)\;+\; C_{13}\\[6pt] 
&=\frac{1}{d_j-1}(h(c_i)+\log2)\sum_{m=1}^{n-1}(d_{\theta_1}\cdots d_{\theta_m})\;+\;C_{13} 
     \end{split}       
\end{equation}
for some $C_{13}$ depending on $K$ and $S$. Here we use Lemma \ref{lem:heightratio} applied to the maps $f=\gamma_m$ and $P=0$ to give an upper bound for $h(\gamma_m(0))$. Hence, combining \eqref{eq:newprime5} and  \eqref{eq:newprime6} with the fact that $h(c_i)\geq \frac{1}{2}\log2$ and the fact that $d_{\theta_1}-6\geq \frac{1}{2}d_{\theta_1}$ since $d_{\theta_1}\geq d_j\geq12$, we see that
\begin{equation}\label{eq:newprime7}
\begin{split} 
\frac{1}{2}(d_{\theta_1}\cdots d_{\theta_{n-1}})\frac{(d_j-4)}{(d_j-1)}
&\leq(d_{\theta_1}-6)(d_{\theta_{2}}\cdots d_{\theta_{n-1}})\frac{(d_j-4)}{(d_j-1)}\\[5pt] 
&\leq\frac{3}{(d_j-1)}\sum_{m=1}^{n-1}(d_{\theta_1}\cdots d_{\theta_m})\;+\;C_{14}
\end{split}
\end{equation}
for some $C_{14}$ depending on $K$ and $S$. Therefore, we deduce from \eqref{eq:newprime7} above that \vspace{.05cm}      
\begin{equation}
\begin{split} 
\label{eq:newprime8}
\frac{(d_j-4)}{(d_j-1)}&\leq \frac{6}{d_j-1}\Big(1+\frac{1}{d_{\theta_{n-1}}}+\frac{1}{d_{\theta_{n-1}}d_{\theta_{n-2}}}+\dots+\frac{1}{d_{\theta_{n-1}}\cdots d_{\theta_2}}\Big)\;+\; \frac{2C_{14}}{d_{\theta_1}\cdots d_{\theta_{n-1}}}\\[6pt] 
&\leq\frac{6}{(d_j-1)}\Big(1+\frac{1}{d_j}+\frac{1}{d_j^2}+\dots \Big)+\frac{2C_{14}}{d_j^{n-1}}\\[6pt]
&=\frac{6d_j}{(d_j-1)^2}+\frac{2C_{14}}{d_j^{n-1}}.  
\end{split} 
\end{equation}
On the other hand, it is straightforward to verify that
\vspace{.1cm} 
\begin{equation}
\label{eq:newprime9}
0<\frac{16}{121}\le\frac{(d-4)}{(d-1)}-\frac{6d}{(d-1)^2}\;\;\text{ for all $d\geq12$.}\vspace{.1cm} 
\end{equation}
In particular, combining \eqref{eq:newprime8} and \eqref{eq:newprime9} we see that $n$ is bounded: there exists a constant $N'(K,S)$ such that if $n\geq N'(K,S)$ and $\gamma_n$ satisfies conditions (1)-(3), then $Y_{1,n}(\gamma)$ contains a prime $\mathfrak{p}$ satisfying $v_\mathfrak{p}(d_r)=0$ for all $1\leq r\leq s$, $v(c_r)\geq 0$ for all $1\leq r\leq s$, and $v_\mathfrak{p}(\gamma_{m}(0))=0$ for all $1\leq m\leq n-1$. In particular, since $v_\mathfrak{p}(\gamma_{n}(0))=1$ by definition of $Y_{1,n}(\gamma)$, it follows that $\mathfrak{p}$ is a good primitive prime divisor of $\gamma_n(0)$ in the case of equal degrees as desired.        
\end{proof}
We now combine Theorem \ref{thm:main+irred}, Theorem \ref{thm:GaloisMax}, and Proposition \ref{prop:newprime} to prove Theorem~\ref{thm:BigGalois}.

\begin{proof}[Proof of Theorem \ref{thm:BigGalois}] Assume that $d_1=d_2=\dots=d_s$ and that conditions (1)-(3) hold for $S$. Then the proof of Theorem \ref{thm:main+irred} implies that there exists $g\in M_S$ such that $\{g\circ f:f\in M_S\}$ is a set of irreducible polynomials in $K[x]$; in fact, there exist $N(K,d)$ and $\phi_1$ and $\phi_2$ in $S$ such that one of the polynomials $\phi_1^N$, $\phi_1^N\circ\phi_2$, or $\phi_1^N\circ\phi_2^N$ works for $g$. From here, write $g=\tau_1\circ\tau_2\dots\tau_{n_0}$ for some $\tau_i\in S$ and $n_0\geq1$. Moreover, fix $\theta\in S$ such that $\theta(x)=x^d+c$ with $h(c)=\max\{h(c_1),\dots,h(c_s)\}$ and note that $h(c)\geq C_3\geq C_4=\frac{3}{5}\log2$ by definition of $C_3(K)$ and $C_4(K)$. Finally, consider the sequences 
\[\mathcal{G}(S):=\Big\{\gamma=(\theta_1,\theta_2,\dots)\in\Phi_S\,:\, 
\text{$\theta_i=\tau_i$ for all $1\leq i\leq n_0$ and $\theta_n=\theta$ i.o.} 
\Big\}.\]
Then for all $\gamma\in\mathcal{G}(S)$ and all (but finitely many) of the infinitely many $n$ satisfying $\theta_n=\theta$, the polynomial $\gamma_n=\theta_1\circ\dots\circ\theta_n$ satisfies the conditions of Proposition \ref{prop:newprime}; note that $\theta_1$ has no roots in $K$ as $g$ is irreducible. Hence, $\gamma_n(0)$ has a good primitive prime divisor in $K$ for such $n$ and $\gamma_n$ is irreducible by construction of $g$. In particular, the extensions $K(\gamma_n)/K(\gamma_{n-1})$ are maximal by Theorem \ref{thm:GaloisMax} for infinitely many $n$. Moreover, the measure $\bar{\nu}(\mathcal{G}(S))$ is at least $|S|^{-2N(K,d)}$, since the set of sequences with $\theta$ appearing infinitely many times has full measure in $\Phi_S$ by the Borel-Cantelli Theorem (more specifically, the Monkey and Typewriter Problem). Likewise, we note that any sequence of the form 
\[\gamma=(\tau_1,\dots,\tau_{n_0},\omega_1,\omega_2\dots,\omega_m,\theta,\theta,\dots),\]for some arbitrary $m\geq0$ and arbitrary $\omega_1,\dots,\omega_m\in S$, furnishes a dynamical Galois group that is a finite index subgroup of $[C_d]^{\infty}$; here we use Proposition \ref{prop:newprime} and Theorem \ref{thm:GaloisMax} to ensure that the subextensions $K(\gamma_n)/K(\gamma_{n-1})$ are maximal for all $n>N'$. In particular, there are infinitely many such sequences when $S$ contains at least two distinct maps as claimed.     
\end{proof}

\begin{remark}
    All of the infinitely many $\gamma \in \Phi_S$ constructed in the proof to have dynamical Galois groups with finite index in $[C_d]^\infty$ are eventually constant sequences. Since $S$ is a finite set, this implies that we have constructed only countably many such $\gamma$; since $\Phi_S$ is uncountable, this further implies that the infinite set of $\gamma$ we have produced to have this property still has measure zero.
\end{remark}

\begin{remark} It is likely that when $\gamma=(\theta_1,\theta_2,\dots)$ is stable and generates maximal subextensions infinitely often (i.e., $\gamma$ furnishes a \emph{big dynamical Galois group} over $K$ in the sense of \cite{MR4680482}), then the Dirichlet density of the set of prime divisors of the sequence \[\theta_1(a),\ \theta_1(\theta_2(a)),\ \theta_1(\theta_2(\theta_3(a))),\ \dots\] 
is zero for all $a\in K$ (and infinite for appropriate $a$); see \cite{JonesIMRN} for the case of iterating a single function for general $d$ and \cite[Theorem 1.5]{MR4680482} for the case of multiple maps when $d=2$.       \end{remark}

On the other hand, even in the case of unequal degrees, the primes $\mathfrak{p}$ in Proposition \ref{prop:newprime} are useful: they must ramify in $K_n(\gamma)$ and not in $K_{m}(\gamma)$ for all $m<n$; c.f. \cite[Theorem 1.1]{MR3703943}. 
\begin{cor}\label{cor:newramifiedprime} Let $K$ be an $abc$-field of characteristic zero, let $S=\{x^{d_1}+c_1,\dots,x^{d_s}+c_s\}$ for some $c_1,\dots,c_s\in K$ and some $\min\{d_1,\dots,d_s\}\geq12$. Moreover, assume that $\gamma_n$ satisfies conditions \textup{(1)-(3)} of Proposition \ref{prop:newprime}. Then the associated prime $\mathfrak{p}$ satisfying \textup{(a)-(d)} ramifies in $K_n(\gamma)$ and not in $K_{m}(\gamma)$ for all $m<n$.   
\end{cor}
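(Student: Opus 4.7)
The plan is to split the statement into two claims: (i) $\mathfrak{p}$ is unramified in $K_m(\gamma)$ for every $m\leq n-1$, and (ii) $\mathfrak{p}$ ramifies in $K_n(\gamma)$. Claim (i) is essentially free from the discriminant argument already invoked in the proof of Theorem~\ref{thm:GaloisMax}: any prime of $K$ that ramifies in $K_{n-1}(\gamma)$ must divide one of the degrees $d_r$, appear in the denominator of some $c_r$, or divide $\gamma_j(0)$ for some $1\leq j\leq n-1$. Conditions (a), (b), and (d) of Proposition~\ref{prop:newprime} rule out all three possibilities for $\mathfrak{p}$.

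For claim (ii), fix a prime $\mathfrak{P}'$ of $K_{n-1}(\gamma)$ above $\mathfrak{p}$; by claim (i), $v_{\mathfrak{P}'}$ restricts to $v_\mathfrak{p}$ on $K$. Since $\gamma_{n-1}$ is a composition of monic polynomials $x^{d_i}+c_i$ whose coefficients are $\mathfrak{p}$-integral (by (b)), it is itself monic with $\mathfrak{p}$-integral coefficients, so every root $\alpha$ of $\gamma_{n-1}$ in $K_{n-1}(\gamma)$ is $\mathfrak{P}'$-integral. Factoring with multiplicity,
\[
\gamma_n(0)\;=\;\gamma_{n-1}(c_{\theta_n})\;=\;(-1)^{\deg\gamma_{n-1}}\prod_{\alpha}(\alpha-c_{\theta_n}),
\]
and applying $v_{\mathfrak{P}'}$ together with (c) yields
\[
\sum_{\alpha}v_{\mathfrak{P}'}(\alpha-c_{\theta_n})\;=\;v_\mathfrak{p}(\gamma_n(0))\;=\;1.
\]
Every summand is a non-negative integer, so exactly one root $\alpha_0$ of $\gamma_{n-1}$ satisfies $v_{\mathfrak{P}'}(\alpha_0-c_{\theta_n})=1$ (and occurs with multiplicity one).

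Finally, set $d:=d_{\theta_n}$ and fix a $d$-th root $\beta_0$ of $\alpha_0-c_{\theta_n}$; then $\theta_n(\beta_0)=\alpha_0$ is a root of $\gamma_{n-1}$, so $\beta_0$ is a root of $\gamma_n$ and lies in $K_n(\gamma)$. For any valuation $v$ on $K_{n-1}(\gamma)(\beta_0)$ extending $v_{\mathfrak{P}'}$ with ramification index $e=e(v/\mathfrak{P}')$, the identity $d\cdot v(\beta_0)=v(\alpha_0-c_{\theta_n})=e\cdot v_{\mathfrak{P}'}(\alpha_0-c_{\theta_n})=e$ forces $d\mid e$, and since $[K_{n-1}(\gamma)(\beta_0):K_{n-1}(\gamma)]\leq d$, we get $e=d\geq 12$; tameness follows from condition (a). Hence $\mathfrak{p}$ ramifies in $K_{n-1}(\gamma)(\beta_0)\subseteq K_n(\gamma)$, which together with (i) proves the corollary. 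The only step requiring care is isolating the unique root $\alpha_0$ with $v_{\mathfrak{P}'}(\alpha_0-c_{\theta_n})=1$ and verifying its $\mathfrak{P}'$-integrality; once this is in hand, the ramification in $K_n(\gamma)$ reduces to the standard computation for a single tame Kummer-type extension.
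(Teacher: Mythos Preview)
Your proof is correct. Part (i) matches the paper exactly. For part (ii), however, you take a genuinely different route from the paper.

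The paper argues by contradiction via the Newton polygon of $\gamma_n$ itself: writing $\gamma_n(x)=x^N+\cdots+a_\ell x^\ell+a_0$ (with no terms of degree between $1$ and $\ell-1$, since $\gamma_n$ is a composition of unicritical maps and hence a polynomial in $x^{d_{\theta_n}}$ plus a constant), one has $v_\mathfrak{p}(a_0)=1$ by (c) and $v_\mathfrak{p}(a_i)\ge 0$ for $i\ge \ell$ by (b). If $\mathfrak{p}$ were unramified in $K_n(\gamma)$, then for any $\mathfrak{P}\mid\mathfrak{p}$ the first Newton polygon segment runs from $(0,1)$ to $(\ell,0)$ with $\ell>1$, producing a root of $\gamma_n$ in $K_n(\gamma)$ with $\mathfrak{P}$-valuation $1/\ell\notin\mathbb{Z}$, a contradiction.

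Your approach instead passes through $K_{n-1}(\gamma)$: you factor $\gamma_n(0)=\gamma_{n-1}(c_{\theta_n})$ over the roots of $\gamma_{n-1}$, use $v_\mathfrak{p}(\gamma_n(0))=1$ to single out a unique root $\alpha_0$ with $v_{\mathfrak{P}'}(\alpha_0-c_{\theta_n})=1$, and then exhibit the ramification explicitly in the Kummer-type extension $K_{n-1}(\gamma)(\beta_0)/K_{n-1}(\gamma)$. The paper's Newton polygon argument is slightly more self-contained (it works directly over $K$ without first locating $\alpha_0$ in the intermediate splitting field), while yours is more explicit about the source of the ramification and gives the sharper information $e(\mathfrak{P}/\mathfrak{p})\ge d_{\theta_n}$ rather than merely $e>1$. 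Both are short and clean; the tameness remark you include is harmless but unnecessary in characteristic zero.
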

\begin{proof} The discriminant formula for $\gamma_m$ in \cite[Proposition 6.2]{MR4292933} implies that $\mathfrak{p}$ is unramified in $K_{m}(\gamma)$ for all $m<n$ (here, we may also apply \cite[II.4, Proposotion 7]{MR1282723} after passing to completions $\mathfrak{p}$); compare to \cite[Proposition 3.1]{MR3703943}. 

On the other hand, a simplified version of the argument given in \cite[Proposition 3.2]{MR3703943} implies that $\mathfrak{p}$ ramifies in $K_n(\gamma)$. To see this, suppose that $\mathfrak{p}$ is unramified in $K_n(\gamma)$ and choose a prime $\mathfrak{P}|\mathfrak{p}$ in $K_n(\gamma)$. In particular, we have that $v_{\mathfrak{p}}(a)=v_{\mathfrak{P}}(a)$ for all $a\in K$ by assumption. Now write $\gamma_n(x)=x^N+\dots+a_dx^d+a_0$ for some $a_i\in K$ and some $d>1$; here we use that $\gamma_n$ is a composition of unicritical polynomials and so has no linear term. Moreover, we have that $v_{\mathfrak{P}}(a_i)\geq0$ for all $i\geq d$ and $v_{\mathfrak{P}}(a_0)=1$ since $\min\{v_{\mathfrak{P}}(c_1),\dots,v_{\mathfrak{P}}(c_1)\}\geq0$ and $a_0=\gamma_n(0)$ respectively. Therefore, the first segment of the $\mathfrak{P}$-adic Newton polygon of $\gamma_n$ is the line from $(0,1)$ to $(\ell,0)$ for some $\ell\geq d>1$. Hence, $\gamma$ has a root $r\in K_n(\gamma)$ with valuation $v_{\mathfrak{P}}(r)=1/\ell$; see, for instance, \cite[Theorem 7.4.7]{MR4175370} or \cite[IV.3]{MR0754003}. But $v_\mathfrak{P}(r)\in\mathbb{Z}$, and so we reach a contradiction. Thus $\mathfrak{p}$ ramifies in $K_n(\gamma)$ as claimed.      
\end{proof}
In particular, we obtain the claim made in Remark~\ref{rem:newramifiedprime} from the introduction.

\bibliographystyle{plain}
\bibliography{FunctionFields}
\bigskip 

\end{document}